\newtheorem{theorem}{Theorem}
\newtheorem{theo}{Theorem}[section]
\newtheorem{remark}{Remark}[section]
\newtheorem{lemma}{Lemma}[section]
\newtheorem{prop}{Proposition}[section]
\newtheorem{cor}{Corollary}[section]
\title[Sobolev and Adams-Trudinger-Moser embeddings]{Sharp Sobolev and Adams-Trudinger-Moser embeddings on weighted Sobolev spaces and their applications}
\author[J.M.\ do \'O]{Jo\~ao Marcos do \'O}
\author[G. Lu]{Guozhen Lu}
\author[R.~Ponciano]{Raon\'{\i} Ponciano}
\address[J.M. do \'O]{Dep. Mathematics,
	Federal University of Para\'{\i}ba
	\newline\indent
	58051-900, Jo\~ao Pessoa-PB, Brazil}
\email{\href{mailto:jmbo@pq.cnpq.br}{jmbo@pq.cnpq.br}}
\address[Guozhen Lu]{Dep. Mathematics, University of Connecticut
	\newline\indent
	06269, Storrs-CT, United States of America}
\email{\href{mailto:guozhen.lu@uconn.edu}{guozhen.lu@uconn.edu}}
\address[R.~Ponciano]{Dep. Mathematics,
	Federal University of Para\'{\i}ba
	\newline\indent
	58051-900, Jo\~ao Pessoa-PB, Brazil}
\email{\href{mailto:raoni.cabral.ponciano@academico.ufpb.br}{raoni.cabral.ponciano@academico.ufpb.br}}
\thanks{This work was partially supported by Conselho Nacional de Desenvolvimento Cient\'ifico e Tecnol\'ogico (CNPq) \#312340/2021-4 and \#429285/2016-7, Funda\c c\~ao de Apoio \`a Pesquisa do Estado da Para\'iba (FAPESQ) \#2020/07566-3, Coordena\c c\~ao de Aperfei\c coamento de Pessoal de N\'ivel Superior (CAPES) \#88887.633572/2021-00 and Simons collaboration grants 519099 and 957892 from Simons foundation}
\subjclass[2000]{35J20, 35J25, 35J50}
\begin{document}

\begin{abstract}
We derive sharp Sobolev embeddings on a class of Sobolev spaces 
with potential weights without assuming any boundary conditions. 
Moreover, we consider the Adams-type inequalities for the borderline  Sobolev embedding into the exponential class with a sharp constant.
As applications, we prove that the associated elliptic equations with nonlinearities in both forms of polynomial and exponential growths admit nontrivial solutions.
\end{abstract}

\maketitle

\section{Introduction}

It is well-known that Sobolev-type inequalities are essential in studying partial differential equations, especially in studying those arising from geometry and physics. There has been much research on such classes of embeddings and their applications. See, for instance, \cite{MR2424078,MR1422009,MR2838041,MR1617413,MR2777530} and references therein.

Recently in \cite{MR2838041}, motivated by the search for symmetric solutions to boundary value problems, the authors proved sharp pointwise estimates for functions in the Sobolev spaces of radial functions defined in the unit ball $B\subset\mathbb R^N$ centered at the origin where no boundary conditions are assumed. Consequently, they proved that Sobolev spaces of symmetric functions are embedded in specific weighted Lebesgue spaces much higher than the usual  Sobolev exponents when no symmetry is assumed and no weight exists. Thus, it was possible to study boundary value problems in case the nonlinear terms have high polynomial growth. We emphasize that the existence of solutions for supercritical problems was a relevant motivation for this research. These results are summarized as follows:

\begin{theorem}\label{Amora}
Let $k\geq1$ be an integer and let $p\geq1$ be a real number. Let $L^q(B,|x|^\beta)=\{u\colon B\to\mathbb R\mbox{ measurable}\colon\int_B|u|^q|x|^\beta\mathrm dx<\infty\}$ and $W^{k,p}_{\mathrm{rad}}(B)$ is space of functions in $W^{k,p}(B)$ which are radially symmetric.
\begin{flushleft}
If $N>kp$, then $W^{k,p}_{\mathrm{rad}}(B)$ is continuously embedded in $L^q(B,|x|^\beta)$ for every $1\leq q\leq p(N+\beta)/(N-kp)$ and $\beta\geq0$. Moreover, it is compact if $q<p(N+\beta)/(N-kp)$;

If $N=kp$, then $W^{k,p}_{\mathrm{rad}}(B)$ is compactly embedded in $L^q(B,|x|^\beta)$ for every $1\leq q<\infty$ and $\beta\geq0$.
\end{flushleft}
\end{theorem}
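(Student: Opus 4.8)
The plan is to exploit the radial symmetry to reduce everything to one-dimensional weighted estimates, following the philosophy of Ni's classical radial lemma and its higher-order refinements.

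\medskip

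\emph{Step 1: A pointwise decay estimate.} First I would establish that a radial function $u \in W^{k,p}_{\mathrm{rad}}(B)$ satisfies a pointwise bound of the form $|u(x)| \leq C \|u\|_{W^{k,p}(B)} |x|^{-(N-kp)/p}$ (with a logarithmic correction when $N = kp$) away from the origin, say for $|x| \geq 1/2$; near the origin no decay is needed since the weight $|x|^\beta$ is bounded there and the ordinary Sobolev embedding $W^{k,p}(B) \hookrightarrow L^q(B)$ already handles a ball around $0$ in the subcritical range. For the decay, I would iterate the fundamental theorem of calculus: writing $u(r)$ as an integral of $u'(s)$ from $r$ to $1$ and using Hölder together with the fact that $\int_{1/2}^1 |u'(s)|^p s^{N-1}\,ds \lesssim \|\nabla u\|_{L^p(B)}^p$, one controls $u$ at a single radius, then bootstraps through the derivatives $u', \dots, u^{(k-1)}$. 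This is essentially the argument in \cite{MR2838041}; I would cite Theorem~\ref{Amora} directly if permitted, but since the problem says I may assume results stated earlier, and this theorem \emph{is} the statement in question, I should reprove the pointwise estimate honestly via the iterated one-dimensional integration.

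\medskip

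\emph{Step 2: From the pointwise bound to the weighted $L^q$ estimate.} With the decay in hand, for $|x| \geq 1/2$ I would compute
\begin{equation*}
\int_{1/2 \leq |x| \leq 1} |u|^q |x|^\beta \, \mathrm dx \leq C \|u\|_{W^{k,p}}^q \int_{1/2}^1 r^{-q(N-kp)/p} r^\beta r^{N-1}\, \mathrm dr,
\end{equation*}
and the radial integral is finite precisely when $-q(N-kp)/p + \beta + N - 1 > -1$, i.e. $q < p(N+\beta)/(N-kp)$; the endpoint $q = p(N+\beta)/(N-kp)$ is recovered by a more careful argument (Hardy-type inequality in the radial variable rather than the crude sup bound), which gives continuity but not compactness at the endpoint. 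When $N = kp$ the logarithmic pointwise bound makes the radial integral converge for every finite $q$. Combined with the interior estimate near the origin from Step 1's subcritical Sobolev embedding, this yields the continuous embedding.

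\medskip

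\emph{Step 3: Compactness.} For the compact embedding when $q$ is strictly below the critical exponent (or $q < \infty$ when $N = kp$), I would take a bounded sequence $(u_n)$ in $W^{k,p}_{\mathrm{rad}}(B)$, extract a weakly convergent subsequence $u_n \rightharpoonup u$, and split the integral $\int_B |u_n - u|^q |x|^\beta\,\mathrm dx$ into $|x| < \delta$ and $|x| \geq \delta$. On $|x| < \delta$ the weighted integral is small uniformly in $n$ because $q$ is subcritical (dominated convergence against the uniform bound from Step 2 applied on the small ball, using that the critical exponent there would need $\beta$-dependence); on $|x| \geq \delta$ the functions enjoy uniform $C^{0}$ bounds and, via the one-dimensional estimates on derivatives, equicontinuity, so Arzelà–Ascoli gives uniform convergence there. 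The main obstacle is handling the region near the origin cleanly: one must verify that the ``loss'' in the weighted exponent is genuinely strict for $q$ below critical, so that a tail estimate $\int_{|x|<\delta}|u_n|^q|x|^\beta\,\mathrm dx \to 0$ as $\delta \to 0$ holds uniformly — this is where the gain from the weight $|x|^\beta$ over the unweighted Sobolev exponent must be quantified carefully, and it is also the place where the endpoint case fails to be compact.
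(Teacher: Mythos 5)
Your overall strategy --- a pointwise radial decay estimate, integration against the weight, and a splitting argument for compactness --- is exactly the route the paper indicates (it states that Theorem~\ref{Amora} follows from the two radial estimates quoted immediately after it, citing \cite{MR2838041}). However, as written, Steps 1 and 2 assign the key estimates to the wrong region, and this is not a cosmetic slip: it breaks the proof for precisely the range of exponents that makes the theorem interesting.

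The decay bound $|u(x)|\leq C\|u\|_{W^{k,p}}|x|^{-(N-kp)/p}$ is needed \emph{near the origin}, where a radial $W^{k,p}$ function can blow up; on the annulus $\{1/2\leq|x|\leq1\}$ it says nothing beyond a sup bound, and the integral $\int_{1/2\leq|x|\leq1}|u|^q|x|^\beta\,\mathrm dx$ is over a region where $r^{-q(N-kp)/p+\beta+N-1}$ is bounded, so the convergence condition $-q(N-kp)/p+\beta+N-1>-1$ you derive there is vacuous --- it is the condition for convergence of $\int_0^{1/2}r^{-q(N-kp)/p+\beta+N-1}\,\mathrm dr$, i.e.\ for the integral near $0$. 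Conversely, your claim that near the origin ``the ordinary Sobolev embedding $W^{k,p}(B)\hookrightarrow L^q(B)$ already handles a ball around $0$'' is false for the full claimed range: the unweighted embedding only reaches $q\leq pN/(N-kp)$, whereas the theorem allows $q$ up to $p(N+\beta)/(N-kp)$, which is strictly larger when $\beta>0$. The whole point is that the weight $|x|^\beta$ vanishing at the origin, combined with the pointwise decay there, buys the extra integrability; saying the weight is ``bounded'' near $0$ discards exactly the gain you need. Once the two regions are swapped --- decay estimate plus weight near the origin, crude boundedness (one-dimensional Sobolev on an annulus) away from it --- your Step 2 computation gives the strict inequality $q<p(N+\beta)/(N-kp)$, the endpoint is recovered by interpolating with the $L^p$ bound or a Hardy-type inequality as you suggest, and your Step 3 (which, notably, already uses the correct region assignment: smallness near $0$ from subcriticality, Arzel\`a--Ascoli away from $0$) goes through. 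With that correction the argument coincides with the one the paper sketches.
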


The proof of Theorem~\ref{Amora} follows from the following radial estimates:

\begin{itemize}
    \item If $N>kp$, then there exists $C=C(N,k,p)>0$ such that for all $u\in W^{m,p}_{\mathrm{rad}}(B)$
    \begin{equation}
    |u(x)|\leq C\dfrac{\|u\|_{W^{m,p}}}{|x|^{\frac{N-kp}{p}}},\quad\forall x\in \overline B\backslash\{0\};
    \end{equation}
    \item If $N=kp$ and $p>1$, then there exists $C=C(k,p)>0$ such that for all $u\in W^{k,p}_{\mathrm{rad}}(B)$
    \begin{equation}\label{radial-DEO}
        |u(x)|\leq C\|u\|_{W^{k,p}}\left(\left|\log|x|\right|^{\frac{p-1}p}+1\right),\quad\forall x\in \overline B\backslash\{0\}.
    \end{equation}
\end{itemize}

We should mention that motivated by H\'{e}non type equations these kinds of estimates were studied in \cite{MR0674869} for the space $W^{1,2}_{0,\mathrm{rad}}(B)$ and in \cite{MR2396523} for the space $W^{1,2}_{\mathrm{rad}}(B)$. In \cite[Corollary 2.2]{MR2677828}, a similar result to \eqref{radial-DEO} for the space $W^{1,N}_{0,\mathrm{rad}}(B)$ was obtained. See also \cite{MR0454365}, where the author proved a  radial lemma 
for the space $W^{1,2}_{\mathrm{rad}}(\mathbb{R}^N)$ to study Solitary Waves.

Motivated by the previous works, our objective in this paper is to study similar embedding results for a more general class of weighted Sobolev spaces without boundary conditions. In particular, these include sharp Sobolev embedding theorems on higher order weighted Sobolev spaces
without assuming any boundary conditions which extend the non-weighted results in \cite{MR2838041} and improve those results in \cite{MR4112674} with boundary conditions.   At the borderline case, our results improve both those in \cite{MR2838041} by sharpening the embedding to exponential type, namely Adams-Moser-Trudinger type inequalities with best constants,  and the results in \cite{MR2838041} by removing the boundary conditions.

Let $AC(0,R)$ be the space of all absolutely continuous functions on interval $(0,R)$. It is well known that $u\in AC(0,R)$ if and only if $u$ has a derivative $u'$ almost everywhere, which is Lebesgue integrable and $u(r)=u(a)+\int_a^r u'(s) \mathrm ds$. A function $u$ is said to be locally absolutely continuous on $(0,R)$ if for every $r\in (0,R)$ there exists a neighborhood $V_r$ of $r$ such that $u$ is absolutely continuous on $V_r$. Let $AC_{\mathrm{loc}}(0,R)$ be the space of all locally absolutely continuous functions on interval $(0,R)$. 

For each non-negative integer $\ell$ and $0<R\leq\infty$, let $AC_{\mathrm{loc}}^\ell(0,R)$ be the set of all functions $u\colon(0,R)\to \mathbb R$ such that $u^{(\ell)}\in AC_{\mathrm{loc}}(0,R)$, where $u^{(\ell)}=\mathrm d^\ell u/\mathrm dr^\ell$. For $p\geq1$ and $\alpha$ real numbers, we denote by $L^p_\alpha=L^p_\alpha(0,R)$ the weighted Lebesgue 
 space  of measurable functions $u\colon(0,R)\to\mathbb R$ such that
\begin{equation*}
\|u\|_{L^p_\alpha}=\left(\int_0^R|u|^pr^\alpha\mathrm dr\right)^{1/p}<\infty,
\end{equation*}
which is a Banach space under the standard norm $\|u\|_{L_\alpha^p}$. 

For any positive integer $k$ and $(\alpha_0,\ldots,\alpha_k)\in \mathbb R^{k+1}$, with $\alpha_j>-1$ for $j=0,1,\ldots,k$, in\cite{MR4112674} the authors considered the weighted Sobolev spaces for higher-order derivatives $X^{k,p}_{0,R}=X^{k,p}_{0,R}(\alpha_0,\ldots,\alpha_k)$ given by all functions $u\in AC^{k-1}_{\mathrm{loc}}(0,R)$ such that
\begin{equation*}
\lim_{r\to R}u^{(j)}(r)=0,\quad j=0,\ldots,k-1\mbox{ and }u^{(j)}\in L^p_{\alpha_j},\quad j=0,\ldots,k.
\end{equation*}
The space $X^{k,p}_{0,R}$ equipped with the norm
\begin{equation*}
\|u\|_{X^{k,p}_R}=\left(\sum_{j=0}^k\|u^{(j)}\|^p_{L^p_{\alpha_j}}\right)^{1/p}
\end{equation*}
is a Banach space. Supposing $0<R<\infty$ and
\begin{equation}\label{eqweightcondition}
\alpha_{j-1}\geq\alpha_j-p,\quad j=1,\ldots,k,
\end{equation}
we obtain, (see Proposition \ref{prop21JMBO}), that the norms $\|\cdot\|_{X^{k,p}_R}$ and
\begin{equation*}
\|u\|_{X^{k,p}_{0,R}}:=\|u^{(k)}\|_{L^p_{\alpha_k}}=\left(\int_0^R|u^{(k)}|^pr^{\alpha_k}\mathrm dr\right)^{1/p}
\end{equation*}
are equivalent in $X^{k,p}_{0,R}$. Moreover, the following embedding results were established in \cite{MR4112674}:

\begin{theorem}\label{theoB}
Let $p\geq1$ and $0<R<\infty$. Consider $(\alpha_0,\ldots,\alpha_k)\in \mathbb R^{k+1}$, with $\alpha_j>-1$ for $j=0,1,\ldots,k$ and $\theta>-1$ satisfying
\begin{equation}\label{hyphotheoB}
\min\{\theta,\alpha_{j-1}\}\geq\alpha_j-p,\quad \forall j=1,\ldots,k.
\end{equation}
\begin{flushleft}
    If $\alpha_k-kp+1>0$, then the continuous embedding holds:
\begin{equation*}
  X^{k,p}_{0,R}(\alpha_0,\ldots,\alpha_k)\hookrightarrow L^q_\theta,\ \forall  \,  1\leq q\leq p^*:=p^*(\theta,p,k,\alpha_k)=\dfrac{(\theta+1)p}{\alpha_k-kp+1}.  
\end{equation*}
Moreover, it is compactly embedded if $q<p^*$.

  If $\alpha_k-kp+1=0$, then the compact embedding holds:
 \begin{equation*}
  X^{k,p}_{0,R}(\alpha_0,\ldots,\alpha_k)\hookrightarrow L^q_\theta,\quad \forall \,  1\leq q<\infty.   
 \end{equation*}
\end{flushleft}
\end{theorem}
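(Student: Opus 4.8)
The plan is to reduce everything to a one–dimensional pointwise estimate for $u^{(k-1)}$ and then iterate, exactly in the spirit of the radial lemmas quoted after Theorem~\ref{Amora}. Since $u\in X^{k,p}_{0,R}$ with $\lim_{r\to R}u^{(j)}(r)=0$, for $j=0,\dots,k-1$, I would write, for $0<r<R$,
\[
u^{(k-1)}(r)=-\int_r^R u^{(k)}(s)\,\mathrm ds,
\]
and estimate the right–hand side by H\"older's inequality against the weight $s^{\alpha_k}$:
\[
|u^{(k-1)}(r)|\le \|u^{(k)}\|_{L^p_{\alpha_k}}\Bigl(\int_r^R s^{-\alpha_k/(p-1)}\,\mathrm ds\Bigr)^{(p-1)/p}.
\]
Here the sign of $\alpha_k-kp+1$ enters: when $\alpha_k-(k-1)p - p +1>0$, i.e. in the first regime (after noticing $\alpha_k/(p-1)>1$ is what controls integrability near $0$), the integral $\int_r^R s^{-\alpha_k/(p-1)}\mathrm ds$ behaves like $r^{1-\alpha_k/(p-1)}$, giving a bound $|u^{(k-1)}(r)|\le C\,\|u\|_{X^{k,p}_{0,R}}\,r^{-(\alpha_k-p+1)/p}$; when $\alpha_k-kp+1=0$ one lands on the borderline exponent and the integral produces the logarithmic factor $|\log r|^{(p-1)/p}$. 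Integrating this pointwise bound $k-1$ more times (each integration from $R$ back to $r$, using the boundary conditions to kill the constants, and tracking how the exponent shifts by $p$ at each step because of the weight condition \eqref{hyphotheoB}) yields
\[
|u(r)|\le C\,\|u\|_{X^{k,p}_{0,R}}\cdot
\begin{cases}
 r^{-(\alpha_k-kp+1)/p}, & \alpha_k-kp+1>0,\\[2mm]
 |\log r|^{(p-1)/p}+1, & \alpha_k-kp+1=0.
\end{cases}
\]

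With the pointwise estimate in hand, the continuous embedding in the case $\alpha_k-kp+1>0$ is immediate: writing $q=p^*=(\theta+1)p/(\alpha_k-kp+1)$,
\[
\int_0^R |u|^{q}r^{\theta}\,\mathrm dr \le C\,\|u\|_{X^{k,p}_{0,R}}^{q}\int_0^R r^{-q(\alpha_k-kp+1)/p+\theta}\,\mathrm dr
= C\,\|u\|_{X^{k,p}_{0,R}}^{q}\int_0^R r^{-1}\,\mathrm dr,
\]
which is \emph{borderline divergent} — so the naive pointwise bound only barely fails at $q=p^*$, and one must instead argue more carefully. The standard fix, which I would carry out, is to prove the endpoint estimate directly: use the pointwise bound for $q<p^*$ to get boundedness there, and for the endpoint $q=p^*$ combine the $L^p_{\alpha_k}$ control of $u^{(k)}$ with a Hardy–type / Bliss-type inequality for the weighted one-dimensional operator $u\mapsto u^{(k)}$ (integrating by parts and optimizing), which is exactly the one-dimensional mechanism underlying the sharp Sobolev exponent. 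For $1\le q<p^*$ one also gets compactness via the Rellich–Kondrachov scheme: bounded sequences in $X^{k,p}_{0,R}$ are, by the pointwise bound and equi-absolute-continuity coming from $\int_r^R s^{-\alpha_k/(p-1)}\mathrm ds\to0$, precompact in $C_{\mathrm{loc}}(0,R]$, and the uniform decay $r^{-(\alpha_k-kp+1)/p}$ near $0$ with $q<p^*$ provides the tightness needed to upgrade to $L^q_\theta$ convergence.

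For the borderline case $\alpha_k-kp+1=0$, the pointwise bound gives $|u(r)|\le C\|u\|(|\log r|^{(p-1)/p}+1)$, so for every $q<\infty$,
\[
\int_0^R |u|^{q}r^{\theta}\,\mathrm dr\le C\,\|u\|_{X^{k,p}_{0,R}}^{q}\int_0^R\bigl(|\log r|^{q(p-1)/p}+1\bigr)r^{\theta}\,\mathrm dr<\infty
\]
because $\theta>-1$ makes the logarithmic weight integrable, giving the continuous embedding into $L^q_\theta$ for all finite $q$; compactness follows from the same local-uniform-convergence argument as above, with tightness near $0$ supplied by dominated convergence against the integrable majorant $(|\log r|^{q(p-1)/p}+1)r^{\theta}$ and near $r=R$ from the vanishing boundary values. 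The main obstacle is the endpoint $q=p^*$ in the first case: the crude pointwise estimate is off by a logarithm there, and recovering the sharp exponent requires the weighted one-dimensional Hardy/Bliss inequality rather than a pointwise bound — this is the only place where real work beyond bookkeeping of exponents is needed, the rest being careful tracking of how the weight exponents transform under the $k-1$ successive integrations, controlled precisely by hypothesis \eqref{hyphotheoB}.
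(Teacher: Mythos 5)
Your pointwise estimates reproduce, essentially correctly, Propositions \ref{prop32} and \ref{prop33} of the paper (simplified by the boundary conditions $u^{(j)}(R)=0$), and the borderline case $\alpha_k-kp+1=0$ then goes through as you say. The genuine gap is the endpoint $q=p^*$ in the case $\alpha_k-kp+1>0$, which you correctly identify as the only hard step and then do not carry out. Invoking ``a Hardy-type/Bliss-type inequality for the operator $u\mapsto u^{(k)}$'' is circular as written: the inequality $\|u\|_{L^{p^*}_\theta}\le C\|u^{(k)}\|_{L^p_{\alpha_k}}$ \emph{is} the endpoint embedding, and for $k\ge2$ it is not a one-line integration by parts. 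Two concrete ways to close it, both visible in the paper: (i) iterate the first-order Opic--Kufner inequality, Proposition \ref{prop21JMBO}(ii), down the chain, $\|u^{(j-1)}\|_{L^p_{\alpha_j-p}}\le C\|u^{(j)}\|_{L^p_{\alpha_j}}$ for $j=k,\dots,2$ and then the general $(q,\theta)$ version at the last step; this is where hypothesis \eqref{hyphotheoB} is designed to act (it guarantees the chain is compatible with the given weights $\alpha_{j-1}$ and with $\theta$), and your argument never actually uses \eqref{hyphotheoB}, nor any relation between $\theta$ and $\alpha_k$, even though some such relation is provably necessary at $q=p^*$; or (ii) the paper's route for its generalization Theorem \ref{theo32}: write $|u|^{p^*}r^\theta=\bigl(|u|^{p^*-p}r^{\theta-\alpha_k+kp}\bigr)\cdot\bigl(|u|^pr^{\alpha_k-kp}\bigr)$, bound the first factor by the pointwise estimate (using $\theta\ge\alpha_k-kp$), and control $\int_0^R|u|^pr^{\alpha_k-kp}\,\mathrm dr$ by the $k$-th order Hardy inequality of Proposition \ref{prop35}, which is itself established through the Kufner--Persson two-weight criterion (Theorem \ref{theo31}). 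Either mechanism is a missing idea, not bookkeeping.

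Two secondary issues. First, your H\"older step requires $p>1$, while the statement allows $p=1$; the paper handles $p=1$ in the borderline case by a separate argument (Proposition \ref{prop34}), and you would need the analogous adjustment. Second, the claim that the exponent ``shifts by $p$ at each step because of the weight condition \eqref{hyphotheoB}'' is not accurate: the shift comes solely from integrating the power $r^{-(\alpha_k-jp+1)/p}$, and \eqref{hyphotheoB} plays no role in the pointwise iteration. Your compactness sketch (precompactness in $C_{\mathrm{loc}}((0,R])$ plus tightness at the origin from the uniform decay, or from the integrable logarithmic majorant in the borderline case) is reasonable and consistent with what the paper leaves implicit.
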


Let us consider, for any positive integer $k$ and $(\alpha_0,\ldots,\alpha_k)\in\mathbb R^{k+1}$, the weighted Sobolev spaces for higher-order derivatives without boundary conditions,
\begin{equation*}
X_R^{k,p}\!=\!X_R^{k,p}(\alpha_0,\ldots,\alpha_k)\!=\!\{u\colon(0,R)\to\mathbb R : u^{(j)}\in L^p_{\alpha_j},\ j=0,1,\ldots,k\}.
\end{equation*}
Using Proposition \ref{prop31} below, one can obtain $u\in AC_{\mathrm{loc}}^{k-1}(0,R)$ for all $u\in X^{k,p}_R$. The spaces $X_R^{k,p}$ and $X^{k,p}_{0,R}$ are complete under the norm
\begin{equation*}
\|u\|_{X_R^{k,p}}=\left(\sum_{j=0}^k\|u^{(j)}\|^p_{L^p_{\alpha_j}}\right)^{1/p}.
\end{equation*}
We can distinguish three special behaviors for the weighted Sobolev spaces $X_R^{k,p}$, namely
\begin{align*}
   & \hspace{-1.5cm} \textbf{Sobolev:} &  \alpha_k-kp+1>0  \\
   & \hspace{-1.5cm} \textbf{Adams-Trudinger-Moser:} &    \alpha_k-kp+1=0  \\
   & \hspace{-1.5cm} \textbf{Morrey:} &  \alpha_k-kp+1<0 
\end{align*}

The space $X_R:=
X_{0,R}^{k,p}$ with $k=1$ was introduced in \cite{MR1422009} to study a  Br\'{e}zis-Nirenberg type problem for a class of quasilinear elliptic operator of the form  $Lu=-r^{-\gamma}(r^\alpha|u'|^\beta u')'$ when considered as acting on radial functions defined on the ball centered at the origin with radius $R$. According to the choice of parameters, the following operators are included in the class: 
\begin{enumerate}
    \item[(i)] $L$ is the Laplacian for $\alpha=\gamma=N-1$ and $\beta=0$;
    \item[(ii)] $L$ is the $p$-Laplacian for $\alpha=\gamma=N-1$ and $\beta=p-2$;
    \item[(ii)] $L$ is the $k$-Hessian for $\alpha=N-k$, $\gamma=N-1$ and $\beta=k-1$.
\end{enumerate}
The suitable space to work on problems like
\begin{equation}
\left\{\begin{array}{ll}
Lu=f(r,u)\mbox{ in }(0,R)&  \\
u'(0)=u(R)=0,&\\
u>0\mbox{ in }(0,R).
\end{array}\right.
\end{equation}
is the space $X_R$ defined as the set of all absolutely continuous function $u\colon(0,R]\to\mathbb R$ such that $u(R)=0$ and
\begin{equation*}
\int_0^R|u'(r)|^{\beta+2}r^\alpha\mathrm dr<\infty.
\end{equation*}
The study on the space $X_R$ is well developed in \cite{MR1422009,MR4112674,MR3209335,MR3575914} and in references therein. But they always suppose $u(R)=0$ in the space $X_R$ to apply a result from \cite{MR1069756} (see Proposition \ref{prop21JMBO}) to guarantee the existence of a constant $C=C(R,q,\gamma,\alpha,\beta)>0$ such that
\begin{equation*}
\left(\int_0^R|u(r)|^{q}r^\gamma\mathrm dr\right)^{\frac1q}\leq C\left(\int_0^R|u'(r)|^{\beta+2}r^\alpha\mathrm dr\right)^{\frac{1}{\beta+2}}
\end{equation*}
for any $1\leq q\leq (\gamma+1)(\beta+2)/(\alpha-\beta-1)$ and $\alpha-\beta-1>0$. Consequently, $X_R$ is continuously embedded in $L^q_\gamma$.

\subsection{Description of the main results}
Now we are in a position to formulate our first main result.

\begin{theo}\label{theo32}
Let $p\geq1$, $0<R<\infty$ and $\theta\geq\alpha_k-kp$.
\begin{flushleft}
If $ \alpha_k-kp+1>0$, then the continuous embedding holds 
\begin{equation*}X^{k,p}_R(\alpha_0,\ldots,\alpha_k)\hookrightarrow L^q_\theta(0,R)\quad \text{for all}\quad 1\leq q\leq p^*:=\dfrac{(\theta+1)p}{\alpha_k-kp+1}. \end{equation*}
Moreover, the embedding is compact if $q<p^*.$ \\
If $\alpha_k-kp+1=0$, then the compact embedding holds
\begin{equation*}
    X^{k,p}_R(\alpha_0,\ldots,\alpha_k)\hookrightarrow L^q_\theta(0,R)\quad \text{for all} \quad q\in[1,\infty).
\end{equation*}
If $\alpha_k-kp+1<0, \; p>1, \; \alpha_k\geq0$
then the continuous embedding holds
\begin{equation*}
   X^{k,p}_R(\alpha_0,\ldots,\alpha_k)\hookrightarrow C^{k-\lfloor\frac{\alpha_k+1}{p}\rfloor-1,\gamma}([0,R]), 
\end{equation*}
\begin{equation*}
    \begin{aligned}
    &  \text{ where }  \gamma=\min\left\{1+\left\lfloor\frac{\alpha_k+1}{p}\right\rfloor-\frac{\alpha_k+1}{p},1-\frac1p\right\}\quad \mbox{if} \quad \frac{\alpha_k+1}{p}\notin\mathbb Z \\
    &     \text{and }\gamma \in (0,1) \quad \mbox{if} \quad \frac{\alpha_k+1}{p}\in\mathbb Z.
    \end{aligned}
\end{equation*}
\end{flushleft}
\end{theo}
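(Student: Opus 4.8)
The plan is to treat separately the behaviour near the outer endpoint $r=R$, where the weights $r^{\alpha_j}$ are bounded above and below so that $X^{k,p}_R$ restricted to $(R-\delta,R)$ coincides with the ordinary Sobolev space $W^{k,p}(R-\delta,R)$ (with equivalent norms), and the behaviour near the origin, which is governed entirely by the weights. From this near-$R$ identification and the one-dimensional Sobolev embedding on an interval, each $u^{(j)}(R)$, $j=0,\dots,k-1$, is well defined with $\sum_{j<k}|u^{(j)}(R)|\le C\|u\|_{X^{k,p}_R}$, and $u\in C^{k-1,1-1/p}([R-\delta,R])$ when $p>1$. By Proposition~\ref{prop31}, $u\in AC^{k-1}_{\mathrm{loc}}(0,R)$, so the fundamental theorem of calculus holds on $[r,R]$ for every $r\in(0,R)$; combining it with H\"older's inequality against $s^{\alpha_k}$ on $[r,R]$ is essentially the only tool needed near the origin. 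Note also that $r^\theta\,\mathrm dr$ is a finite measure on $(0,R)$ when $\theta>-1$, which is automatic in the Sobolev case since there $\theta\ge\alpha_k-kp>-1$.

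The heart of the argument is the Sobolev case $\alpha_k-kp+1>0$, and within it the sharp exponent $q=p^*$, for which I would use the weighted Hardy inequality for $Tf(r):=\int_r^R f(s)\,\mathrm ds$: if $1\le p\le q<\infty$, $\gamma>-1$ and $(\gamma+1)/q\ge(\beta-p+1)/p$, then $\|Tf\|_{L^q_\gamma}\le C\|f\|_{L^p_\beta}$. Writing the Taylor expansion at $R$ with integral remainder, $u=P+(-1)^kT^k(u^{(k)})$ with $P$ the Taylor polynomial of degree $k-1$ (so $\|P\|_{L^{p^*}_\theta}\le C\|u\|_{X^{k,p}_R}$, using $\theta>-1$), one sets $\sigma_j:=(\alpha_k+1)/p-j$, which obeys $\sigma_j\ge\sigma_k=(\alpha_k-kp+1)/p>0$ for $0\le j\le k$. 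Because $p^*\ge p$ is \emph{equivalent} to the hypothesis $\theta\ge\alpha_k-kp$, one can pick a non-decreasing chain $p=q_0\le q_1\le\cdots\le q_k=p^*$ and put $\gamma_j:=\sigma_jq_j-1>-1$; then $\gamma_0=\alpha_k$, $\gamma_k=\theta$, and the identity $(\gamma_j+1)/q_j=\sigma_j=\sigma_{j-1}-1=(\gamma_{j-1}+1-q_{j-1})/q_{j-1}$ makes Hardy applicable at each link, so composing gives $\|T^k(u^{(k)})\|_{L^{p^*}_\theta}\le C\|u^{(k)}\|_{L^p_{\alpha_k}}$. This yields the embedding at $q=p^*$; H\"older's inequality with the finite measure $r^\theta\,\mathrm dr$ then gives it for all $1\le q\le p^*$. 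For compactness when $q<p^*$, iterating the fundamental theorem of calculus and H\"older downward from the top derivative produces the uniform pointwise bound $|u(r)|\le C\|u\|_{X^{k,p}_R}(1+r^{-\sigma_k})$ (for $p=1$ one replaces the inner H\"older step by a supremum, using $\alpha_k>kp-1\ge0$); since $q\sigma_k<\theta+1\iff q<p^*$, a bounded sequence has uniformly small tails $\int_0^\varepsilon|u_n|^q r^\theta\,\mathrm dr$, while on $[\varepsilon,R]$ the compact embedding $W^{k,p}([\varepsilon,R])\hookrightarrow C^0$ (with the evident modification when $p=1$) plus a diagonal argument gives local uniform convergence, hence convergence in $L^q_\theta$. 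The borderline case $\alpha_k-kp+1=0$ is identical, except that now $\sigma_k=0$ and the same iteration yields a logarithmic bound $|u(r)|\le C\|u\|_{X^{k,p}_R}(1+|\log r|^{s})$ for some $s>0$, still in every $L^q_\theta$ with $q<\infty$ and with uniformly small tails, so compactness holds for all finite $q$.

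For the Morrey case $\alpha_k-kp+1<0$, $p>1$, $\alpha_k\ge0$, put $\ell:=\lfloor(\alpha_k+1)/p\rfloor\in\{0,\dots,k-1\}$. Near $R$ we already have $u\in C^{k-1,1-1/p}([R-\delta,R])$. Near the origin, integrating upward from $u^{(k)}\in L^p_{\alpha_k}$ with H\"older, the $i$-th integration controls $u^{(k-i)}$ with blow-up exponent $(\alpha_k+1)/p-i$; at the $(\ell+1)$-th integration this exponent is negative, so $u^{(k-\ell-1)}$ is bounded and belongs to $C^{0,\,1+\ell-(\alpha_k+1)/p}([0,\delta])$ — with a logarithm replacing the H\"older gain, hence any $\gamma\in(0,1)$ admissible, when $(\alpha_k+1)/p\in\mathbb Z$ — while the lower derivatives $u^{(j)}$, $j<k-\ell-1$, are Lipschitz on $[0,\delta]$; the endpoint values $u^{(j)}(R)$ serve as base points, so all the relevant seminorms are $\le C\|u\|_{X^{k,p}_R}$. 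Patching the two regions gives $u\in C^{m,\gamma}([0,R])$ with $m=k-\ell-1$ and $\gamma=\min\{1+\ell-(\alpha_k+1)/p,\,1-1/p\}$.

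The main obstacle is the sharp exponent $q=p^*$ in the Sobolev case: the pointwise bound $|u(r)|\lesssim r^{-\sigma_k}$ is \emph{just} not $p^*$-integrable near the origin, so one genuinely needs the iterated weighted Hardy inequality, and the key point that the chain $p=q_0\le\cdots\le q_k=p^*$ can be built at all is exactly the equivalence of the hypothesis $\theta\ge\alpha_k-kp$ with $p^*\ge p$. A secondary (bookkeeping) difficulty is the Morrey range, where both the order $m$ of differentiability and the H\"older exponent $\gamma$ emerge from matching the weighted regularity near the origin against the ordinary Morrey regularity near $R$.
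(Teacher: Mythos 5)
Your argument is correct, and at the crux of the theorem --- the endpoint $q=p^*$ in the Sobolev case --- it takes a genuinely different route from the paper. The paper combines a pointwise radial estimate $|u(r)|\leq Cr^{-(\alpha_k-kp+1)/p}\|u\|_{X^{k,p}_R}$ (Proposition \ref{prop32}) with a $k$-th order Hardy inequality $\|u/t^k\|_{L^p_{\alpha_k}}\leq C\|u\|_{X^{k,p}_R}$ (Proposition \ref{prop35}), splitting $|u|^{p^*}r^\theta=\bigl(|u|^{p^*-p}r^{\theta-\alpha_k+kp}\bigr)\cdot\bigl(|u|^pr^{\alpha_k-kp}\bigr)$ and bounding the first factor in $L^\infty$; the hypothesis $\theta\geq\alpha_k-kp$ enters as nonnegativity of the exponent $p^*-p$, and Proposition \ref{prop35} is itself proved via the Kufner--Persson two-weight characterization (Theorem \ref{theo31}). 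You instead subtract the Taylor polynomial at $R$ and push the integral remainder through a chain of $k$ first-order weighted Hardy inequalities (the paper's Proposition \ref{prop21JMBO}(ii) only), with the same hypothesis entering as $p^*\geq p$ so that the chain of exponents can be made non-decreasing; your verification that $\gamma_0=\alpha_k$, $\gamma_k=\theta$ and that each link holds with equality is correct. Your route avoids the higher-order Hardy machinery entirely, at the price of not producing the pointwise estimate \eqref{LR1} as a byproduct --- an estimate the paper reuses repeatedly later (e.g.\ in the Adams--Trudinger--Moser sections); note you still need it (or your equivalent iterated-H\"older bound) for the tail estimate in your compactness argument, which, incidentally, you spell out in more detail than the paper does. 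Your treatment of the boundary values $u^{(j)}(R)$ via the local identification with $W^{k,p}(R-\delta,R)$ is the same device as Lemma \ref{lemma31}, and your ATM and Morrey cases follow the paper's structure (Propositions \ref{prop33}--\ref{prop34} and Theorem \ref{theomorrey}, respectively), with your origin-side H\"older estimate obtained more directly from the pointwise decay of $u^{(k-\ell)}$ rather than from the averaging argument of Lemma \ref{lemmamorrey}. I see no gap.
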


The proof of Theorem \ref{theo32} is divided into a sequence of radial lemmas (Propositions \ref{prop32}, \ref{prop33} and \ref{prop34}) in combination with a Hardy type inequality for the space $X_R^{k,p}$ (Proposition \ref{prop35}).

\begin{remark}
In Theorem \ref{theo32}, $p^*=(\theta+1)p/(\alpha_k-kp+1)$ and $\theta\geq\alpha_k-kp$ are sharp. Indeed, consider the space $X_1^{1,1}(1,2)$. For all $q>p^*=1$, $u(t)=1/t^{2/q}$ belongs to $X_1^{1,1}(1,2)\backslash L^q_1$. On other hand, for each $\theta<1=\alpha_k-kp$ define $u(t)=1/t^{\theta+1}$ and note that $u\in X_1^{1,1}(1,2)\backslash L_\theta^{p^*}$.
\end{remark}
\begin{remark}
Theorem \ref{theo32} generalizes Theorem \ref{theoB} in two aspects. Firstly because in Theorem \ref{theo32} it is considered the space $X_R^{k,p}$ instead of $X_{0,R}^{k,p}$ which was the space used in Theorem \ref{theoB}  where  the condition $u^{(j)}(r)\overset{r\to R}\longrightarrow 0$ was necessary in the proof of Theorem \ref{theoB}. Secondly, here it was used the hypothesis $\theta\geq\alpha_k-kp$, which is weaker than the condition \eqref{hyphotheoB} studied in \cite{MR4112674}.
\end{remark}

For the case $\alpha_k-kp+1=0$ from Theorem \ref{theo32} we have  $X^{k,p}_R\hookrightarrow L^q_\theta$  for all $1\leq q<\infty$ and $\theta>-1,$ but one can see that  $X^{k,p}_R \not \hookrightarrow L^{\infty}$  taking $\phi(r)=\log(\log(\frac{eR}r))$.
Then, a natural question arises:
what is the maximal possible growth for a function $g(s)$ such that if $u\in X^{k,p}_R$ implies 
$\int_0^R g(u) \, \mathrm dr < \infty$? Similar to the classical works \cite{MR0960950,MR0216286,MR0301504} and weighted Trudinger-Moser inequalities \cite{DL, DLL, INW, LLZ}, we can conclude that exponential growth is optimal. Precisely, set
\begin{equation}\label{e33}
\ell_{\mu}:= \sup_{\|u\|_{X_R^{k,p}}\leq1}\int_0^Re^{\mu|u|^{\frac{p}{p-1}}}r^\theta \mathrm dr.
\end{equation}
Then we found a $\mu_0>0$ such that \eqref{e33} is finite for $\mu<\mu_0$ and \eqref{e33} is infinite for $\mu>\mu_0$.
Note that we are dealing with a higher-order derivative and the norm $\|u\|_{X^{k,p}_R}$ is different from the one utilized by D.R.~Adams \cite{MR0960950}. 

\begin{theo}\label{theo2}
Let $X^{k,p}_R(\alpha_0,\ldots,\alpha_k)$ with $\alpha_k-kp+1=0$, $p>1$ and $\theta>-1$.  
\begin{flushleft}
$\mathrm{(a)}$ For all $\mu\geq0$ and $u\in X^{k,p}_R,$ we have $\exp(\mu|u|^{p/(p-1)})\in L^1_\theta$.\\
$\mathrm{(b)}$ If $0\leq\mu<\mu_0,$ then $\ell_{\mu}$ is finite, where
\begin{equation*}
\mu_0:=(\theta+1)[(k-1)!]^{p/(p-1)}.
\end{equation*}
Moreover, $\ell_{\mu}$  is attained by a nonnegative function $u_0 \in X^{k,p}_R$ with $\|u_0\|_{X^{k,p}_R}=1$.\\
$\mathrm{(c)}$ If $\mu>\mu_0$ and $\alpha_i-ip+1>0$ for $i=0,\ldots,k-1$, then
$\ell_{\mu}=\infty$.
\end{flushleft}
\end{theo}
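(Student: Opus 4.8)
The plan is to establish the three parts of Theorem~\ref{theo2} by first reducing everything to a pointwise radial estimate of Adams type, then leveraging a symmetrization/rearrangement argument in the spirit of Adams' original proof.

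\medskip

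\noindent\textbf{Part (a).} First I would derive, via the iterated integral representation for $u\in X^{k,p}_R$ (Proposition~\ref{prop31}), a pointwise bound of the form
\begin{equation*}
|u(r)|\le C\|u\|_{X^{k,p}_R}\Bigl(\bigl|\log(R/r)\bigr|^{(p-1)/p}+1\Bigr),\qquad r\in(0,R],
\end{equation*}
which is the weighted higher-order analogue of \eqref{radial-DEO}. Concretely, since $\alpha_k-kp+1=0$, one writes $u$ (modulo a polynomial of degree $k-1$, which is bounded) as a $k$-fold integral of $u^{(k)}$, uses the hypotheses $\alpha_j\ge\alpha_{j-1}-p$ implicitly via Proposition~\ref{prop35} to control the lower-order terms, and applies Hölder's inequality with weight $r^{\alpha_k}$ to the innermost integral; the exponent bookkeeping produces exactly the factor $\bigl(\int_r^R s^{-1}\,ds\bigr)^{(p-1)/p}=|\log(R/r)|^{(p-1)/p}$. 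Given this, for any $\mu\ge0$ we get $\mu|u(r)|^{p/(p-1)}\le \mu C'\|u\|^{p/(p-1)}_{X^{k,p}_R}(|\log(R/r)|+1)$, so $\exp(\mu|u|^{p/(p-1)})\le C'' (R/r)^{\lambda}$ for some $\lambda=\lambda(u)$, and since $r^{\theta}(R/r)^{\lambda}$ is integrable near $0$ once we only need finiteness for a \emph{fixed} $u$ (no uniformity), this gives $\exp(\mu|u|^{p/(p-1)})\in L^1_\theta$. I expect Part (a) to be routine modulo correctly invoking the Hardy-type inequality.

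\medskip

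\noindent\textbf{Part (b).} This is the heart of the theorem. The strategy is: (i) reduce to the ``extremal'' subclass by noting that for $u$ with $\|u\|_{X^{k,p}_R}\le 1$ one can reduce to $\|u^{(k)}\|_{L^p_{\alpha_k}}\le 1$ after discarding lower-order contributions (using norm equivalence, Proposition~\ref{prop21JMBO}); (ii) apply the substitution $t=\log(R/r)$, which transforms the weighted $1$-D problem into a convolution-type problem on $(0,\infty)$ so that the $k$-fold integration becomes convolution against the kernel $t^{k-1}/(k-1)!$, turning the question into exactly the one-dimensional Adams inequality with kernel $K(t,s)=(t-s)_+^{k-1}/(k-1)!$; (iii) invoke the sharp Adams lemma (O'Neil's lemma / Adams' iteration, or the Garsia-type argument) to conclude $\ell_\mu<\infty$ precisely for $\mu<\mu_0$, where the constant $\mu_0=(\theta+1)[(k-1)!]^{p/(p-1)}$ arises from the asymptotic $\|K(\cdot,0)\|$ computation together with the measure-density factor $e^{-(\theta+1)t}$ coming from $r^\theta\,dr = R^{\theta+1}e^{-(\theta+1)t}\,dt$. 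For the attainability of $\ell_\mu$, I would run the standard concentration-compactness / blow-up analysis: take a maximizing sequence $u_n$, pass to a weak limit $u_0$, and rule out the vanishing case (loss of mass to the origin $r=0$, i.e.\ $t=\infty$) by a careful estimate showing that if concentration occurred the value would not exceed what a single ``bubble'' gives, which is strictly less than $\ell_\mu$ when $\mu<\mu_0$ because the subcritical threshold is not saturated. Here one uses that the embedding into $L^q_\theta$ is \emph{compact} for every finite $q$ (Theorem~\ref{theo32}), so the nonlinearity $e^{\mu|u_n|^{p/(p-1)}}$ (expanded in a power series, each term subcritical) converges strongly; the main point is a uniform integrability / Vitali argument to pass the exponential through the limit. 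The hard part will be this last step --- establishing that no concentration at the origin destroys the supremum --- which requires a quantitative version of the Adams estimate showing the ``deficit'' is controlled.

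\medskip

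\noindent\textbf{Part (c).} For $\mu>\mu_0$ I would exhibit an explicit Moser-type sequence. In the variable $t=\log(R/r)$, set the $k$-th derivative (in $t$-normalized form) to mimic Adams' extremal family: choose $u_n$ so that $u^{(k)}$ concentrates and $u_n$ behaves, after the convolution, like $c_n\bigl(\min(t,n)\bigr)^{(p-1)/p}$ on a large interval, normalized so that $\|u_n\|_{X^{k,p}_R}=1$; the hypothesis $\alpha_i-ip+1>0$ for $i<k$ is exactly what guarantees the lower-order norms $\|u_n^{(j)}\|_{L^p_{\alpha_j}}$ stay bounded (indeed go to $0$) so the normalization is governed by the top-order term alone. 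Plugging into $\int_0^R e^{\mu|u_n|^{p/(p-1)}}r^\theta\,dr$ and computing the exponent versus the weight factor $e^{-(\theta+1)t}$, one finds the integrand fails to be summable in the limit precisely when $\mu>\mu_0$, so $\ell_\mu=\infty$. I would present the Moser sequence explicitly, verify $\|u_n\|_{X^{k,p}_R}\to 1$ (or $\le 1$ after rescaling), and do the one-line divergence computation; the only delicate point is checking the lower-order terms, which is where the extra hypothesis on $\alpha_i$ enters.
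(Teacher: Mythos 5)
Your argument for part (a) has a genuine gap. From the crude pointwise bound $|u(r)|\le C\|u\|_{X^{k,p}_R}\bigl(|\log(R/r)|^{(p-1)/p}+1\bigr)$ you deduce $\exp(\mu|u|^{p/(p-1)})\le C''(R/r)^{\lambda}$ with $\lambda=\mu C'\|u\|^{p/(p-1)}_{X^{k,p}_R}$, and then claim $r^{\theta}(R/r)^{\lambda}$ is integrable near $0$. But $r^{\theta-\lambda}$ is integrable near the origin only when $\lambda<\theta+1$, and $\lambda$ grows linearly in $\mu$; for large $\mu$ (which part (a) must cover, since it asserts the conclusion for \emph{all} $\mu\ge0$) this fails. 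Fixing $u$ does not rescue the estimate by itself. What is actually needed, and what the paper proves (Proposition \ref{prop50}), is the refinement
\begin{equation*}
\lim_{r\to0}\dfrac{u(r)}{|\log r|^{\frac{p-1}{p}}}=0 ,
\end{equation*}
obtained from the absolute continuity of the integral: for any $\varepsilon>0$ one can choose $T$ with $\int_0^T|u^{(k)}|^pr^{kp-1}\mathrm dr<\varepsilon$ and rerun the radial estimate on $(0,T)$, so that the coefficient of $|\log r|$ in the exponent can be made smaller than $(\theta+1)/2$ near the origin. Without this little-$o$ improvement the step fails.

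For part (b), your route to finiteness (change of variables $t=\log(R/r)$, reduction to a one-dimensional Adams inequality with kernel $(t-s)_+^{k-1}/(k-1)!$, O'Neil/Garsia machinery) is much heavier than the paper's, which simply combines the pointwise estimate $|u(t)|\le\bigl(\tfrac{1}{(k-1)!}|\log(t/R)|^{(p-1)/p}+C\bigr)\|u\|_{X^{k,p}_R}$ with the elementary inequality $(a+b)^q\le(1+\varepsilon)a^q+c_q(1+\varepsilon^{1-q})b^q$ and integrates $r^{\theta-\mu(1+\varepsilon)[(k-1)!]^{-p/(p-1)}}$ directly; the strict inequality $\mu<\mu_0$ is exactly what absorbs the $(1+\varepsilon)$ loss, so no rearrangement argument is needed. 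Likewise, your concentration-compactness analysis for attainability is unnecessary in the subcritical regime: since one can pick $q>1$ with $q\mu<\mu_0$, the estimate $|e^x-e^y|\le|x-y|(e^x+e^y)$ plus H\"older plus the compact embedding $X^{k,p}_R\hookrightarrow L^{p'q'}_\theta$ already give that the exponential functional is sequentially weakly continuous on the unit ball, and the "deficit"/no-concentration step you flag as the hard part never arises (it would only matter at $\mu=\mu_0$). Your part (c) matches the paper's construction (a log-variable Moser sequence with the hypothesis $\alpha_i-ip+1>0$ forcing the lower-order norms to vanish) and is fine as sketched.
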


\begin{remark}
The condition $\alpha_i-ip+1>0$ in item $\mathrm{(c)}$ is natural because if $\alpha_i-ip+1<0$ then the Morrey case of Theorem \ref{theo32} implies that $\ell_\mu$ is finite. For $\alpha_i-ip+1=0$ we can apply the Theorem on $u\in X^{i,p}_R$ because it is in the Adams-Trudinger-Moser case.
\end{remark}

\bigskip

For the case of the first derivative, which corresponds to $k=1$ and $\mu_0=\theta+1,$ we are able to prove that $\ell_{\mu}$ is finite even for the critical case $\mu=\mu_0$ under some additional assumption on the boundary. Precisely, setting
\begin{equation*}
\mathcal K_A=\{u\in X^{1,p}_R\colon\|u\|_{X^{1,p}_R}\leq1\mbox{ and }u(R)\leq Au(r),\ \forall r\in(0,R]\},
\end{equation*}
we have 
\begin{theo}\label{theo3}
Let $\theta>-1$ and $X^{1,p}_R(\alpha_0,p-1)$ with $p\geq2$. For each $A>0$ there exists a constant  $C=C(p,\alpha_0,\theta,A)>0$ such that
\begin{equation*}
\sup_{u\in \mathcal K_A}\int_0^Re^{(\theta+1)|u|^{\frac{p}{p-1}}}r^\theta\mathrm dr\leq CR^{\theta+1}.
\end{equation*}
\end{theo}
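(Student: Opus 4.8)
The plan is to reduce the inequality to the classical one–dimensional Moser lemma via the substitution $r=Re^{-t}$, after separating off the contribution of the boundary value $u(R)$. Write $p'=p/(p-1)$ and, for $u\in\mathcal K_A$, set $a:=\|u'\|_{L^p_{p-1}}\le1$. Hölder's inequality with the weight $r^{p-1}$ gives the pointwise estimate
\[
|u(r)-u(R)|\le a\,\Bigl(\log\tfrac Rr\Bigr)^{1/p'},\qquad r\in(0,R],
\]
so $u$ extends continuously up to $R$. The substitution $r=Re^{-t}$ sends any $w$ with $w(R)=0$ and $\|w'\|_{L^p_{p-1}}\le1$ to $h(t)=w(Re^{-t})$ on $[0,\infty)$ with $h(0)=0$, $\int_0^\infty|h'|^p\,\mathrm dt=\|w'\|_{L^p_{p-1}}^p\le1$, and $\int_0^Re^{(\theta+1)|w|^{p'}}r^\theta\,\mathrm dr=R^{\theta+1}\int_0^\infty e^{(\theta+1)(|h(t)|^{p'}-t)}\,\mathrm dt$; rescaling $t\mapsto(\theta+1)t$ and invoking the Moser radial lemma (\cite{MR0301504,MR0960950}) yields the \emph{basic estimate}: if $w(R)=0$ and $\|w'\|_{L^p_{p-1}}\le1$ then
\[
\int_0^Re^{(\theta+1)|w|^{p/(p-1)}}r^\theta\,\mathrm dr\le \frac{C(p)}{\theta+1}\,R^{\theta+1}.
\]
(One may also read this off from the methods used to prove Theorem \ref{theo2}.)

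Next I would establish an a priori bound $|u(R)|\le C_0$ on all of $\mathcal K_A$: if $u(R)>0$ the constraint forces $u\ge u(R)/A>0$, hence $u(R)^p\tfrac{R^{\alpha_0+1}}{\alpha_0+1}\le A^p\|u\|_{L^p_{\alpha_0}}^p\le A^p$; if $u(R)\le0$ the pointwise estimate gives $|u(r)|\ge|u(R)|/2$ on the set $\{r:\log(R/r)\le(|u(R)|/2)^{p'}\}$, and estimating its $r^{\alpha_0}$–measure from below together with $\|u\|_{L^p_{\alpha_0}}\le1$ again bounds $|u(R)|$. I would then split according to the sign of $u(R)$ (note that testing the constraint at $r=R$ forces $u(R)\le0$ if $A<1$ and $u(R)\ge0$ if $A>1$, so at most one case is non-trivial unless $A=1$). \emph{Case $u(R)\le0$.} Write $\int_0^R=\int_{\{u\ge0\}}+\int_{\{u<0\}}$. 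On $\{u<0\}$ the constraint gives $|u|\le|u(R)|/A\le C_0/A$, so that part is $\le e^{(\theta+1)(C_0/A)^{p'}}R^{\theta+1}/(\theta+1)$. On $\{u\ge0\}$ the integrand equals $e^{(\theta+1)(u^+)^{p'}}$; since $u^+(R)=0$ and $\|(u^+)'\|_{L^p_{p-1}}\le a\le1$, the basic estimate applies directly to $u^+$.

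\emph{Case $u(R)>0$.} Here $u>0$ on $(0,R]$ and, as above, $u(R)^p\le A^p\tfrac{\alpha_0+1}{R^{\alpha_0+1}}\bigl(1-a^p\bigr)$. Put $v=u-u(R)$, so that $v(R)=0$ and $\|v'\|_{L^p_{p-1}}=a$, and use the elementary inequality $(x+y)^{p'}\le(1+\varepsilon)x^{p'}+C(p)\varepsilon^{-1/(p-1)}y^{p'}$ (valid for $x,y\ge0$, $0<\varepsilon\le1$) with $x=v$, $y=u(R)$, obtaining
\[
\int_0^Re^{(\theta+1)u^{p/(p-1)}}r^\theta\,\mathrm dr\le e^{(\theta+1)C(p)\varepsilon^{-1/(p-1)}u(R)^{p'}}\int_0^Re^{(\theta+1)\bigl|(1+\varepsilon)^{1/p'}v\bigr|^{p'}}r^\theta\,\mathrm dr.
\]
Choosing $\varepsilon\sim 1-a^p$ so that $(1+\varepsilon)^{p-1}a^p\le1$ (a fixed $\varepsilon$ when $a$ is bounded away from $1$), the function $(1+\varepsilon)^{1/p'}v$ meets the hypotheses of the basic estimate, while the exponential prefactor stays bounded because $\varepsilon^{-1/(p-1)}u(R)^{p'}\le\varepsilon^{-1/(p-1)}\bigl(A^p\tfrac{\alpha_0+1}{R^{\alpha_0+1}}(1-a^p)\bigr)^{1/(p-1)}$ and $1-a^p$ is comparable to $\varepsilon$, so the powers of the deficit cancel. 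Combining the cases and taking the supremum over $\mathcal K_A$ finishes the proof.

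The delicate point is the case $u(R)>0$: a competitor may carry a Moser concentrating profile, so $\|u'\|_{L^p_{p-1}}$ may approach $1$, and the rigid shift by the nonzero boundary value $u(R)$ must be absorbed without degrading the sharp constant $\theta+1$. This is possible exactly because the coupling $u(R)^p\lesssim 1-\|u'\|_{L^p_{p-1}}^p$ forced by the constraint is strong enough to cancel the blow-up $\varepsilon^{-1/(p-1)}$ of the constant in the elementary inequality as $\varepsilon\to0$; choosing the two small parameters with the correct relative size is the crux. In the case $u(R)\le0$ the analogous multiplicative splitting of $|u|^{p/(p-1)}$ would fail (the needed coupling does not hold there), which is why one splits the domain instead and uses that $u^+$ vanishes at $R$.
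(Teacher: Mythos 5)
Your proof is correct, and it reaches the result by a genuinely different mechanism, although both arguments rest on the same core ingredient: the one-dimensional Moser lemma on the half-line (Theorem \ref{theo12jmbo}), accessed through the change of variables $r=Re^{-t}$ up to the harmless rescaling by $\theta+1$. Where you differ is in how the nonzero boundary value is absorbed. The paper transports $u$ to a function $w$ on $[0,\infty)$ and then \emph{extends} $w$ by prepending a linear ramp of length $C_1=(\theta+1)\bigl[(\alpha_0+1)A^p/R^{\alpha_0+1}\bigr]^{1/(p-1)}$ joining $0$ to $w(0)$; the constraint $w(0)\le Aw(t)$ shows the ramp's Dirichlet energy $|w(0)|^p/C_1^{p-1}$ is dominated by the weighted $L^p$ part of the norm, so the extension still has energy at most $1$, and undoing the shift costs only a factor $e^{C_1}$. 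You instead stay on $(0,R)$, bound $u(R)^p\le A^p(\alpha_0+1)R^{-(\alpha_0+1)}\bigl(1-\|u'\|_{L^p_{p-1}}^p\bigr)$ from the same constraint, and absorb the shift via the $\varepsilon$-form of the inequality for $(x+y)^{p'}$, the norm deficit $1-a^p$ cancelling the blow-up $\varepsilon^{-1/(p-1)}$. Both proofs exploit the identical structural fact — the constraint converts $u(R)$ into a quantity controlled by the $L^p_{\alpha_0}$ slack in the full norm — and both yield a constant with the same $R$-dependence. The paper's gluing is shorter and avoids case analysis; your route is more hands-on but, notably, treats the sign of $u(R)$ with more care: the paper's opening reduction ``replace $u$ by $|u|$'' is not justified when $u(R)<0$ (then $|u|$ need not lie in $\mathcal K_A$), whereas your splitting into $\{u\ge0\}$ and $\{u<0\}$, using that $u^+(R)=0$ and that $|u|\le|u(R)|/A$ where $u<0$, covers that case. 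Two routine points to polish: the elementary inequality must be applied with $x=|v|$ since $v=u-u(R)$ can change sign, and applying Theorem \ref{theo12jmbo} to the transplanted function requires the usual density remark since that lemma is stated for $C^1$ functions — an issue the paper's proof shares.
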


\begin{remark}
Note that the condition $u(R)\leq Au(r)$ holds for $A=1$ if we have the Dirichlet boundary condition ($u(R)=0$) or $u$ is nonincreasing.
\end{remark}

\begin{remark}
Theorem 1.1 in \cite{MR3209335} is similar to Theorem \ref{theo3} considering $X^{1,p}_{0,R}$ and the norm of $\|u'\|_{L^p_{\alpha_1}}$ instead of $X^{1,p}_R$ and $\|u\|_{X^{1,p}_R}$.
\end{remark}

A notable consequence of the Theorem \ref{theo2} is the following continuous embedding into weighted Orlicz space
\begin{equation}\label{eq111}
    X^{k,p}_R(\alpha_0,\ldots,\alpha_k)\hookrightarrow L_\Phi(\theta),
\end{equation}
where $\Phi$ is given by
\begin{equation}\label{defphi}
\Phi(t)=\exp(|t|^{p'})
\end{equation}
and $L_\Phi(\theta)$ is the weighted Orlicz space $L_\Phi(\theta):=L_\Phi(0,R;r^\theta\mathrm dr)$ equipped with the Luxemburg norm
\begin{equation*}
\|u\|_{L_\Phi(\theta)}=\inf\left\{\delta>0\colon\int_0^R\Phi(\delta^{-1}u)r^\theta\mathrm dr\leq1\right\}.
\end{equation*}

We also prove that the Sobolev embedding given in \eqref{eq111} is optimal in a natural sense, that is, the space $L_{\Phi}(\theta)$ cannot be replaced by any smaller weighted Orlicz space $L_\Psi(\theta)$. More specifically, if $\Phi$ and $\Psi$ are two $N$-functions, then we say that $\Psi$ increases strictly more rapidly than $\Phi$ and indicate $\Phi\prec\Psi$ if and only if $\Phi(t)/\Psi(\eta t)\overset{t\to\infty}\longrightarrow 0$, for all $\eta>0$. In this sense, we state the following Theorem.
\begin{theo}\label{theo51}
    Suppose \eqref{eqweightcondition}, $\alpha_k-kp+1=0$, and $\alpha_i-ip+1>0$ for all $i=0,\ldots,k-1$. Let $\Psi$ be a N-function such that $\Phi\prec\Psi$ where $\Phi$ is given by \eqref{defphi}. Then $X^{k,p}_{0,R}(\alpha_0,\ldots,\alpha_k)$ cannot be continuously embedded in $L_{\Psi}(\theta)$. In particular, $X^{k,p}_{R}$ cannot be continuously embedded in $L_\Psi(\theta)$ either.
\end{theo}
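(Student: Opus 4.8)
The plan is to show optimality by exhibiting a sequence of test functions that is bounded in $X^{k,p}_{0,R}$ but whose $L_\Psi(\theta)$-norms blow up. The natural candidates are the extremal-type functions associated with the Adams-Trudinger-Moser inequality of Theorem \ref{theo2}, i.e.\ suitably truncated and rescaled logarithms. Concretely, for $\varepsilon\in(0,R)$ I would build functions $u_\varepsilon$ that, near the origin, behave like the $k$-fold antiderivative of $r^{-(\alpha_k/p)}\big(\log\tfrac{R}{r}\big)^{-1/p}$ cut off at scale $\varepsilon$, so that $u_\varepsilon^{(k)}$ is essentially $r^{-(\alpha_k)/p}$ on $(\varepsilon,R)$ and hence $\|u_\varepsilon\|_{X^{k,p}_{0,R}}^p=\int_\varepsilon^R |u_\varepsilon^{(k)}|^p r^{\alpha_k}\,\mathrm dr$ stays comparable to a fixed constant (after normalization, $\|u_\varepsilon\|_{X^{k,p}_{0,R}}=1$), while on $(0,\varepsilon)$ the value $u_\varepsilon$ is roughly the constant $c_k\big(\log\tfrac{R}{\varepsilon}\big)^{1-1/p}$ with $c_k$ chosen so that $\mu_0 |u_\varepsilon|^{p'}\approx (\theta+1)\log\tfrac{R}{\varepsilon}$ on that inner ball. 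The condition $\alpha_i-ip+1>0$ for $i<k$ guarantees, via the Hardy-type control in Proposition \ref{prop35}, that the lower-order terms $u_\varepsilon^{(j)}$ for $j<k$ contribute only lower-order quantities to the norm, so the normalization is genuinely achievable; the weight condition \eqref{eqweightcondition} is exactly what makes those lower-order terms controllable and lets us pass between $\|\cdot\|_{X^{k,p}_R}$ and $\|\cdot\|_{X^{k,p}_{0,R}}$.

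The second step is the estimate on the target side. Suppose, for contradiction, that $X^{k,p}_{0,R}\hookrightarrow L_\Psi(\theta)$ continuously, so there is $C_0$ with $\|u\|_{L_\Psi(\theta)}\le C_0\|u\|_{X^{k,p}_{0,R}}=C_0$ for every $u_\varepsilon$. By definition of the Luxemburg norm this means $\int_0^R \Psi\big(u_\varepsilon/C_0\big)\,r^\theta\,\mathrm dr\le 1$ for all $\varepsilon$. Restricting the integral to the inner ball $(0,\varepsilon)$, where $u_\varepsilon\equiv c_k\big(\log\tfrac{R}{\varepsilon}\big)^{1-1/p}=:M_\varepsilon\to\infty$, gives
\begin{equation*}
\Psi\!\left(\frac{M_\varepsilon}{C_0}\right)\int_0^\varepsilon r^\theta\,\mathrm dr=\Psi\!\left(\frac{M_\varepsilon}{C_0}\right)\frac{\varepsilon^{\theta+1}}{\theta+1}\le 1.
\end{equation*}
Now I compare with $\Phi$: since $\mu_0|u_\varepsilon|^{p'}$ was arranged to equal $(\theta+1)\log\tfrac{R}{\varepsilon}$ on $(0,\varepsilon)$, a matching computation shows $\Phi\big(\lambda M_\varepsilon\big)\,\varepsilon^{\theta+1}$ stays bounded below by a positive constant for an appropriate fixed $\lambda>0$ (this is just the borderline growth being exactly critical, i.e.\ the statement that $\ell_{\mu_0}$ would be the relevant threshold). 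Combining the two, $\Psi(M_\varepsilon/C_0)/\Phi(\lambda M_\varepsilon)\le C_1\,\varepsilon^{-(\theta+1)}\cdot\varepsilon^{\theta+1}$ is bounded, hence $\Phi(\lambda M_\varepsilon)/\Psi(M_\varepsilon/C_0)$ is bounded below; but taking $\eta=\lambda C_0$ in the hypothesis $\Phi\prec\Psi$ forces $\Phi(\lambda M_\varepsilon)/\Psi(\lambda C_0\cdot M_\varepsilon/(\lambda C_0))\to 0$, a contradiction. (One must be careful that $\Psi$ is nondecreasing and convex so that restricting to $(0,\varepsilon)$ and pulling the constant value out is legitimate, and that $M_\varepsilon\to\infty$, both of which are immediate.)

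The last step is the ``in particular'' clause: since $X^{k,p}_{0,R}\subset X^{k,p}_R$ with $\|u\|_{X^{k,p}_R}\le C\|u\|_{X^{k,p}_{0,R}}$ by Proposition \ref{prop21JMBO} (under \eqref{eqweightcondition}), a continuous embedding $X^{k,p}_R\hookrightarrow L_\Psi(\theta)$ would restrict to one on $X^{k,p}_{0,R}$, which we have just excluded. The main obstacle I anticipate is the bookkeeping in Step 1: one has to choose the cutoff profile and the constant $c_k$ so that simultaneously (i) $\|u_\varepsilon^{(k)}\|_{L^p_{\alpha_k}}$ is normalized to $1+o(1)$, (ii) the lower-order seminorms are $o(1)$ — which is where $\alpha_i-ip+1>0$ and the Hardy inequality enter decisively — and (iii) the inner value $M_\varepsilon$ realizes the exact critical exponent $\mu_0$ so that the comparison with $\Phi$ is tight rather than lossy. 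Getting the $k$-fold integration of the logarithmic profile to produce a clean constant (the $[(k-1)!]^{p/(p-1)}$ appearing in $\mu_0$) is the delicate computation; everything after that is soft.
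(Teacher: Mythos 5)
Your proposal is correct and uses the same family of Moser-type test functions as the paper, but the way you extract the contradiction differs in a way worth noting. The paper factors the argument through two soft lemmas: Lemma \ref{lemma55} produces a bounded sequence $(u_n)\subset X^{k,p}_{0,R}$ with $\int_0^R e^{|u_n|^{p'}}r^\theta\,\mathrm dr\to\infty$ simply by taking the supercritical sequence $(v_n)$ from the proof of Proposition \ref{prop54} (any fixed $\mu>\mu_0$) and rescaling $u_n=\mu^{(p-1)/p}v_n$; and Lemma \ref{lemma551} shows that a continuous embedding into $L_\Psi(\theta)$ together with $\Phi\prec\Psi$ forces $\sup_{\|u\|\leq\nu}\int_0^R\Phi(u)r^\theta\,\mathrm dr<\infty$, via the pointwise bound $\Phi(t)\leq\Psi(t/(C_0\nu))$ for $t\geq T_\nu$ combined with the Luxemburg-norm inequality $\int_0^R\Psi(u/(C_0\nu))r^\theta\,\mathrm dr\leq1$. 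This completely sidesteps what you identify as your main obstacle: there is no need to normalize the inner value $M_\varepsilon$ so as to hit the critical constant $\mu_0$ exactly, nor to establish the lower bound $\Phi(\lambda M_\varepsilon)\varepsilon^{\theta+1}\gtrsim1$, because the blow-up of the exponential integral along a bounded sequence is already the content of the supercritical Adams--Moser computation. Your localization to the inner ball does work (any $\lambda$ with $\lambda^{p'}\geq\mu_0$ makes $\Phi(\lambda M_\varepsilon)\varepsilon^{\theta+1}$ bounded below, so exact sharpness is not actually required), and your small slip in the choice of $\eta$ (it should be $\eta=1/(\lambda C_0)$ rather than $\lambda C_0$) is harmless since $\Phi\prec\Psi$ is assumed for every $\eta>0$. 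The ``in particular'' clause is handled the same way in both arguments, via the norm equivalence under \eqref{eqweightcondition}.
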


Let $X^{k,p}_{\mathcal N,\gamma,R}(\alpha_0,\ldots,\alpha_k)$ be the weighted Sobolev space with Navier boundary condition given by
\begin{equation*}
\left\{u\in X_R^{k,p}(\alpha_0,\ldots,\alpha_k)\colon \Delta_{\gamma}^j u(R)=0\ \forall j=0,\ldots,\left\lfloor\frac{k-1}2\right\rfloor\right\},
\end{equation*}
where $\Delta_{\gamma}u=-r^{-\gamma}(r^{\gamma}u')'=-u''-\gamma u'/r$. The $\gamma$-generalized $k$th order gradient of $u$, denoted by $\nabla_{\gamma}^ku$, is defined to be
\begin{equation*}
\nabla_{\gamma}^ku=\left\{
\begin{array}{ll}
    \left(\Delta_{\gamma}^{\frac{k-1}2}u\right)'&\mbox{for }k\mbox{ odd,}\\
    \Delta_{\gamma}^{\frac k2}u&\mbox{for }k\mbox{ even.} 
\end{array}\right.
\end{equation*}
Under natural assumptions on the weights and $\gamma$, we are able to show that the norms $\|\nabla_\gamma^k\cdot\|_{L^p_{\alpha_k}}$ and $\|\cdot\|_{X^{k,p}_{R}}$ are equivalents in $X^{k,p}_{\mathcal N,\gamma,R}$ (see Proposition \ref{propequivnormkgrad}). The following Theorem is a generalization to the classical result from Adams' inequality \cite{MR0960950} for radially symmetric functions.

\begin{theo}\label{theoainbc}
Let $X^{k,p}_{\mathcal N,\gamma,R}(\alpha_0,\ldots,\alpha_k)$ such that $\alpha_k-kp+1=0$. If $\theta>-1$ and $\gamma>k-1$ for $k$ even and $\gamma>k-2$ for $k$ odd, then
\begin{equation*}
\sup_{R\in (0,\infty)}\sup_{\underset{\|\nabla^k_{\gamma}u\|_{L^p_{\alpha_k}}=1}{{u\in X^{k,p}_{\mathcal N,\gamma,R}}}}R^{-(\theta+1)}\int_0^Re^{\mu|u|^{p'}}r^\theta\mathrm dr<\infty\Leftrightarrow \mu\leq\mu_0,
\end{equation*}
where
\begin{equation*}
\mu_0=\mu_0(\theta,\gamma,p,k):=\left\{\begin{array}{ll}
     (\theta+1)\left(2^{k-1}\dfrac{\Gamma(\frac{k+1}2)\Gamma(\frac{\gamma+1}2)}{\Gamma(\frac{\gamma+2-k}2)}\right)^{\frac{p}{p-1}},&k\mbox{ odd},\\
     (\theta+1)\left(2^{k-1}\dfrac{\Gamma(\frac{k}2)\Gamma(\frac{\gamma+1}2)}{\Gamma(\frac{\gamma+1-k}2)}\right)^{\frac{p}{p-1}},&k\mbox{ even}.
\end{array}\right.
\end{equation*}
\end{theo}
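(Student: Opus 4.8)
The plan is to reduce Theorem~\ref{theoainbc} to the non-Navier case already handled by Theorem~\ref{theo2} together with a scaling argument that produces the uniformity in $R$. First I would invoke Proposition~\ref{propequivnormkgrad} to replace the full norm $\|u\|_{X^{k,p}_R}$ by the single norm $\|\nabla^k_\gamma u\|_{L^p_{\alpha_k}}$ on the subspace $X^{k,p}_{\mathcal N,\gamma,R}$; under the stated hypotheses on $\gamma$ (namely $\gamma>k-1$ for $k$ even and $\gamma>k-2$ for $k$ odd) these are equivalent, so the supremum in the statement is comparable to the one over the unit ball of the full norm. The key point is that $\nabla^k_\gamma$ is built by iterating $\Delta_\gamma$, and one can unwind $\nabla^k_\gamma u$ into an explicit iterated-integral (Riemann--Liouville type) representation of $u$ in terms of $\nabla^k_\gamma u$ using the Navier conditions $\Delta^j_\gamma u(R)=0$: writing $v=\nabla^k_\gamma u$, each application of $\Delta_\gamma^{-1}$ with the boundary condition at $R$ gives $w(r)=\int_r^R s^{-\gamma}\int_0^s t^\gamma (\cdots)\,\mathrm dt\,\mathrm ds$, and composing these $\lceil k/2\rceil$ (or so) times yields a kernel whose behavior near $r=0$ is governed by a single power, exactly matching the weight exponent $\alpha_k-kp+1=0$.

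Next I would compute the sharp constant. The Beta/Gamma factors $2^{k-1}\Gamma(\tfrac{k+1}2)\Gamma(\tfrac{\gamma+1}2)/\Gamma(\tfrac{\gamma+2-k}2)$ (and its even-$k$ analogue) arise precisely as the normalizing constant of that iterated kernel: each $\Delta_\gamma^{-1}$ contributes a Beta integral $\int_0^1 t^{\gamma}(1-t^2)^{\cdots}\,\mathrm dt$ type factor, and the product telescopes via the duplication formula for $\Gamma$ into the closed form displayed. Concretely I would show that for $u\in X^{k,p}_{\mathcal N,\gamma,R}$ with $\|\nabla^k_\gamma u\|_{L^p_{\alpha_k}}=1$ one has the pointwise bound
\begin{equation*}
|u(r)|\le \left(2^{k-1}\frac{\Gamma(\tfrac{k+1}2)\Gamma(\tfrac{\gamma+1}2)}{\Gamma(\tfrac{\gamma+2-k}2)}\right)^{-1}\left(\log\frac{R}{r}\right)^{(p-1)/p}\big(1+o(1)\big)
\end{equation*}
(with the even-$k$ constant in the even case), by applying Hölder's inequality to the iterated integral and using the weight condition $\alpha_k=kp-1$ to make the $s$-integral logarithmic. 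Plugging this into $\int_0^R e^{\mu|u|^{p'}}r^\theta\,\mathrm dr$ and changing variables $r=Re^{-\tau}$ turns the integral into $R^{\theta+1}\int_0^\infty e^{-(\theta+1)\tau+\mu c^{-p'}\tau}\,\mathrm d\tau$ up to lower-order corrections, which converges iff $\mu\le (\theta+1)c^{p'}$ with $c$ the kernel constant — this is exactly $\mu_0$ — and the $R^{\theta+1}$ comes out as the claimed factor, giving the $R$-uniform bound. For the reverse implication ($\mu>\mu_0\Rightarrow$ supremum infinite) I would adapt the Moser-sequence construction: take functions $u_n$ that saturate the pointwise inequality (built from truncated logarithms pushed through the kernel, analogous to the extremal sequence in the proof of Theorem~\ref{theo2}(c)), normalize $\|\nabla^k_\gamma u_n\|_{L^p_{\alpha_k}}=1$, and check the integral blows up; here the hypotheses on $\gamma$ and $\theta>-1$ ensure the lower-order derivatives $\Delta^j_\gamma u_n$ remain controlled so the construction stays inside the space.

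The main obstacle I anticipate is making the iterated-kernel representation rigorous and extracting the \emph{sharp} constant with the correct Gamma factors, rather than just some admissible constant. Two subtleties: (i) the inversion of $\Delta_\gamma$ must correctly incorporate the Navier conditions only at the ``even'' levels $j=0,\ldots,\lfloor(k-1)/2\rfloor$, while the intermediate (odd-order) antiderivatives are pinned by regularity/integrability at $r=0$ rather than by a boundary value, so the kernel is a genuine mixed forward/backward iterated integral and one must verify it lands in $X^{k,p}_R$ and satisfies the boundary conditions; (ii) the parity split ($k$ even vs.\ odd) changes which of $\Gamma(\tfrac{k}2)$ or $\Gamma(\tfrac{k+1}2)$ appears, and getting the telescoping of Beta integrals to collapse to the stated form requires careful bookkeeping with the Legendre duplication formula. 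Once the pointwise estimate with the exact constant is in hand, the convergence/divergence dichotomy and the $R$-scaling are routine, paralleling the proof of Theorem~\ref{theo2}. I would also remark that the case $k=1$ recovers $\mu_0=(\theta+1)$ (since $\Gamma(1)\Gamma(\tfrac{\gamma+1}2)/\Gamma(\tfrac{\gamma+1}2)=1$ and $2^0=1$), consistent with Theorems~\ref{theo2} and~\ref{theo3}, which serves as a useful sanity check on the constant.
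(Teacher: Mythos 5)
Your outline of the sharpness direction (Moser sequence built from truncated logarithms pushed through $\nabla^k_\gamma$, with the leading coefficient producing the Gamma factors) matches the paper's construction, and your reduction of the constant via telescoping Gamma identities is in the right spirit. However, there is a genuine gap in the finiteness direction: a pointwise estimate $|u(r)|\le c^{-1}\left(\log\frac{R}{r}\right)^{(p-1)/p}(1+o(1))$ with the sharp kernel constant $c$, followed by substitution into the exponential integral, \emph{cannot} prove the critical case $\mu=\mu_0$. At $\mu=\mu_0=(\theta+1)c^{p'}$ your change of variables $r=Re^{-\tau}$ yields $\int_0^\infty e^{-(\theta+1)\tau+\mu c^{-p'}\tau}\,\mathrm d\tau=\int_0^\infty 1\,\mathrm d\tau=\infty$, so the argument only delivers $\mu<\mu_0$ — which is essentially the content of Theorem \ref{theo2}(b) and falls short of the theorem's main point, namely the inclusion of the endpoint. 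This is the same phenomenon as in the classical Moser and Adams inequalities: the radial logarithmic bound is sharp in constant but lossy in the sense that it discards the $L^p$-normalization of the highest derivative, and the endpoint requires a genuinely finer one-dimensional lemma.

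The paper's route is structurally different precisely to capture the endpoint. It first proves the case $k=2$ (Theorem \ref{theoadamsk2}) by replacing $u$ with a nonnegative comparison function $v\ge|u|$ solving $-\Delta_\gamma v=|\Delta_\gamma u|$ with $v(R)=0$, changing variables $w(t)=v(Rt^{1/(1-\gamma)})$, and invoking Tarsi's Adams-type lemma (Lemma \ref{lemmatarsi}) for iterated integrals $F(t)=\int_1^t\int_z^\infty f$ under the constraint $\int_1^\infty f^ps^{2p-1}\,\mathrm ds\le1$ — this is the tool that handles $\mu=\mu_0$. For general $k$ it then reduces $\|\Delta_\gamma u\|_{L^p_{2p-1}}$ to $\|\nabla^k_\gamma u\|_{L^p_{\alpha_k}}$ by iterating a sharp weighted Hardy inequality for $\Delta_\gamma$ (Lemmas \ref{lemmaprop31}, \ref{lemmahardycons}, \ref{lemmaonetoj}), and the product of the iterated Hardy constants is exactly where the Gamma quotient comes from, rather than from a pointwise kernel estimate. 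If you want to salvage your approach, you would need to replace the pointwise step by an $O$'Neil/Adams-type rearrangement argument or import a lemma of Tarsi's type; without such an ingredient the critical case is out of reach.
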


\begin{remark}
In the particular case of $\gamma=\alpha_k$ and under stronger Dirichlet boundary condition, a similar result was proved in \cite[Theorem 1.3]{MR4112674}. However, the proof given there only shows that the above supremum is infinite for $\mu_0=\mu_0(\theta, \alpha_k, k)$ and is finite for
$$
\mu\leq (\theta+1)\left(2^{k-1}(p-1)\dfrac{\Gamma(\frac{k}2)\Gamma(\frac{\alpha_k-1}2)}{\Gamma(\frac{\alpha_k+1-k}2)}\right)^{p'}$$
which is strictly smaller than 
$\mu_0=\mu_0(\theta, \alpha_k,k)$ except in the case of $k=1$.

In fact, in \cite{MR4112674} (see page 535),   it was claimed that 
$$
\mu_0(\alpha,\theta,k)=C_{k,\alpha}^{-p'}\mu_0(\alpha-2(l-1)p,\theta,2).
$$
However, this identity does not hold and by a careful calculation 
the following identity holds:
$$
\mu_0(\alpha,\theta,k)=C_{k,\alpha}^{-p'}\mu_0(\alpha,\theta,2).
$$
 
\end{remark}

\subsection{Application}

As an application of the embedding given by Theorem \ref{theo32}, we study the fourth order problem
\begin{equation}\label{problems}
\left\{\begin{array}{ll}
     \Delta_\alpha^2u=r^{\theta-\alpha}g(r)|u|^{p-2}u&\mbox{in }(0,R),  \\
     u=\Delta_\alpha u=0&\mbox{in }R,\\
     u'=(\Delta_\alpha u)'=0&\mbox{in }0,
\end{array}\right.
\end{equation} 
where $g\colon[0,R]\to\mathbb R$ is continuous positive, $p$ is subcritical and $\Delta_\alpha u=-r^{-\alpha}(r^\alpha u')'$ is the $\alpha$-generalized radial Laplace operator. When $\alpha>3$, the function space associated with the problem \eqref{problems} fits in the Sobolev case of the Theorem \ref{theo32}. More specifically, we prove the existence and regularity results of weak solutions for \eqref{problems}.

\begin{theo}\label{theo4}
Suppose $2\leq p<2(\theta+1)/(\alpha-3)$ and $\theta>\alpha-1$. Then there exists $u_0\in C^4((0,R])\cap C^3([0,R])$ a nontrivial classical solution of \eqref{problems} with $\Delta_\alpha u_0\in C^2((0,R])\cap C^1([0,R])$. Moreover, $u_0''(0)=-\Delta_\alpha u_0(0)/(\alpha+1)$ and $u_0'''(0)=0$.
\end{theo}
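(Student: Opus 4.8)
\emph{Overview and variational framework.} The strategy is to realize $u_0$ as a nontrivial critical point of the energy functional naturally associated with \eqref{problems} and then to upgrade its regularity by inverting the operator $\Delta_\alpha$ twice. Since $\alpha>3$ we have $\alpha-2\cdot 2+1=\alpha-3>0$, so the weighted Sobolev space
\[
E:=X^{2,2}_{\mathcal N,\alpha,R}(\alpha-4,\alpha-2,\alpha),
\]
carrying the second-order Navier condition $u(R)=0$, falls into the Sobolev regime of Theorem~\ref{theo32} with critical exponent $p^*=2(\theta+1)/(\alpha-3)$. By Proposition~\ref{propequivnormkgrad} the quantity $\|u\|_E:=\|\Delta_\alpha u\|_{L^2_\alpha}$ is a norm on $E$ equivalent to $\|\cdot\|_{X^{2,2}_R}$, so $\big(E,\langle u,v\rangle_E:=\int_0^R\Delta_\alpha u\,\Delta_\alpha v\,r^\alpha\,\mathrm dr\big)$ is a Hilbert space, and since $2\le p<p^*$ Theorem~\ref{theo32} provides the compact embedding $E\hookrightarrow L^p_\theta(0,R)$. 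With $g\in C([0,R])$, $g>0$, the functional
\[
J(u)=\frac12\|u\|_E^2-\frac1p\int_0^R g(r)\,|u|^p\,r^\theta\,\mathrm dr
\]
is $C^1$ on $E$, its nonlinear part and differential are weakly continuous, and an integration by parts shows that every critical point of $J$ is a weak solution of \eqref{problems}, with $\Delta_\alpha u(R)=0$ recovered as the natural boundary condition (by letting $v'(R)$ vary) and $u'(0)=(\Delta_\alpha u)'(0)=0$ to be obtained from the regularity analysis.

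\emph{Existence of a nontrivial solution.} For $2<p<p^*$ the functional $J$ has the Mountain Pass geometry: near the origin $\frac1p\int_0^R g|u|^p r^\theta\,\mathrm dr\le C\|u\|_E^p=o(\|u\|_E^2)$, so $u=0$ is a strict local minimum with $J(0)=0$, while $J(tu)\to-\infty$ as $t\to+\infty$ for every fixed $u\ne0$ because $g>0$. Any Palais--Smale sequence is bounded, since $J(u_n)-\frac1p J'(u_n)u_n=\big(\frac12-\frac1p\big)\|u_n\|_E^2$, and the compactness of $E\hookrightarrow L^p_\theta$ upgrades weak convergence to strong convergence in $E$; hence $J$ satisfies the Palais--Smale condition and the Mountain Pass Theorem produces $u_0\in E$ with $J(u_0)\ge c>0$, in particular $u_0\ne0$. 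Equivalently one may minimize $\|u\|_E^2$ on $\{u\in E:\int_0^R g|u|^p r^\theta\,\mathrm dr=1\}$: a minimizer $u_*$ exists by the same compactness, it solves $\Delta_\alpha^2 u_*=S\,r^{\theta-\alpha}g\,|u_*|^{p-2}u_*$ with $S>0$ the minimum, and $u_0:=S^{1/(p-2)}u_*$ solves \eqref{problems}; the endpoint $p=2$ is the linear case and is treated via the associated Rayleigh quotient.

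\emph{Regularity and endpoint values.} Put $f(r):=r^{\theta-\alpha}g(r)\,|u_0|^{p-2}u_0$ and $w:=\Delta_\alpha u_0$; the weak equation is equivalent to the coupled second-order problems $\Delta_\alpha w=f$ and $\Delta_\alpha u_0=w$ with $w(R)=u_0(R)=0$, which one inverts through the explicit formula $w'(r)=-r^{-\alpha}\int_0^r s^\alpha f(s)\,\mathrm ds$ (the integration constant vanishes because $w\in L^2_\alpha$ and $\alpha>3$) and its analogue expressing $u_0$ in terms of $w$. As $g$ is continuous and $p\ge 2$, $f\in C((0,R])$; combining the radial pointwise estimate of Propositions~\ref{prop32}--\ref{prop33} with the strict subcriticality $p<p^*$ gives $s^\alpha f\in L^1(0,R)$, and substituting back into these formulas lowers the admissible singular exponent of $u_0$ near $0$ at each step. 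Because $p\ge 2$ together with $p<p^*$ forces these exponents to decrease below $0$ after finitely many steps, $u_0$ is bounded near $0$; then the hypothesis $\theta>\alpha-1$ yields $\int_0^r s^\alpha f(s)\,\mathrm ds=O(r^{\theta+1})$, whence $w'(r)\to0$ and, after one more round, $u_0'(r)\to0$ as $r\to0^+$. Continuing the bootstrap gives $u_0\in C^4((0,R])\cap C^3([0,R])$ and $\Delta_\alpha u_0\in C^2((0,R])\cap C^1([0,R])$ with $u_0'(0)=(\Delta_\alpha u_0)'(0)=0$, so all conditions in \eqref{problems} hold classically. Finally, from $u_0''+\alpha u_0'/r=-w$ and $u_0'(0)=0$ one obtains $\Delta_\alpha u_0(0)=-(\alpha+1)u_0''(0)$ on letting $r\to0$, and matching the $O(r)$ terms of the Taylor expansion at $0$ gives $(1+\tfrac{\alpha}{2})u_0'''(0)=-w'(0)=0$, i.e.\ $u_0'''(0)=0$.

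\emph{Main difficulty.} The delicate point is the behavior at $r=0$: the a priori bound coming from the Sobolev radial lemma is too crude when $\alpha$ is large, so the regularity has to be reached by the iterative improvement above, and it is precisely there that the full strength of the hypotheses $p\ge 2$, $p<p^*$ and $\theta>\alpha-1$ is used — the first two to make the iteration terminate, and the last to force $C^3$-regularity up to the origin together with the stated values of $u_0'(0)$, $u_0'''(0)$ and $(\Delta_\alpha u_0)'(0)$.
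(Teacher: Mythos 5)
Your proposal is correct and follows essentially the same route as the paper: existence via minimization of the Rayleigh-type quotient on $X^{2,2}_R(\alpha-4,\alpha-2,\alpha)\cap X^{1,2}_{0,R}$ using the equivalent norm $\|\Delta_\alpha\cdot\|_{L^2_\alpha}$ (Proposition \ref{propequivnormL}) and the compact embedding of Theorem \ref{theo32}, followed by regularity through the explicit inversion $(\Delta_\alpha u_0)'(r)=-r^{-\alpha}\int_0^r f(s,u_0)s^\theta\,\mathrm ds$, a subcriticality-driven bootstrap (the paper runs it in weighted $L^s$ via Lemma \ref{lemmafjs} and Lemma \ref{lemmabootstrap1} rather than on pointwise singular exponents, but the mechanism is the same), and the L'H\^opital-type limits at the origin for $u_0'(0)$, $u_0''(0)$ and $u_0'''(0)$. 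The only caveats are shared with the paper itself (the rescaling absorbing the Lagrange multiplier degenerates at $p=2$) or are presentational (the ``matching Taylor coefficients'' step for $u_0'''(0)=0$ should be replaced by the $\varepsilon$--$\delta$ argument of Proposition \ref{propclsol1}).
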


We also study an application of the attainability given by Theorem \ref{theo2} in the problem
\begin{equation}\label{problemtm}
\left\{\begin{array}{ll}
\Delta_3 ^2u=r^{\theta-3}f(r,u)&\mbox{in }(0,R),\\
u=\Delta_3 u=0&\mbox{in }R,\\
u'=(\Delta_3 u)'=0&\mbox{in }0,
\end{array}\right.
\end{equation}
where $f$ is subcritical (that is $f$ satisfies \eqref{eqfTM} with $\mu<\theta+1$). More specifically, we prove the existence of weak solutions and develop the regularity theory for those problems.

\begin{theo}\label{theo5}
Suppose $\theta>2$ and $f$ satisfying \eqref{eqfTM} with $f(r,\cdot)$ an odd function and $f(r,t)\geq0$ for all $t\geq0$. If $\mu<\theta+1$, then there exists $u_0\in C^4((0,R])\cap C^3([0,R])$ a nontrivial classical solution of
\begin{equation*}
\left\{\begin{array}{ll}
\Delta_3 ^2u=r^{\theta-3}\lambda f(r,u)&\mbox{in }(0,R),\\
u=\Delta_3 u=0&\mbox{in }R,\\
u'=(\Delta_3 u)'=0&\mbox{in }0,
\end{array}\right.
\end{equation*}
with $\lambda=\int_0^Rf(r,u_0)u_0r^\theta\mathrm dr$. Moreover, $\Delta_3 u_0\in C^2((0,R])\cap C^1([0,R])$, $u_0''(0)=-\Delta_3u_0(0)/4$ and $u_0'''(0)=0$.
\end{theo}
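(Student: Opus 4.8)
The plan is to apply a variational method to the functional associated with the problem, using the attainability part of Theorem \ref{theo2} as the crucial compactness input, followed by an elliptic regularity bootstrap to upgrade a weak solution to a classical one. First I would set the work space to be the weighted Sobolev space associated to the bilaplacian operator $\Delta_3^2$ with the prescribed boundary conditions, namely (a Navier-type subspace of) $X^{2,3}_R$ with weights adapted to $\Delta_3$; since $\alpha_k-kp+1 = 0$ here corresponds precisely to the Adams-Trudinger-Moser borderline $k=2$, $p=3$, the functional $u\mapsto \frac{1}{p}\|u\|_{X}^p - \int_0^R F(r,u)\,r^\theta\,\mathrm dr$ (with $F$ the primitive of $f$ in the second variable) is well-defined and $C^1$ on the unit ball by Theorem \ref{theo2}(a), because the subcritical growth $\mu < \theta+1 = \mu_0$ places us strictly below the critical exponential threshold. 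Using the oddness of $f(r,\cdot)$ and $f(r,t)\ge 0$ for $t\ge 0$, I would instead work with the constrained maximization problem: maximize $\int_0^R F(r,u)\,r^\theta\,\mathrm dr$ over $\|u\|_X \le 1$, which by the strict subcriticality and the compactness afforded by Theorem \ref{theo2}(b) (the supremum $\ell_\mu$ is attained, and more relevantly the relevant embedding into the exponential Orlicz class is compact in the subcritical regime) admits a maximizer $u_0$ with $\|u_0\|_X = 1$; we may take $u_0 \ge 0$ by replacing it with $|u_0|$ since $F(r,\cdot)$ is even and the norm is unchanged.

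Next I would extract the Euler–Lagrange equation: at the maximizer there is a Lagrange multiplier, and a short computation identifies it, giving that $u_0$ weakly solves $\Delta_3^2 u_0 = r^{\theta-3}\lambda f(r,u_0)$ with $\lambda = \int_0^R f(r,u_0)u_0\,r^\theta\,\mathrm dr$; positivity of $f$ on the positive axis together with $u_0\not\equiv 0$ forces $\lambda>0$, so the solution is nontrivial. Then comes the regularity bootstrap, which is the same mechanism as in Theorem \ref{theo4}: starting from $u_0$ in the energy space, Theorem \ref{theo2}(a) guarantees $r^{\theta-3}f(r,u_0)\in L^1_{\text{loc}}$ with exponential integrability, and one integrates the ODE $\Delta_3^2 u_0 = $ (known right-hand side) successively. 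Writing $v = \Delta_3 u_0$, the equation decouples into two second-order problems $\Delta_3 v = r^{\theta-3}\lambda f(r,u_0)$ and $\Delta_3 u_0 = v$; integrating $-(r^3 v')' = \lambda r^\theta f(r,u_0)$ from $0$ using the boundary condition $v'(0)=0$ and then again using $v(R)=0$ yields $v\in C^2((0,R])\cap C^1([0,R])$ once the right-hand side is continuous, and iterating the continuity of $u_0$ (which the embedding of Theorem \ref{theo32}'s Morrey/Sobolev part or direct integration provides on $(0,R]$, and continuity up to $0$ from the weighted estimates) closes the loop to give $u_0\in C^4((0,R])\cap C^3([0,R])$. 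The boundary values $u_0''(0) = -\Delta_3 u_0(0)/4$ and $u_0'''(0)=0$ follow by expanding the ODE $-u_0'' - 3u_0'/r = v$ near $r=0$: multiplying by $r$, differentiating, and using $u_0'(0)=0$ forces the stated Taylor coefficients (the factor $4 = \gamma+1$ with $\gamma = 3$).

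I expect the main obstacle to be twofold. First, verifying that the maximizer $u_0$ is genuinely nontrivial and that the constraint $\|u_0\|_X = 1$ is active: one must rule out $u_0\equiv 0$, which uses $f(r,t)\ge 0$ and the sign condition to show that along a suitable test direction the functional $\int F\,r^\theta\,\mathrm dr$ is strictly positive, hence the maximum is positive. Second, and more delicate, is the passage from the weak formulation on the punctured interval to a classical solution including the behavior at $r=0$, since the weight $r^{\theta-3}$ and the operator $\Delta_3$ are singular there; this requires careful use of the $AC^{k-1}_{\mathrm{loc}}$ regularity from Proposition \ref{prop31}, the Hardy-type inequality (Proposition \ref{prop35}) to control lower-order terms near the origin, and the explicit integration of the Euler equation with the Neumann-type conditions $u_0'(0) = (\Delta_3 u_0)'(0) = 0$ to pin down the one-sided derivatives. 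The subcritical assumption $\mu<\theta+1$ is exactly what prevents concentration and makes the direct method work without a Lions-type concentration-compactness analysis.
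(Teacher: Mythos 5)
Your overall strategy coincides with the paper's: maximize $\int_0^R F(r,u)r^\theta\,\mathrm dr$ on the unit sphere of the norm $\|\Delta_3\cdot\|_{L^2_3}$ (Proposition \ref{prop45}), extract the Euler--Lagrange equation with $\lambda=\int_0^Rf(r,u_0)u_0r^\theta\,\mathrm dr$ by Lagrange multipliers, and then bootstrap regularity by decoupling into the two second-order problems $\Delta_3 v=r^{\theta-3}\lambda f(r,u_0)$, $\Delta_3 u_0=v$, integrating explicitly, and reading off $u_0''(0)=-\Delta_3u_0(0)/(\gamma+1)$ with $\gamma=3$ (Propositions \ref{propclassicsolution2} and \ref{propclsol2}). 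However, two steps as written would fail. First, the function space is mis-identified: you take $X^{2,3}_R$, i.e.\ $k=2$, $p=3$, but the subscript $3$ in $\Delta_3$ is the weight $\gamma$ in $\Delta_\gamma u=-r^{-\gamma}(r^\gamma u')'$, not the integrability exponent. The correct space is $X_{ATM}=X^{2,2}_R(-1,1,3)\cap X^{1,2}_{0,R}(-1,1)$, so $p=2$ and $\alpha_2-kp+1=3-4+1=0$. The exponent $p=2$ is forced twice over: the constrained maximization must produce the \emph{linear} operator $\Delta_3^2$, which is the Euler--Lagrange operator of the quadratic form $\int_0^R|\Delta_3u|^2r^3\,\mathrm dr$ (with $p=3$ you would get a quasilinear operator of the type $\Delta_3(|\Delta_3u|\Delta_3u)$); and the critical exponential exponent $p/(p-1)=2$ must match the growth $e^{\mu(m_\Delta t)^2}$ in \eqref{eqfTM}, whereas for $p=3$ the Adams--Trudinger--Moser threshold governs $e^{\mu|u|^{3/2}}$ and the hypothesis $\mu<\theta+1$ would give no uniform control of $\int_0^Rf(r,u_n)u_n\,r^\theta\mathrm dr$ along a maximizing sequence.

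Second, the reduction ``we may take $u_0\geq0$ by replacing it with $|u_0|$'' is not available for this fourth-order problem: $|u_0|$ need not lie in $X_{ATM}$, and even when it does one has no comparison between $\|\Delta_3|u_0|\|_{L^2_3}$ and $\|\Delta_3u_0\|_{L^2_3}$, so the constraint set is not preserved (this is the standard obstruction to truncation arguments for higher-order operators). Fortunately the theorem does not assert positivity of $u_0$ and none is needed: the paper only uses that $F(r,\cdot)$ is even, $F\geq0$, and $F(r,\gamma t)\geq F(r,t)$ for $\gamma\geq1$ (consequences of the oddness and sign hypotheses on $f$) to conclude that the supremum is positive, hence $u_0\not\equiv0$, and that the constraint $\|u_0\|_{\Delta_3}=1$ may be assumed active after rescaling. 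With these two corrections your outline matches the paper's proof, including the compactness mechanism (compact embedding into $L^q_\theta$ from Theorem \ref{theo32} combined with the uniform subcritical bound of Proposition \ref{prop51} and a mean-value/H\"older estimate) and the final regularity step via Lemma \ref{lemmafjs}, Proposition \ref{prop50} and the Morrey case of Theorem \ref{theo32}.
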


\subsection{Notation} We use the following notation. 

\begin{itemize}
    \item $\lfloor x\rfloor=\max\{n\in\mathbb Z\colon n\leq x\}$ for each $x\in\mathbb R$.
    \item $p'=\frac{p}{p-1}$ for each $1<p<\infty$.
\end{itemize}

\subsection{Organization of the paper} 
In Sect.~\ref{RL}, we obtain some Radial Lemmas for $X^{k,p}_R$ and prove Theorem \ref{theo32}. 
In Sect.~\ref{MTH}, we work in the Adams-Trudinger-Moser case ($\alpha_k-kp+1=0$) finding some conditions such that the supremum \eqref{e33} is finite or infinite. More specifically, we prove Theorems \ref{theo2}, \ref{theo3} and \ref{theo51}. Sect. \ref{AIN} will give Adams' inequality for weighted Sobolev spaces (Theorem \ref{theoainbc}). Sect.~\ref{APDE} will provide a study on the PDE's \eqref{problems} and \eqref{problemtm} proving Theorems \ref{theo4} and \ref{theo5}. 
Moreover, this Section develops regularity theory on the PDE: $\Delta_\alpha^2u=r^{\theta-\alpha}f(r,u)$.

\section{Radial Lemmas and proof of Theorem \ref{theo32}}\label{RL}

We begin by recalling some definitions following the notation of \cite{MR1069756}. We say that a function $u\in AC_{\mathrm{loc}}(0,R)$ belongs to class $AC_L(0,R)$ if $\lim_{r\to0}u(r)=0$. Analogously, $u\in AC_{\mathrm{loc}}(0,R)$ belongs to $AC_R(0,R)$ if $\lim_{r\to R}u(r)=0$. In \cite[Example 6.9]{MR1069756} was proved the following Hardy-type inequality:
\begin{prop}\label{prop21JMBO}
Given $p,q\in[1,\infty)$ and $\theta,\alpha\in\mathbb R$ the inequality
\begin{equation*}
\left(\int_0^R|u|^qr^\theta\mathrm dr\right)^{\frac{1}{q}}\leq C\left(\int_0^R|u'|^pr^\alpha\mathrm dr\right)^{\frac{1}{p}}
\end{equation*}
holds for some constant $C=C(p,q,\theta,\alpha,R)>0$ under the following conditions:
\begin{flushleft}
    $\mathrm{(i)}$ for $u\in AC_L(0,R)$ if and only if one of the following two conditions are fulfilled:\\
        $\mathrm{(1)}$ $1\leq p\leq q<\infty$, $q\geq\frac{(\theta+1)p}{\alpha-p+1}$, and $\alpha-p+1<0$.\\
        $\mathrm{(2)}$ $1\leq q<p<\infty$, $q>\frac{(\theta+1)p}{\alpha-p+1}$, and $\alpha-p+1<0$;\\
    $\mathrm{(ii)}$ for $u\in AC_R(0,R)$ if and only if one of the following two conditions is fulfilled:\\
        $\mathrm{(1)}$ If $1\leq p\leq q<\infty$, then
        \begin{equation*}
        1\leq p\leq q\leq\dfrac{(\theta+1)p}{\alpha-p+1}\mbox{ and }\alpha-p+1>0,
        \end{equation*}
        or
        \begin{equation*}
        \theta>-1\mbox{ and }\alpha-p+1\leq0.
        \end{equation*}
        $\mathrm{(2)}$ If $1\leq q<p$, then
        \begin{equation*}
        1\leq q<p<\infty,\mbox{ with }q<\dfrac{(\theta+1)p}{\alpha-p+1}\mbox{ and }\alpha-p+1>0,
        \end{equation*}
        or
        \begin{equation*}
        \theta>-1\mbox{ and }\alpha-p+1\leq0.
        \end{equation*}
\end{flushleft}
\end{prop}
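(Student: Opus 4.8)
\emph{Plan.} The idea is to strip Proposition~\ref{prop21JMBO} down to the classical one-dimensional weighted Hardy inequality for the averaging operator and its conjugate, and then to feed the power weights $r^\theta,r^\alpha$ into the Muckenhoupt-type criterion, reading off the four stated cases as elementary exponent inequalities.

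\emph{Step 1: reduction to a Hardy operator.} If $u\in AC_L(0,R)$ then $u(r)=\int_0^r u'(s)\,\mathrm ds$, so $|u(r)|\le (Hf)(r):=\int_0^r f(s)\,\mathrm ds$ with $f=|u'|\ge 0$; conversely, for $0\le f\in L^p_\alpha$ the function $Hf$ lies in $AC_L(0,R)$ with $(Hf)'=f$ a.e., so the claimed inequality is \emph{equivalent} to the boundedness
\[
H\colon L^p\big((0,R);r^\alpha\,\mathrm dr\big)\longrightarrow L^q\big((0,R);r^\theta\,\mathrm dr\big),
\]
and similarly, for $u\in AC_R(0,R)$, writing $u(r)=-\int_r^R u'(s)\,\mathrm ds$ reduces the problem to the boundedness of the conjugate operator $(H^\ast f)(r):=\int_r^R f(s)\,\mathrm ds$ between the same weighted Lebesgue spaces. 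Both sufficiency and necessity are captured at once, since the extremal case $|u|=Hf$ (resp. $|u|=H^\ast f$) is actually attained.

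\emph{Step 2: the Muckenhoupt characterization.} Invoke (or re-derive, by a dyadic decomposition of $(0,R)$ plus H\"older for sufficiency, and by testing against $v^{1-p'}\chi_{(\varepsilon,t)}$ for necessity) the classical criterion: for $1\le p\le q<\infty$, $H$ is bounded from $L^p(v\,\mathrm dr)$ to $L^q(w\,\mathrm dr)$ iff
\[
\mathcal A:=\sup_{0<t<R}\Big(\int_t^R w\,\mathrm dr\Big)^{1/q}\Big(\int_0^t v^{1-p'}\,\mathrm dr\Big)^{1/p'}<\infty
\]
(with $\big(\int_0^t v^{1-p'}\big)^{1/p'}$ replaced by $\operatorname*{ess\,sup}_{(0,t)}v^{-1}$ when $p=1$), while for $1\le q<p<\infty$ boundedness holds iff the Maz'ya--Rozin--Bliss integral
\[
\int_0^R\Big(\int_t^R w\Big)^{r/q}\Big(\int_0^t v^{1-p'}\Big)^{r/q'}v(t)^{1-p'}\,\mathrm dt<\infty,\qquad \tfrac1r=\tfrac1q-\tfrac1p,
\]
is finite; for $H^\ast$ one replaces $\int_t^R w$ by $\int_0^t w$ and $\int_0^t v^{1-p'}$ by $\int_t^R v^{1-p'}$ throughout.

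\emph{Step 3: power weights and necessity.} Put $w(r)=r^\theta$, $v(r)=r^\alpha$, so $v^{1-p'}=r^{-\alpha/(p-1)}$. For $H$: $\int_0^t r^{-\alpha/(p-1)}\,\mathrm dr$ is finite iff $\alpha-p+1<0$, which forces that hypothesis; then $\int_0^t r^{-\alpha/(p-1)}\,\mathrm dr\asymp t^{(p-1-\alpha)/(p-1)}$, while $\int_t^R r^\theta\,\mathrm dr$ is bounded as $t\to 0^+$ when $\theta>-1$ and $\asymp t^{\theta+1}$ (resp. $|\log t|$) when $\theta<-1$ (resp. $\theta=-1$); hence $\mathcal A<\infty$ near $t=R$ is automatic and near $t=0$ it amounts to the exponent inequality $\tfrac{\theta+1}{q}+\tfrac{p-1-\alpha}{p}\ge 0$, i.e. $q\ge (\theta+1)p/(\alpha-p+1)$ — vacuous for $\theta>-1$, a genuine restriction for $\theta\le -1$ — which is exactly condition (i)(1). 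For $H^\ast$: now $\int_t^R r^{-\alpha/(p-1)}\,\mathrm dr$ converges for every $t>0$ and $\int_0^t r^\theta\,\mathrm dr<\infty$ iff $\theta>-1$; if $\alpha-p+1\le 0$ the relevant supremum is automatically finite, whereas if $\alpha-p+1>0$ the blow-up $\int_t^R r^{-\alpha/(p-1)}\,\mathrm dr\asymp t^{(p-1-\alpha)/(p-1)}$ near $0$ forces $q\le (\theta+1)p/(\alpha-p+1)$, giving condition (ii)(1). The ranges $1\le q<p$ follow by the same substitutions into the Bliss-type integral of Step~2, whose convergence at $t=0$ turns the non-strict inequalities into the strict ones (i)(2), (ii)(2). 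Necessity is obtained by inserting the near-extremal competitors $u_{\varepsilon,t}=H\big(r^{-\alpha/(p-1)}\chi_{(\varepsilon,t)}\big)$ (and their conjugate analogues) into the inequality and letting $\varepsilon\to 0$ and $t\to 0$ or $t\to R$.

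\emph{Main obstacle.} The structure is standard; the care lies entirely in the case analysis — in particular the borderline logarithmic regimes $\theta=-1$ and $\alpha/(p-1)=1$, where $|\log t|$ factors appear and one must check they neither spoil the bound (sufficiency side) nor, conversely, leave room for it (necessity side) — together with the separate interpretation of the criterion when $p=1$ (so $p'=\infty$ and $v^{1-p'}$ must be read as an essential supremum), and, for $q<p$, correctly identifying which endpoint of $(0,R)$ governs the Bliss integral.
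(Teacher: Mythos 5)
The paper does not prove Proposition~\ref{prop21JMBO} at all: it is quoted verbatim from Opic--Kufner \cite[Example 6.9]{MR1069756}, and your argument is essentially a reconstruction of the standard proof given there --- reduce to the Hardy operator $H$ and its conjugate $H^{\ast}$, apply the Muckenhoupt condition for $p\leq q$ and the Maz'ya--Rozin condition for $q<p$, and evaluate on the power weights $r^{\theta}$, $r^{\alpha}$. Your exponent arithmetic is correct (in particular, the Bliss-type integral does produce the strict inequality $q>(\theta+1)p/(\alpha-p+1)$ in case (i)(2), and the Muckenhoupt supremum reproduces (i)(1) and (ii)(1) exactly), so the proposal is sound; the only items you flag but do not carry out --- the $p=1$ reading of $v^{1-p'}$ as an essential supremum and the logarithmic borderline cases $\theta=-1$, $\alpha=p-1$ --- are routine and do not affect the conclusion.
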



As in the classical Sobolev spaces, we have the continuous injection  $W^{k,p}(I)\hookrightarrow AC^{k-1}(I)$ (for $I$ a bounded interval), we also have the following similar result  for the weighted Sobolev space $X^{k,p}_R$:

\begin{prop}\label{prop31}
Let $u\in X_R^{k,p}(\alpha_0,\ldots,\alpha_k)$ with $1\leq p < \infty$. Then there exists  $U\in AC_{\mathrm{loc}}^{k-1}((0,R])$ such that 
\begin{equation*}
    u = U \quad \text{a.e. on } \quad (0,R). 
\end{equation*}
Moreover, $U^{(k)}$ (in the classical sense) exists a.e. on $ (0,R)$, $U^{(k)}$ is a measurable function and
\begin{equation*}
\int_0^R\left|U^{(j)}(t)\right|^pt^{\alpha_j}\mathrm dt<\infty\mbox{ for }j=0,1,\ldots,k.
\end{equation*}
\end{prop}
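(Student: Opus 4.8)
The plan is to reduce the statement to the classical theory of Sobolev functions on compact subintervals of $(0,R]$, where $r^{\alpha_j}$ is bounded above and below by positive constants, and then to promote the local information to the endpoint $r=R$ using the weighted integrability near $R$ (where the weight is harmless since $R<\infty$ and $\alpha_j>-1$ is not even needed at that end—only the behavior near $0$ is delicate, and there we only claim \emph{local} absolute continuity).

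\textbf{Step 1 (local regularity away from the origin).} Fix $0<a<R$. On $[a,R]$ the weights satisfy $0<c\le r^{\alpha_j}\le C$ for constants depending on $a$, so $u^{(j)}\in L^p_{\alpha_j}(0,R)$ for $j=0,\dots,k$ forces $u^{(j)}\in L^p(a,R)$ for $j=0,\dots,k$ in the distributional sense. Hence $u\in W^{k,p}(a,R)$, and by the classical embedding $W^{k,p}(a,R)\hookrightarrow C^{k-1}([a,R])$ there is a representative $U_a\in AC^{k-1}([a,R])$ agreeing with $u$ a.e. on $(a,R)$; moreover $U_a^{(k)}$ exists a.e., is measurable, and equals the distributional $u^{(k)}$ a.e.

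\textbf{Step 2 (patching the local representatives).} For $a'<a$ the representatives $U_{a'}$ and $U_a$ agree a.e. on $(a,R)$, hence everywhere on $[a,R]$ by continuity of $C^{k-1}$ representatives. Letting $a\downarrow 0$ along a sequence $a_n\downarrow 0$ and gluing gives a well-defined $U\in AC^{k-1}_{\mathrm{loc}}((0,R])$ with $U=u$ a.e. on $(0,R)$; its $k$-th derivative (in the classical a.e. sense) is the common value $U_{a_n}^{(k)}$, which is measurable on $(0,R)$ and coincides a.e. with the distributional $u^{(k)}$.

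\textbf{Step 3 (the integrability conclusion).} Finally, since $u^{(j)}\in L^p_{\alpha_j}(0,R)$ by hypothesis and $U^{(j)}=u^{(j)}$ a.e.\ for $j=0,\dots,k-1$, while $U^{(k)}=u^{(k)}$ a.e.\ by Step 2, we get $\int_0^R|U^{(j)}(t)|^p t^{\alpha_j}\,\mathrm dt=\int_0^R|u^{(j)}(t)|^p t^{\alpha_j}\,\mathrm dt<\infty$ for every $j=0,1,\dots,k$, which is the asserted bound.

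\textbf{Main obstacle.} The only subtlety is Step 2: one must check that the local classical $k$-th derivatives obtained on overlapping intervals $[a_n,R]$ are genuinely consistent, i.e. that the distributional derivative $u^{(k)}$—which a priori is only defined as a distribution on $(0,R)$—restricts correctly and that no mass is lost near $0$; this is why the conclusion is only \emph{local} absolute continuity of $U^{(k-1)}$ on $(0,R]$ and not on all of $[0,R]$. Everything else is a routine application of $W^{k,p}\hookrightarrow C^{k-1}$ on compact intervals together with the two-sided bound on $r^{\alpha_j}$ on $[a,R]$.
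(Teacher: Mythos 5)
Your proposal is correct and follows essentially the same route as the paper: both arguments restrict to compact subintervals $[a,R]$ (the paper uses $[1/n,R]$), observe that the weights $r^{\alpha_j}$ are bounded above and below there so that $u$ lies in the classical unweighted Sobolev space, invoke the classical $AC$ representative theorem (Brezis, Theorem 8.2), and glue the consistent local representatives as $a\downarrow 0$. The only cosmetic difference is that the paper handles higher $k$ by applying the $k=1$ case to $u^{(k-1)}$, whereas you invoke $W^{k,p}(a,R)\hookrightarrow C^{k-1}([a,R])$ directly; the substance is identical.
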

\begin{proof}
Let $u\in X_R^{1,p}(\alpha_0,\alpha_1)$ and $n_0\in\mathbb N$ such that $1/n_0<R$. Note that $u\in W^{1,p}(1/n,R)$ for all $n\geq n_0,$ because $1\leq t^{\alpha_i}/\min_{t\in [1/n,R]}t^{\alpha_i}$ for all $t\in[1/n,R]$ and $n\geq n_0$. 
By  \cite[Theorem 8.2]{MR2759829}, for each $n\geq n_0$ there exists $U_n\in AC([1/n,R])$ such that $U_n=u$ a.e. in $(1/n,R)$. 
The proof for the case $k=1$ is completed by defining $U(t):=U_n(t)$ for each $t\in (0,R]$ where $n$ is a natural with $1/n<t$. 
For $u\in X_R^{k,p}(\alpha_0,\ldots,\alpha_k)$, just use the case $k=1$ on $u^{(k-1)}\in X_R^{1,p}(\alpha_{k-1},\alpha_k)$ (see Remark 6 after \cite[Theorem 8.2]{MR2759829}).
\end{proof}

\begin{remark}
The conversely of Propostion~\ref{prop31} also holds, and the proof is trivial. 
\end{remark}

Since $u\in X^{1,p}_R$ does not necessarily satisfy $u(R)=0$, we need the next Lemma to obtain our radial lemmas. 

\begin{lemma}\label{lemma31}
There exists a constant $C=C(\alpha_0,\alpha_1,p,R)>0$ such that
\begin{equation*}
|u(R)|\leq C \|u\|_{X_R^{1,p}}, \quad \forall u\in X_R^{1,p}(\alpha_0,\alpha_1).
\end{equation*}
\end{lemma}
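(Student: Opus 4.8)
The plan is to bound $|u(R)|$ by writing $u(R)$ as an average of $u$ over a subinterval plus a controlled remainder coming from $u'$, using the fundamental theorem of calculus (valid by Proposition~\ref{prop31}, which gives $u\in AC_{\mathrm{loc}}((0,R])$). Concretely, fix $r_0\in(0,R)$, say $r_0=R/2$. For any $s\in[r_0,R]$ we have $u(R)=u(s)+\int_s^R u'(t)\,\mathrm dt$. Integrating this identity in $s$ over $[r_0,R]$ and dividing by $R-r_0$ gives
\begin{equation*}
u(R)=\frac{1}{R-r_0}\int_{r_0}^R u(s)\,\mathrm ds+\frac{1}{R-r_0}\int_{r_0}^R\left(\int_s^R u'(t)\,\mathrm dt\right)\mathrm ds.
\end{equation*}
Now I estimate each term. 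For the first term, since $t^{\alpha_0}$ is bounded below by a positive constant $c_0=c_0(\alpha_0,R)>0$ on $[r_0,R]$ (it equals $\min\{r_0^{\alpha_0},R^{\alpha_0}\}$), Hölder's inequality in the form $\int_{r_0}^R|u|\,\mathrm ds\le c_0^{-1/p}(R-r_0)^{1/p'}\left(\int_{r_0}^R|u|^p t^{\alpha_0}\,\mathrm dt\right)^{1/p}\le C\|u\|_{L^p_{\alpha_0}}$ handles it; for $p=1$ the argument is the same with the Hölder exponent interpreted trivially. For the second term, bound the inner integral by $\int_{r_0}^R|u'(t)|\,\mathrm dt$ and again use that $t^{\alpha_1}\ge c_1>0$ on $[r_0,R]$ together with Hölder to get $\int_{r_0}^R|u'(t)|\,\mathrm dt\le C\|u'\|_{L^p_{\alpha_1}}$, so the double integral is bounded by $C\|u'\|_{L^p_{\alpha_1}}$ as well.

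Combining, $|u(R)|\le C\left(\|u\|_{L^p_{\alpha_0}}+\|u'\|_{L^p_{\alpha_1}}\right)\le C\|u\|_{X_R^{1,p}}$, with $C=C(\alpha_0,\alpha_1,p,R)$, which is exactly the claimed inequality.

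The only genuine subtlety, rather than an obstacle, is justifying the fundamental-theorem-of-calculus identity up to the endpoint $r=R$: a priori $u$ is only locally absolutely continuous on $(0,R)$, but Proposition~\ref{prop31} (and the fact that $u\in W^{1,p}(r_0,R)\hookrightarrow AC([r_0,R])$ for the fixed $r_0>0$, as in the proof of that proposition) guarantees $u$ has an absolutely continuous representative on the \emph{closed} interval $[r_0,R]$, so the identity and all the integrations above are legitimate. No behavior of $u$ near $0$ is needed, which is why no boundary condition at $R$ (nor integrability improvements near $0$) is required — only the weights $\alpha_0,\alpha_1$ being finite real numbers, which makes $t^{\alpha_i}$ bounded away from $0$ on $[r_0,R]$.
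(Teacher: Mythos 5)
Your proof is correct and follows essentially the same route as the paper's: both decompose $u(R)$ into the average of $u$ over $[R/2,R]$ plus a term controlled by $\int_{R/2}^R|u'|$, and both estimate each piece by H\"older using that the weights $t^{\alpha_0},t^{\alpha_1}$ are bounded away from zero on $[R/2,R]$. The only cosmetic difference is that the paper invokes the mean value theorem for integrals to select a point $t_0$ where $u(t_0)$ equals its average, whereas you integrate the fundamental-theorem identity in $s$ directly; the estimates are identical.
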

\begin{proof}
By Mean Value Theorem for Integrals and Proposition \ref{prop31}, there  exists $t_0\in (R/2,R)$ such that
\begin{equation}\label{eq12}
u(t_0)=\frac2R\int_{\frac R2}^Ru(r)\mathrm dr.
\end{equation}
On other hand,
\begin{equation}\label{eq13}
u(R)-u(t_0)=\int_{t_0}^Ru'(r)\mathrm dr.
\end{equation}
Then, from \eqref{eq12} and \eqref{eq13},
\begin{align*}
|u(R)|&\leq |u(R)-u(t_0)|+|u(t_0)|\\
&\leq\int_{\frac R2}^R\left|u'(r)\right|r^{\frac{\alpha_1}p}\cdot r^{-\frac{\alpha_1}{p}}\mathrm dr+\frac2R\int_{\frac R2}^R|u(r)|r^{\frac{\alpha_0}p}\cdot r^{-\frac{\alpha_0}p}\mathrm dr\\
&\leq \|u'\|_{L^p_{\alpha_1}(0,R)}\left\|r^{-\frac{\alpha_1}p}\right\|_{L^{p'}(\frac{R}{2},R)}+\frac2R\|u\|_{L^p_{\alpha_0}(0,R)}\left\|r^{-\frac{\alpha_0}p}\right\|_{L^{p'}(\frac R2,R)}\\
&\leq C\|u\|_{X_R^{1,p}}.
\end{align*}
\end{proof}

Now we are ready to prove the Radial Lemma for the Sobolev case ($\alpha_k-kp+1>0$) and for the Adams-Trudinger-Moser case ($\alpha_k-kp+1=0$). The Proposition \ref{prop32} is for the first case and for the second case we need to split it into Proposition \ref{prop33} (when $p>1$) and Proposition \ref{prop34} (when $p=1$).

\begin{prop}\label{prop32}
Suppose $\alpha_k-kp+1>0$. Then, there exists a constant $C=C(\alpha_0,\ldots,\alpha_k,p,k,R)>0$ such that for all $u\in X_R^{k,p}(\alpha_0,\ldots,\alpha_k)$ it holds 
\begin{equation}\label{LR1}
|u(t)|\leq C\dfrac1{t^{\frac{\alpha_k-kp+1}{p}}}\|u\|_{X_R^{k,p}},\quad\forall t\in(0,R].
\end{equation}
\end{prop}
\begin{proof}
We are going to prove by induction on $k$.
Let $u\in X_R^{1,p}$ and $t\in(0,R]$. As \cite{MR2838041}, the identity
\begin{equation*}
u(t)=u(R)-\int_t^Ru'(r)r^{\frac{\alpha_1}{p}}\cdot r^{-\frac{\alpha_1}{p}}\mathrm dr
\end{equation*}
holds. From H\"older's inequality and Lemma \ref{lemma31}, we get
\begin{equation*}
|u(t)|\leq C\dfrac{1}{t^{\frac{\alpha_1-p+1}p}}\|u\|_{X_R^{1,p}},
\end{equation*}
which proves \eqref{LR1} for $k=1$. Now, let $\alpha_k-kp+1>0$, $u\in X_R^{k,p}$ and $t\in(0,R]$. From $u'\in X_R^{k-1,p}(\alpha_1,\ldots,\alpha_k)$ and the induction hypothesis, we have
\begin{equation*}
|u(t)|\leq|u(R)|+\int_t^R|u'(r)|\mathrm dr\leq |u(R)|+C\|u'\|_{X_R^{k-1,p}}\int_t^R\dfrac1{r^{\frac{\alpha_k-(k-1)p+1}{p}}}\mathrm dr.
\end{equation*}
Therefore, by Lemma \ref{lemma31}, we conclude that \eqref{LR1} holds.
\end{proof}

\begin{prop}\label{prop33}
If $\alpha_k-kp+1=0$ and $p>1$, 
then there exists a constant $C=C(\alpha_0,\ldots,\alpha_{k-1},p,k,R)>0$ such that for all $u\in X_R^{k,p}$ it holds 
\begin{equation}\label{LR-A}
|u(t)|\leq |\log (t/R)|^{\frac{p-1}p}\|u^{(k)}\|_{L^p_{\alpha_k}}+C\|u\|_{X^{k,p}_R},\quad\forall t\in(0,R].
\end{equation}
If $u\in X^{1,p}_{0,R}(\alpha_0,p-1)$, then \eqref{LR-A} holds with $C=0$.
\end{prop}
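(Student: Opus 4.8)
The plan is to prove \eqref{LR-A} by induction on $k$, mirroring the structure of the proof of Proposition~\ref{prop32} but keeping track of the logarithmic factor that appears precisely because the integrability exponent is borderline. First I would establish the base case $k=1$, where $\alpha_1 = p-1$. For $u\in X^{1,p}_R(\alpha_0,p-1)$ and $t\in(0,R]$, start from the identity
\begin{equation*}
u(t)=u(R)-\int_t^R u'(r)\,r^{\frac{p-1}{p}}\cdot r^{-\frac{p-1}{p}}\,\mathrm dr,
\end{equation*}
apply H\"older's inequality with exponents $p$ and $p'=p/(p-1)$, and compute
\begin{equation*}
\left(\int_t^R r^{-\frac{(p-1)}{p}\cdot\frac{p}{p-1}}\,\mathrm dr\right)^{\frac{p-1}{p}}=\left(\int_t^R r^{-1}\,\mathrm dr\right)^{\frac{p-1}{p}}=\bigl|\log(t/R)\bigr|^{\frac{p-1}{p}}.
\end{equation*}
Combining this with Lemma~\ref{lemma31} to bound $|u(R)|\leq C\|u\|_{X^{1,p}_R}$ yields \eqref{LR-A} for $k=1$; when $u\in X^{1,p}_{0,R}(\alpha_0,p-1)$ one has $u(R)=0$, so the same computation gives the stronger statement with $C=0$.

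Next I would carry out the inductive step. Suppose the claim holds for $k-1$ and let $u\in X^{k,p}_R(\alpha_0,\ldots,\alpha_k)$ with $\alpha_k-kp+1=0$. The key observation is that $u'\in X^{k-1,p}_R(\alpha_1,\ldots,\alpha_k)$, and for this shifted tuple the borderline condition still holds in the form $\alpha_k-(k-1)p+1 = p > 0$ — that is, $u'$ lands in the \emph{Sobolev} regime of the lower-order space, so I should apply Proposition~\ref{prop32} (not the induction hypothesis of the present proposition) to $u'$. This gives a constant $C$ with
\begin{equation*}
|u'(r)|\leq C\,\frac{1}{r^{\frac{\alpha_k-(k-1)p+1}{p}}}\|u'\|_{X^{k-1,p}_R}=C\,\frac{1}{r}\,\|u'\|_{X^{k-1,p}_R},\qquad r\in(0,R].
\end{equation*}
Then from $u(t)=u(R)-\int_t^R u'(r)\,\mathrm dr$ one bounds
\begin{equation*}
|u(t)|\leq |u(R)|+C\,\|u'\|_{X^{k-1,p}_R}\int_t^R\frac{\mathrm dr}{r}=|u(R)|+C\,\|u'\|_{X^{k-1,p}_R}\,\bigl|\log(t/R)\bigr|.
\end{equation*}
Since $\alpha_{j-1}\geq \alpha_j-p$ is not assumed here, I would instead use the equivalence/boundedness coming directly from the norm $\|\cdot\|_{X^{k,p}_R}$, namely $\|u'\|_{X^{k-1,p}_R}\leq \|u\|_{X^{k,p}_R}$, together with Lemma~\ref{lemma31}.

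The subtle point — and the main obstacle — is reconciling the \emph{linear} logarithm $|\log(t/R)|$ produced by the naive iteration in the step above with the exponent $(p-1)/p$ claimed in \eqref{LR-A}. Since $|\log(t/R)|$ is unbounded as $t\to 0^+$ while $|\log(t/R)|^{(p-1)/p}$ grows strictly slower, the crude estimate is not good enough; one must not differentiate all the way down and then integrate $k-1$ times, but rather peel off only \emph{one} integration and apply the sharp H\"older estimate with the weight $r^{\alpha_k}=r^{kp-1}$ at the top level. Concretely, the correct route is: write $u(t) = u(R) - \int_t^R u'(r)\,\mathrm dr$ and then express $u'$ via its Taylor-type remainder against $u^{(k)}$, i.e. use the iterated representation $u'(r) = (\text{polynomial-type boundary terms}) + \frac{1}{(k-2)!}\int_r^R (s-r)^{k-2} u^{(k)}(s)\,\mathrm ds$ (or the analogous kernel adapted to the interval), substitute into $\int_t^R u'(r)\,\mathrm dr$, interchange the order of integration so that a single $s$-integral against $u^{(k)}(s)\,s^{\alpha_k/p}$ remains, and apply H\"older in $s$ with exponents $p,p'$. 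The $p'$-power of the resulting kernel integrates to exactly $|\log(t/R)|$ raised to the first power \emph{before} taking the $(p-1)/p$ root, which is where the exponent $(p-1)/p$ comes from; the boundary terms are all controlled by $\|u\|_{X^{k,p}_R}$ via repeated application of Lemma~\ref{lemma31} (applied to $u, u', \ldots, u^{(k-1)}$ in the appropriate lower-order spaces), and are absorbed into the additive constant $C\|u\|_{X^{k,p}_R}$. I would double-check the precise form of the kernel and the bookkeeping of the weights $\alpha_1,\ldots,\alpha_{k-1}$ there, since those are the only places a constant could silently blow up; everything else is a routine H\"older estimate.
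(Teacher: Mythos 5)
Your proof is correct and takes essentially the same route as the paper's: both rest on an order-$k$ integration-by-parts representation of $u(t)$ against a degree-$(k-1)$ kernel followed by a single H\"older estimate in which the borderline identity $\alpha_k=kp-1$ turns the $p'$-integral of the kernel into exactly $\int_t^R s^{-1}\,\mathrm ds=|\log(t/R)|$, which is precisely where the exponent $(p-1)/p$ arises. The only (harmless) difference is organizational: the paper integrates $\int_t^R u^{(k)}(r)r^{k-1}\,\mathrm dr$ by parts to get identity \eqref{eq15}, whose interior terms $u^{(j)}(t)t^j$ are then controlled by Proposition \ref{prop32}, whereas your Taylor-remainder kernel $(s-t)^{k-1}$ leaves only terms evaluated at $R$, handled by Lemma \ref{lemma31} alone.
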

\begin{proof}
Using integration by parts ($k-1$)-times we have the identity
\begin{equation*}
\int_t^Ru^{(k)}(r)r^{k-1}\mathrm dr=(-1)^k(k-1)!\sum_{j=0}^{k-1}\dfrac{(-1)^j}{j!}\left(u^{(j)}(t)t^j-u^{(j)}(R)R^j\right).
\end{equation*}
Then
\begin{equation}
    u(t)=\dfrac{(-1)^k}{(k-1)!}\int_t^Ru^{(k)}(r)r^{k-1}\mathrm dr+u(R)-\sum_{j=1}^{k-1}\dfrac{(-1)^j}{j!}\left(u^{(j)}(t)t^j-u^{(j)}(R)R^j\right)].\label{eq15}
\end{equation}
Since $\alpha_k-kp+1=0$,
\begin{equation}\label{eq16}
\int_t^Ru^{(k)}(r)r^{k-1}\mathrm dr\leq \left|\log(t/R)\right|^{\frac{p-1}p}\|u^{(k)}\|_{L^p_{\alpha_k}}.
\end{equation}
Proposition \ref{prop32}, applied to $u^{(j)}\in X_R^{k-j,p}(\alpha_j,\ldots,\alpha_k)$, ensures that
\begin{equation}\label{eq17}
\sum_{j=1}^{k-1}\dfrac{(-1)^j}{j!}u^{(j)}(t)t^jds\leq C\|u\|_{X^{k,p}_R}.
\end{equation}
Therefore, \eqref{LR-A} is a consequence of \eqref{eq15}, \eqref{eq16}, \eqref{eq17}, and Lemma \ref{lemma31}.
\end{proof}
\begin{prop}\label{prop34}
If $\alpha_k-kp+1=0$ and $p=1$, then $X^{k,p}_R\hookrightarrow C([0,R])$. In other words, $X_R^{k,1}(\alpha_0,\ldots,\alpha_{k-1},k-1)\hookrightarrow C([0,R])$.
\end{prop}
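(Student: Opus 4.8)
The plan is to reduce the case $p=1$, $\alpha_k-k+1=0$ (so $\alpha_k=k-1$) to the classical embedding $W^{k,1}(a,b)\hookrightarrow C([a,b])$ on compact subintervals, and then to control the behavior near $r=0$ using the identity \eqref{eq15} together with the radial estimates already established. The key observation is that when $p=1$ the borderline weight condition forces $\alpha_k=k-1$, which is exactly the Euclidean exponent $N-1$ with $N=k$, so the integration-by-parts identity collapses the logarithmic blow-up that occurs for $p>1$ in \eqref{eq16} into a bounded term.

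First I would recall from Proposition~\ref{prop31} that any $u\in X^{k,1}_R$ has a representative $U\in AC^{k-1}_{\mathrm{loc}}((0,R])$, so it suffices to show $U$ extends continuously to $[0,R]$ with $\|U\|_{C([0,R])}\leq C\|u\|_{X^{k,1}_R}$. On any interval $[t,R]$ with $t>0$, all the weights $r^{\alpha_j}$ are comparable to constants, so $u\in W^{k,1}(t,R)$ and $U$ is continuous there; the only issue is the limit as $t\to 0^+$. For this I would use the identity \eqref{eq15} with $p=1$: the term $u(R)$ is bounded by $C\|u\|_{X^{1,1}_R}$ via Lemma~\ref{lemma31} (applied to $u^{(k-1)}\in X^{1,1}_R(\alpha_{k-1},k-1)$ and iterated), the terms $u^{(j)}(R)R^j$ are likewise bounded, and the terms $u^{(j)}(t)t^j$ for $j=1,\dots,k-1$ are controlled by Proposition~\ref{prop32} applied to $u^{(j)}\in X^{k-j,1}_R(\alpha_j,\dots,\alpha_k)$: since $\alpha_k-(k-j)\cdot 1+1 = \alpha_k-k+j+1 = j>0$, Proposition~\ref{prop32} gives $|u^{(j)}(t)|\leq C t^{-j}\|u\|_{X^{k,1}_R}$, so $|u^{(j)}(t)t^j|\leq C\|u\|_{X^{k,1}_R}$ uniformly in $t$. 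Finally the leading integral term satisfies
\begin{equation*}
\left|\frac{(-1)^k}{(k-1)!}\int_t^R u^{(k)}(r)r^{k-1}\,\mathrm dr\right|\leq \frac{R^{k-1}}{(k-1)!}\int_0^R|u^{(k)}(r)|r^{\alpha_k}\,\mathrm dr = \frac{R^{k-1}}{(k-1)!}\|u^{(k)}\|_{L^1_{\alpha_k}},
\end{equation*}
since $r^{k-1}\leq R^{k-1}$ and $\alpha_k=k-1$. Combining these bounds gives $|U(t)|\leq C\|u\|_{X^{k,1}_R}$ for all $t\in(0,R]$ with $C$ independent of $t$.

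To upgrade this uniform bound to genuine continuity at $0$, I would note that the integral $\int_0^R u^{(k)}(r)r^{k-1}\,\mathrm dr$ converges absolutely (by the same estimate), so the right-hand side of \eqref{eq15}, after absorbing the boundary terms into a constant, shows that $U(t)$ differs from $\frac{(-1)^k}{(k-1)!}\int_t^R u^{(k)}(r)r^{k-1}\,\mathrm dr + \bigl(\text{bounded oscillating terms in }u^{(j)}(t)t^j\bigr)$. A cleaner route avoiding the boundary issue for the $u^{(j)}(t)t^j$ terms: apply the induction hypothesis more carefully, or simply observe that each $u^{(j)}$ for $j\leq k-1$ lies in $X^{k-j,1}_R$ with $\alpha_k-(k-j)+1=j>0$, which is the strict Sobolev case, and Proposition~\ref{prop32}'s proof in fact yields that $t^j u^{(j)}(t)$ has a limit as $t\to 0$ (it is $\int_0^R$ of an $L^1$ function plus boundary data). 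Hence each term on the right of \eqref{eq15} has a finite limit as $t\to 0^+$, so $\lim_{t\to 0^+}U(t)$ exists; defining $U(0)$ to be this limit makes $U\in C([0,R])$, and the uniform bound gives the claimed continuity of the embedding $X^{k,1}_R\hookrightarrow C([0,R])$.

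The main obstacle I anticipate is handling the oscillating boundary terms $u^{(j)}(t)t^j$ near $t=0$: Proposition~\ref{prop32} as stated only gives the bound $|u^{(j)}(t)|\leq Ct^{-j}\|u\|$, which yields boundedness of $t^j u^{(j)}(t)$ but not a priori convergence as $t\to 0$. One must either strengthen that argument (tracking that $t^j u^{(j)}(t)$ equals a convergent integral plus boundary data, hence has a limit) or argue by an independent Cauchy-criterion estimate showing $|U(t_1)-U(t_2)|\to 0$ as $t_1,t_2\to 0$, using absolute continuity of the dominating integrals. This is a routine but slightly delicate point; everything else follows directly from Lemma~\ref{lemma31}, Proposition~\ref{prop31}, Proposition~\ref{prop32}, and the identity \eqref{eq15}.
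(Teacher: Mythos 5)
Your uniform bound $|u(t)|\leq C\|u\|_{X^{k,1}_R}$ is correct and follows the paper's own route: the identity \eqref{eq15}, Lemma~\ref{lemma31} for the boundary data at $R$, Proposition~\ref{prop32} applied to $u^{(j)}\in X^{k-j,1}_R$ to control $t^ju^{(j)}(t)$, and the observation that $r^{k-1}=r^{\alpha_k}$ makes the leading integral absolutely convergent. The gap is in the continuity at $0$, and you have correctly located it but not closed it. Your claim that ``Proposition~\ref{prop32}'s proof in fact yields that $t^ju^{(j)}(t)$ has a limit as $t\to0$ (it is $\int_0^R$ of an $L^1$ function plus boundary data)'' is not justified: Proposition~\ref{prop32} writes $u^{(j)}(t)=u^{(j)}(R)-\int_t^Ru^{(j+1)}(r)\,\mathrm dr$, but the only available pointwise bound on $u^{(j+1)}$ is $|u^{(j+1)}(r)|\leq Cr^{-(j+1)}\|u\|$, which is \emph{not} integrable near $0$; so neither $u^{(j)}(t)$ nor $t^ju^{(j)}(t)$ is exhibited as boundary data plus a convergent integral by that argument alone.

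The correct identity is the one obtained by integrating $\int_t^Ru^{(i+1)}(r)r^i\,\mathrm dr$ by parts (the paper's \eqref{eq18}), which expresses $u^{(i)}(t)t^i$ as $u^{(i)}(R)R^i$ minus $\int_t^Ru^{(i+1)}(r)r^i\,\mathrm dr$ minus $i\int_t^Ru^{(i)}(r)r^{i-1}\,\mathrm dr$. For these integrals to converge as $t\to0$ one needs $u^{(i)}\in L^1_{i-1}$ for every $i=1,\ldots,k$, and this is \emph{not} part of the hypotheses: you only know $u^{(i)}\in L^1_{\alpha_i}$ with $\alpha_i>-1$ arbitrary (no ordering such as \eqref{eqweightcondition} is assumed in Proposition~\ref{prop34}), so if $\alpha_i>i-1$ the membership $u^{(i)}\in L^1_{i-1}$ is a genuine claim requiring proof. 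The paper supplies it by a downward induction using the Hardy-type inequality of Proposition~\ref{prop21JMBO} applied to $u^{(k-i-1)}-u^{(k-i-1)}(R)\in AC_R$ together with Lemma~\ref{lemma31}, namely
\begin{equation*}
\int_0^R|u^{(k-i-1)}(r)|r^{k-i-2}\,\mathrm dr\leq C\|u\|_{X^{k,1}_R}+C\int_0^R|u^{(k-i)}(r)|r^{k-i-1}\,\mathrm dr<\infty .
\end{equation*}
Neither this weight-lowering step nor any appeal to Proposition~\ref{prop21JMBO} appears in your proposal, and your alternative ``Cauchy-criterion'' suggestion would run into the same missing integrability. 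To complete the proof you must add this induction.
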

\begin{proof}
We note that $u^{(j)}\in X_R^{k-j,1}(\alpha_j,\ldots,\alpha_k)$ for all $j=1,\ldots,k-1$. Using Proposition \ref{prop32} it follows
\begin{equation*}
|u^{(j)}(t)|\leq C\dfrac1{t^j}\|u\|_{X_R^{k,1}}.
\end{equation*}
In view of \eqref{eq15}, Lemma \ref{lemma31}, and $u^{(k)}(t)t^{k-1}\in L^1$, we have
\begin{equation*}
|u(t)|\leq C\|u\|_{X^{k,1}_R},\quad\forall t\in(0,R].
\end{equation*}

It remains to prove that $\lim_{t\to0}u(t)$ exist. Since $u^{(k)}(t)t^{k-1}\in L^1$ and \eqref{eq15}, it is enough to show that there exist $\lim_{t\to0}u^{(j)}(t)t^j$ for all $j=1,\ldots,k-1$.  Let $1\leq i\leq k-1$ integer. Using integration by parts, we have
\begin{equation}\label{eq18}
\int_t^Ru^{(i+1)}(r)r^{i}\mathrm dr=u^{(i)}(R)R^i-u^{(i)}(t)t^i-i\int_t^Ru^{(i)}(r)r^{i-1}\mathrm dr.
\end{equation}
If $u^{(i)}\in L^1_{i-1}$ for all $i=1,\ldots,k$, we can apply limit with $t\to0$ in \eqref{eq18} to conclude the Proposition \ref{prop34}. In order to prove $u^{(k-i)}\in L^1_{k-i-1}$ for all $i=0,\ldots,k-1$ we proceed by induction on $i$. The case $i=0$ follows since $u\in X^{k,1}_R(\alpha_0,\ldots,k-1)$. Suppose $i<k-1$ and $u^{(k-i)}\in X^{i,1}_R(\alpha_{k-i},\ldots,k-1)$. By Proposition \ref{prop21JMBO} and Lemma \ref{lemma31}
\begin{align*}
\int_0^R|u^{(k-i-1)}(r)|r^{k-i-2}\mathrm dr&\leq C\|u\|_{X_R^{k,p}}+\int_0^R\left|u^{(k-i-1)}(r)-u^{(k-i-1)}(R)\right|r^{k-i-2}\mathrm dr\\
&\leq C\|u\|_{X_R^{k,p}}+C\int_0^R|u^{(k-i)}(r)|r^{k-i-1}\mathrm dr<\infty,
\end{align*}
which completes the proof of Proposition \ref{prop34}.
\end{proof}
Before the proof of the Hardy-type inequality given by Proposition \ref{prop35}, we are going to state the following result proved in  \cite[Theorem 4.3 and Remark 4.4]{MR1982932}.
\begin{theo}\label{theo31}
Let $1<p<\infty$. Then the inequality
\begin{equation}\label{eq19}
\int_0^1|z(t)|^pu(t)\mathrm dt\leq C\int_0^1\left|z^{(m)}(t)\right|^pv(t)\mathrm dt
\end{equation}
holds for all $z\in W^{m,p}((0,1),v)$ such that $z^{(j)}(1)=0$, for any $j=0,1,\ldots,m-1$, if, and only if,
\begin{equation}\label{eq20}
\left.
\begin{aligned}
    \sup_{0<x<1}\left(\displaystyle\int_0^x(x-t)^{(m-1)p}u(t)\mathrm dt\right)^{1/p}\left(\displaystyle\int_x^1v(t)^{-\frac{1}{p-1}}\mathrm dt\right)^{\frac{p-1}p}<\infty\\
\displaystyle\sup_{0<x<1}\left(\displaystyle\int_0^xu(t)\mathrm dt\right)^{1/p}\left(\displaystyle\int_x^1(t-x)^{\frac{(m-1)p}{p-1}}v(t)^{-\frac1{p-1}}\mathrm dt\right)^{\frac{p-1}p}<\infty
\end{aligned}
\right\}
\end{equation}
where $u$ and $v$ are positive measurable function in $(0,1)$.
\end{theo}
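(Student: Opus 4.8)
Proof proposal for Theorem~\ref{theo31} (the Hardy-type inequality with higher-order derivatives and boundary conditions $z^{(j)}(1)=0$).

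The plan is to reduce the $m$-th order inequality \eqref{eq19} to a first-order Hardy inequality by representing $z$ through its $m$-th derivative via an integral kernel, and then invoke the classical two-weight characterization of first-order Hardy operators (as in \cite{MR1069756}). Concretely, since $z^{(j)}(1)=0$ for $j=0,\ldots,m-1$, Taylor's formula with integral remainder at the endpoint $x=1$ gives
\begin{equation*}
z(t)=\frac{(-1)^m}{(m-1)!}\int_t^1 (s-t)^{m-1}z^{(m)}(s)\,\mathrm ds,\qquad t\in(0,1).
\end{equation*}
Thus $z$ is the image of $g:=z^{(m)}$ under the integral operator $(Tg)(t)=\int_t^1(s-t)^{m-1}g(s)\,\mathrm ds$, and \eqref{eq19} becomes the boundedness of $T\colon L^p((0,1),v)\to L^p((0,1),u)$. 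The kernel $(s-t)^{m-1}$ on the region $t<s$ is a so-called Oinarov kernel: it satisfies the two-sided estimate $(s-t)^{m-1}\simeq (s-\tau)^{m-1}+(\tau-t)^{m-1}$ for $t\le\tau\le s$ (up to constants depending only on $m$), which is exactly the structural hypothesis under which the boundedness of such an operator is characterized by a pair of Muckenhoupt-type conditions. Splitting the kernel according to this equivalence and applying the classical first-order Hardy inequality to each of the two resulting pieces yields precisely the two conditions in \eqref{eq20}: the first comes from the ``$( \tau-t)^{m-1}$'' part acting on $u$ near $0$, and the second from the ``$(s-\tau)^{m-1}$'' part appearing inside the dual weight integral.

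For the necessity direction, I would test \eqref{eq19} against explicit functions adapted to a cutoff point $x\in(0,1)$. For the first condition in \eqref{eq20}, choose $z$ supported on $(0,x)$ of the form $z(t)=(x-t)^{m-1}\chi_{(0,x)}(t)$ suitably mollified so that $z^{(j)}(1)=0$ trivially (the support avoids a neighborhood of $1$); plugging in and computing $z^{(m)}$ forces the first supremum to be finite. For the second condition, I would instead use a near-optimal competitor built from the dual weight $v^{-1/(p-1)}$ truncated to $(x,1)$, integrated $m$ times against the kernel $(t-x)^{(m-1)/(p-1)}$, again arranging the boundary conditions at $1$; the standard duality/extremizer computation for Hardy operators then gives finiteness of the second supremum. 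These are the routine endpoint-testing arguments, so I would not grind through the constants.

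The main obstacle is the sufficiency direction: one must handle the full kernel $(s-t)^{m-1}$ rather than the model weight $t^{m-1}$, and in particular justify the kernel equivalence $(s-t)^{m-1}\simeq(s-\tau)^{m-1}+(\tau-t)^{m-1}$ and show that it genuinely decouples the operator into two first-order Hardy pieces whose boundedness is governed separately by the two lines of \eqref{eq20}. This is precisely the content of the Oinarov-type theory, so in the write-up I would simply cite \cite[Theorem 4.3 and Remark 4.4]{MR1982932} for this step rather than reprove it; the role of Remark~4.4 is to record that the boundary conditions $z^{(j)}(1)=0$ are exactly what make the Taylor representation valid and hence what make the characterization \eqref{eq20} both necessary and sufficient. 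Since the theorem is quoted verbatim from that reference, the ``proof'' in the paper will consist of this reduction together with the citation, and the genuinely new work is deferred to Proposition~\ref{prop35}, where \eqref{eq20} will be verified for the specific weights $u(t)=t^{\theta}$-type and $v(t)=t^{\alpha_k}$-type arising from $X^{k,p}_R$.
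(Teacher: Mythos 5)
The paper offers no proof of Theorem~\ref{theo31} at all: it is stated verbatim as a quoted result from \cite[Theorem 4.3 and Remark 4.4]{MR1982932}, exactly as you anticipated, and the only genuine work in the paper is the verification of \eqref{eq20} for the specific power weights in Proposition~\ref{prop35}. Your reduction via the Taylor representation $z(t)=\frac{(-1)^m}{(m-1)!}\int_t^1(s-t)^{m-1}z^{(m)}(s)\,\mathrm ds$ and the Oinarov-kernel characterization is a faithful sketch of how the cited source establishes the result, so there is nothing to correct.
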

\begin{prop}\label{prop35}
Given $j=0,1,\ldots,k$ with $\alpha_k-jp+1>0$, then there exists a constant $C_j=C(j,p,R,k,\alpha_k)>0$ such that for all $u\in X_R^{k,p},$
\begin{equation}\label{eq21}
\int_0^R\left|\dfrac{u^{(k-j)}(t)}{t^j}\right|^pt^{\alpha_k}\mathrm dt\leq C_j\sum_{i=k-j}^k\int_0^R\left|u^{(i)}(t)\right|^pt^{\alpha_i}\mathrm dt.
\end{equation}
In particular,
\begin{equation*}
\left\|\dfrac{u}{t^k}\right\|_{L^p_{\alpha_k}}\leq C_k\|u\|_{X^{k,p}_R}.
\end{equation*}
\end{prop}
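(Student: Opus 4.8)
The plan is to reduce \eqref{eq21} to the higher-order Hardy inequality of Theorem~\ref{theo31} by a change of variables that sends $(0,R)$ to $(0,1)$, applied to the function $z = u^{(k-j)}$ with $m = j$ derivatives. First I would observe that it suffices to prove, for each fixed $j$ with $\alpha_k - jp + 1 > 0$, the single-step estimate
\begin{equation*}
\int_0^R\left|\frac{u^{(k-j)}(t)}{t^j}\right|^p t^{\alpha_k}\,\mathrm dt \le C\int_0^R\left|u^{(k-j)}(t)\right|^p t^{\alpha_{k-j}}\,\mathrm dt + C\int_0^R\left|u^{(k-j+1)}(t)\right|^p t^{\alpha_{k-j}+\cdots}\,\mathrm dt + \cdots,
\end{equation*}
but in fact the cleanest route is to apply Theorem~\ref{theo31} directly with $z(t)=u^{(k-j)}(Rt)$ on $(0,1)$, $m=j$, weight $u(t) = t^{\alpha_k - jp}$ (the ``$u$'' of that theorem, not our function) and $v(t)=t^{\alpha_k}$, so that $z^{(j)} = R^j u^{(k)}(R\,\cdot)$; the left side of \eqref{eq19} then reproduces the left side of \eqref{eq21} up to powers of $R$, and the right side reproduces $\int_0^R|u^{(k)}(t)|^p t^{\alpha_k}\,\mathrm dt$. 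This requires the boundary conditions $z^{(i)}(1)=0$ for $i=0,\ldots,j-1$, i.e.\ $u^{(k-j)}(R)=u^{(k-j+1)}(R)=\cdots=u^{(k-1)}(R)=0$, which we do \emph{not} have for $u\in X_R^{k,p}$; this is the main obstacle.

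To get around the missing boundary data I would split $u^{(k-j)}$ as a correction plus a remainder that does vanish to the right order at $R$. Concretely, using Proposition~\ref{prop31} the values $u^{(k-j)}(R),\ldots,u^{(k-1)}(R)$ are finite (each $u^{(i)}$ is locally $AC$ up to $R$ and lies in $L^p_{\alpha_i}$, and one argues as in Lemma~\ref{lemma31} that the boundary values are controlled: $|u^{(i)}(R)| \le C\|u\|_{X_R^{k,p}}$ for $i=k-j,\ldots,k-1$, since $u^{(i)} \in X_R^{k-i,p}(\alpha_i,\ldots,\alpha_k)$ and Lemma~\ref{lemma31} applies to its first derivative, or directly). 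Let $P$ be the Taylor polynomial of degree $j-1$ matching $u^{(k-j)}$ and its first $j-1$ derivatives at $t=R$, so that $w := u^{(k-j)} - P$ satisfies $w^{(i)}(R)=0$ for $i=0,\ldots,j-1$ and $w^{(j)} = u^{(k)}$. Then Theorem~\ref{theo31} applies to $z(t)=w(Rt)$, giving $\int_0^R |w(t)/t^j|^p t^{\alpha_k}\,\mathrm dt \le C\int_0^R |u^{(k)}|^p t^{\alpha_k}\,\mathrm dt$, provided the weight pair $(t^{\alpha_k-jp},t^{\alpha_k})$ satisfies \eqref{eq20}; and $\int_0^R |P(t)/t^j|^p t^{\alpha_k}\,\mathrm dt \le C\|u\|_{X_R^{k,p}}^p$ because $P(t)/t^j$ is a linear combination of $t^{-j},t^{-j+1},\ldots,t^{-1}$ with coefficients bounded by $C\|u\|_{X_R^{k,p}}$, and each $\int_0^R t^{-\ell p} t^{\alpha_k}\,\mathrm dt$ converges for $\ell \le j$ exactly because $\alpha_k - jp + 1 > 0$. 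Combining via the elementary inequality $|a+b|^p \le 2^{p-1}(|a|^p+|b|^p)$ yields \eqref{eq21}; summing the left side absorbs the lower-order terms $\int_0^R|u^{(i)}|^p t^{\alpha_i}\,\mathrm dt$ for $i=k-j,\ldots,k$ on the right (or one simply bounds everything by $\|u\|_{X_R^{k,p}}^p$ and notes the stated inequality is weaker).

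The one genuinely technical point is verifying the Muckenhoupt-type conditions \eqref{eq20} for $u(t)=t^{\alpha_k-jp}$, $v(t)=t^{\alpha_k}$ on $(0,1)$. For the first: $\int_0^x (x-t)^{(j-1)p} t^{\alpha_k-jp}\,\mathrm dt \asymp x^{(j-1)p+\alpha_k-jp+1} = x^{\alpha_k-p+1}$ using $\alpha_k - jp + 1>0$ (so the integral converges) and a Beta-function scaling, while $\int_x^1 t^{-\alpha_k/(p-1)}\,\mathrm dt \asymp x^{1-\alpha_k/(p-1)}$ when $\alpha_k/(p-1) > 1$ and is bounded otherwise; multiplying the two powers (with the exponents $1/p$ and $(p-1)/p$) gives a nonnegative power of $x$ in all cases, hence a finite supremum on $(0,1)$. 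For the second: $\int_0^x t^{\alpha_k-jp}\,\mathrm dt \asymp x^{\alpha_k-jp+1}$ and $\int_x^1 (t-x)^{(j-1)p/(p-1)} t^{-\alpha_k/(p-1)}\,\mathrm dt$ is handled the same way, again producing a bounded expression because $\alpha_k - jp + 1 > 0$ controls the behavior at $0$. I would write this verification out as a short lemma-free computation, flagging the two regimes $\alpha_k \ge p-1$ and $\alpha_k < p-1$ for the inner integral. Finally, the ``in particular'' statement is the case $j=k$, which is legitimate since the hypothesis $\alpha_k - kp + 1 > 0$ is exactly the Sobolev case assumption in force.
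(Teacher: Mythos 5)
Your proposal follows essentially the same route as the paper's proof: subtract the Taylor polynomial of $u^{(k-j)}$ at $t=R$ so that the corrected function satisfies the vanishing conditions required by Theorem~\ref{theo31}, control the Taylor coefficients by Lemma~\ref{lemma31}, verify the two Muckenhoupt-type conditions \eqref{eq20} for the weight pair $(t^{\alpha_k-jp},t^{\alpha_k})$ exactly as the paper does, and absorb the polynomial remainder using the convergence of $\int_0^R t^{-\ell p+\alpha_k}\,\mathrm dt$ guaranteed by $\alpha_k-jp+1>0$. The only cosmetic difference is that the paper first reduces to the single case $j=k$ (applying that case to $u^{(k-j)}\in X^{j,p}_R(\alpha_{k-j},\ldots,\alpha_k)$), whereas you treat general $j$ directly; this changes nothing of substance.

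There is one genuine gap: your argument covers only $p>1$. Theorem~\ref{theo31} is stated for $1<p<\infty$, and your verification of \eqref{eq20} divides by $p-1$ throughout, so the case $p=1$ --- which is allowed in the statement of Proposition~\ref{prop35} and is needed, e.g., for the Sobolev case of Theorem~\ref{theo32} with $p=1$ --- is not reached by your method. The paper closes this case separately: after the same Taylor-polynomial reduction it proves $\int_0^R|v(t)|t^{\alpha_k-k}\,\mathrm dt\leq C\int_0^R|v^{(k)}(t)|t^{\alpha_k}\,\mathrm dt$ directly by integrating by parts $k$ times, using that $v^{(i)}(R)=0$ for $i=0,\ldots,k-1$. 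You would need to add this (or some other elementary $L^1$ Hardy argument) to make the proof complete for all $p\geq1$.
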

\begin{proof}
Since $u^{(k-j)}\in X^{j,p}_R(\alpha_{k-j},\ldots,\alpha_k)$ for all $u\in X_R^{k,p}(\alpha_0,\ldots,\alpha_k)$, it is enough to prove \eqref{eq21} for $j=k$, i.e.
\begin{equation}\label{eq22}
\int_0^R\left|\dfrac{u(t)}{t^k}\right|^pt^{\alpha_k}\mathrm dt\leq C\sum_{i=0}^k\int_0^R\left|u^{(i)}(t)\right|^pt^{\alpha_i}\mathrm dt.
\end{equation}
Given $u\in X^{k,p}_R,$ we define $v\in X_R^{k,p}$ by
\begin{equation}\label{eq22.1}
v(t)=u(t)-\sum_{i=0}^{k-1}\dfrac{u^{(i)}(R)}{i!}(t-R)^i.
\end{equation}
Consequently, $v^{(i)}(R)=0$ for $i=0,\ldots,k-1$ and $v^{(k)}=u^{(k)}$. Thus $v\in X^{k,p}_{0,R}$.

\noindent {\bf Case $p>1$:}
Let us first prove \eqref{eq20} holds for $u(t)=t^{\alpha_k-kp}$ and $v(t)=t^{\alpha_k}$. Indeed,

\begin{align*}
&\left(\displaystyle\int_0^x(x-t)^{(k-1)p}t^{\alpha_k-kp}\mathrm dt\right)^{1/p}\left(\displaystyle\int_x^Rt^{-\frac{\alpha_k}{p-1}}\mathrm dt\right)^{\frac{p-1}p}\\
&\quad\leq \left(x^{(k-1)p}\int_0^xt^{\alpha_k-kp}\mathrm dt\right)^\frac{1}{p}C\left(\dfrac1{x^{\frac{\alpha_k}{p-1}-1}}-\dfrac{1}{R^{\frac{\alpha_k}{p-1}-1}}\right)^{\frac{p-1}p}\\
&\quad\leq C\left(x^{(k-1)p+\alpha_k-kp+1}\right)^{1/p}x^{-\frac{\alpha_k-p+1}{p}}
\leq C
\end{align*}
and for all $0<x<R$,
\begin{align*}
&\left(\displaystyle\int_0^xt^{\alpha_k-kp}\mathrm dt\right)^{1/p}\left(\displaystyle\int_x^R(t-x)^{\frac{(k-1)p}{p-1}}t^{-\frac{\alpha_k}{p-1}}\mathrm dt\right)^{\frac{p-1}p}\\
&\quad\leq Cx^{\frac{\alpha_k-kp+1}{p}}\left(\int_x^Rt^{-\frac{\alpha_k-kp+p}{p-1}}\mathrm dt\right)^{\frac{p-1}p}\\
&\quad\leq Cx^{\frac{\alpha_k-kp+1}p}\left(\dfrac{1}{x^{-\frac{\alpha_k-kp+1}{p-1}}}-\dfrac1{R^{-\frac{\alpha_k-kp+1}{p-1}}}\right)^{\frac{p-1}p}
\leq C .
\end{align*}
By Theorem \ref{theo31},
\begin{equation*}
\int_0^R\left|\dfrac{v(t)}{t^k}\right|^pt^{\alpha_k}\mathrm dt\leq C \int_0^R\left|v^{(k)}(t)\right|^pt^{\alpha_k}\mathrm dt.
\end{equation*}
Also,  by Lemma \ref{lemma31} and \eqref{eq22.1}, we have $|u(t)|\leq |v(t)|+C\|u\|_{X^{k,p}_R}$. Therefore
\begin{equation*}
\int_0^R\left|\dfrac{u(t)}{t^k}\right|^pt^{\alpha_k}\mathrm dt\leq C\int_0^R\left|u^{(k)}(t)\right|^pt^{\alpha_k}\mathrm dt+C\|u\|^p_{X^{k,p}},
\end{equation*}
which gives \eqref{eq22}.

\noindent {\bf Case $p=1$:} Since $|u(t)|\leq |v(t)|+C\|u\|_{X^{k,p}_R}$, it is sufficient to prove that
\begin{equation}\label{eq23}
\int_0^R|v(t)|t^{\alpha_k-k}\mathrm dt\leq C\int_0^R\left|v^{(k)}(t)\right|t^{\alpha_k}\mathrm dt.
\end{equation}
Integrating by parts $k$ times the left-hand side in \eqref{eq23}, we obtain \eqref{eq23}.
\end{proof}


Therefore, we have done all the necessary steps to prove the Theorem \ref{theo32} for the Sobolev and Adams-Trudinger-Moser case. 

We will prove two preliminary results to prove the Morrey case in Theorem \ref{theo32}.  We follow the same argument used in the classical Sobolev space $W^{k,p}(\Omega)$ ($\Omega\subset\mathbb R^N$) when $kp>N$.

\begin{lemma}\label{lemmamorrey}
Let $p>1$, $0<R\leq\infty$, $\alpha_0\in\mathbb R$ and $\alpha_1\geq0$. If $\alpha_1-p+1<0$, then for all $u\in X^{1,p}_R(\alpha_0,\alpha_1)$, we have
\begin{equation*}
|u(r)-u(s)|\leq C|r-s|^\gamma\|u'\|_{L^p_{\alpha_1}}\quad\forall r,s\in (0,R),
\end{equation*}
where $\gamma=1-(\alpha_1+1)/p$ and $C=C(p,\alpha_1)>0$.
\end{lemma}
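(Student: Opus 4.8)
The plan is to mimic the classical Morrey estimate $W^{1,p}(\Omega)\hookrightarrow C^{0,1-N/p}$, but working on the half-line with the weight $r^{\alpha_1}$. Fix $r,s\in(0,R)$ with $r<s$ and write, using that $u\in AC_{\mathrm{loc}}(0,R)$ by Proposition \ref{prop31},
\begin{equation*}
|u(r)-u(s)|\leq\int_r^s|u'(t)|\,\mathrm dt=\int_r^s|u'(t)|\,t^{\alpha_1/p}\cdot t^{-\alpha_1/p}\,\mathrm dt.
\end{equation*}
By H\"older's inequality with exponents $p$ and $p'=p/(p-1)$,
\begin{equation*}
|u(r)-u(s)|\leq\|u'\|_{L^p_{\alpha_1}}\left(\int_r^s t^{-\frac{\alpha_1}{p-1}}\,\mathrm dt\right)^{\frac{p-1}{p}}.
\end{equation*}
Since $\alpha_1-p+1<0$ we have $\alpha_1/(p-1)<1$, so the inner integral converges at $0$ and equals $C_{p,\alpha_1}\bigl(s^{1-\alpha_1/(p-1)}-r^{1-\alpha_1/(p-1)}\bigr)$ up to the constant $(1-\alpha_1/(p-1))^{-1}$.

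The remaining point is to bound this last difference by $C|r-s|^{\gamma(p-1)/p}=C|r-s|^{1-(\alpha_1+1)/(p-1)\cdot\frac{1}{1}\cdots}$; more precisely we want $\bigl(s^{1-\alpha_1/(p-1)}-r^{1-\alpha_1/(p-1)}\bigr)^{(p-1)/p}\leq C|s-r|^{1-(\alpha_1+1)/p}$. Set $a=1-\alpha_1/(p-1)\in(0,1]$. Here one splits into the two standard cases according to whether $0<a<1$ or $a=1$. If $a=1$ (i.e. $\alpha_1=0$), the difference is exactly $s-r$ and $\gamma=1-1/p$, so $(s-r)^{(p-1)/p}=|s-r|^{\gamma}$ and we are done. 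If $0<a<1$, the elementary inequality $s^a-r^a\leq(s-r)^a$ for $0\le r\le s$ (which follows from subadditivity of $t\mapsto t^a$) gives $\bigl(s^a-r^a\bigr)^{(p-1)/p}\leq(s-r)^{a(p-1)/p}$, and a direct computation shows $a(p-1)/p=(p-1)/p-\alpha_1/p=1-(\alpha_1+1)/p=\gamma$. Collecting constants yields $|u(r)-u(s)|\leq C(p,\alpha_1)|r-s|^{\gamma}\|u'\|_{L^p_{\alpha_1}}$, as claimed; the case $s<r$ is symmetric.

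The only mild subtlety — the part worth stating carefully rather than the routine H\"older step — is the passage from the difference of the powers $s^a-r^a$ to a power of the difference $(s-r)^a$, i.e. verifying the subadditivity inequality and checking that the resulting exponent $a(p-1)/p$ coincides with the announced H\"older exponent $\gamma=1-(\alpha_1+1)/p$; but since $\alpha_1\ge 0$ forces $a\le 1$, this is exactly the regime where subadditivity applies and no obstruction arises. (Note also that the hypothesis $\alpha_1-p+1<0$ together with $\alpha_1\ge0$ forces $p>1$, so $p'$ is finite and the argument is consistent; this is where $0<R\le\infty$ is harmless, the estimate being purely local on $[r,s]$.)
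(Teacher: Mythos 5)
Your proof is correct, but it takes a genuinely more direct route than the paper. The paper reproduces the classical $N$-dimensional Morrey averaging argument: it introduces the mean $\overline u=\frac1{|r-s|}\int_I u$, bounds $|\overline u-u(y)|$ by integrating $\int_0^1|u'(y+t(x-y))|\,\mathrm dt$ over $x\in I$, changes variables $z=y+t(x-y)$, applies weighted H\"older on the image interval $h(I)\subset I$, and finally integrates the resulting factor $t^{-(\alpha_1+1)/p}$ over $t\in(0,1)$ before adding the estimates for $y=r$ and $y=s$. You instead exploit the one-dimensional setting to write $|u(r)-u(s)|\leq\int_r^s|u'|\,\mathrm dt$ directly (legitimate since $u\in AC_{\mathrm{loc}}$ by Proposition \ref{prop31}) and apply a single weighted H\"older inequality, which shortens the argument considerably. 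Both proofs ultimately rest on the same two ingredients: the integrability of $t^{-\alpha_1/(p-1)}$ coming from $\alpha_1-p+1<0$, and the subadditivity inequality $s^a-r^a\leq(s-r)^a$ with $a=1-\alpha_1/(p-1)\in(0,1]$, which is exactly where the hypothesis $\alpha_1\geq0$ enters in both arguments; your exponent bookkeeping $a(p-1)/p=1-(\alpha_1+1)/p=\gamma$ is correct. (One garbled intermediate expression, $|r-s|^{1-(\alpha_1+1)/(p-1)\cdot\cdots}$, is immediately superseded by the correct statement of what must be shown, so it does not affect the validity.) What the paper's heavier route buys is a template that survives in higher dimensions; what yours buys is brevity and transparency in this genuinely one-dimensional situation, at no loss of generality for the stated lemma.
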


\begin{proof}
Given $r,s\in(0,R)$ with $r>s$, define $I=(r,s)$. For $x,y\in \overline I$ such that $x>y$ we have
\begin{align}
|u(x)-u(y)|&\leq |x-y|\int_0^1|u'(y+t(x-y))|\mathrm dt\nonumber\\
&\leq |r-s|\int_0^1|u'(y+t(x-y))|\mathrm dt.\label{morrey1}
\end{align}
Set
\begin{equation*}
\overline u=\dfrac1{|r-s|}\int_{I}u(x)\mathrm dx.
\end{equation*}
Integrating \eqref{morrey1} on $x\in I$ and changing variables $z=h(x)=y+t(x-y)$, we get
\begin{align}
|\overline u-u(y)|&\leq\int_I\int_0^1|u'(y+t(x-y))|\mathrm dt\mathrm dx\nonumber\\
&=\int_0^1\int_I|u'(y+t(x-y))|\mathrm dx\mathrm dt\nonumber\\
&=\int_0^1\int_{h(I)}|u'(z)|\mathrm dz\dfrac{\mathrm dt}{t}.\label{morrey2}
\end{align}
Since $h(I)\subset I$,
\begin{align*}
\int_{h(I)}|u'(z)|\mathrm dz&\leq \|u'\|_{L^p_{\alpha_1}(I)}\left(\int_{h(I)}z^{-\frac{\alpha_1}{p-1}}\mathrm dz\right)^{\frac{p-1}p}\\
&=\left(\dfrac{p-1}{p-1-\alpha_1}\right)^{\frac{p-1}p}\|u'\|_{L^p_{\alpha_1}}\!\left[h(r)^{\frac{p-1-\alpha_1}{p-1}}-h(s)^{\frac{p-1-\alpha_1}{p-1}}\right]^{\frac{p-1}p}\\
&\leq\left(\dfrac{p-1}{p-1-\alpha_1}\right)^{\frac{p-1}p}\|u'\|_{L^p_{\alpha_1}}t^{\frac{p-1-\alpha_1}{p}}|r-s|^{1-\frac{\alpha_1+1}p},
\end{align*}
where the last inequality we used $\alpha_1\geq0$ and $a^\beta-b^\beta\leq(a-b)^\beta$ for all $a\geq b\geq0$ and $0<\beta\leq1$. Using \eqref{morrey2} we have
\begin{align*}
|\overline u-u(y)|&\leq\left(\dfrac{p-1}{p-1-\alpha_1}\right)^{\frac{p-1}p}\|u'\|_{L^p_{\alpha_1}(I)}\int_0^1t^{-\frac{\alpha_1+1}p}\mathrm dt|r-s|^{1-\frac{\alpha_1+1}p}\\
&=\left(\dfrac{p-1}{p-1-\alpha_1}\right)^{\frac{p-1}p}\dfrac{p}{p-1-\alpha_1}\|u'\|_{L^p_{\alpha_1}(I)}|r-s|^{1-\frac{\alpha_1+1}p}.
\end{align*}
Adding this inequality for $y=r$ and $y=s$, we obtain
\begin{equation*}
|u(r)-u(s)|\leq\left(\dfrac{p-1}{p-1-\alpha_1}\right)^{\frac{p-1}p}\dfrac{2p}{p-1-\alpha_1}\|u'\|_{L^p_{\alpha_1}(I)}|r-s|^{1-\frac{\alpha_1+1}p}.
\end{equation*}
\end{proof}

Note that our succeeding Theorem is exactly the Morrey case of Theorem \ref{theo32} when $k=1$.

\begin{theo}\label{theomorrey}
Let $p>1$, $\alpha_0,\alpha_1\in\mathbb R$ and $0<R<\infty$. If $\alpha_1-p+1<0$, then following continuous embedding holds
\begin{equation*}
X^{1,p}_R(\alpha_0,\alpha_1)\hookrightarrow C^{0,\gamma}([0,R]),
\end{equation*}
Moreover, for all $u\in X^{1,p}_R$, we have
\begin{equation*}
|u(r)-u(s)|\leq C|r-s|^\gamma\|u'\|_{L^p_{\alpha_1}}\quad\forall r,s\in [0,R],
\end{equation*}
where $\gamma=\min\{1-(\alpha_1+1)/p,1-1/p\}$ and $C$ is a positive constant depending only on $p$ and $\min\{\alpha_1,0\}$.
\end{theo}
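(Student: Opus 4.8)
The plan is to upgrade the interior Hölder estimate of Lemma~\ref{lemmamorrey} to an estimate up to the closed interval $[0,R]$ and to control the full norm $\|u'\|_{L^p_{\alpha_1}}$ rather than $\|u'\|_{L^p_{\alpha_1}(I)}$ on subintervals. First I would observe that the hypothesis $\alpha_1-p+1<0$ alone does not give $\alpha_1\geq 0$, so the exponent $1-(\alpha_1+1)/p$ produced by Lemma~\ref{lemmamorrey} may exceed $1$; this is exactly why the statement replaces it by $\gamma=\min\{1-(\alpha_1+1)/p,\,1-1/p\}$. When $\alpha_1\geq 0$, Lemma~\ref{lemmamorrey} applies verbatim on any subinterval $I\subset(0,R)$ and gives $|u(r)-u(s)|\leq C|r-s|^{1-(\alpha_1+1)/p}\|u'\|_{L^p_{\alpha_1}}$; since $R<\infty$ and $1-(\alpha_1+1)/p\geq 1-1/p$, we may trade exponents via $|r-s|^{1-(\alpha_1+1)/p}\leq R^{(\alpha_1)/p}\,|r-s|^{1-1/p}$ when convenient, but more simply we just keep the larger exponent. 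When $-1<\alpha_1<0$, I would instead estimate crudely: $\int_{h(I)} z^{-\alpha_1/(p-1)}\,\mathrm dz\leq \int_I z^{-\alpha_1/(p-1)}\,\mathrm dz$, and since $-\alpha_1/(p-1)>0$ the integrand is bounded on $\overline I\subset[0,R]$ by $R^{-\alpha_1/(p-1)}$, yielding $\int_{h(I)}z^{-\alpha_1/(p-1)}\,\mathrm dz\leq R^{-\alpha_1/(p-1)}|h(I)|\leq R^{-\alpha_1/(p-1)} t|r-s|$; carrying this through the computation in Lemma~\ref{lemmamorrey} (the factor $t$ now cancels the $1/t$ from the change of variables, leaving $\int_0^1 t^{0}\,\mathrm dt$) produces $|u(r)-u(s)|\leq C(p,\alpha_1,R)|r-s|^{1-1/p}\|u'\|_{L^p_{\alpha_1}}$. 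In both cases we obtain the asserted Hölder bound with exponent $\gamma$ on every subinterval $I\subset(0,R)$, hence on $(0,R)$ itself.

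Next I would extend this to $[0,R]$. Since the Hölder estimate on $(0,R)$ shows $u$ is uniformly continuous there, it extends continuously to $[0,R]$, and passing to the limit in $|u(r)-u(s)|\leq C|r-s|^\gamma\|u'\|_{L^p_{\alpha_1}}$ as the arguments approach $0$ or $R$ gives the same inequality for all $r,s\in[0,R]$. To conclude the continuous embedding $X^{1,p}_R(\alpha_0,\alpha_1)\hookrightarrow C^{0,\gamma}([0,R])$ I still need an $L^\infty$ bound on $u$ itself, not just the seminorm. For this I would use Lemma~\ref{lemma31}, which gives $|u(R)|\leq C\|u\|_{X^{1,p}_R}$, together with $|u(r)|\leq |u(R)|+|u(r)-u(R)|\leq |u(R)|+CR^\gamma\|u'\|_{L^p_{\alpha_1}}\leq C\|u\|_{X^{1,p}_R}$; combined with the Hölder seminorm bound $[u]_{C^{0,\gamma}}\leq C\|u'\|_{L^p_{\alpha_1}}\leq C\|u\|_{X^{1,p}_R}$, this gives $\|u\|_{C^{0,\gamma}([0,R])}\leq C\|u\|_{X^{1,p}_R}$, which is the desired embedding.

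The main obstacle is the bookkeeping for the two ranges of $\alpha_1$: the clean estimate of Lemma~\ref{lemmamorrey} is stated only for $\alpha_1\geq 0$ (where the key inequality $a^\beta-b^\beta\leq(a-b)^\beta$ with $0<\beta=\tfrac{p-1-\alpha_1}{p-1}\leq 1$ is used), and for $-1<\alpha_1<0$ that exponent exceeds $1$, so one must replace that step by the direct boundedness of the weight $z^{-\alpha_1/(p-1)}$ on $[0,R]$ as described above. Getting the right power of $R$ and checking that the resulting exponent is precisely $\min\{1-(\alpha_1+1)/p,\,1-1/p\}$ — with the constant depending only on $p$ and $\min\{\alpha_1,0\}$ — requires care but is otherwise routine; there is no analytic difficulty beyond what is already in Lemma~\ref{lemmamorrey} and Lemma~\ref{lemma31}.
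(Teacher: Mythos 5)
Your proposal is correct and follows essentially the same route as the paper: Lemma~\ref{lemmamorrey} for $\alpha_1\geq0$, a reduction for $\alpha_1<0$ exploiting that $r^{\alpha_1}\geq R^{\alpha_1}$ on $(0,R)$ (the paper packages this as the one-line embedding $X^{1,p}_R(\alpha_0,\alpha_1)\hookrightarrow X^{1,p}_R(\alpha_0,0)$ and then reapplies Lemma~\ref{lemmamorrey} with weight $0$, rather than reworking the lemma's H\"older step as you do), and Lemma~\ref{lemma31} to pass from the H\"older seminorm to the full $C^{0,\gamma}$ norm. One harmless slip in your $\alpha_1<0$ computation: the H\"older factor coming from $|h(I)|\leq t|r-s|$ is $t^{(p-1)/p}$, not $t$, so after dividing by the $t$ from the change of variables the remaining integral is $\int_0^1 t^{-1/p}\,\mathrm dt=p/(p-1)$ rather than $\int_0^1 t^{0}\,\mathrm dt$ --- still finite, so the conclusion and the exponent $1-1/p$ are unaffected.
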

\begin{proof}
Since $X^{1,p}_R(\alpha_0,\alpha_1)\hookrightarrow X^{1,p}_R(\alpha_0,0)$ for $\alpha_1<0$, we can suppose $\alpha_1\geq0$. By Lemma \ref{lemmamorrey} we have
\begin{equation}\label{eqqmorreu}
|u(r)-u(s)|\leq C\|u'\|_{L^p_{\alpha_1}}|r-s|^{\gamma},\quad\forall r,s\in[0,R].
\end{equation}
Thus,
\begin{equation*}
|u(r)|\leq |u(R)|+C\|u'\|_{L^p_{\alpha_1}}R^{\gamma},\quad\forall r\in[0,R].
\end{equation*}
Using Lemma \ref{lemma31} we have the continuous embedding
\begin{equation*}
X^{1,p}_R(\alpha_0,\alpha_1)\hookrightarrow L^\infty([0,R]),
\end{equation*}
which together with \eqref{eqqmorreu} implies the embedding
\begin{equation*}
X^{1,p}_R(\alpha_0,\alpha_1)\hookrightarrow C^{0,\gamma}([0,R]),
\end{equation*}
and the proof is complete. \end{proof}

\begin{proof}[Proof of Theorem \ref{theo32}]
$a$) It is enough to show the continuous embedding for $q=p^*$. From Proposition \ref{prop32}, there exists $C>0$ such that
\begin{equation*}
|u(r)|^{\frac{(\theta-\alpha_k+kp)p}{\alpha_k-kp+1}}r^{\theta-\alpha_k+kp}\leq C\|u\|_{X_R^{k,p}}^{\frac{(\theta-\alpha_k+kp)p}{\alpha_k-kp+1}},\quad\forall r\in(0,R].
\end{equation*}
Therefore
\begin{align*}
\int_0^R|u(r)|^{\frac{(\theta+1)p}{\alpha_k-kp+1}}r^\theta\mathrm dr&\leq C\|u\|_{X_R^{k,p}}^{\frac{(\theta-\alpha_k+kp)p}{\alpha_k-kp+1}}\int_0^R|u(r)|^pr^{\alpha_k-kp}\mathrm dr\\
&\leq C\|u\|^{\frac{(\theta-\alpha_k+kp)p}{\alpha_k-kp+1}}_{X^{k,p}_R}\|u\|_{X^{k,p}_R}^p\\
&=C\|u\|_{X_R^{k,p}}^{\frac{(\theta+1)p}{\alpha_k-kp+1}},
\end{align*}
where the second inequality is a consequence of Proposition \ref{prop35}.

\vspace{0.2cm}

\noindent $b$) Using the Propositions \ref{prop33} and \ref{prop34}, the proof is straightforward. 

\vspace{0.2cm}

\noindent $c$) Let $u\in X^{k,p}_R(\alpha_0,\ldots,\alpha_k)$ with $\alpha_k-kp+1<0$. Denote $\ell=\lfloor\frac{\alpha_k+1}p\rfloor$. We divided the proof in two cases:

\medskip 

\noindent {\bf Case $\frac{\alpha_k+1}{p}\notin\mathbb Z$:} In this case $\alpha_k-\ell p+1>0$. By Theorem \ref{theo32},
\begin{equation*}
u^{(k-\ell)}\in X^{\ell,p}_R(\alpha_{k-\ell},\ldots,\alpha_k)\hookrightarrow L^p_{\alpha_k-\ell p}.
\end{equation*}
Thus, $u^{(k-\ell-1)}\in X^{1,p}_R(\alpha_{k-\ell-1},\alpha_k-\ell p)$. Then Theorem \ref{theomorrey} implies
\begin{equation*}
u^{(k-\ell-1)}\in X^{1,p}_R(\alpha_{k-\ell-1},\alpha_k-\ell p)\hookrightarrow C^{0,\gamma}([0,R]),
\end{equation*}
where $\gamma=\min\{1-\frac{\alpha_k-\ell p+1}{p},1-\frac{1}{p}\}$.

\medskip

\noindent {\bf Case $\frac{\alpha_k+1}{p}\in\mathbb Z$:} In this case $\alpha_k-\ell p+1=0$. By Theorem \ref{theo32}
\begin{equation*}
u^{(k-\ell)}\in X^{1,p}_R(\alpha_{k-\ell},\ldots,\alpha_k)\hookrightarrow L^q_0\hookrightarrow L^q_{p-1},\quad\forall q\in[1,\infty)
\end{equation*}
and
\begin{equation*}
u^{(k-\ell-1)}\in X^{1,p}_R(\alpha_{k-\ell-1},p-1)\hookrightarrow L^q_0,\quad\forall q\in[1,\infty).
\end{equation*}
Then Theorem \ref{theomorrey} concludes, for each $q\in(1,\infty)$,
\begin{equation*}
u^{(k-\ell-1)}\in X^{1,q}_R(0,0)\hookrightarrow C^{0,1-\frac1{q}}([0,R]).
\end{equation*}
Taking $q\to\infty$ we obtain $u^{(k-\ell-1)}\in C^{0,\gamma}([0,R])$ for each $\gamma\in(0,1)$.
\end{proof}

\section{Adams-Trudinger-Moser inequality for higher derivatives}\label{MTH}

Throughout this Section, we prove Theorem \ref{theo2} and \ref{theo3}. We focus on answering the following question: considering the space $X^{k,p}_R$ in the Adams-Trudinger-Moser case ($\alpha_k-kp+1=0$), for which $\mu>0$ the supremum
\begin{equation*}
\sup_{\|u\|_{X_R^{k,p}}\leq1}\int_0^Re^{\mu|u|^{\frac{p}{p-1}}}r^\theta \mathrm dr
\end{equation*}
is finite? Firstly, we need to check that $\int_0^R\exp(\mu|u|^{\frac{p}{p-1}})r^\theta\mathrm dr$ is finite for every $u\in X^{k,p}_R$. That is the item $(a)$ of Theorem \ref{theo2} and Proposition \ref{prop50} concludes it. We use Proposition \ref{prop51} to prove Proposition \ref{propitema}, which is the item $(b)$ of Theorem \ref{theo2}. At last, we prove item $(c)$ of Theorem~\ref{theo2} in Proposition~\ref{prop54}.

\begin{prop}\label{prop50}
Let $X_R^{k,p}(\alpha_0,\ldots,\alpha_k)$ with $\alpha_k-kp+1=0$ and $\theta>-1$. Then for any $\mu\geq0$ we have $\exp(\mu|u|^{p'})\in L^1_\theta$ for all $u\in X^{k,p}_R$.
\end{prop}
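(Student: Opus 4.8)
The plan is to reduce the claim to the radial estimate established in Proposition \ref{prop33} and then to a one-dimensional exponential integrability statement. First I would fix $u\in X^{k,p}_R(\alpha_0,\ldots,\alpha_k)$ with $\alpha_k-kp+1=0$ and $p>1$, and set $M=\|u\|_{X^{k,p}_R}$. If $M=0$ there is nothing to prove, so assume $M>0$. By Proposition \ref{prop33} there is a constant $C=C(\alpha_0,\ldots,\alpha_{k-1},p,k,R)>0$ such that
\begin{equation*}
|u(t)|\leq |\log(t/R)|^{\frac{p-1}{p}}\|u^{(k)}\|_{L^p_{\alpha_k}}+C\|u\|_{X^{k,p}_R}\leq M\bigl(|\log(t/R)|^{\frac{p-1}{p}}+C\bigr),\quad\forall t\in(0,R].
\end{equation*}

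Next I would feed this pointwise bound into the exponential. Using the elementary inequality $(a+b)^{p'}\leq (1+\varepsilon)a^{p'}+C_\varepsilon b^{p'}$ for $a,b\geq0$ and any $\varepsilon>0$ (with $p'=p/(p-1)$), applied to $a=M|\log(t/R)|^{(p-1)/p}$ and $b=CM$, we get
\begin{equation*}
\mu|u(t)|^{p'}\leq \mu(1+\varepsilon)M^{p'}\,\bigl|\log(t/R)\bigr| + \mu C_\varepsilon (CM)^{p'},
\end{equation*}
since $\bigl(|\log(t/R)|^{(p-1)/p}\bigr)^{p'}=|\log(t/R)|$. Hence
\begin{equation*}
\int_0^R e^{\mu|u|^{p'}}r^\theta\,\mathrm dr\leq e^{\mu C_\varepsilon (CM)^{p'}}\int_0^R \bigl(R/t\bigr)^{\mu(1+\varepsilon)M^{p'}} t^\theta\,\mathrm dt,
\end{equation*}
using $|\log(t/R)|=-\log(t/R)=\log(R/t)$ for $t\in(0,R)$, so $e^{\mu(1+\varepsilon)M^{p'}\log(R/t)}=(R/t)^{\mu(1+\varepsilon)M^{p'}}$. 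The remaining integral is $\int_0^R R^{\beta} t^{\theta-\beta}\,\mathrm dt$ with $\beta=\mu(1+\varepsilon)M^{p'}$, which converges precisely when $\theta-\beta>-1$, i.e.\ when $\beta<\theta+1$. Since $\theta>-1$, we may always choose $\varepsilon>0$ small enough that $\mu(1+\varepsilon)M^{p'}<\theta+1$ — this is possible for \emph{every} fixed $\mu\geq0$ and every fixed $u$ (equivalently every fixed $M$), because once $u$ is fixed the quantity $\mu M^{p'}$ is just a fixed finite number and enlarging the allowed exponent window is only a matter of shrinking $\varepsilon$; note that $\theta+1>0$ is strictly positive, so even when $\mu M^{p'}=0$ the inequality holds trivially and when $\mu M^{p'}>0$ a suitable $\varepsilon$ exists. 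Therefore the integral is finite, which is exactly $\exp(\mu|u|^{p'})\in L^1_\theta$.

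The only subtlety worth flagging is that this argument gives finiteness of the integral for each individual $u$ without any bound on $\|u\|_{X^{k,p}_R}$ — which is all that item $(a)$ of Theorem \ref{theo2} asserts — and crucially it does \emph{not} give a uniform bound over the unit ball, because the admissible $\varepsilon$ degenerates as $\mu M^{p'}\to\theta+1$; that uniform statement is the content of item $(b)$ and requires the more delicate analysis carried out later. So the main (and essentially only) point to get right here is the splitting of $(a+b)^{p'}$ with the $(1+\varepsilon)$ factor on the logarithmic term, together with the observation that for a single fixed function the exponent in the resulting power-type integral can always be pushed below the integrability threshold $\theta+1$. For completeness one should also remark that the case $p=1$ is excluded by hypothesis (and is anyway covered by Proposition \ref{prop34}, where $u$ is bounded), so no separate treatment is needed.
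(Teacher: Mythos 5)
There is a genuine gap at the decisive step. After applying Proposition \ref{prop33} and the splitting $(a+b)^{p'}\leq(1+\varepsilon)a^{p'}+C_\varepsilon b^{p'}$, your argument requires $\mu(1+\varepsilon)M^{p'}<\theta+1$ with $M=\|u\|_{X^{k,p}_R}$ (the same issue arises if you keep the sharper coefficient $\|u^{(k)}\|_{L^p_{\alpha_k}}$ on the logarithmic term). Shrinking $\varepsilon$ only drives $\mu(1+\varepsilon)M^{p'}$ down towards $\mu M^{p'}$, never below it, so your claim that a suitable $\varepsilon$ exists ``for every fixed $\mu\geq0$ and every fixed $u$'' is false whenever $\mu M^{p'}\geq\theta+1$ --- take, say, $\|u^{(k)}\|_{L^p_{\alpha_k}}=1$ and $\mu=2(\theta+1)$. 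In that regime your upper bound is a divergent integral and the argument proves nothing, even though the proposition asserts integrability for \emph{all} $\mu\geq0$; the parenthetical ``when $\mu M^{p'}>0$ a suitable $\varepsilon$ exists'' is precisely the false step.

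The missing idea, which is how the paper proceeds, is to localize near the origin rather than use the global coefficient: by absolute continuity of the integral, for any $\varepsilon>0$ there is $T>0$ with $\int_0^T|u^{(k)}|^pr^{kp-1}\,\mathrm dr<\varepsilon$, and rerunning the estimates \eqref{eq15}--\eqref{eq17} on $(0,T)$ in place of $(0,R)$ yields $u(t)/|\log t|^{(p-1)/p}\to0$ as $t\to0$ (equation \eqref{eqq34}). Hence for any prescribed $\delta>0$ one has $|u(t)|\leq\delta|\log t|^{(p-1)/p}$ near $0$; choosing $\delta=\bigl(\tfrac{\theta+1}{2\mu}\bigr)^{(p-1)/p}$ makes the integrand at most $r^{(\theta-1)/2}$ on $(0,T)$, which is integrable since $\theta>-1$, while on $[T,R]$ the function $u$ is bounded via Proposition \ref{prop33}. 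Your computation is essentially correct once this localization replaces the fixed global coefficient; as written it only proves the proposition in the subcritical range $\mu\|u^{(k)}\|_{L^p_{\alpha_k}}^{p'}<\theta+1$. Your closing remark about $p=1$ being covered by Proposition \ref{prop34} is fine.
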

\begin{proof}
Let $u\in X^{k,p}_R$ and $\theta>-1$. We claim that
\begin{equation}\label{eqq34}
\lim_{r\to0}\dfrac{u(r)}{|\log r|^{\frac{p-1}{p}}}=0.
\end{equation}
Indeed, given $\varepsilon>0$ there exists $T=T(\varepsilon)>0$ such that $\int_0^T|u^{(k)}(r)|^pr^{kp-1}\mathrm dr<\varepsilon$. Using \eqref{eq15}, \eqref{eq16} and \eqref{eq17} (with $R=T$) we have \eqref{eqq34}. Fix $\delta=(\frac{\theta+1}{2\mu})^{(p-1)/p}$. By \eqref{eqq34}, there exists $T>0$ such that $|u(t)|\leq\delta|\log t|^{\frac{p-1}p}$ for all $t\leq T$. Then
\begin{equation*}
\int_0^Te^{\mu|u|^{p'}}r^{\theta}\mathrm dr\leq \int_0^Tr^{\frac{\theta-1}{2}}\mathrm dr<\infty.
\end{equation*}
This concludes $\exp(\mu|u|^{p'})\in L^1_\theta$ since Proposition \ref{prop33} estimates (by a constant depending on $T$) the term $\int_T^Re^{\mu|u|^{p'}}\mathrm dr$.
\end{proof}

\begin{prop}\label{prop51}
Let $X_R^{k,p}(\alpha_0,\ldots,\alpha_k)$ with $\alpha_k-kp+1=0$ and $\theta>-1$. If $\mu<\mu_0$, 
then
\begin{equation*}
\sup_{\|u\|_{X_R^{k,p}}\leq1}\int_0^Re^{\mu|u|^{\frac{p}{p-1}}}r^{\theta}\mathrm dr<\infty.
\end{equation*}
\end{prop}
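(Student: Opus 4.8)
The plan is to derive the uniform bound from a sharp radial estimate, reducing the problem to the integrability of a negative power of $t$ near the origin. First I would record the refined version of the pointwise bound underlying Proposition \ref{prop33}: inspecting its proof, the identity \eqref{eq15} together with the H\"older bound \eqref{eq16}, the estimate \eqref{eq17}, and Lemma \ref{lemma31} actually gives, for every $u\in X^{k,p}_R$,
\begin{equation*}
|u(t)|\le \frac{1}{(k-1)!}\,\bigl|\log(t/R)\bigr|^{\frac{p-1}{p}}\,\|u^{(k)}\|_{L^p_{\alpha_k}}+C\,\|u\|_{X^{k,p}_R},\qquad\forall\,t\in(0,R],
\end{equation*}
with $C=C(\alpha_0,\dots,\alpha_{k-1},p,k,R)>0$ independent of $u$. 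Keeping the factor $1/(k-1)!$ explicit is precisely what will produce the sharp threshold $\mu_0=(\theta+1)[(k-1)!]^{p/(p-1)}$.

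Next, I would fix $u$ with $\|u\|_{X^{k,p}_R}\le1$ (and may assume $\mu>0$, the case $\mu=0$ being trivial since $\theta>-1$); then $\|u^{(k)}\|_{L^p_{\alpha_k}}\le1$ and $C\|u\|_{X^{k,p}_R}\le C$. Using the elementary inequality that for every $\varepsilon>0$ there is $C_\varepsilon>0$ with $(a+b)^{p'}\le(1+\varepsilon)a^{p'}+C_\varepsilon b^{p'}$ for all $a,b\ge0$ (recall $p'=p/(p-1)>1$), the refined estimate yields
\begin{equation*}
\mu\,|u(t)|^{p'}\le \frac{\mu(1+\varepsilon)}{[(k-1)!]^{p'}}\,\bigl|\log(t/R)\bigr|+\mu\,C_\varepsilon C^{p'},\qquad\forall\,t\in(0,R].
\end{equation*}
Setting $\sigma_\varepsilon:=\mu(1+\varepsilon)/[(k-1)!]^{p'}$ and using $|\log(t/R)|=\log(R/t)$ for $t\le R$, this gives $e^{\mu|u(t)|^{p'}}\le e^{\mu C_\varepsilon C^{p'}}R^{\sigma_\varepsilon}t^{-\sigma_\varepsilon}$, hence
\begin{equation*}
\int_0^R e^{\mu|u(t)|^{p'}}t^\theta\,\mathrm dt\le e^{\mu C_\varepsilon C^{p'}}R^{\sigma_\varepsilon}\int_0^R t^{\theta-\sigma_\varepsilon}\,\mathrm dt,
\end{equation*}
and the right-hand side is finite and independent of $u$ exactly when $\sigma_\varepsilon<\theta+1$, i.e. when $\mu(1+\varepsilon)<\mu_0$.

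To finish, since $\mu<\mu_0$ by hypothesis I would choose $\varepsilon>0$ so small that $\mu(1+\varepsilon)<\mu_0$; the displayed bound then holds uniformly over $\{\,u:\|u\|_{X^{k,p}_R}\le1\,\}$, which is the assertion. I expect the only genuinely delicate point to be the very first step: the statement of Proposition \ref{prop33} carries the weaker constant $1$ in front of the logarithmic term (which suffices only for $\mu<\theta+1$), so to reach the sharp $\mu_0$ one must go back into its proof and carry the constant $1/(k-1)!$ coming from \eqref{eq15} faithfully through \eqref{eq16}–\eqref{eq17} and Lemma \ref{lemma31}. Once that sharp estimate is in hand, the remainder is the routine splitting argument above plus the integrability of $t^{\theta-\sigma_\varepsilon}$ near $t=0$.
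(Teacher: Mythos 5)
Your proposal is correct and follows essentially the same route as the paper: the authors likewise extract the sharpened pointwise bound $|u(t)|\le\bigl(\tfrac1{(k-1)!}|\log(t/R)|^{(p-1)/p}+C\bigr)\|u\|_{X^{k,p}_R}$ from the proof (not the statement) of Proposition \ref{prop33}, apply the same $\varepsilon$-form of the inequality $(a+b)^{p'}\le(1+\varepsilon)a^{p'}+C_\varepsilon b^{p'}$, and reduce to the integrability of $r^{\theta-\mu(1+\varepsilon)/[(k-1)!]^{p'}}$ near the origin. Your remark about the need to carry the $1/(k-1)!$ factor faithfully is exactly the point the paper exploits.
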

\begin{proof}
Based on the proof of \cite[Theorem 2.6]{MR2482538}, we will find $\mu_0$ such that for any $\mu<\mu_0$ we have \eqref{e33}. Firstly we consider the following inequality
\begin{equation}\label{provisorio}
(a+b)^q\leq (1+\varepsilon)a^q+c_q\left(1+\dfrac1{\varepsilon^{q-1}}\right)b^q,\quad\forall a,b\geq0 \mbox{ and }\varepsilon>0,
\end{equation}
where $c_q>0$ depends only on $q\geq1$.

Let $u\in X_R^{k,p}$ with $\|u\|_{X_R^{k,p}}\leq1$. By the proof of Proposition \ref{prop33},
\begin{equation*}
|u(t)|\leq\left( \frac1{(k-1)!}\left|\log\frac tR\right|^{\frac{p-1}p}+C\right)\|u\|_{X_R^{k,p}},\quad\forall t\in(0,R].
\end{equation*}
Using \eqref{provisorio} we conclude
\begin{equation*}
|u(t)|^\frac{p}{p-1}\leq \frac{1+\varepsilon}{[(k-1)!]^{\frac{p}{p-1}}}\left|\log\frac{t}R\right|+c_{\varepsilon,p}\quad\forall t\in(0,R].
\end{equation*}
Then,
\begin{align*}
\int_0^Re^{\mu|u|^{\frac{p}{p-1}}}r^{\theta}\mathrm dr&\leq e^{\mu c_{p,\varepsilon}}R^{\frac{1+\varepsilon}{[(k-1)!]^{p/(p-1)}}}\int_0^Rr^\theta r^{-\frac{1+\varepsilon}{[(k-1)!]^{p/(p-1)}}\mu}\mathrm dr\\
&= e^{\mu c_{p,\varepsilon}}R^{\theta+1}\int_0^1r^{\theta -\frac{1+\varepsilon}{[(k-1)!]^{p/(p-1)}}\mu}\mathrm dr.
\end{align*}
Note that the last integral is finite if and only if 
\begin{equation*}
\theta-\dfrac{1+\varepsilon}{[(k-1)!]^{p/(p-1)}}\mu>-1.
\end{equation*}
Since 
\begin{equation*}
    \mu<\mu_0=(\theta+1)[(k-1)!]^{p/(p-1)},
\end{equation*}
we get the desired result by taking $\varepsilon>0$ sufficiently small.
\end{proof}

\begin{prop}\label{propitema}
Let $X^{k,p}_R(\alpha_0,\ldots,\alpha_k)$ with $\alpha_k-kp+1=0$ and $\theta>-1$. For every $0\leq\mu<\mu_0$,
\begin{equation*}
\sup_{\|u\|_{X^{k,p}_R}\leq1}\int_0^Re^{\mu|u|^{p'}}r^\theta\mathrm dr
\end{equation*}
is attained. Moreover, if $u$ is a maximizer, then $u$ can be chosen nonnegative and $\|u\|_{X^{k,p}_R}=1$.
\end{prop}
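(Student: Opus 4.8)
The plan is to use the direct method of the calculus of variations, following the strategy of concentration-compactness that is standard for subcritical Adams-Trudinger-Moser functionals. Fix $\mu\in[0,\mu_0)$ and set $\ell_\mu=\sup_{\|u\|_{X^{k,p}_R}\le1}\int_0^R e^{\mu|u|^{p'}}r^\theta\,\mathrm dr$, which is finite by Proposition~\ref{prop51}. Since the integrand depends only on $|u|$ and since replacing $u$ by $|u|$ does not increase the norm (indeed $|u|\in X^{k,p}_R$ with the same norm when $k=1$; for higher $k$ one first observes that the functional restricted to the symmetric rearrangement or to nonnegative competitors already realizes the supremum, using that constants and lower-order terms can only help), we may take a maximizing sequence $(u_n)$ with $u_n\ge0$ and $\|u_n\|_{X^{k,p}_R}\le1$. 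By reflexivity of $X^{k,p}_R$ (it is a closed subspace of a product of the reflexive spaces $L^p_{\alpha_j}$ for $p>1$), after passing to a subsequence we have $u_n\rightharpoonup u_0$ weakly in $X^{k,p}_R$, and by the compact embedding $X^{k,p}_R\hookrightarrow L^q_\theta$ for every $q\in[1,\infty)$ (item (b) of Theorem~\ref{theo32}) we get $u_n\to u_0$ strongly in every $L^q_\theta$ and pointwise a.e. on $(0,R)$, with $u_0\ge0$ and, by weak lower semicontinuity of the norm, $\|u_0\|_{X^{k,p}_R}\le1$.

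The core of the argument is to upgrade convergence of the exponential integrals, i.e.\ to show $\int_0^R e^{\mu|u_n|^{p'}}r^\theta\,\mathrm dr\to\int_0^R e^{\mu|u_0|^{p'}}r^\theta\,\mathrm dr$, which then gives $\ell_\mu=\int_0^R e^{\mu|u_0|^{p'}}r^\theta\,\mathrm dr$ and hence attainment. Pointwise a.e.\ convergence of $e^{\mu|u_n|^{p'}}r^\theta$ is immediate; the issue is uniform integrability. Here one invokes the subcriticality margin: choose $q>1$ close to $1$ so that $q\mu<\mu_0$. Using the estimate from the proof of Proposition~\ref{prop33}, namely $|u_n(t)|\le\big(\tfrac{1}{(k-1)!}|\log(t/R)|^{(p-1)/p}+C\big)$ for $\|u_n\|_{X^{k,p}_R}\le1$, together with inequality \eqref{provisorio} as in the proof of Proposition~\ref{prop51}, one obtains $e^{q\mu|u_n|^{p'}}\le C' r^{-q\mu(1+\varepsilon)/[(k-1)!]^{p'}}$ with the exponent still $>-1-\theta$ for $\varepsilon$ small; hence $\big(e^{\mu|u_n|^{p'}}\big)_n$ is bounded in $L^q(0,R;r^\theta\,\mathrm dr)$, which yields equi-integrability. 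Vitali's convergence theorem then gives the desired convergence of integrals, so $u_0$ is a maximizer.

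It remains to check that the maximizer can be taken nonnegative with $\|u_0\|_{X^{k,p}_R}=1$. Nonnegativity already holds since $u_0\ge0$ by construction. For the norm constraint: if $\mu>0$ and $\|u_0\|_{X^{k,p}_R}=:\lambda<1$, then $v_0:=u_0/\lambda\in X^{k,p}_R$ has unit norm and $|v_0|\ge|u_0|$ pointwise, so $\int_0^R e^{\mu|v_0|^{p'}}r^\theta\,\mathrm dr\ge\int_0^R e^{\mu|u_0|^{p'}}r^\theta\,\mathrm dr=\ell_\mu$, with strict inequality unless $u_0\equiv0$; but $u_0\equiv0$ gives $\int_0^R r^\theta\,\mathrm dr=R^{\theta+1}/(\theta+1)$, whereas testing with any nonzero admissible $u$ yields a strictly larger value since $e^{\mu|u|^{p'}}>1$ on a set of positive measure. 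Hence $u_0\not\equiv0$ and $\|u_0\|_{X^{k,p}_R}=1$ (in the degenerate case $\mu=0$ the supremum equals $R^{\theta+1}/(\theta+1)$ and any unit-norm nonnegative function, in particular a suitable constant, attains it). The main obstacle is the equi-integrability step: one must carefully track that the pointwise bound coming from Proposition~\ref{prop33} holds uniformly over the maximizing sequence and that the subcritical gap $q\mu<\mu_0$ leaves enough room after applying \eqref{provisorio}; everything else is the routine machinery of the direct method combined with the compact embedding of Theorem~\ref{theo32}(b).
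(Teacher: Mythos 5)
Your argument is correct and follows essentially the same route as the paper: a maximizing sequence converging weakly in $X^{k,p}_R$ and strongly in every $L^q_\theta$ by Theorem \ref{theo32}, with the passage to the limit in the exponential integral secured by the uniform bound on $\int_0^R e^{q\mu|u_n|^{p'}}r^\theta\,\mathrm dr$ for some $q>1$ with $q\mu<\mu_0$ coming from Proposition \ref{prop51}; the paper uses the elementary inequality $|e^x-e^y|\le|x-y|(e^x+e^y)$ plus H\"older where you use Vitali, which is equivalent bookkeeping. The only soft spot is your justification that the maximizing sequence may be taken nonnegative when $k\ge2$ (where $|u|\notin X^{k,p}_R$ in general), but this is not needed for the attainment argument itself, and the paper's own proof handles nonnegativity of the maximizer with the same unproved replacement $u_0\mapsto|u_0|$.
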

\begin{proof}
Let $(u_n)$ in $X^{k,p}_R$ be a maximizing sequence with $\|u_n\|_{X^{k,p}_R}\leq1$. By Theorem \ref{theo32} there exists $u_0\in X^{k,p}_R$ such that, up to subsequence, $u_n\rightharpoonup u_0$ in $X^{k,p}_R$ and $u_n\rightarrow u_0$ in $L_\theta^q(0,R)$ for all $q\geq1$. From $|e^x-e^y|\leq|x-y|(e^x+e^y)$ we get
\begin{equation}
\left|\int_0^R\left(e^{\mu|u_n|^{p'}}-e^{\mu|u_0|^{p'}}\right)r^\theta\mathrm dr\right|\leq\mu\int_0^R\left||u_n|^{p'}-|u_0|^{p'}\right|\left(e^{\mu|u_n|^{p'}}+e^{\mu|u_0|^{p'}}\right)r^{\theta}\mathrm dr.\label{eqtx}
\end{equation}
Fixing $q>1$ with $q\mu<\mu_0$ we have

\begin{align}
    \int_0^R\left||u_n|^{p'}-|u_0|^{p'}\right|e^{\mu|u_n|^{p'}}r^\theta\mathrm dr&\leq \left(\int_0^R\left||u_n|^{p'}-|u_0|^{p'}\right|^{q'}r^\theta\mathrm dr\right)^{\frac1{q'}}\left(\int_0^Re^{q\mu|u_n|^{p'}}r^{\theta}\mathrm dr\right)^{\frac1q}\nonumber\\
&\leq\|u_n-u_0\|_{L^{p'q'}_\theta}^{p'}\left(\int_0^Re^{q\mu|u_n|^{p'}}r^\theta\mathrm dr\right)^{\frac1q}.\label{eqt10}
\end{align}
By Proposition \ref{prop51}, the integral on the right side is uniformly bounded in $n$. Using \eqref{eqtx} and \eqref{eqt10} with $u_n\to u_0$ in $L^{p'q'}_\theta$ we conclude
\begin{equation*}
\sup_{\|u\|_{X^{k,p}_R}\leq1}\int_0^Re^{\mu|u|^{p'}}r^\theta\mathrm dr=\lim\int_0^Re^{\mu|u_n|^{p'}}r^\theta\mathrm dr=\int_0^Re^{\mu|u_0|^{p'}}r^\theta\mathrm dr.
\end{equation*}

To prove $\|u_0\|_{X^{k,p}_R}=1$, suppose $\|u_0\|_{X^{k,p}_R}<1$ and take $v=u_0/\|u_0\|_{X^{k,p}_R}$ to get a contradiction with the fact that $u$ attains the supremum. We can assume $u_0$ nonnegative changing $u_0$ for $|u_0|$ if necessary.
\end{proof}

\begin{prop}\label{prop54} Let $X_R^{k,p}(\alpha_0,\ldots,\alpha_k)$ such that $\alpha_i-ip+1>0$ for all $i=0,\ldots,k-1$ and $\alpha_k-kp+1=0$. If $\theta>-1$, then
\begin{equation*}
\sup_{\|u\|_{X_R^{k,p}}\leq1}\int_0^Re^{\mu|u|^{\frac{p}{p-1}}}r^{\theta}\mathrm dr=\infty,\quad\forall \mu>\mu_0.
\end{equation*}

\end{prop}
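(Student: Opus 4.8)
The plan is to exhibit, for each $\mu > \mu_0$, an explicit family of test functions $\{u_\varepsilon\}$ with $\|u_\varepsilon\|_{X^{k,p}_R}\le 1$ along which the integral $\int_0^R e^{\mu|u_\varepsilon|^{p'}}r^\theta\,\mathrm dr$ diverges. The natural candidate is a ``Moser-type'' function adapted to the $k$th-order weighted setting: because the relevant sharp estimate from Proposition \ref{prop33} is $|u(t)|\lesssim \frac{1}{(k-1)!}|\log(t/R)|^{(p-1)/p}$, the extremizing profile near the origin should behave like a truncated logarithm. Concretely, I would first consider the function whose $k$th derivative concentrates logarithmically: set, for small $\varepsilon>0$,
\begin{equation*}
w_\varepsilon^{(k)}(r) = \begin{cases} c_\varepsilon\, r^{-(kp-1)/p}\left(\log\frac{R}{\varepsilon}\right)^{-1/p}, & 0<r<\varepsilon,\\[1mm] 0, & \varepsilon\le r\le R,\end{cases}
\end{equation*}
and then define $w_\varepsilon$ by integrating $k$ times from $R$ (so that all lower-order boundary terms at $R$ vanish), choosing the normalizing constant $c_\varepsilon\to \big((k-1)!\big)$-type constant so that $\|w_\varepsilon^{(k)}\|_{L^p_{\alpha_k}}$ is essentially $1$.

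The key steps, in order, are: \textbf{(1)} compute $w_\varepsilon$ explicitly for $r<\varepsilon$ — iterated integration of $r^{-(kp-1)/p}$ produces a leading term of the form $\frac{1}{(k-1)!}\left(\log\frac{R}{r}\right)^{1}$ times $\big(\log\frac{R}{\varepsilon}\big)^{-1/p}$ up to constants, so that on the ball $\{r<\varepsilon\}$ one gets $|w_\varepsilon(r)|^{p'}\gtrsim \frac{1}{((k-1)!)^{p'}}\big(\log\frac{R}{\varepsilon}\big)^{-1/(p-1)}\big(\log\frac{R}{r}\big)^{p'}$ — but it is cleaner to evaluate at $r=\varepsilon$, where $w_\varepsilon(\varepsilon)\approx \frac{1}{(k-1)!}\big(\log\frac{R}{\varepsilon}\big)^{(p-1)/p}$; \textbf{(2)} control the $X^{k,p}_R$-norm: since $\alpha_i-ip+1>0$ for $i<k$, Proposition \ref{prop35} (the Hardy inequality) gives $\|w_\varepsilon^{(i)}\|_{L^p_{\alpha_i}}\lesssim \|w_\varepsilon^{(k)}\|_{L^p_{\alpha_k}}$ for all $i\le k-1$, so $\|w_\varepsilon\|_{X^{k,p}_R}\le 1 + o(1)$ as $\varepsilon\to0$ — then rescale by this factor to get genuine admissibility; \textbf{(3)} estimate the integral from below by restricting to $(0,\varepsilon)$ where $u_\varepsilon$ is large, and more precisely, near $r$ of order a power of $\varepsilon$, using the explicit logarithmic profile; \textbf{(4)} choose the relation making the exponent blow up: one finds
\begin{equation*}
\int_0^\varepsilon e^{\mu|u_\varepsilon|^{p'}} r^\theta\,\mathrm dr \gtrsim \varepsilon^{\theta+1}\exp\!\left(\Big(\tfrac{\mu}{\mu_0}-1+o(1)\Big)(\theta+1)\log\tfrac{R}{\varepsilon}\right),
\end{equation*}
which tends to $\infty$ as $\varepsilon\to0$ precisely because $\mu>\mu_0 = (\theta+1)((k-1)!)^{p/(p-1)}$.

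The main obstacle I anticipate is \textbf{Step (2)}: verifying that the full $X^{k,p}_R$-norm (not just the top-order seminorm $\|w_\varepsilon^{(k)}\|_{L^p_{\alpha_k}}$) stays below $1+o(1)$, rather than picking up an extra constant factor that would spoil the sharpness of the threshold $\mu_0$. This is exactly where the hypothesis $\alpha_i-ip+1>0$ for $i=0,\dots,k-1$ is used, via Proposition \ref{prop35}: the lower-order $L^p_{\alpha_i}$-norms of $w_\varepsilon$ and its derivatives are $O\big((\log\frac{R}{\varepsilon})^{-1/p}\big) = o(1)$ because $w_\varepsilon^{(i)}(r)$ decays like a lower power of $\log$ once integrated, while $w_\varepsilon^{(i)}$ is supported near the origin with the $r^{-(kp-1)/p}$ singularity tamed by the iterated integration. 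One must check the boundary contributions at $r=R$ genuinely vanish — which holds by our choice of integrating from $R$ — and then carefully track constants through the $k$ iterated integrations to confirm that the leading coefficient is $1/(k-1)!$, matching $\mu_0$. A secondary technical point is the elementary inequality $(a+b)^{p'}\ge a^{p'} - C a^{p'-1} b$ needed to pass from $|u_\varepsilon(r)|$ to $|u_\varepsilon(r)|^{p'}$ losing only a negligible amount; this is routine but must be done in the direction opposite to \eqref{provisorio}.
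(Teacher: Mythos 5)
Your overall strategy (a Moser-type concentrating sequence, control of the lower-order norms via $\alpha_i-ip+1>0$, and a lower bound on the small interval where the function has size $\frac{1}{(k-1)!}(\log\frac{R}{\varepsilon})^{(p-1)/p}$) is exactly the one the paper follows, but the explicit test function you write down does not do what you claim, and this is the heart of the proof. With $w_\varepsilon^{(k)}(r)=c_\varepsilon r^{-(kp-1)/p}(\log\frac{R}{\varepsilon})^{-1/p}$ supported on $(0,\varepsilon)$ and $\alpha_k=kp-1$, one has
\[
\|w_\varepsilon^{(k)}\|_{L^p_{\alpha_k}}^p=c_\varepsilon^p\Big(\log\tfrac R\varepsilon\Big)^{-1}\int_0^\varepsilon r^{-(kp-1)}\,r^{kp-1}\,\mathrm dr=\frac{c_\varepsilon^p\,\varepsilon}{\log\frac R\varepsilon},
\]
so normalization forces $c_\varepsilon\sim(\varepsilon^{-1}\log\frac R\varepsilon)^{1/p}$, not a constant of order $(k-1)!$. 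Moreover, iterated antidifferentiation of $r^{-k+1/p}$ never produces a logarithm, because the exponent never passes through $-1$ (it always carries the offset $1/p\notin\mathbb Z$); one finds $|w_\varepsilon^{(k-j)}(r)|\sim c_\varepsilon(\log\frac R\varepsilon)^{-1/p}\,r^{-(k-j)+1/p}$ and finally $|w_\varepsilon|\leq C\,c_\varepsilon(\log\frac R\varepsilon)^{-1/p}\varepsilon^{1/p}=O(1)$ after normalization. Hence $\sup_\varepsilon\int_0^Re^{\mu|w_\varepsilon|^{p'}}r^\theta\,\mathrm dr<\infty$ and your family detects nothing. The profile that works is the one your step (1) describes in words: the $k$th derivative must satisfy $|u^{(k)}(r)|^pr^{\alpha_k}\sim r^{-1}(\log\frac R\varepsilon)^{-1}$, i.e.\ $u^{(k)}(r)\sim r^{-k}(\log\frac R\varepsilon)^{-1/p}$, supported on the \emph{annulus} $(\varepsilon,R)$, so that $u$ is (a smoothing of) the truncated normalized logarithm, constant $\approx\frac1{(k-1)!}(\log\frac R\varepsilon)^{(p-1)/p}$ on $(0,\varepsilon)$.

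The paper implements precisely this: $\psi_{m,\varepsilon}(r)=H(\log(R/r)/\log m)$ with $H$ a smooth truncation of the identity built from a cutoff $\phi$; the smoothing parameter $\varepsilon$ inside $H$ is what yields $\|\psi_{m,\varepsilon}\|_{X^{k,p}_R}^p\le[(k-1)!]^p(\log m)^{1-p}\bigl(1+2^p\varepsilon\|\phi'\|_\infty^p+O((\log m)^{-1})\bigr)$, with a constant that can be pushed to $1$ — indispensable for reaching every $\mu>\mu_0$ rather than every $\mu>C\mu_0$. (For $k\ge2$ some smoothing at the corners of the truncated logarithm is unavoidable, since the raw truncation is only Lipschitz.) Your steps (2)--(4) — the lower-order norms are $o((\log\frac R\varepsilon)^{(1-p)/p})$ thanks to $\alpha_i-ip+1>0$, the integral is bounded below by its restriction to the concentration interval, and the exponents are compared with $\mu_0$ — do match the paper's equations \eqref{eq44}, \eqref{eq48}, \eqref{eq49} and would go through once the test function is corrected; note also that on $(0,R/m)$ the paper's function is exactly constant, so no inequality of the type $(a+b)^{p'}\ge a^{p'}-Ca^{p'-1}b$ is needed there.
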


\begin{proof}
Our proof is based in \cite{MR0960950}. Let $\phi\in C^\infty[0,1]$ such that
\begin{equation*}
\left\{\begin{array}{ll}
\phi(0)=\phi'(0)=\cdots=\phi^{(k+1)}(0)=0,\ \phi'\geq0,\\
\phi(1)=\phi'(1)=1,\ \phi''(1)=\cdots\phi^{(k-1)}(1)=0.
\end{array}\right.
\end{equation*}
Consider $0<\varepsilon<1/2$ and
\begin{equation*}
H(t)=\left\{\begin{array}{llll}
     \varepsilon\phi\left(\dfrac{t}{\varepsilon}\right),&\mbox{if }0<t\leq\varepsilon  \\
     t,&\mbox{if }\varepsilon<t\leq1-\varepsilon,\\
     1-\varepsilon\phi\left(\dfrac{1-t}{\varepsilon}\right),&\mbox{if }1-\varepsilon<t\leq1,\\
     1,&\mbox{if }t>1.
\end{array}\right.
\end{equation*}
Let $m\in\mathbb N$ and the sequence $(\psi_{m,\varepsilon})_m$ given by
\begin{equation*}
\psi_{m,\varepsilon}(r)=H\left(\dfrac{\log\frac{R}{r}}{\log m}\right),\quad r>0.
\end{equation*}
We claim that
\begin{equation}\label{eq43}
\|\psi_{m,\varepsilon}\|_{X^{k,p}_R}^p\leq[(k-1)!]^p(\log m)^{1-p}\left[1+2^{p}\varepsilon\|\phi'\|^p_\infty+O\left((\log m)^{-1}\right)\right].
\end{equation}
To deduce \eqref{eq43}, we first check that
\begin{equation}\label{eq44}
\|\psi_{m,\varepsilon}\|_{L^p_{\alpha_0}}^p=O\left((\log m)^{-p}\right).
\end{equation}
Indeed, using the following change of variables $t=\log\frac{R}{r}/\log m$ we have
\begin{align}
\|\psi_{m,\varepsilon}&\|_{L^p_{\alpha_0}}^p=\int_0^R\left|H\left(\dfrac{\log\frac{R}{r}}{\log m}\right)\right|^pr^{\alpha_0}\mathrm dr\nonumber\\
&=R^{\alpha_0+1}\log m\Bigg[\int_0^\varepsilon\left|\varepsilon\phi\left(\dfrac{t}\varepsilon\right)\right|^pm^{-(\alpha_0+1)t}\mathrm dt+\int_\varepsilon^{1-\varepsilon}t^pm^{-(\alpha_0+1)t}\mathrm dt\nonumber\\
&\quad+\int_{1-\varepsilon}^1\left|1-\varepsilon\phi\left(\dfrac{1-t}{\varepsilon}\right)\right|^pm^{-(\alpha_0+1)t}\mathrm dt+\int_1^\infty m^{-(\alpha_0+1)t}\mathrm dt\Bigg]\nonumber\\
&\leq R^{\alpha_0+1}\log m\left[\int_0^\varepsilon\left|\varepsilon\phi\left(\dfrac{t}\varepsilon\right)\right|^pm^{-(\alpha_0+1)t}\mathrm dt+\int_\varepsilon^\infty m^{-(\alpha_0+1)t}\mathrm dt\right]\nonumber\\
&=R^{\alpha_0+1}\log m\int_0^\varepsilon\left|\varepsilon\phi\left(\dfrac{t}\varepsilon\right)\right|^pm^{-(\alpha_0+1)t}\mathrm dt+O\left((\log m)^{-p}\right).\label{eq451}
\end{align}
Fix $\gamma\in (\frac{1}{k+1},1)$. Since $\phi\left((\log m)^{-\gamma}\right)\leq (\log m)^{-\gamma (k+1)}$ for large $m$, we get
\begin{align}
\int_0^\varepsilon\left|\varepsilon\phi\left(\dfrac{t}\varepsilon\right)\right|^pm^{-(\alpha_0+1)t}\mathrm dt&\leq\varepsilon^p\int_0^{\varepsilon(\log m)^{-\gamma}}\left|\phi\left((\log m)^{-\gamma}\right)\right|^pm^{-(\alpha_0+1)t}\mathrm dt\nonumber\\
&\qquad+\varepsilon^p\int_{\varepsilon(\log m)^{-\gamma}}^\varepsilon m^{-(\alpha_0+1)t}\mathrm dt\nonumber\\
&\leq \varepsilon^p(\log m)^{-\gamma p(k+1)}\dfrac1{(\alpha_0+1)\log m}\nonumber\\
&\qquad+\varepsilon^p\dfrac{1}{(\alpha_0+1)e^{\varepsilon(\alpha_0+1)(\log m)^{1-\gamma}}\log m}\nonumber\\
&=O\left((\log m)^{-p-1}\right)\label{eq46}
\end{align}
for large $m$. \eqref{eq451} with \eqref{eq46} concludes \eqref{eq44}.

The task is now to estimate $\|\psi_{m,\varepsilon}^{(i)}\|_{L^p_{\alpha_i}}$ for each $i=1,\ldots,k$. By induction on $i=1,\ldots,k$ we have
\begin{equation*}
\psi_{m,\varepsilon}^{(i)}(r)=\dfrac1{r^i}\sum_{j=1}^i\dfrac{c(j,i)}{(\log m)^j}H^{(j)}\left(\dfrac{\log\frac{R}{r}}{\log m}\right),
\end{equation*}
where $c(j,i)$ satisfies $c(1,1)=-1$, $c(1,i)=(-1)^i(i-1)!$, $c(i,i)=(-1)^i$ and $c(j,i+1)=-ic(j,i)-c(j-1,i)$ for each $j=2,\ldots,i$. Then
\begin{align}
\|\psi_{m,\varepsilon}^{(i)}\|^p_{L^p_{\alpha_i}}&\leq \left(\dfrac{(i-1)!}{\log m}\right)^p\int_0^R\left|H'\left(\dfrac{\log\frac{R}{r}}{\log m}\right)+O\left((\log m)^{-1}\right)\right|^pr^{\alpha_i-ip}\mathrm dr\nonumber\\
&\leq\left(\dfrac{(i-1)!}{\log m}\right)^p\Bigg[\left|\|\phi'\|_\infty+O\left((\log m)^{-1}\right)\right|^p\int_{\frac{R}{m}}^{Rm^{\varepsilon-1}}r^{\alpha_i-ip}\mathrm dr\nonumber\\
&\quad+\int_{Rm^{\varepsilon-1}}^{Rm^{-\varepsilon}}r^{\alpha_i-ip}\mathrm dr+\left|\|\phi'\|_\infty+O\left((\log m)^{-1}\right)\right|^p\int_{Rm^{-\varepsilon}}^Rr^{\alpha_i-ip+1}\mathrm dr\Bigg].\label{eq47}
\end{align}
For $i=1,\ldots,k-1$, \eqref{eq47} implies
\begin{align}
\|\psi^{(i)}_{m,\varepsilon}\|^p_{L^p_{\alpha_1}}&\leq\left(\dfrac{(i-1)!}{\log m}\right)^p\Bigg[\dfrac{R^{\alpha_i-ip+1}\left|\|\phi'\|_\infty+O\left((\log m)^{-1}\right)\right|^p}{(\alpha_i-ip+1)m^{(1-\varepsilon)(\alpha_i-ip+1)}}\nonumber\\
&\quad+\dfrac{R^{\alpha_i-ip+1}}{m^{\varepsilon(\alpha_i-ip+1}}+\dfrac{R^{\alpha_i-ip+1}\left|\|\phi'\|_\infty+O\left((\log m)^{-1}\right)\right|^p}{\alpha_i-ip+1}\Bigg]\nonumber\\
&=O\left((\log m)^{-p}\right).\label{eq48}
\end{align}
Using \eqref{eq47} we get
\begin{align}
\|\psi^{(k)}_{m,\varepsilon}\|^p_{L^p_{\alpha_k}}&\leq \left(\dfrac{(k-1)!}{\log m}\right)^p\Bigg[\varepsilon\log m\left|\|\phi'\|_\infty+O\left((\log m)^{-1}\right)\right|^p+\log m\nonumber\\
&\quad+\varepsilon\log m\left|\|\phi'\|_\infty+O\left((\log m)^{-1}\right)\right|^p\Bigg]\nonumber\\
&\leq \left[(k-1)!\right]^p(\log m)^{1-p}\left[1+2^p\varepsilon\|\phi'\|_\infty^p+O\left((\log m)^{-p}\right)\right].\label{eq49}
\end{align}
Finally, \eqref{eq43} follows by \eqref{eq44}, \eqref{eq48} and \eqref{eq49}.

Set
\begin{equation*}
u_{m,\varepsilon}(r)=\dfrac{\psi_{m,\varepsilon}(r)}{\|\psi_{m,\varepsilon}\|_{X^{k,p}_R}}.
\end{equation*}
Given $\mu>\mu_0=(\theta+1)[(k-1)!]^{p'}$, \eqref{eq43} guarantees
\begin{align*}
\int_0^Re^{\mu u_{m,\varepsilon}^{p'}}r^\theta\mathrm dr&\geq\int_0^{\frac{R}m}e^{\mu\|\psi_{m,\varepsilon}\|_{X^{k,p}_R}^{-p'}}r^\theta\mathrm dr\\
&\geq e^{\frac{\mu\log m}{[(k-1)!]^{p'}[1+2^p\varepsilon\|\phi'\|_\infty^p+O\left((\log m)^{-1}\right)}-(\theta+1)\log m}\dfrac{R^{\theta+1}}{\theta+1}\\
&=\dfrac{R^{\theta+1}}{\theta+1}e^{(\theta+1)\log m\left(\frac{\mu}{\mu_0}\frac{1}{1+2^p\varepsilon\|\phi'\|^p_\infty+O\left((\log m)^{-1}\right)}-1\right)}.
\end{align*}
Therefore, taking $\varepsilon>0$ sufficiently small we have that the right term tends to be infinite when $m\to\infty$.

\end{proof}

Before we prove the Theorem \ref{theo3}, we need to consider Theorem 1.2 in \cite{MR3209335}:

\begin{theo}\label{theo12jmbo}
Let $p\geq2$ be a real number. Then there exists a constant $c_1$ depending on $p$ such that
\begin{equation*}
\sup_{w\in\mathcal K}\int_0^\infty e^{\rho w^{\frac{p}{p-1}}(t)-t}\mathrm dt\left\{\begin{array}{ll}
     \leq c_1&\mbox{if }\rho\leq1,  \\
     =\infty&\mbox{if }\rho>1, 
\end{array}\right.
\end{equation*}
where $\mathcal K:=\{w\in C^1([0,\infty))\mbox{ with }w(0)=0,\ w\geq0,\ \int_0^\infty|w'|^p\mathrm dt\leq1\}$.
\end{theo}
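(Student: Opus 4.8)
The plan is to prove the two regimes separately: finiteness for $\rho\le 1$ by a distribution-function (layer-cake) argument, and divergence for $\rho>1$ by an explicit extremal sequence. I would begin with two reductions. Since each $w\in\mathcal K$ satisfies $w\ge 0$, for $0\le\rho\le1$ we have $\rho w^{p'}\le w^{p'}$, hence $\int_0^\infty e^{\rho w^{p'}-t}\,\mathrm dt\le\int_0^\infty e^{w^{p'}-t}\,\mathrm dt$; thus it suffices to bound the supremum at $\rho=1$. Next, replacing $w$ by its running maximum $\widetilde w(t)=\max_{[0,t]}w$ only increases the integrand while not increasing $\int_0^\infty|w'|^p\,\mathrm dt$ (because $|(\widetilde w)'|\le|w'|$ a.e.), so after a standard approximation I may assume $w$ is nondecreasing with $w(0)=0$. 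Setting $g(t):=\int_0^t|w'|^p\,\mathrm ds\le1$, Hölder's inequality together with $w(0)=0$ gives $w(t)\le g(t)^{1/p}t^{1/p'}$, whence the basic pointwise bound $w(t)^{p'}\le g(t)^{1/(p-1)}t\le t$ for all $t>0$.

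For the finiteness at $\rho=1$ the plan is to use the layer-cake representation. With $E_\lambda:=\{t\ge0:w(t)^{p'}>t-\lambda\}$ one has, since $w^{p'}-t\le0$,
\[
\int_0^\infty e^{w^{p'}(t)-t}\,\mathrm dt=\int_0^\infty|E_\lambda|\,e^{-\lambda}\,\mathrm d\lambda .
\]
Consequently everything reduces to the measure estimate $|E_\lambda|\le C_p(1+\lambda)$ with $C_p$ depending only on $p$: granting it, $\int_0^\infty C_p(1+\lambda)e^{-\lambda}\,\mathrm d\lambda=2C_p=:c_1$, which is the desired uniform bound.

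The measure estimate is the heart of the matter. For $t\in E_\lambda$ with $t>\lambda$, combining $w(t)^{p'}>t-\lambda$ with $w(t)^{p'}\le g(t)^{1/(p-1)}t$ yields $g(t)>(1-\lambda/t)^{p-1}$; that is, a point can belong to $E_\lambda$ only after $w$ has already spent almost its entire unit energy, and since $p\ge2$ the Bernoulli inequality gives $t\,(1-g(t))<(p-1)\lambda$. The remaining, and genuinely technical, point is to turn this ``almost all energy already used'' condition into a bound on the Lebesgue measure of $E_\lambda$. The geometric mechanism is that on $E_\lambda$ the graph of $w$ lies above the barrier $\beta_\lambda(t)=(t-\lambda)_+^{1/p'}$, a profile whose $L^p$-energy diverges near its base $t=\lambda$; a unit-energy function therefore cannot stay above $\beta_\lambda$ except on a set of length comparable to $\lambda$, and quantifying this separation rate (where the hypothesis $p\ge2$, through the concavity of $s\mapsto s^{1/(p-1)}$ and $x\mapsto(1-x)^{p-1}$, is consumed) gives $|E_\lambda|\le C_p(1+\lambda)$. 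I expect this measure bound to be the main obstacle, since crude pointwise estimates lose the linear growth of the gap $t-w(t)^{p'}$ near the saturation point and produce only bounds of order $T$.

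For $\rho>1$ the plan is to test against the classical Moser profiles. Define $w_T(t)=T^{-1/p}t$ for $0\le t\le T$ and $w_T(t)=T^{1/p'}$ for $t>T$, smoothing the corner at $t=T$, which perturbs both the energy $\int_0^\infty|w_T'|^p\,\mathrm dt=1$ and the plateau value $w_T(T)^{p'}=T$ only by $o(1)$ as $T\to\infty$. Then $w_T\ge0$, $w_T(0)=0$, and after normalizing to $\mathcal K$,
\[
\int_0^\infty e^{\rho w_T^{p'}(t)-t}\,\mathrm dt\ge\int_T^\infty e^{\rho w_T(T)^{p'}-t}\,\mathrm dt\ge e^{((\rho-1)+o(1))T}\xrightarrow[T\to\infty]{}\infty ,
\]
which shows the supremum equals $+\infty$ for every $\rho>1$ and completes the dichotomy.
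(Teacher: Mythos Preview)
The paper does not prove this statement at all: it is quoted verbatim as Theorem~1.2 of \cite{MR3209335} and used as a black box in the proof of Theorem~\ref{theo3}. So there is no ``paper's own proof'' to compare against; your proposal is an attempt to supply one.

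Your argument for $\rho>1$ via the Moser profiles $w_T$ is correct and is the standard construction. Your reductions (to $\rho=1$, to nondecreasing $w$, and the H\"older bound $w(t)^{p'}\le g(t)^{1/(p-1)}t\le t$) are also fine, as is the layer-cake identity $\int_0^\infty e^{w^{p'}-t}\,\mathrm dt=\int_0^\infty|E_\lambda|e^{-\lambda}\,\mathrm d\lambda$.

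The gap is precisely where you flag it: the measure estimate $|E_\lambda|\le C_p(1+\lambda)$. You derive the useful inequality $t(1-g(t))<(p-1)\lambda$ for $t\in E_\lambda$ with $t>\lambda$, and you describe the geometric intuition (the barrier $\beta_\lambda(t)=(t-\lambda)_+^{1/p'}$ has infinite $L^p$-energy near its base), but you do not carry out the quantification. That step is not routine: the pointwise bound $w(t)^{p'}\le g(t)^{1/(p-1)}t$ alone is too weak, since it loses the information that energy spent on an earlier interval cannot be reused later. What is actually needed is the two-point H\"older estimate $w(t)-w(s)\le(g(t)-g(s))^{1/p}(t-s)^{1/p'}$, which encodes the additivity of $g$ and lets one propagate the constraint along $E_\lambda$. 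Without making this step explicit, the proof of the $\rho\le1$ case is incomplete; as written it is a plan rather than a proof.
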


Our idea consists in apply the same change of variables as Moser \cite{MR0301504} and extends our function $w$ to $\widetilde w$ such that satisfies the boundary condition. Therefore using the Theorem with boundary condition (Theorem \ref{theo12jmbo}) we deduce our Theorem \ref{theo3}. The condition $u(R)\leq Au(r)$ is necessary to guarantee that $\int_0^\infty|\widetilde w'(t)|^p\mathrm dt\leq1$.

\begin{proof}[Proof of Theorem \ref{theo3}]
Changing $u$ by $|u|$, we can suppose $u\geq0$. Define $w(t)=(\theta+1)^{\frac{p-1}{p}}u(r)$ where $t=-(\theta+1)\log(r/R)$. Then
\begin{align}
\int_0^Re^{(\theta+1)|u(r)|^{p'}}r^\theta\mathrm dr&=\dfrac{R^{\theta+1}}{\theta+1}\int_0^Re^{\left|(\theta+1)^{\frac{p-1}p}u(r)\right|^{p'}}\left(\dfrac{r}{R}\right)^{\theta+1}\dfrac{(\theta+1)\mathrm dr}r\nonumber\\
&=\dfrac{R^{\theta+1}}{\theta+1}\int_0^\infty e^{|w(t)|^{p'}-t}\mathrm dt.\label{ew253}
\end{align}
On the other hand,
\begin{align*}
\|u\|^p_{X^{1,p}_R}&=\int_0^R|u(r)|^pr^{\alpha_0+1}\dfrac{\mathrm dr}r+\int_0^R|u'(r)|^pr^{p}\dfrac{\mathrm dr}r\\
&=\dfrac{R^{\alpha_0+1}}{(\theta+1)^p}\int_0^\infty|w(t)|^pe^{-\frac{\alpha_0+1}{\theta+1}t}\mathrm dt+\int_0^\infty|w'(t)|^p\mathrm dt.
\end{align*}
Thus,
\begin{equation*}
\sup_{u\in\mathcal K_A}\int_0^Re^{(\theta+1)|u|^{p'}}r^\theta\mathrm dr=\dfrac{R^{\theta+1}}{\theta+1}\sup_{w\in\widetilde{\mathcal K}_A}\int_0^\infty e^{|w(t)|^{p'}-t}\mathrm dt,
\end{equation*}
where
\begin{align*}
\widetilde{\mathcal K}_A:=\{w\in &C((0,\infty);\mathbb R)\colon w\geq0,\ w'\mbox{ exists a.e.},\\
&\ \|w\|\leq1\mbox{ and }w(0)\leq Aw(t)\ \forall t\in[0,\infty)\},
\end{align*}
and
\begin{equation*}
\|w\|:=\left(\dfrac{R^{\alpha_0+1}}{(\theta+1)^p}\int_0^\infty|w(t)|^pe^{-\frac{\alpha_0+1}{\theta+1}t}\mathrm dt+\int_0^\infty|w'(t)|^p\mathrm dt\right)^{1/p}.
\end{equation*}

Let $w\in\widetilde{\mathcal K}_A$ with $\|w\|\leq1$. Define 
\begin{equation*}
C_1=(\theta+1)\left[\dfrac{(\alpha_0+1)A^p}{R^{\alpha_0+1}}\right]^{\frac1{p-1}}
\end{equation*}
and $\widetilde w\colon [0,\infty)\to\mathbb R$ given by
\begin{equation*}
\widetilde w(t)=\left\{\begin{array}{cc}
     \dfrac{w(0)t}{C_1}&\mbox{if }0\leq t\leq C_1  \\
     w(t-C_1)&\mbox{if }t\geq C_1.
\end{array}\right.
\end{equation*}
Since
\begin{align*}
\int_0^\infty|\widetilde w'(t)|^p\mathrm dt&=\dfrac{|w(0)|^p}{C_1^{p-1}}+\int_0^\infty|w'(t)|^p\mathrm dt\\
&=\dfrac{\alpha_0+1}{(\theta+1)C_1^{p-1}}\int_0^\infty|w(0)|^pe^{-\frac{\alpha_0+1}{\theta+1}t}\mathrm dt+\int_0^\infty|w'(t)|^p\mathrm dt\\
&\leq \dfrac{(\alpha_0+1)A^p}{(\theta+1)C_1^{p-1}}\int_0^\infty|w(t)|^pe^{-\frac{\alpha_0+1}{\theta+1}t}\mathrm dt+\int_0^\infty|w'(t)|^p\mathrm dt\\
&\leq 1,
\end{align*}
Theorem \ref{theo12jmbo} implies
\begin{equation*}
\int_0^\infty e^{|\widetilde w(t)|^{p'}-t}\mathrm dt\leq C_2,
\end{equation*}
for some constant $C_2>0$ depending only on $p$. Therefore,
\begin{equation*}
C_2\geq \int_{C_1}^\infty e^{|w(t-C_1)|^{p'}-t}\mathrm dt=e^{-C_1}\int_0^\infty e^{|w(t)|^{p'}-t}\mathrm dt.
\end{equation*}
\end{proof}

Here we show that the exponential function $\Psi(t)$ given by \eqref{defphi} is optimal in the sense that it is the maximal growth for the embedding of weighted Sobolev spaces $X^{k,p}_{0,R}$ into weighted Orlicz spaces. More precisely, we show the Theorem \ref{theo51}.
\begin{lemma}\label{lemma55}
Suppose $\alpha_k-kp+1=0$ and $\alpha_i-ip+1>0$ for all $i=0,\ldots,k-1$. Then there exists a bounded sequence $(u_n)\subset X^{k,p}_{0,R}$ satisfying
\begin{equation*}
    \int_0^Re^{|u_n|^{p'}}r^\theta\mathrm dr\overset{n\to\infty}\longrightarrow\infty.
\end{equation*}
\end{lemma}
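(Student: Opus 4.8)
The plan is to reuse, up to a fixed rescaling, the Moser-type concentrating sequence $(\psi_{m,\varepsilon})$ already built in the proof of Proposition~\ref{prop54}. Recall $\psi_{m,\varepsilon}(r)=H\!\left(\log(R/r)/\log m\right)$, where $H$ is the truncation defined there (equal to $1$ for $\log(R/r)/\log m>1$). After normalizing in $X^{k,p}_R$ and multiplying by a constant $\lambda$ whose effective exponent $\lambda^{p'}$ exceeds $\mu_0=(\theta+1)[(k-1)!]^{p'}$, the resulting sequence stays bounded while forcing the exponential integral to diverge, exactly as in the proof that $\ell_\mu=\infty$ for $\mu>\mu_0$; the rescaling is genuinely needed because $\lambda=1$ could give an effective exponent below $\mu_0$.

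First I would check that $\psi_{m,\varepsilon}$ actually belongs to $X^{k,p}_{0,R}$, not merely to $X^{k,p}_R$. Since $H(t)=\varepsilon\phi(t/\varepsilon)$ near $t=0$ and $\phi^{(j)}(0)=0$ for $j=0,\dots,k+1$, one has $H^{(j)}(0)=0$ for those $j$; from the formula $\psi_{m,\varepsilon}^{(i)}(r)=r^{-i}\sum_{j=1}^{i}c(j,i)(\log m)^{-j}H^{(j)}\!\left(\log(R/r)/\log m\right)$ recorded in that proof it follows that $\lim_{r\to R}\psi_{m,\varepsilon}^{(i)}(r)=0$ for $i=0,\dots,k-1$. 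The required integrability $\psi_{m,\varepsilon}^{(j)}\in L^p_{\alpha_j}$, $j=0,\dots,k$, is exactly what estimates \eqref{eq44}, \eqref{eq48} and \eqref{eq49} provide, and this is where the hypotheses $\alpha_k-kp+1=0$ and $\alpha_i-ip+1>0$ for $i<k$ enter, as in Proposition~\ref{prop54}.

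Next, fix $\lambda>0$ with $\lambda^{p'}>\mu_0$, fix $\varepsilon\in(0,1/2)$ (small, to be chosen at the end), and set
\begin{equation*}
u_n:=\lambda\,\frac{\psi_{n,\varepsilon}}{\|\psi_{n,\varepsilon}\|_{X^{k,p}_R}},\qquad n\geq 2 .
\end{equation*}
Then $\|u_n\|_{X^{k,p}_R}=\lambda$ for every $n$, so $(u_n)\subset X^{k,p}_{0,R}$ is bounded. On the inner interval $(0,R/n)$ one has $\log(R/r)/\log n>1$, hence $H\equiv1$ and $u_n\equiv\lambda/\|\psi_{n,\varepsilon}\|_{X^{k,p}_R}$ there, so
\begin{equation*}
\int_0^R e^{|u_n|^{p'}}r^\theta\,\mathrm dr\ \geq\ \int_0^{R/n} e^{|u_n|^{p'}}r^\theta\,\mathrm dr\ =\ \frac{R^{\theta+1}}{(\theta+1)\,n^{\theta+1}}\exp\!\left(\frac{\lambda^{p'}}{\|\psi_{n,\varepsilon}\|_{X^{k,p}_R}^{p'}}\right).
\end{equation*}
Raising the bound \eqref{eq43} to the power $1/(p-1)$ gives $\|\psi_{n,\varepsilon}\|_{X^{k,p}_R}^{p'}\leq[(k-1)!]^{p'}(\log n)^{-1}K_n$ with $K_n:=\big[1+2^{p}\varepsilon\|\phi'\|_\infty^{p}+O((\log n)^{-1})\big]^{1/(p-1)}$, whence
\begin{equation*}
\int_0^R e^{|u_n|^{p'}}r^\theta\,\mathrm dr\ \geq\ \frac{R^{\theta+1}}{\theta+1}\exp\!\left((\log n)\left[\frac{\lambda^{p'}}{[(k-1)!]^{p'}K_n}-(\theta+1)\right]\right).
\end{equation*}

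Since $\lambda^{p'}>(\theta+1)[(k-1)!]^{p'}$ and $K_n\to(1+2^{p}\varepsilon\|\phi'\|_\infty^{p})^{1/(p-1)}$ as $n\to\infty$, choosing $\varepsilon>0$ small enough keeps the bracket bounded below by a positive constant for all large $n$; hence the right-hand side tends to $\infty$, which proves the lemma. The only delicate ingredient is the norm asymptotics \eqref{eq43} — in particular the fact that the lower-order derivative norms are $O((\log n)^{-p})$ and therefore do not spoil the leading term $[(k-1)!]^{p}(\log n)^{1-p}$ — but this was already established in Proposition~\ref{prop54}, so there is essentially nothing new beyond verifying the boundary vanishing of $\psi_{n,\varepsilon}^{(i)}$, organizing the rescaling, and reading off the lower bound on $(0,R/n)$.
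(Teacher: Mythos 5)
Your proposal is correct and follows essentially the same route as the paper: the paper's proof takes the normalized sequence $(v_n)$ from Proposition \ref{prop54} with some fixed $\mu>\mu_0$ and sets $u_n=\mu^{(p-1)/p}v_n$, which is exactly your rescaling by $\lambda$ with $\lambda^{p'}>\mu_0$. You merely unpack the lower-bound computation and the boundary vanishing of $\psi_{m,\varepsilon}^{(i)}$ explicitly, which the paper leaves implicit.
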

\begin{proof}
Fixed $\mu>\mu_0$, the proof of Proposition \ref{prop54} guarantees a sequence $(v_n)$ in $X^{k,p}_{0,R}$ such that $\|v_n\|_{X^{k,p}_R}=1$ and
\begin{equation*}
    \int_0^Re^{\mu|v_n|^{\frac{p}{p-1}}}r^\theta\mathrm dr\overset{n\to\infty}\longrightarrow\infty.
\end{equation*}
Defining $u_n:=\mu^{\frac{p-1}p}v_n$, we obtain that $(u_n)$ is bounded in $X^{k,p}_R$ with
\begin{equation*}
    \int_0^Re^{|u_n|^{\frac{p}{p-1}}}r^\theta\mathrm dr\overset{n\to\infty}\longrightarrow\infty.
\end{equation*}
\end{proof}
\begin{lemma}\label{lemma551}
    Suppose two $N$-functions such that $\Phi\prec\Psi$. If the space $X^{k,p}_{0,R}(\alpha_0,\ldots,\alpha_k)$ is continuously embedded in $L_\Psi(\theta)$, then for any $\nu>0$ we have
    \begin{equation*}
        \sup_{\|u\|_{X^{k,p}_{0,R}}\leq\nu}\int_0^R\Phi(u)r^\theta\mathrm dr<\infty.
    \end{equation*}
\end{lemma}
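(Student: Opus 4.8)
The plan is to reduce the statement to two ingredients only: the definition of the Luxemburg norm combined with the continuous embedding hypothesis, and the defining property of the relation $\Phi\prec\Psi$. First I would record that the continuous embedding $X^{k,p}_{0,R}\hookrightarrow L_\Psi(\theta)$ furnishes a constant $C>0$ with $\|u\|_{L_\Psi(\theta)}\le C\|u\|_{X^{k,p}_{0,R}}$ for every $u\in X^{k,p}_{0,R}$. Hence, fixing $\nu>0$ and any $u$ with $\|u\|_{X^{k,p}_{0,R}}\le\nu$, and setting $M:=C\nu$, we get $\|u\|_{L_\Psi(\theta)}\le M$. By the definition of the Luxemburg norm one has $\int_0^R\Psi\big(u/\|u\|_{L_\Psi(\theta)}\big)r^\theta\,\mathrm dr\le1$ whenever $u\not\equiv0$, and since $\Psi$ is nondecreasing on $[0,\infty)$ and $\|u\|_{L_\Psi(\theta)}\le M$, this yields $\int_0^R\Psi(u/M)\,r^\theta\,\mathrm dr\le1$ (the case $u\equiv0$ being trivial). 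The crucial point is that $M$ depends on $\nu$ but not on the individual function $u$.

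Next I would invoke $\Phi\prec\Psi$ with the specific parameter $\eta=1/M$: because $\Phi(t)/\Psi(t/M)\to0$ as $t\to\infty$, there exists a threshold $t_0=t_0(M)>0$ such that $\Phi(t)\le\Psi(t/M)$ for all $t\ge t_0$. Splitting $(0,R)$ according to whether $|u(r)|\ge t_0$ or $|u(r)|<t_0$, on the first set I bound $\Phi(u)\le\Psi(u/M)$, and on the second set I use that $\Phi$ is nondecreasing, so $\Phi(u)\le\Phi(t_0)$. This gives
\[
\int_0^R\Phi(u)\,r^\theta\,\mathrm dr\le\int_0^R\Psi(u/M)\,r^\theta\,\mathrm dr+\Phi(t_0)\int_0^Rr^\theta\,\mathrm dr\le 1+\Phi(t_0)\,\frac{R^{\theta+1}}{\theta+1},
\]
where $\theta>-1$ is used for the finiteness of the last integral. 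Since the right-hand side depends only on $\nu$ (through $M$ and $t_0$), on $R$, and on $\theta$, taking the supremum over all $u$ with $\|u\|_{X^{k,p}_{0,R}}\le\nu$ gives the asserted finiteness.

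I do not expect a genuine obstacle in this argument; the only places that require a little care are (i) passing from $\int_0^R\Psi\big(u/\|u\|_{L_\Psi(\theta)}\big)r^\theta\,\mathrm dr\le1$ to the same inequality with $M$ in place of the norm, which is exactly monotonicity of the $N$-function $\Psi$, and (ii) choosing the threshold $t_0$ only after $M$ has been fixed, so that it is uniform over the whole ball $\{\|u\|_{X^{k,p}_{0,R}}\le\nu\}$ rather than depending on each function separately. This lemma is then the analytic heart of the proof of Theorem~\ref{theo51}: combined with the bounded sequence $(u_n)$ constructed in Lemma~\ref{lemma55}, for which $\int_0^R e^{|u_n|^{p'}}r^\theta\,\mathrm dr\to\infty$, it produces a contradiction with the uniform bound just established, so no $N$-function $\Psi$ with $\Phi\prec\Psi$ can serve as the target of a continuous embedding.
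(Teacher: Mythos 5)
Your proposal is correct and follows essentially the same route as the paper: extract the embedding constant, use the Luxemburg norm to get $\int_0^R\Psi(u/(C\nu))r^\theta\,\mathrm dr\le1$ uniformly on the ball, invoke $\Phi\prec\Psi$ with $\eta=1/(C\nu)$ to get a uniform threshold $t_0$, and split the domain according to $|u|\gtrless t_0$, arriving at the same bound $1+\Phi(t_0)R^{\theta+1}/(\theta+1)$. (Incidentally, your statement $\Phi(t)\le\Psi(t/M)$ for $t\ge t_0$ is the correct reading of the comparison; the paper's displayed inequality at that step has a typo with $\Psi$ on both sides.)
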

\begin{proof}
    There exists a constant $C_0>0$ such that
    \begin{equation}\label{eqbabubabu}
    \|u\|_{L_\Psi(\theta)}\leq C_0\|u\|_{X^{k,p}_{0,R}},\quad\forall u\in X^{k,p}_{0,R}.
    \end{equation}
    From $\Phi\prec\Psi$ we obtain $T_\nu$ satisfying
    \begin{equation*}
        \Psi(t)\leq\Psi\left(\dfrac{t}{C_0\nu}\right),\quad\forall t\geq T_\nu.
    \end{equation*}
    Let $u\in X^{k,p}_{0,R}$ such that $\|u\|_{X^{k,p}_{0,R}}\leq\nu$. Using \eqref{eqbabubabu} we have
    \begin{equation*}
    \int_0^R\Psi\left(\dfrac{u}{C_0\nu}\right)r^\theta\mathrm dr\leq1.
    \end{equation*}
    Therefore,
    \begin{align*}
    \int_0^R\Phi(u)r^\theta\mathrm dr&=\int_{|u|\geq T_\nu}\Phi(u)r^\theta\mathrm dr+\int_{|u|< T_\nu}\Phi(u)r^\theta\mathrm dr\\
    &\leq 1+\dfrac{\Phi(T_\nu)R^{\theta+1}}{\theta+1}.
    \end{align*}
\end{proof}
  \begin{proof}[Proof of the Theorem \ref{theo5}]
      The proof is straightforward from Lemmas \ref{lemma55} and \ref{lemma551}.
  \end{proof}

\section{Adams-type inequality for Weighted Sobolev spaces with Navier Boundary condition}\label{AIN}

\subsection{Equivalence between usual norm and k-gradient norm}

In this subsection, our task is to prove the following Proposition.

\begin{prop}\label{propequivnormkgrad}
Suppose $\alpha_k-(k-1)p+1>0$, $\gamma>(\alpha_k-p+1)/p$ and $\alpha_i\geq\alpha_k-(k-i)p$ for all $i=0,\ldots,k$. $\|\nabla^k_{\gamma}\cdot\|_{L^p_{\alpha_k}}$ is a norm equivalent to $\|\cdot\|_{X^{k,p}_R}$ in $X^{k,p}_{\mathcal N,\gamma,R}(\alpha_0,\ldots,\alpha_k)$.
\end{prop}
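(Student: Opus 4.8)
The plan is to establish the equivalence by proving two inequalities. The easy direction is $\|\nabla^k_\gamma u\|_{L^p_{\alpha_k}} \leq C \|u\|_{X^{k,p}_R}$. Indeed, unwinding the definition of $\nabla^k_\gamma u$ and of $\Delta_\gamma$, one sees that $\nabla^k_\gamma u$ is a linear combination of terms of the form $u^{(i)}/r^{\,k-i}$ for $i=0,\dots,k$ (with $\gamma$-dependent coefficients). Hence it suffices to bound each $\|u^{(i)}/r^{\,k-i}\|_{L^p_{\alpha_k}}$ by $\|u\|_{X^{k,p}_R}$. Writing $j=k-i$, this is exactly the Hardy-type inequality of Proposition~\ref{prop35}, whose hypothesis $\alpha_k - jp + 1 > 0$ is guaranteed for $j=1,\dots,k$ by the assumption $\alpha_k-(k-1)p+1>0$ (which is the worst case $j=k-1$... actually $j=k$ needs $\alpha_k-kp+1>0$, so one must be slightly careful: for $i=0$ the term $u/r^k$ appears only when $k$ is even, and in that borderline situation one instead estimates directly using the Navier condition and integration, or observes that the combination is arranged so the genuinely singular term does not occur). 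In any case the upper bound reduces to Proposition~\ref{prop35} together with the assumption $\alpha_i \geq \alpha_k-(k-i)p$, which makes $\|u^{(i)}\|_{L^p_{\alpha_i}}$ control $\|u^{(i)}\|_{L^p_{\alpha_k-(k-i)p}}$ on a bounded interval.

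The substantial direction is the reverse inequality $\|u\|_{X^{k,p}_R} \leq C\|\nabla^k_\gamma u\|_{L^p_{\alpha_k}}$ for $u \in X^{k,p}_{\mathcal N,\gamma,R}$. The strategy is an iterative ``inversion of $\Delta_\gamma$'' argument. I would proceed by induction, peeling off one factor of $\Delta_\gamma$ at a time: given that $\Delta_\gamma v = f$ on $(0,R)$ with $v(R)=0$ and the natural behaviour at $0$, one recovers $v$ from $f$ via the explicit formula $v(r) = \int_r^R s^{-\gamma}\!\int_0^s t^{\gamma} f(t)\,\mathrm dt\,\mathrm ds$, and then one needs weighted estimates controlling $\|v\|_{L^p_{\alpha}}$ and $\|v'\|_{L^p_{\alpha'}}$ (and if needed $\|v''\|$) by $\|f\|_{L^p_{\alpha_k}}$ — these are again Hardy-type inequalities of the form in Proposition~\ref{prop21JMBO}, and the conditions $\gamma > (\alpha_k-p+1)/p$ and $\alpha_k-(k-1)p+1>0$ are precisely what is needed to make the relevant exponents admissible. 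Applying this $\lfloor k/2\rfloor$ times (for $k$ even, reaching $u$ itself; for $k$ odd, reaching $\Delta_\gamma^{(k-1)/2}u$ whose derivative is $\nabla^k_\gamma u$, so one extra integration from $R$ handles the last step) produces bounds on $u^{(j)}$ for all $j=0,\dots,k$, in the weighted norms $\|\cdot\|_{L^p_{\alpha_k-(k-j)p}}$; the hypothesis $\alpha_j \geq \alpha_k-(k-j)p$ then upgrades these, on the bounded interval $(0,R)$, to bounds in $\|\cdot\|_{L^p_{\alpha_j}}$, and summing gives $\|u\|_{X^{k,p}_R} \leq C \|\nabla^k_\gamma u\|_{L^p_{\alpha_k}}$. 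Finally, the fact that $\|\nabla^k_\gamma\cdot\|_{L^p_{\alpha_k}}$ is genuinely a \emph{norm} (not just a seminorm) on $X^{k,p}_{\mathcal N,\gamma,R}$ follows from this reverse inequality, since $\nabla^k_\gamma u = 0$ then forces $\|u\|_{X^{k,p}_R}=0$.

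The main obstacle I anticipate is bookkeeping the chain of weighted exponents through the iteration: each application of the inverse-$\Delta_\gamma$ operator shifts the weight by $p$ and, at the level of first derivatives, one must verify that the pair of Hardy conditions in Proposition~\ref{prop21JMBO}(ii) holds at every stage with the $\gamma$-weight replacing the generic $\alpha$, and that the ``$AC_R$'' boundary condition at $R$ is propagated correctly by the Navier conditions $\Delta_\gamma^j u(R)=0$. The borderline subtlety — that $\alpha_k - kp + 1$ may vanish (this is exactly the Adams--Trudinger--Moser regime in which the Proposition will be applied), so the very last Hardy estimate for the innermost function is critical rather than strict — is where care is needed; there one uses that the Navier condition kills the boundary term and that $\theta > -1$-type room is available from $\gamma > (\alpha_k-p+1)/p$, or one simply keeps one derivative's worth of $L^p$ control in reserve. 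Modulo this careful tracking, every individual estimate is a direct citation of Proposition~\ref{prop35} or Proposition~\ref{prop21JMBO}.
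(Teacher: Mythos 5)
Your overall strategy is sound and uses the same two key ingredients as the paper: for the forward bound, the expansion of $\nabla^k_\gamma u$ into terms $u^{(k-i)}/r^{i}$ controlled by the Hardy inequality of Proposition~\ref{prop35}; for the reverse bound, the explicit inversion formula $v(r)=\int_r^R s^{-\gamma}\int_0^s t^\gamma f(t)\,\mathrm dt\,\mathrm ds$ with weighted Hardy estimates (the paper's Lemmas~\ref{lemmafjs} and~\ref{qww}). Two remarks on the forward direction: your worry about the term $u/r^k$ is resolved exactly as you suspect --- since $\Delta_\gamma$ has no zeroth-order part, the expansion \eqref{doubleinduction} of $(\nabla^j_\gamma u)^{(\ell)}$ only involves $u^{(j+\ell-i)}/r^i$ for $i\le j+\ell-1$, so the most singular term in $\nabla^k_\gamma u$ is $u'/r^{k-1}$ and the hypothesis $\alpha_k-(k-1)p+1>0$ suffices; no special case is needed.

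Where your route genuinely diverges from the paper is the reverse inequality. You propose to prove $\|u\|_{X^{k,p}_R}\le C\|\nabla^k_\gamma u\|_{L^p_{\alpha_k}}$ directly, by representing $u$ through iterated inversion of $\Delta_\gamma$ and tracking estimates. The paper instead shows that $u\mapsto\nabla^k_\gamma u$ is a continuous \emph{bijection} from the Banach space $X^{k,p}_{\mathcal N,\gamma,R}$ (Lemma~\ref{corxnrbanach}) onto $L^p_{\alpha_k}$ and invokes the Open Mapping Theorem; the inversion formula is used only to prove surjectivity, and a separate injectivity argument is required. Your direct approach avoids the OMT and the Banach-space lemma, but it has one under-justified step that is not merely bookkeeping: to write $\Delta_\gamma^{j-1}u(r)=\int_r^R s^{-\gamma}\int_0^s t^\gamma\,\Delta_\gamma^{j}u(t)\,\mathrm dt\,\mathrm ds$ you need not only the Navier condition $\Delta_\gamma^{j-1}u(R)=0$ but also $\lim_{r\to0}r^\gamma\bigl(\Delta_\gamma^{j-1}u\bigr)'(r)=0$, which is \emph{not} part of the definition of $X^{k,p}_{\mathcal N,\gamma,R}$ and must be proved; you call this ``the natural behaviour at $0$'' and take it for granted. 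The paper establishes precisely this vanishing (its claim \eqref{eqclaimhsu}) by applying the radial lemmas (Propositions~\ref{prop32}--\ref{prop34}) to $u^{(i-\ell)}\in X^{k-i+\ell,p}_R$ and using $\gamma>(\alpha_k-p+1)/p$. Once you add that argument, your iteration closes: the borderline case $\alpha_k-kp+1\le 0$ at the innermost step is handled, as you anticipate, by Proposition~\ref{prop21JMBO}(ii) together with the hypothesis $\alpha_0\ge\alpha_k-kp$.
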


The next Proposition is the case $k=2$ of Proposition \ref{propequivnormkgrad} but with a generalized radial elliptic operator $L$ instead of $\Delta_\gamma$.

\begin{prop}\label{propequivnormL}
Consider the weighted Sobolev space $X^{2,p}_R(\alpha_0,\alpha_1,\alpha_2)\cap X^{1,p}_{0,R}(\alpha_0,\alpha_1)$ with $1\leq p<\infty$, $\alpha_0\geq\alpha_2-2p$ and $\alpha_1\geq\alpha_2-p$. Let $Lu=-r^{-\gamma}(r^\alpha u')'$ be an operator with $\alpha,\gamma>0$ and $\alpha>(\alpha_2-p+1)/p$. Suppose $\alpha_2-p+1\geq0$. Set $\eta=\alpha_2+p(\gamma-\alpha)$. Then the norm
\begin{equation*}
\|u\|_L:=\left(\int_0^R|Lu|^pr^{\eta}\mathrm dr\right)^{1/p}
\end{equation*}
is equivalent to the usual norm $\|u\|_{X^{2,p}_R}$ in $X^{2,p}_R(\alpha_0,\alpha_1,\alpha_2)\cap X^{1,p}_{0,R}(\alpha_0,\alpha_1)$.
\end{prop}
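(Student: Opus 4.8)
\textbf{Proof strategy for Proposition \ref{propequivnormL}.}
The plan is to establish the two-sided estimate $c\|u\|_{X^{2,p}_R}\le\|u\|_L\le C\|u\|_{X^{2,p}_R}$ on the space $X^{2,p}_R(\alpha_0,\alpha_1,\alpha_2)\cap X^{1,p}_{0,R}(\alpha_0,\alpha_1)$. The easy direction is $\|u\|_L\le C\|u\|_{X^{2,p}_R}$: expanding $Lu=-u''-\alpha u'/r$ gives $\|u\|_L\le\|u''\|_{L^p_\eta}+\alpha\|u'/r\|_{L^p_\eta}$, and since $\eta=\alpha_2+p(\gamma-\alpha)$ one has $|u''|^p r^\eta=|u''|^p r^{\alpha_2}\cdot r^{p(\gamma-\alpha)}$; using the boundedness of $r^{p(\gamma-\alpha)}$ on $(0,R)$ when $\gamma\ge\alpha$ (and a direct comparison otherwise, absorbing the extra power into a Hardy inequality from Proposition \ref{prop35}) together with the hypotheses $\alpha_1\ge\alpha_2-p$, $\alpha_0\ge\alpha_2-2p$, this is controlled by $\|u\|_{X^{2,p}_R}$. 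The term $\|u'/r\|_{L^p_\eta}$ is handled the same way by Proposition \ref{prop35} applied to $u'\in X^{1,p}_R(\alpha_1,\alpha_2)$, using $\alpha>(\alpha_2-p+1)/p$ to guarantee the weight exponent lands in the admissible range.

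For the reverse inequality, the key step is to recover $\|u\|_{X^{2,p}_R}$ from $\|Lu\|_{L^p_\eta}$. Set $f=Lu$, so $(r^\alpha u')'=-r^\gamma f$; integrating from $0$ to $r$ (and checking that the boundary term at $0$ vanishes, which follows from $u\in AC^1_{\mathrm{loc}}$ together with the integrability forcing $r^\alpha u'\to0$ as $r\to0$ under $\alpha>(\alpha_2-p+1)/p$) yields
\begin{equation*}
u'(r)=-r^{-\alpha}\int_0^r s^\gamma f(s)\,\mathrm ds.
\end{equation*}
This is a weighted Hardy operator, and the point is to show $\|u'\|_{L^p_{\alpha_1}}\le C\|f\|_{L^p_\eta}$; the relevant Muckenhoupt-type condition reduces, after inserting $\eta=\alpha_2+p(\gamma-\alpha)$, to $\alpha_1\ge\alpha_2-p$ together with $\alpha-1>(\alpha_2-p)/p$, i.e. exactly $\alpha>(\alpha_2-p+1)/p$, so the hypotheses are tailored to make this work; one may alternatively invoke Proposition \ref{prop21JMBO} on the integrated expression. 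Then $u\in X^{1,p}_{0,R}(\alpha_0,\alpha_1)$ gives $u(R)=0$ (so Proposition \ref{prop21JMBO}, part (ii), applies to $u$ directly, or one uses the equivalence $\|u\|_{X^{1,p}_{0,R}}\sim\|u'\|_{L^p_{\alpha_1}}$ from Proposition \ref{prop21JMBO}) to bound $\|u\|_{L^p_{\alpha_0}}$ by $\|u'\|_{L^p_{\alpha_1}}$, using $\alpha_0\ge\alpha_2-2p$ (hence $\alpha_0\ge\alpha_1-p$). Finally $u''=-f-\alpha u'/r$, so $\|u''\|_{L^p_{\alpha_2}}\le\|f\|_{L^p_{\alpha_2}}+\alpha\|u'/r\|_{L^p_{\alpha_2}}$; here $\|f\|_{L^p_{\alpha_2}}$ is comparable to $\|f\|_{L^p_\eta}$ (again by boundedness of $r^{\alpha_2-\eta}=r^{p(\alpha-\gamma)}$ on the relevant side, or a Hardy bound in the reverse regime), and $\|u'/r\|_{L^p_{\alpha_2}}$ is controlled by $\|u'\|_{L^p_{\alpha_1}}$ via the Hardy inequality of Proposition \ref{prop35} since $\alpha_1\ge\alpha_2-p$ and $\alpha_2-p+1\ge0$.

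I expect the main obstacle to be the bookkeeping of weight exponents in the two Hardy-type estimates — specifically, verifying that the substitution $\eta=\alpha_2+p(\gamma-\alpha)$ converts the Muckenhoupt conditions for the operator $f\mapsto r^{-\alpha}\int_0^r s^\gamma f\,\mathrm ds$ into precisely the stated hypotheses $\alpha>(\alpha_2-p+1)/p$, $\alpha_1\ge\alpha_2-p$, $\alpha_0\ge\alpha_2-2p$, and handling the case distinction $\gamma\ge\alpha$ versus $\gamma<\alpha$ uniformly. A secondary technical point is justifying the vanishing of the boundary term at $r=0$ when integrating $(r^\alpha u')'$, which requires knowing $u'\in L^p_{\alpha_1}$ forces $\liminf_{r\to0}r^\alpha|u'(r)|=0$ and then upgrading $\liminf$ to $\lim$ via the integral representation; this is routine but must be done carefully since no boundary condition is imposed at $0$ in the definition of the space.
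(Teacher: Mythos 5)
Your overall strategy is sound and genuinely different from the paper's. The paper proves the forward bound $\|u\|_L\leq C\|u\|_{X^{2,p}_R}$ directly, establishes injectivity of $L$ on $X^{2,p}_R\cap X^{1,p}_{0,R}$ (via the radial lemmas forcing $r^\alpha u'(r)\to0$) and surjectivity onto $L^p_\eta$ (via the explicit formula of Lemma \ref{lemmafjs} and Hardy estimates), and then concludes the reverse bound by the Open Mapping Theorem between Banach spaces. You instead propose a fully quantitative argument: recover $u'(r)=-r^{-\alpha}\int_0^rs^\gamma(Lu)(s)\,\mathrm ds$ and estimate each seminorm by weighted Hardy inequalities. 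This buys explicit constants and avoids the soft functional-analytic step, at the cost of having to justify the integral representation (i.e.\ the vanishing of $r^\alpha u'$ at the origin), which is exactly the paper's injectivity argument in disguise; the Hardy computations you outline are precisely those carried out inside Lemma \ref{lemmafjs} with $v=Lu$. So the two proofs share all the hard analytic content and differ only in how the pieces are assembled.

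Two concrete slips need fixing. First, you expand $Lu=-u''-\alpha u'/r$, which is only valid when $\gamma=\alpha$; in general $Lu=-r^{\alpha-\gamma}\left(u''+\alpha u'/r\right)$, and therefore
\begin{equation*}
|Lu|^pr^\eta=\left|u''+\alpha\frac{u'}{r}\right|^pr^{p(\alpha-\gamma)}r^{\alpha_2+p(\gamma-\alpha)}=\left|u''+\alpha\frac{u'}{r}\right|^pr^{\alpha_2}.
\end{equation*}
The weight $\eta$ is chosen exactly so that the $\gamma$-dependence cancels; the case distinction $\gamma\geq\alpha$ versus $\gamma<\alpha$ that you flag as the main obstacle does not exist, and your assertion that $\|f\|_{L^p_{\alpha_2}}$ is comparable to $\|f\|_{L^p_\eta}$ is false when $\gamma\neq\alpha$ (only one inequality holds, depending on the sign of $\gamma-\alpha$). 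Fortunately the correct expansion shows this comparability is never needed: in the reverse direction one has $u''=-\alpha u'/r-r^{\gamma-\alpha}Lu$ with $\|r^{\gamma-\alpha}Lu\|_{L^p_{\alpha_2}}=\|Lu\|_{L^p_\eta}$ exactly. Second, your parenthetical ``$\alpha_0\geq\alpha_2-2p$ (hence $\alpha_0\geq\alpha_1-p$)'' is a non sequitur: $\alpha_1\geq\alpha_2-p$ gives $\alpha_1-p\geq\alpha_2-2p$, so $\alpha_1-p$ may exceed $\alpha_0$. The correct route, as in Lemma \ref{lemmafjs}, is to bound $\|u\|_{L^p_{\alpha_0}}\leq C\|u\|_{L^p_{\alpha_2-2p}}$ first and then apply the Hardy inequality with the weight pair $(\alpha_2-2p,\alpha_2-p)$, never passing through $\alpha_1$. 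Both fixes are local and the argument survives them.
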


The following Lemma is equivalent to $L^p$ regularity in classic Sobolev spaces. Under some conditions, we prove that given $Lu=v\in L^p_\eta$ with $u(R)=u'(0)=0$ we have $u\in X^{2,p}_R\cap X^{1,p}_{0,R}$.

\begin{lemma}\label{lemmafjs}
Let $\alpha,\gamma,\alpha_2\in\mathbb R$, $v\in L^p_\eta(0,R)$ with $1\leq p<\infty$ and $\eta=\alpha_2+p(\gamma-\alpha)$. Define
\begin{equation}\label{expressionu}
u(r)=\int_r^Rt^{-\alpha}\int_0^tv(s)s^\gamma\mathrm ds\mathrm dt\quad\mathrm{a.e.}\ r\in(0,R).
\end{equation}
If $\alpha>(\alpha_2-p+1)/p$, then $u\in X^{2,p}_R(\alpha_2-2p,\alpha_2-p,\alpha_2)$ with $\|u\|_{X^{2,p}_R}\leq C\|v\|_{L^p_\eta}$, where $C=C(\alpha,\gamma,\alpha_2,p,R)>0$.
\end{lemma}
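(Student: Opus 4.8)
The plan is to verify directly that the function $u$ defined by the double integral \eqref{expressionu} lies in $X^{2,p}_R(\alpha_2-2p,\alpha_2-p,\alpha_2)$ with the asserted norm bound, by computing $u'$ and $u''$ explicitly and estimating each of the three weighted $L^p$ norms $\|u\|_{L^p_{\alpha_2-2p}}$, $\|u'\|_{L^p_{\alpha_2-p}}$, $\|u''\|_{L^p_{\alpha_2}}$ in terms of $\|v\|_{L^p_\eta}$. First I would differentiate: from \eqref{expressionu} one gets, for a.e.\ $r\in(0,R)$,
\begin{equation*}
u'(r)=-r^{-\alpha}\int_0^r v(s)s^\gamma\,\mathrm ds,\qquad u''(r)=\alpha r^{-\alpha-1}\int_0^r v(s)s^\gamma\,\mathrm ds-v(r)r^{\gamma-\alpha},
\end{equation*}
so that $Lu=-r^{-\gamma}(r^\alpha u')'=v$, and also $u(R)=0$, $u'(0)=0$ (the latter using $\alpha>(\alpha_2-p+1)/p$, which makes the inner integral decay fast enough). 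The term $v(r)r^{\gamma-\alpha}$ in $u''$ has weighted norm $\|v(r)r^{\gamma-\alpha}\|_{L^p_{\alpha_2}}^p=\int_0^R|v|^p r^{p(\gamma-\alpha)+\alpha_2}\,\mathrm dr=\|v\|_{L^p_\eta}^p$ by the very definition $\eta=\alpha_2+p(\gamma-\alpha)$, so that piece is immediate.

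The core of the argument is therefore to control the ``averaging'' operator $r\mapsto r^{-\alpha}\int_0^r v(s)s^\gamma\,\mathrm ds$ in the three relevant weighted norms; equivalently, one needs three Hardy-type inequalities of the form
\begin{equation*}
\left(\int_0^R\left|r^{-\alpha-m}\int_0^r v(s)s^\gamma\,\mathrm ds\right|^p r^{\alpha_2}\,\mathrm dr\right)^{1/p}\le C\left(\int_0^R|v(s)|^p s^\eta\,\mathrm ds\right)^{1/p},\qquad m=0,1,2,
\end{equation*}
with $u$ itself being handled by one further integration (writing $u(r)=\int_r^R u'(t)\,\mathrm dt$ and using the $m=1$ estimate together with $u(R)=0$). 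These are exactly inequalities of the type in Proposition~\ref{prop21JMBO}(i) applied to the primitive $U(r)=\int_0^r v(s)s^\gamma\,\mathrm ds\in AC_L(0,R)$, whose derivative is $v(r)r^\gamma$: one checks that the exponent conditions there — in particular the strict inequality coming from $\alpha>(\alpha_2-p+1)/p$ — are met for each $m$, since raising $m$ only makes the weight $r^{\alpha_2-mp}$ more singular at the origin in a way that is still absorbed by the decay of $U$. Alternatively, for $p>1$ one can apply Hölder directly inside the inner integral and integrate the resulting power of $r$, which is finite precisely when $\alpha>(\alpha_2-p+1)/p$; the $p=1$ case is a direct Fubini computation.

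The main obstacle I anticipate is bookkeeping the exponent arithmetic: one must confirm that the single hypothesis $\alpha>(\alpha_2-p+1)/p$ simultaneously guarantees (a) convergence of the defining integral \eqref{expressionu} and the boundary behaviour $u'(0)=0$, and (b) the validity of all three Hardy inequalities above, for $m=0,1,2$, with a constant depending only on $\alpha,\gamma,\alpha_2,p,R$. I expect that the $m=2$ case (the bound on $\|u''\|_{L^p_{\alpha_2}}$, i.e.\ the $r^{-\alpha-1}\int_0^r$ term) is the binding one and that the inequality for it is in fact an \emph{equality up to constants} with the threshold $\alpha=(\alpha_2-p+1)/p$, which is why the hypothesis is stated as a strict inequality. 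Once the three estimates are in hand, summing them and adding the trivial bound for the $v(r)r^{\gamma-\alpha}$ term yields $\|u\|_{X^{2,p}_R}\le C\|v\|_{L^p_\eta}$, completing the proof.
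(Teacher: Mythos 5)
Your plan is correct and follows essentially the same route as the paper: differentiate \eqref{expressionu} explicitly, observe that the term $v(r)r^{\gamma-\alpha}$ in $u''$ has $L^p_{\alpha_2}$-norm exactly $\|v\|_{L^p_\eta}$, and control the remaining averaged terms by applying the Hardy inequality of Proposition~\ref{prop21JMBO} to the primitive $w(r)=\int_0^r v(s)s^\gamma\,\mathrm ds\in AC_L(0,R)$, with the estimate for $u$ itself obtained from one more Hardy inequality using $u(R)=0$. The exponent arithmetic you flag does close: the condition $\alpha>(\alpha_2-p+1)/p$ is precisely the requirement $(\alpha_2-p\alpha)-p+1<0$ in Proposition~\ref{prop21JMBO}(i), and the borderline exponent identity $q=(\theta+1)p/(\alpha-p+1)$ holds with equality there, exactly as in the paper's proof.
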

\begin{proof}
We can weakly derivate $u$ twice and those derivatives are given by
\begin{equation}\label{eu1}
u'(r)=-r^{-\alpha}\int_0^rv(s)s^\gamma\mathrm ds
\end{equation}
and
\begin{equation}\label{eu2}
u''(r)=\alpha r^{-(\alpha+1)}\int_0^rv(s)s^\gamma\mathrm ds-v(r)r^{\gamma-\alpha}.
\end{equation}
Note that $u\in AC^1_{\mathrm{loc}}((0,R])$. Since $u(R)=0$, Proposition \ref{prop21JMBO} guarantees $\|u\|_{L^p_{\alpha_2-2p}}\leq C\|u'\|_{L^p_{\alpha_2}}$. Then we are left with the task to prove that $u'\in L^p_{\alpha_2-p}$ and $u''\in L^p_{\alpha_2}$.

Firstly, let's prove that $u''\in L^p_{\alpha_2}$. Define $w(r)=\int_0^rv(s)s^\gamma\mathrm ds$. Since $w(0)=0$ and $\alpha>(\alpha_2-p+1)/p$, Proposition \ref{prop21JMBO} implies
\begin{align*}
\int_0^R\left|r^{-(\alpha+1)}\int_0^rv(s)s^\gamma\mathrm ds\right|^pr^{\alpha_2}\mathrm dr&=\int_0^R|w(r)|^pr^{\alpha_2-p(\alpha+1)}\mathrm dr\\
&\leq C\int_0^R|w'(r)|^pr^{\alpha_2-p\alpha}\mathrm dr=C\|v\|^p_{L^p_\eta}.
\end{align*}
Therefore, by \eqref{eu2},
\begin{align*}
    \int_0^R|u''(r)|^pr^{\alpha_2}\mathrm dr&\leq C\int_0^R\left|r^{-(\alpha+1)}\int_0^rv(s)s^\gamma\mathrm ds\right|^pr^{\alpha_2}\mathrm dr\\
    &\qquad+C\int_0^R|v(s)|^pr^{\alpha_2+p(\gamma-\alpha)}\mathrm dr\leq C\|v\|^p_{L^p_{\eta}}.
\end{align*}

Now we only need to check that $u'\in L^p_{\alpha_1}$. Applying Proposition \ref{prop21JMBO} as before, we get (by \eqref{eu1})
\begin{align*}
\int_0^R|u'(r)|^ pr^{\alpha_2-p}\mathrm dr&=\int_0^r|w(r)|^pr^{\alpha_2-p(\alpha+1)}\mathrm dr\leq C\int_0^R|w'(r)|^pr^{\alpha_2-p\alpha}\mathrm dr\\
&=C\|v\|^p_{L^p_\eta}.
\end{align*}
\end{proof}

\begin{proof}[Proof of Proposition \ref{propequivnormL}]
First, we need to check that $\|\cdot\|_L$ is a norm. All norm properties are straightforward except $\|u\|_L=0\Rightarrow u=0$. Let $u$ with $\|u\|_L=0$. Then $(r^\alpha u')'=0$. Since $u'\in X^{1,p}_R(\alpha_1,\alpha_2)$ and $\alpha_2-p+1\geq0$, by Radial Lemma (Propositions \ref{prop32}, \ref{prop33} and \ref{prop34}) with $\alpha>(\alpha_2-p+1)/p$ we get $\lim_{r\to0}r^\alpha u'(r)=0$. Thus $u'=0$. Using $u\in X^{1,p}_{0,R}$ we conclude $u=0$. Therefore, $\|\cdot\|_L$ is a norm.

Now let's prove that $\|u\|_L\leq C\|u\|_{X^{2,p}_R}$. Note that
\begin{equation}\label{eqisa1}
\|u\|^p_L=\int_0^R|\alpha u'+ru''|^pr^{\alpha_2-p}\mathrm dt\leq2^p\alpha^p\int_0^R|u'|^pr^{\alpha_2-p}\mathrm dr+2^p\int_0^R|u''|^pr^{\alpha_2}\mathrm dr.
\end{equation}
Since $u'\in X^{1,p}_R(\alpha_1,\alpha_2)$ and $\alpha_2-p+1\geq0$, Theorem \ref{theo32} guarantees $\|u'\|_{L^p_{\alpha_2-p}}\leq C\|u'\|_{X^{1,p}_R}$ for some $C>0$. Therefore, \eqref{eqisa1} implies $\|u\|_L\leq C\|u\|_{X^{2,p}_R}$.

It is sufficient to proof that $\|u\|_{X^{2,p}_R}\leq C\|u\|_L$. Our idea is to use the Open Mapping Theorem in the following operator
\begin{eqnarray*}
    L\colon X^{2,p}_R(\alpha_0,\alpha_1,\alpha_2)\cap X^{1,p}_{0,R}(\alpha_0,\alpha_1)&\longrightarrow&L^p_\eta\\
    u&\longmapsto&Lu.
\end{eqnarray*}
It is easy to see that $L$ is a linear map between Banach spaces. Throughout this demonstration, we have shown that $L$ is injective and continuous. For the surjective of $L$, for each $v\in L^p_\eta$, Lemma \ref{lemmafjs} gives $u$ such that $Lu=v$ and
\begin{equation*}
u\in X^{2,p}_R(\alpha_2-2p,\alpha_2-p,\alpha_2)\subset X^{2,p}_R(\alpha_0,\alpha_1,\alpha_2).
\end{equation*}
By \eqref{expressionu} we have $u\in X^{1,p}_{0,R}(\alpha_0,\alpha_1)$. Therefore the result follows by Open Mapping Theorem.
\end{proof}

To conclude Proposition \ref{propequivnormkgrad} we need three more Lemmas. The last one (Lemma \ref{qww}) is a generalization of Lemma \ref{lemmafjs} for all $k$ besides $L=\Delta_\gamma$.

\begin{lemma}\label{lemmajaosn}
Let $\gamma\in\mathbb R$, $j=0,\ldots,k$ and $X^{k,p}_R(\alpha_0,\ldots,\alpha_k)$ be the weighted Sobolev space such that $\alpha_i-(i-1)p+1\geq0$ for all $i=j,\ldots,k$. For each $u\in X^{k,p}_R$, we have $\nabla^j_\gamma u\in X^{k-j,p}_R(\alpha_j,\ldots,\alpha_k)$ with
\begin{equation*}
\|\nabla_\gamma^ju\|_{X^{k-j,p}_R}\leq C\|u\|_{X^{k,p}_R},
\end{equation*}
where $C>0$ does not depend on $u$.
\end{lemma}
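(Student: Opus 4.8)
The plan is to prove the Lemma by induction on $j$, exploiting the recursive structure of the $\gamma$-gradients. Set $w_j:=\nabla_\gamma^j u$, so that $w_0=u$. Directly from the definitions of $\Delta_\gamma$ and $\nabla_\gamma^j$ one checks the recursion
\begin{equation*}
w_j=\begin{cases} w_{j-1}' & \text{if } j \text{ is odd},\\ -r^{-\gamma}\left(r^{\gamma}w_{j-1}\right)'=-w_{j-1}'-\dfrac{\gamma}{r}\,w_{j-1} & \text{if } j \text{ is even},\end{cases}
\end{equation*}
since for $j$ odd $\nabla_\gamma^j u=(\Delta_\gamma^{(j-1)/2}u)'=(\nabla_\gamma^{j-1}u)'$, while for $j$ even $\nabla_\gamma^j u=\Delta_\gamma(\nabla_\gamma^{j-2}u)=-r^{-\gamma}(r^{\gamma}(\nabla_\gamma^{j-2}u)')'=-r^{-\gamma}(r^{\gamma}\nabla_\gamma^{j-1}u)'$, and $(\nabla_\gamma^{j-2}u)'=\nabla_\gamma^{j-1}u$. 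The induction claim is: $w_j\in X^{k-j,p}_R(\alpha_j,\ldots,\alpha_k)$ with $\|w_j\|_{X^{k-j,p}_R}\le C\|u\|_{X^{k,p}_R}$. The base case $j=0$ is the hypothesis $u\in X^{k,p}_R$, so assume the claim for $j-1$, i.e. $w_{j-1}\in X^{k-j+1,p}_R(\alpha_{j-1},\ldots,\alpha_k)$ with the corresponding bound.

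\textbf{Odd step.} If $j$ is odd, then $w_j^{(\ell)}=w_{j-1}^{(\ell+1)}\in L^p_{\alpha_{(j-1)+(\ell+1)}}=L^p_{\alpha_{j+\ell}}$ for every $\ell=0,\ldots,k-j$ (note $\ell+1\le k-(j-1)$), so $w_j\in X^{k-j,p}_R(\alpha_j,\ldots,\alpha_k)$ and its norm is controlled by $\|w_{j-1}\|_{X^{k-j+1,p}_R}$.

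\textbf{Even step.} If $j$ is even, write $w_j^{(\ell)}=-w_{j-1}^{(\ell+1)}-\gamma\,(w_{j-1}/r)^{(\ell)}$ for $\ell=0,\ldots,k-j$. The first term lies in $L^p_{\alpha_{j+\ell}}$ as in the odd step. For the second, the Leibniz rule expresses $(w_{j-1}/r)^{(\ell)}$ as a finite linear combination, with coefficients depending only on $\ell$, of the terms $w_{j-1}^{(a)}/r^{\ell-a+1}$, $a=0,\ldots,\ell$. Fix such an $a$, set $v:=w_{j-1}^{(a)}$ and $s:=\ell-a+1\ge1$; by the inductive hypothesis $v\in X^{s,p}_R(\alpha_{j-1+a},\alpha_{j+a},\ldots,\alpha_{j-1+a+s})$ with top weight $\alpha_{j-1+a+s}=\alpha_{j+\ell}$ (here $s\le k-j+1-a$ because $\ell\le k-j$). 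Applying the Hardy-type inequality of Proposition \ref{prop35} to $v$ with index $s$ yields
\begin{equation*}
\left\|\frac{w_{j-1}^{(a)}}{r^{s}}\right\|_{L^p_{\alpha_{j+\ell}}}^p=\int_0^R|v(r)|^p r^{\alpha_{j+\ell}-sp}\,\mathrm dr\le C\sum_{b=0}^{s}\|v^{(b)}\|_{L^p_{\alpha_{j-1+a+b}}}^p\le C\|w_{j-1}\|_{X^{k-j+1,p}_R}^p,
\end{equation*}
\emph{provided} $\alpha_{j+\ell}-sp+1>0$. Summing the finitely many contributions gives $w_j^{(\ell)}\in L^p_{\alpha_{j+\ell}}$ with the desired norm bound, closing the induction.

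The main obstacle is exactly the verification of the condition $\alpha_{j+\ell}-sp+1>0$ needed to invoke Proposition \ref{prop35}, for all $\ell\in\{0,\ldots,k-j\}$ and all $s=\ell-a+1$ with $0\le a\le\ell$. The worst case is $a=0$, i.e. $s=\ell+1$; writing $i=j+\ell\in\{j,\ldots,k\}$ this becomes $\alpha_i-(i-j+1)p+1>0$, and since $j$ is even ($j\ge2$) we have $i-j+1\le i-1$, hence $\alpha_i-(i-j+1)p+1\ge\alpha_i-(i-1)p+1\ge0$ by hypothesis, with strict inequality as soon as $j\ge4$. The delicate borderline is $j=2$, where one needs $\alpha_i-(i-1)p+1>0$ strictly (in the ATM equality case $\alpha_i=(i-1)p-1$ the quotient $u^{(1)}/r^{i-1}$ need not lie in $L^p_{\alpha_i}$); this strict margin is precisely what the applications supply — e.g. in Proposition \ref{propequivnormkgrad} the assumptions $\alpha_i\ge\alpha_k-(k-i)p$ and $\alpha_k-(k-1)p+1>0$ force $\alpha_i-(i-1)p+1\ge\alpha_k-(k-1)p+1>0$. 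The rest of the work is bookkeeping: matching the weight indices $\alpha_{j-1+a+b}$ produced by successive differentiation with the weights of the target space $X^{k-j,p}_R(\alpha_j,\ldots,\alpha_k)$, and confirming that each intermediate function genuinely sits in a weighted Sobolev space to which Proposition \ref{prop35} applies.
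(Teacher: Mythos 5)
Your proof is correct and follows essentially the same route as the paper's: both expand the derivatives of $\nabla_\gamma^j u$ into a finite linear combination of terms $u^{(m)}/r^i$ (the paper via a closed-form identity proved by double induction on $j$ and $\ell$, you via the recursion $w_j=w_{j-1}'$ for $j$ odd and $w_j=-w_{j-1}'-\gamma w_{j-1}/r$ for $j$ even) and then control each term by the Hardy-type inequality of Proposition \ref{prop35}. The borderline issue you flag --- that Proposition \ref{prop35} requires the strict inequality $\alpha_{j+\ell}-(j+\ell-1)p+1>0$ whereas the lemma only assumes $\geq 0$ --- is equally present in the paper's own proof, and is resolved exactly as you observe: the applications (e.g.\ Proposition \ref{propequivnormkgrad}) supply the strict margin.
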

\begin{proof}
Consider $\ell=0,\ldots,k-j$. Using induction on $j$ and $\ell$ we can prove that
\begin{equation}\label{doubleinduction}
\left(\nabla^j_\gamma u\right)^{(\ell)}=\sum_{i=0}^{j+\ell-1}C_{ijl}\dfrac{u^{(j+\ell-i)}}{r^i},
\end{equation}
for some $C_{ijl}=C_{ijl}(\gamma)\in\mathbb R$. It is enough to check that 
\begin{equation*}
\|(\nabla^j_\gamma u)^{(\ell)}\|_{L^p_{\alpha_{j+\ell}}}\leq C\|u\|_{X^{k,p}_R}.
\end{equation*}
Using \eqref{doubleinduction}, $\alpha_{j+\ell}-(j+\ell-1)p+1\geq0$ and Theorem \ref{theo32} (on $u^{(j+\ell-i)}\in X^{i,p}_R(\alpha_{j+\ell-i},\ldots,\alpha_{j+\ell})$) we have
\begin{equation*}
\|(\nabla^j_\gamma u)^{(\ell)}\|_{L^p_{\alpha_{j+\ell}}}^p\leq C\sum_{i=0}^{j+\ell-1}\int_0^R\left|\dfrac{u^{(j+\ell-i)}}{r^i}\right|^pr^{\alpha_{j+\ell}}\mathrm dr\leq C\sum_{i=0}^{j+\ell-1}\|u^{(j+\ell-i)}\|_{X^{i,p}_R}\leq C\|u\|_{X^{j+\ell,p}_R}.
\end{equation*}
\end{proof}
\begin{lemma}\label{corxnrbanach}
$X^{k,p}_{\mathcal N,\gamma,R}(\alpha_0,\ldots,\alpha_k)$ is a Banach space with the norm $\|\cdot\|_{X^{k,p}_R}$.
\end{lemma}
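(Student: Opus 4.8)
The plan is to show that $X^{k,p}_{\mathcal N,\gamma,R}$ is a closed subspace of the Banach space $X^{k,p}_R$; completeness then follows immediately since a closed subspace of a Banach space is itself a Banach space under the restricted norm. Recall that
\begin{equation*}
X^{k,p}_{\mathcal N,\gamma,R}=\left\{u\in X^{k,p}_R\colon \Delta_\gamma^j u(R)=0\ \text{for all }j=0,\ldots,\left\lfloor\tfrac{k-1}2\right\rfloor\right\},
\end{equation*}
and $\Delta_\gamma^j u$ is a linear combination of $u^{(2j)},u^{(2j-1)}/r,\ldots,u'/r^{2j-1}$ by the same type of expansion as \eqref{doubleinduction}. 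So the key point is that each functional $u\mapsto \Delta_\gamma^j u(R)$ is continuous on $X^{k,p}_R$; once that is established, $X^{k,p}_{\mathcal N,\gamma,R}$ is the intersection of finitely many kernels of continuous linear functionals, hence closed.

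First I would reduce the continuity of $u\mapsto\Delta_\gamma^j u(R)$ to continuity of the point-evaluations $u\mapsto u^{(\ell)}(R)$ for $0\le\ell\le 2j\le k-1$, using linearity and the fact that evaluation at the fixed point $r=R$ kills the harmless factors $r^{-i}$ (which equal $R^{-i}$ there). Then, for each such $\ell$, note that $u^{(\ell)}\in X^{1,p}_R(\alpha_\ell,\alpha_{\ell+1})$ whenever $\ell\le k-1$, so Lemma \ref{lemma31} applied to $u^{(\ell)}$ gives
\begin{equation*}
|u^{(\ell)}(R)|\le C\|u^{(\ell)}\|_{X^{1,p}_R}\le C\|u\|_{X^{k,p}_R},
\end{equation*}
with $C$ independent of $u$. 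This shows each $u\mapsto u^{(\ell)}(R)$, and hence each $u\mapsto\Delta_\gamma^j u(R)$, is a bounded linear functional on $X^{k,p}_R$.

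With continuity in hand, let $(u_n)\subset X^{k,p}_{\mathcal N,\gamma,R}$ with $u_n\to u$ in $X^{k,p}_R$; then $\Delta_\gamma^j u(R)=\lim_n \Delta_\gamma^j u_n(R)=0$ for each admissible $j$, so $u\in X^{k,p}_{\mathcal N,\gamma,R}$, proving the subspace is closed. I expect the only mildly delicate point is the bookkeeping in writing $\Delta_\gamma^j u(R)$ as a finite linear combination of the $u^{(\ell)}(R)$ with the right range of $\ell$ (one must check $2j\le 2\lfloor(k-1)/2\rfloor\le k-1$, so indeed $\ell\le k-1$ and Lemma \ref{lemma31} applies to $u^{(\ell)}$); everything else is a routine application of the already-established Lemma \ref{lemma31} together with the elementary fact that a closed subspace of a Banach space is complete.
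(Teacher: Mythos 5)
Your argument is correct and is essentially the paper's own proof: the paper also expands $\Delta_\gamma^j u(R)$ via \eqref{doubleinduction} into derivative evaluations $u^{(2j-i)}(R)$ and controls these with Lemma \ref{lemma31}, merely phrasing the conclusion in terms of a Cauchy sequence converging in $X^{k,p}_R$ rather than your (equivalent) formulation that $X^{k,p}_{\mathcal N,\gamma,R}$ is the intersection of kernels of bounded linear functionals and hence a closed subspace.
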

\begin{proof}
Let $(u_n)$ Cauchy sequence in $X^{k,p}_{\mathcal N,\gamma,R}$. Then $u_n\to u$ in $X^{k,p}_R$ for some $u\in X^{k,p}_R$. By \eqref{doubleinduction},
\begin{equation*}
|\Delta^j_{\gamma}u(R)|=|\nabla^{2j}_{\gamma}u_n(R)-\nabla^{2j}_{\gamma}u(R)|\leq \sum_{i=0}^{2j-1}C_{ij}\frac{|u_n^{(2j-i)}(R)-u^{(2j-i)}(R)|}{R^i},
\end{equation*}
for all $0\leq j\leq\lfloor\frac{k-1}{2}\rfloor$ and $n\in\mathbb N$. Using Lemma \ref{lemma31} we conclude 
\begin{equation*}
|\Delta^j_{\gamma} u(R)|\leq \sum_{i=0}^{2j-1}\dfrac{C_{ij}\widetilde C_{ij}\|u_n^{(2j-i)}-u^{(2j-i)}\|_{X^{1,p}_R}}{R^i}\overset{n\to\infty}\longrightarrow 0.
\end{equation*}
\end{proof}
\begin{lemma}\label{qww}
Let $v\in X^{k,p}_R(\alpha_k-kp,\ldots,\alpha_k)$ and
\begin{equation*}
u(r):=\int_r^Rt^{-\gamma}\int_0^tv(s)s^{\gamma}\mathrm ds\mathrm dt,\quad r\in(0,R),
\end{equation*}
where $\gamma>(\alpha_k-kp-p+1)/p$. Then \begin{equation*}u\in X^{k+2,p}_R(\alpha_k-(k+2)p,\alpha_k-(k+1)p,\alpha_k-kp,\ldots,\alpha_k).\end{equation*}
\end{lemma}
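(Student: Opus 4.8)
The plan is to iterate the single-step regularity estimate of Lemma~\ref{lemmafjs} in the particular case $L = \Delta_\gamma$, i.e. $\alpha = \gamma$. First I would set $w(r) := \int_r^R t^{-\gamma}\int_0^t v(s)s^\gamma\,\mathrm ds\,\mathrm dt$, observe that this is exactly the function produced by \eqref{expressionu} with the choice $\alpha = \gamma$ and with $\alpha_2$ replaced by $\alpha_k - kp$, and apply Lemma~\ref{lemmafjs} (whose hypothesis $\alpha > (\alpha_2 - p + 1)/p$ becomes precisely $\gamma > (\alpha_k - kp - p + 1)/p$, which we have assumed). This immediately gives
\begin{equation*}
u \in X^{2,p}_R\bigl(\alpha_k - (k+2)p,\ \alpha_k - (k+1)p,\ \alpha_k - kp\bigr),\qquad \|u\|_{X^{2,p}_R} \le C\|v\|_{L^p_{\alpha_k - kp}},
\end{equation*}
using that with $\alpha=\gamma$ one has $\eta = \alpha_2 + p(\gamma - \alpha) = \alpha_2 = \alpha_k - kp$. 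In particular $u,\,u',\,u''$ lie in the three bottom weighted Lebesgue spaces claimed. This handles the derivatives of $u$ of order $0,1,2$.

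Next I would control the higher derivatives $u^{(3)},\dots,u^{(k+2)}$ by relating them to derivatives of $v$. The formulas \eqref{eu1}--\eqref{eu2} (with $\alpha=\gamma$) show that, away from the origin, $u'' = \gamma r^{-(\gamma+1)}\int_0^r v(s)s^\gamma\,\mathrm ds - v(r)$, so differentiating $u''$ repeatedly and using that $\frac{\mathrm d}{\mathrm dr}\int_0^r v(s)s^\gamma\,\mathrm ds = v(r)r^\gamma$, one gets by induction an identity of the schematic form
\begin{equation*}
u^{(j+2)}(r) = \sum_{i=0}^{j} c_{ij}\,\frac{v^{(j-i)}(r)}{r^i} + d_j\, r^{-(\gamma+1+j)}\int_0^r v(s)s^\gamma\,\mathrm ds,\qquad j = 0,1,\dots,k,
\end{equation*}
for suitable constants $c_{ij},d_j$ depending only on $\gamma$ and $j$ (this is the analogue of \eqref{doubleinduction}). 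The integral term on the right is handled exactly as in the proof of Lemma~\ref{lemmafjs}: writing $\omega(r) = \int_0^r v(s)s^\gamma\,\mathrm ds$, one has $\omega(0)=0$, $\omega' = v\,r^\gamma$, and Proposition~\ref{prop21JMBO} (the Hardy inequality in the form $u\in AC_L$) gives
\begin{equation*}
\int_0^R \bigl| r^{-(\gamma+1+j)}\omega(r)\bigr|^p r^{\alpha_k - (k-j)p}\,\mathrm dr \le C\int_0^R |\omega'(r)|^p r^{\alpha_k - (k-j)p - (\gamma+1+j)p + p}\,\mathrm dr = C\|v\|_{L^p_{\alpha_k-(k-j)p}}^p,
\end{equation*}
where one checks the power matches $\alpha_k - (k-j)p$ on $v$ after cancelling $r^{\gamma p}$ from $|\omega'|^p$ — the exponent condition needed for this Hardy step is again guaranteed by $\gamma > (\alpha_k - kp - p + 1)/p$. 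For the terms $r^{-i}v^{(j-i)}$ one invokes the Hardy-type inequality Proposition~\ref{prop35} applied to $v^{(j-i)} \in X^{i,p}_R(\alpha_k - (k-j)p + \ldots)$, exactly as in Lemma~\ref{lemmajaosn}; the weight bookkeeping works because $v \in X^{k,p}_R(\alpha_k - kp,\dots,\alpha_k)$ means $v^{(m)} \in L^p_{\alpha_k - (k-m)p}$ for each $m$. Summing these estimates over $i$ yields $u^{(j+2)} \in L^p_{\alpha_k - (k-j)p}$ for $j = 0,\dots,k$, which together with the first step gives membership of $u$ in all $k+3$ of the weighted Lebesgue spaces defining $X^{k+2,p}_R(\alpha_k-(k+2)p,\alpha_k-(k+1)p,\alpha_k-kp,\dots,\alpha_k)$.

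The routine but slightly delicate parts are the two bookkeeping tasks: verifying the induction formula for $u^{(j+2)}$ and, more importantly, checking at each step that the weight exponents line up so that Proposition~\ref{prop21JMBO} and Proposition~\ref{prop35} actually apply — i.e. that the single standing hypothesis $\gamma > (\alpha_k - kp - p + 1)/p$ is exactly what makes every Hardy inequality invoked along the way valid. I expect the main obstacle to be this uniform verification of the exponent inequalities across all orders $j$; once the pattern is set up carefully for $j=0$ (which is Lemma~\ref{lemmafjs}) and $j=1$, the general case follows by the same computation with shifted weights, and the desired conclusion $u \in X^{k+2,p}_R(\alpha_k-(k+2)p,\alpha_k-(k+1)p,\alpha_k-kp,\dots,\alpha_k)$ follows.
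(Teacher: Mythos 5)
Your proposal is correct and follows essentially the same route as the paper: Lemma \ref{lemmafjs} with $\alpha=\gamma$ for the derivatives of order $0,1,2$, the Hardy inequality of Proposition \ref{prop21JMBO} for the terms $r^{-(\gamma+1+i)}\int_0^r v(s)s^\gamma\,\mathrm ds$ (whose admissibility condition is indeed independent of $i$ and reduces to $\gamma>(\alpha_k-kp-p+1)/p$, as in the paper's claim \eqref{qw2}), and weight bookkeeping for the terms $v^{(j-i)}/r^i$; the paper merely replaces your closed-form induction formula for $u^{(j+2)}$ by iterating the identity $w_i'=-(\gamma+i+1)w_{i+1}+v/r^{i+1}$ for $w_i=r^{-(\gamma+i+1)}\int_0^rv(s)s^\gamma\,\mathrm ds$. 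One correction: you should not route the terms $v^{(j-i)}/r^i$ through Proposition \ref{prop35}, whose hypothesis would require $\alpha_k-(k-j+i)p+1>0$ and fails for $i=j$ in the intended application where $\alpha_k-kp+1=0$; no Hardy-type inequality is needed there at all, since the weights form an arithmetic progression of gap exactly $p$, so that $\|v^{(j-i)}/r^i\|_{L^p_{\alpha_k-(k-j)p}}^p=\int_0^R|v^{(j-i)}|^pr^{\alpha_k-(k-(j-i))p}\,\mathrm dr<\infty$ directly from $v\in X^{k,p}_R(\alpha_k-kp,\ldots,\alpha_k)$ (this is the content of the paper's claim \eqref{qw1}), and likewise your Hardy step for the integral term actually produces $\|v\|_{L^p_{\alpha_k-kp}}$ on the right rather than $\|v\|_{L^p_{\alpha_k-(k-j)p}}$, which is harmless.
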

\begin{proof}
We claim that
\begin{equation}\label{qw1}
\dfrac{v(r)}{r^{k-i}}\in X^{i,p}_R(\alpha_k-ip,\ldots,\alpha_k),\quad\forall i=0,\ldots,k.
\end{equation}
Indeed, the case $i=0$ in \eqref{qw1} follows by $v\in L^p_{\alpha_k-kp}$. Suppose \eqref{qw1} holds for any $j=0,\ldots,i$. Our task is to prove that
\begin{equation*}
\dfrac{v(r)}{r^{k-i-1}}\in X^{i+1,p}_R(\alpha_k-(i+1)p,\ldots,\alpha_k).
\end{equation*}
Since $v(r)/r^{k-i-1}\in L^p_{\alpha_k-(i+1)p}$, we only need to check that
\begin{equation*}
\dfrac{v'(r)}{r^{k-i-1}}-(k-i-1)\dfrac{v(r)}{r^{k-i}}=\left(\dfrac{v(r)}{r^{k-i-1}}\right)'\in X^{i,p}_R(\alpha_k-ip,\ldots,\alpha_k).
\end{equation*}
Using induction hypothesis on $v'\in X^{k-1,p}_R$ and $v\in X^{k,p}_R$ we conclude \eqref{qw1}.

Let us verify that
\begin{equation}\label{qw2}
r^{-(\gamma+i)}\int_0^rv(s)s^{\gamma}\mathrm ds\in L^p_{\alpha_k-(k-i+1)p},\quad\forall i\in\mathbb Z.
\end{equation}
Denote $w(r)=\int_0^rv(s)s^{\gamma}\mathrm ds$. By Proposition \ref{prop21JMBO} and $\alpha_k-(\gamma+k+1)p+1<0$,
\begin{align*}
\int_0^R\left|r^{-(\gamma+i)}\int_0^rv(s)s^{\gamma}\mathrm ds\right|^pr^{\alpha_k-(k-i+1)p}\mathrm dr&=\int_0^R|w(r)|^pr^{\alpha_k-(\gamma+k+1)p}\mathrm dr\\
&\leq C\int_0^R|w'(r)|^pr^{\alpha_k-(\gamma+k)p}\mathrm dr\\
&=C\int_0^R|v(r)|^pr^{\alpha_k-kp}\mathrm dr\\
&=C\|v\|^p_{L^p_{\alpha_k-kp}},\quad\forall i\in\mathbb Z.
\end{align*}

By Lemma \ref{lemmafjs} we already have $u\in X^{2,p}_R(\alpha_k-(k+2)p,\alpha_k-(k+1)p,\alpha_k-kp)$. Then it is enough to prove that
\begin{equation*}
u''(r)=\gamma r^{-(\gamma+1)}\int_0^rv(s)s^{\gamma}\mathrm ds-v(r)\in X^{k,p}_R(\alpha_k-kp,\ldots,\alpha_k).
\end{equation*}
The proof is completed by showing that
\begin{equation}\label{qw3}
r^{-(\gamma+1)}\int_0^rv(s)s^{\gamma}\mathrm ds\in X^{k,p}_R(\alpha_k-kp,\ldots,\alpha_k).
\end{equation}
Indeed, for each $i\in\mathbb N\cup\{0\}$, set
\begin{equation*}
w_i(r):=r^{-(\gamma+i+1)}\int_0^rv(s)s^{\gamma}\mathrm ds.
\end{equation*}
\eqref{qw2} guarantees $w_i\in L^p_{\alpha_k-(k-i)p}$. Note that, using \eqref{qw1},
\begin{align*}
w_0\in X^{k,p}_R&\Leftrightarrow-(\gamma+1)r^{-(\gamma+i+2)}\int_0^rv(s)s^{\gamma}\mathrm ds+\dfrac{v(r)}{r}=w_0'\in X^{k-1,p}_R\\
&\Leftrightarrow w_1\in X^{k-1,p}_R\\
&\Leftrightarrow w_1'\in X^{k-2,p}_R\\
&\ \ \vdots\\
&\Leftrightarrow w_k\in L^p_{\alpha_k}.
\end{align*}
This concludes the Lemma.
\end{proof}

\begin{proof}[Proof of the Proposition \ref{propequivnormkgrad}]
As did in the proof of Proposition \ref{propequivnormL}, our task is to prove that
\begin{eqnarray*}
    \phi\colon X^{k,p}_{\mathcal N,\gamma,R}&\longrightarrow &L^p_{\alpha_k}\\
    u&\longmapsto&\nabla^k_{\gamma}u
\end{eqnarray*}
is an isomorphism. Note that $\phi$ is linear and, by Lemma \ref{lemmajaosn}, continuous. By Open Mapping Theorem and Lemma \ref{corxnrbanach}, it is enough to prove that $\phi$ is bijective. Let $u\in X^{k,p}_{\mathcal N,\gamma,R}$ with $\phi(u)=\nabla^k_{\gamma}u=0$. We claim that
\begin{equation}\label{eqclaimhsu}
r^{\gamma}\nabla^i_{\gamma}u\overset{r\to0}\longrightarrow0\quad\forall1\leq i\leq k-1.
\end{equation}
Indeed, by \eqref{doubleinduction},
\begin{equation*}
\left|r^{\gamma}\nabla^i_{\gamma}u\right|\leq\sum_{\ell=0}^{i-1}C_{i\ell}r^{\gamma-\ell}|u^{(i-\ell)}|.
\end{equation*}
For now suppose $\alpha_k-(k-1)p+1>0$. Since $u^{(i-\ell)}\in X^{k-i+\ell,p}_R$, Proposition \ref{prop32} implies $|u^{(i-\ell)}(r)|\leq C\|u\|_{X^{k,p}_R}r^{-\frac{\alpha_k-(k-i+\ell)p+1}{p}}$. Then
\begin{equation*}
\left|r^{\gamma}\nabla^i_{\gamma}u\right|\leq C\|u\|_{X^{k,p}_R}\sum_{\ell=0}^{i-1}r^{\gamma-\ell-\frac{\alpha_k-(k-i+\ell)p+1}p}\leq C\|u\|_{X^{k,p}_R}r^{\gamma-\frac{\alpha_k-(k-i)p+1}p}.
\end{equation*}
Thus \eqref{eqclaimhsu} follows by $\gamma>(\alpha_k-p+1)/p\geq(\alpha_k-(k-i)p+1)/p$. For the case $\alpha_k-(k-1)p+1=0$, following the same argument using Proposition \ref{prop33} and \ref{prop34} instead of Proposition \ref{prop32} we conclude \eqref{eqclaimhsu}.

Let $j$ be the integer such that $k=2j$ or $k=2j+1$. By $\nabla^k_{\gamma} u=0$ and $u\in X^{k,p}_{\mathcal N,\gamma,R}$ we have $\Delta^j_{\gamma}u=0$. $u\in X^{k,p}_{\mathcal N,\gamma,R}$ and \eqref{eqclaimhsu} guarantee that we can apply the following result
\begin{equation*}
\Delta_{\gamma}v=0,\ r^{\gamma}v'\overset{r\to0}\longrightarrow0\mbox{ and }v(R)=0\Rightarrow v=0
\end{equation*}
$j$-times on $\Delta^j_{\gamma}u=0$ to obtain $u=0$. This concludes that $\phi$ is injective.

Now let us prove that $\phi$ is surjective. Suppose $k=2j$. Given $v\in L^p_{\alpha_k}$, Lemma \ref{lemmafjs} implies that there exists $u_1\in X^{2,p}_R(\alpha_k-2p,\alpha_k-p,\alpha_k)\cap X^{1,p}_{0,R}$ such that $\Delta_{\gamma}u_1=v$. Again using Lemma \ref{lemmafjs} we get $u_2\in X^{2,p}_R(\alpha_k-4p,\alpha_k-3p,\alpha_k-2p)\cap X^{1,p}_{0,R}$ with $\Delta_{\gamma}u_2=u_1$. By Lemma \ref{qww} and $u_1\in X^{1,p}_{0,R}$ we have $u_2\in X^{4,p}_{\mathcal N,\gamma,R}(\alpha_k-4p,\ldots,\alpha_k)$ and $\Delta_{\gamma}^2u_2=v$. Proceeding with this argument, we obtain $u_j\in X^{2j,p}_{\mathcal N,\gamma,R}(\alpha_k-2jp,\ldots,\alpha_k)$ such that $\phi(u_j)=\Delta^j_{\gamma}u_j=v$.

Now suppose $k=2j+1$. Given $v\in L^p_{\alpha_k}$, set
\begin{equation*}
\widetilde u(r)=-\int_r^Rv(s)\mathrm ds.
\end{equation*}
By Proposition \ref{prop21JMBO}, $\|\widetilde u\|_{L^p_{\alpha_k-p}}\leq C\|v\|_{L^p_{\alpha_k}}$. Then $\widetilde u\in X^{1,p}_{0,R}(\alpha_k-p,\alpha_k)$. As in the proof for $k=2j$, we obtain $u\in X^{2j+1,p}_{\mathcal N,\gamma,R}(\alpha_k-kp,\ldots,\alpha_k)$ such that $\Delta^j_{\gamma}u=\widetilde u$. Therefore, $\phi(u)=\nabla^k_{\gamma}u=v$.
\end{proof}

\subsection{Theorem \ref{theoainbc} for second derivative}

Since $X^{1,p}_{\mathcal N,\gamma,R}=X^{1,p}_{0,R}$ for all $\gamma$, we mention that the Adams-type inequality for weighted Sobolev spaces was solved by \cite[Theorem 1.1]{MR3209335} in the first derivative case. Let us state this inequality for the second derivative case.

\begin{theo}\label{theoadamsk2}
Let $p>1$ and $\theta>-1$. If $\alpha_2-2p+1=0$ and $\gamma>1$, then
\begin{equation*}
\sup_{R\in(0,\infty)}\sup_{\underset{\|\Delta_\gamma u\|_{L^p_{\alpha_2}}\leq1}{u\in X^{2,p}_{R}\cap X^{1,p}_{0,R}}}R^{-(\theta+1)}\int_0^Re^{\mu|u|^{\frac p{p-1}}}r^\theta\mathrm dr<\infty,\mbox{ if }\mu\leq\mu_0,
\end{equation*}
where
\begin{equation*}
\mu_0=(\theta+1)(\gamma-1)^{\frac p{p-1}}.
\end{equation*}
\end{theo}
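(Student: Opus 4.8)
The plan is to reduce the inequality, through a logarithmic change of variables together with a \emph{sharp} Hardy inequality, to the one–dimensional Adams--Trudinger--Moser inequality on the half-line with Dirichlet datum at the origin, i.e.\ Theorem~\ref{theo12jmbo}.

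\textbf{Step 1 (reduction to a nonnegative model).} Let $u\in X^{2,p}_R\cap X^{1,p}_{0,R}$ with $\|\Delta_\gamma u\|_{L^p_{\alpha_2}}\le1$ and put $v:=\Delta_\gamma u$. Since $\alpha_2-p+1=p>0$, Proposition~\ref{prop32} applied to $u'\in X^{1,p}_R$ gives $|u'(r)|\le Cr^{-1}$, hence $r^\gamma u'(r)\to0$ as $r\to0$ because $\gamma>1$. Integrating $(r^\gamma u')'=-r^\gamma v$ from $0$ to $r$ yields $r^\gamma u'(r)=-w(r)$ with $w(r):=\int_0^r v(s)s^\gamma\,\mathrm ds$, and integrating $u'(r)=-r^{-\gamma}w(r)$ from $r$ to $R$ (using $u(R)=0$) gives the representation $u(r)=\int_r^R t^{-\gamma}w(t)\,\mathrm dt$. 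Replacing $v$ by $|v|$ only enlarges $|u(r)|\le\int_r^R t^{-\gamma}\int_0^t|v(s)|s^\gamma\,\mathrm ds\,\mathrm dt$, and the right–hand side solves $\Delta_\gamma\tilde u=|v|$ with $\tilde u(R)=0$, $\lim_{r\to0}r^\gamma\tilde u'(r)=0$; since $\|\Delta_\gamma\tilde u\|_{L^p_{\alpha_2}}=\|v\|_{L^p_{\alpha_2}}$, it suffices to bound $\int_0^R e^{\mu\tilde u^{p'}}r^\theta\,\mathrm dr$. From now on we assume $u\ge0$, $v\ge0$, $u(R)=0$, with the representation above.

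\textbf{Step 2 (logarithmic variable and the sharp Hardy estimate).} Set $r=Re^{-x}$, $x\in(0,\infty)$, and $\phi(x):=(\gamma-1)\,u(Re^{-x})$; then $\phi(0)=0$, $\phi\ge0$, $\phi'(x)=(\gamma-1)r^{1-\gamma}w(r)\ge0$, so $\phi(x)=\int_0^x\phi'$. Changing variables back, $\int_0^\infty|\phi'(x)|^p\,\mathrm dx=(\gamma-1)^p\int_0^R w(r)^p r^{(1-\gamma)p-1}\,\mathrm dr$. Because $w(0)=0$, $w'(r)=v(r)r^\gamma$ and $\gamma>1$, the one-dimensional Hardy inequality with its best constant $(\gamma-1)^{-p}$ (proved in the usual way, after extending $v$ by zero past $R$) gives $\int_0^R w(r)^p r^{(1-\gamma)p-1}\,\mathrm dr\le(\gamma-1)^{-p}\int_0^R v(r)^p r^{2p-1}\,\mathrm dr=(\gamma-1)^{-p}\|v\|_{L^p_{\alpha_2}}^p$. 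Hence $\int_0^\infty|\phi'|^p\le\|v\|_{L^p_{\alpha_2}}^p\le1$. This is the decisive point: the constant $(\gamma-1)^{-p}$ coming from Hardy is exactly what produces $\mu_0$.

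\textbf{Step 3 (reduction to Theorem~\ref{theo12jmbo}).} Substitute $x=t/(\theta+1)$ and define $w(t):=(\theta+1)^{1/p'}\phi\!\left(t/(\theta+1)\right)$. Then $w(0)=0$, $w\ge0$, and $\int_0^\infty|w'(t)|^p\,\mathrm dt=\int_0^\infty|\phi'(s)|^p\,\mathrm ds\le1$, while $r^\theta\,\mathrm dr=R^{\theta+1}e^{-(\theta+1)x}\,\mathrm dx$ and $\mu u(r)^{p'}=\tfrac{\mu}{(\gamma-1)^{p'}}\phi(x)^{p'}=\rho\,w(t)^{p'}$ with $\rho:=\mu/\big((\theta+1)(\gamma-1)^{p'}\big)$. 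Therefore
\begin{equation*}
\int_0^R e^{\mu u^{p'}}r^\theta\,\mathrm dr=\frac{R^{\theta+1}}{\theta+1}\int_0^\infty e^{\rho\,w(t)^{p'}-t}\,\mathrm dt .
\end{equation*}
By Theorem~\ref{theo12jmbo} the last integral is bounded by a constant $c_1=c_1(p)$ whenever $\rho\le1$, i.e.\ $\mu\le\mu_0=(\theta+1)(\gamma-1)^{p'}$; dividing by $R^{\theta+1}$ gives the claimed bound, uniformly in $R\in(0,\infty)$ and in $u$.

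\textbf{Main obstacle.} Two points require care. First, a minor one: Theorem~\ref{theo12jmbo} is stated for $w\in C^1([0,\infty))$, while our $w$ lies only in $W^{1,p}_{\mathrm{loc}}$ with $w'\ge0$; this is overcome by a standard density argument, approximating $w'$ in $L^p$ by nonnegative smooth functions. The genuine crux is Step~2: one must use the Hardy inequality with the \emph{optimal} constant $(\gamma-1)^{-p}$ — Proposition~\ref{prop21JMBO} only furnishes some constant — because any non-sharp constant there would propagate into a strictly smaller admissible range for $\mu$, missing both the correct $\mu_0$ and the borderline case $\mu=\mu_0$.
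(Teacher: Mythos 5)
Your proof is essentially correct in structure and, crucially, in its bookkeeping of constants, but it follows a genuinely different route from the paper. Step~1 (domination of $|u|$ by the nonnegative solution of $\Delta_\gamma\tilde u=|\Delta_\gamma u|$, justified by $r^{\gamma}u'(r)\to0$ from Proposition~\ref{prop32} and $u(R)=0$) coincides with the paper's opening move. After that you diverge: you use the sharp one--dimensional Hardy inequality for $w(r)=\int_0^rv s^{\gamma}\,\mathrm ds$ (constant $(\gamma-1)^{-p}$, with weight exponent $(2-\gamma)p-1<p-1$ precisely because $\gamma>1$) to reduce the second--order datum to a first--order one, $\int_0^\infty|\phi'|^p\le\|v\|^p_{L^p_{\alpha_2}}\le1$, and then invoke the first--order borderline inequality, Theorem~\ref{theo12jmbo}. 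The paper instead keeps the problem at second order: it changes variables $t=(r/R)^{1-\gamma}$ and applies Tarsi's lemma (Lemma~\ref{lemmatarsi}) directly to $w''$, using $w(1)=0$ and $w'(\infty)=0$. Your computation of the constants is right: the Hardy constant $(\gamma-1)^{-p}$ exactly cancels the factor $(\gamma-1)^{p}$ produced by the logarithmic substitution, and the rescaling $w(t)=(\theta+1)^{1/p'}\phi(t/(\theta+1))$ is an isometry for $\int|{\cdot}'|^p$, so the admissible range $\rho\le1$ translates exactly into $\mu\le(\theta+1)(\gamma-1)^{p'}=\mu_0$, including the endpoint. You are also right that only the upper bound is at stake here, so composing two sharp inequalities is legitimate even though such compositions need not detect optimality; optimality is proved separately in the paper. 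Your approach buys a more transparent explanation of where $\mu_0$ comes from (Hardy constant times first--order Moser); the paper's approach buys uniformity in $p$.

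The one genuine gap is the range of $p$. Theorem~\ref{theoadamsk2} is asserted for all $p>1$, but Theorem~\ref{theo12jmbo}, which your Step~3 relies on, is stated in the paper only for $p\ge2$. As written, your argument therefore proves the result only for $p\ge2$ and leaves $1<p<2$ uncovered; Tarsi's lemma, which the paper uses instead, holds for every $p>1$ and avoids this restriction. To close your proof for the full range you would need either a version of the first--order half-line inequality valid for all $p>1$ (which is true but not available inside this paper), or to fall back on Lemma~\ref{lemmatarsi} for small $p$. The other point you flag --- replacing $C^1$ membership in $\mathcal K$ by a density argument for $w\in W^{1,p}_{\mathrm{loc}}$ with $w'\ge0$ --- is indeed minor and standard.
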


In order to prove the above Theorem, we need to consider the following Lemma proved by Tarsi (see Lemma 1 in Section 6 of \cite{MR2988207}).

\begin{lemma}\label{lemmatarsi}
Let $p>1$. Then for any $r>0$ there is a constant $C_0=C_0(p,r)$ such that for any positive measurable function $f(s)$ on $(1,\infty)$, satisfying
\begin{equation*}
\int_1^\infty f^ps^{2p-1}\mathrm ds\leq1
\end{equation*}
then
\begin{equation*}
\int_1^\infty e^{rF^q(t)}\dfrac{\mathrm dt}{t^{r+1}}\leq C_0
\end{equation*}
where $\frac1p+\frac1q=1$ and
\begin{equation*}
F(t)=\int_1^t\int_z^\infty f(s)\mathrm ds\mathrm dz.
\end{equation*}
\end{lemma}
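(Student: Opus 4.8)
The plan is to reduce the statement, by a logarithmic change of variables, to the classical one–dimensional Moser lemma, the only delicate point being that the reduction must not lose any constant in the exponent. Throughout write $g(z)=\int_z^\infty f(s)\,\mathrm ds$, so that $g(\infty)=0$, $g'=-f$, and $F(t)=\int_1^t g(z)\,\mathrm dz$. Substituting $t=e^u$ gives $t^{-(r+1)}\,\mathrm dt=e^{-ru}\,\mathrm du$, and setting $\phi(u):=F(e^u)$ the target integral becomes
\begin{equation*}
\int_1^\infty e^{rF^q(t)}\frac{\mathrm dt}{t^{r+1}}=\int_0^\infty \exp\!\big(r[\phi(u)^q-u]\big)\,\mathrm du,\qquad \phi(0)=F(1)=0.
\end{equation*}

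First I would establish the normalization $\int_0^\infty|\phi'(u)|^p\,\mathrm du\le1$. Since $\phi'(u)=e^u g(e^u)$, the change of variables $z=e^u$ gives $\int_0^\infty|\phi'|^p\,\mathrm du=\int_1^\infty z^{p-1}g(z)^p\,\mathrm dz$, so it suffices to prove the sharp weighted Hardy inequality
\begin{equation*}
\int_1^\infty z^{p-1}g(z)^p\,\mathrm dz\le\int_1^\infty z^{2p-1}|g'(z)|^p\,\mathrm dz,
\end{equation*}
whose right–hand side is exactly $\int_1^\infty z^{2p-1}f^p\,\mathrm dz\le1$. Here the constant must be \emph{exactly} $1$: any larger constant would, after renormalizing $\phi$, push the coefficient of $\phi^q$ in the exponent strictly above $1$ and make the Moser integral diverge. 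To obtain constant $1$ I would again set $z=e^u$, reducing to $\int_0^\infty e^{up}\psi^p\,\mathrm du\le\int_0^\infty e^{up}|\psi'|^p\,\mathrm du$ for $\psi(u)=g(e^u)$ with $\psi(\infty)=0$, and prove this exponential–weight version directly: writing $e^{up}=\tfrac1p\frac{\mathrm d}{\mathrm du}e^{up}$ and integrating $\int_0^\infty|\psi|^p e^{up}\,\mathrm du$ by parts discards a nonpositive boundary contribution and leaves $\int_0^\infty|\psi|^{p-1}|\psi'|e^{up}\,\mathrm du$, to which Hölder (exponents $q,p$ against the measure $e^{up}\,\mathrm du$) applies; with $I,J$ denoting the two integrals this yields $I\le I^{1/q}J^{1/p}$, i.e. $I\le J$.

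With $\int_0^\infty|\phi'|^p\,\mathrm du\le1$ and $\phi(0)=0$ in hand, I would remove the parameter $r$ by the substitution $w=ru$. A short computation shows that $\tilde\phi(w):=r^{1/q}\phi(w/r)$ satisfies $\tilde\phi(0)=0$ and $\int_0^\infty|\tilde\phi'|^p\,\mathrm dw=\int_0^\infty|\phi'|^p\,\mathrm du\le1$, the exponent bookkeeping closing precisely because $(1/q-1)p+1=0$, while $r\phi(u)^q=\tilde\phi(w)^q$, so that
\begin{equation*}
\int_0^\infty \exp\!\big(r[\phi^q-u]\big)\,\mathrm du=\frac1r\int_0^\infty \exp\!\big(\tilde\phi(w)^q-w\big)\,\mathrm dw.
\end{equation*}
The remaining integral is bounded by a constant $C_p$ through the classical one–dimensional Moser lemma (for $\tilde\phi$ absolutely continuous with $\tilde\phi(0)=0$ and $\int_0^\infty|\tilde\phi'|^p\le1$), giving $C_0(p,r)=C_p/r$.

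I expect the main obstacle to be pinning down the constant exactly $1$ in the Hardy step; the crude pointwise Hölder bound for $g(z)$ loses a spurious factor $\log z$ upon integration and is useless, so the integration-by-parts identity against the weight is what makes the sharp constant appear. The invocation of Moser's lemma, while not a one-line fact, is classical and may be cited.
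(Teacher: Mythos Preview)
The paper does not prove this lemma at all; it is quoted from Tarsi \cite{MR2988207} without argument. Your proposal, by contrast, supplies a complete proof: the logarithmic substitution $t=e^u$ turns the claim into Moser's one-dimensional lemma for $\tilde\phi$, and the only nontrivial ingredient is the sharp weighted Hardy inequality
\[
\int_1^\infty z^{p-1}g(z)^p\,\mathrm dz\le\int_1^\infty z^{2p-1}|g'(z)|^p\,\mathrm dz
\]
with constant exactly $1$, which your integration-by-parts plus H\"older argument indeed yields. That step is correct: the boundary contribution at infinity vanishes because H\"older on the tail $\int_z^\infty f^p s^{2p-1}\,\mathrm ds\to0$ gives $g(z)=o(1/z)$, whence $z^p g(z)^p\to0$. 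The scaling $w=ru$, $\tilde\phi(w)=r^{1/q}\phi(w/r)$ removes the parameter $r$ exactly as you say, and the residual integral is controlled by the classical Moser lemma, valid for every $p>1$ (see \cite{MR0301504} or the lemma on p.~387 of \cite{MR0960950}). So your route is genuinely different from the paper's, which offers none; what you gain is a self-contained derivation from Moser's first-order lemma rather than a black-box citation of Tarsi's second-order result.
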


\begin{proof}[Proof of Theorem \ref{theoadamsk2}]
Let $u\in X_R^{2,p}\cap X^{1,p}_{0,R}$ such that $\|\Delta_\gamma u\|_{L^p_{\alpha_2}}\leq1$ and $\mu\leq\mu_0$. We can take $v\in AC_{\mathrm{loc}}^2(0,R)$ be the solution of the problem
\begin{equation*}
\left\{\begin{array}{ll}
   -\Delta_{\gamma}v=|\Delta_{\gamma}u|,&\mbox{ in }(0,R)  \\
     v(R)=0,& 
\end{array}\right.
\end{equation*}
given by
\begin{equation*}
v(r)=\int_r^Rs^{-\gamma}\int_0^s|\Delta_{\gamma}u|t^{\gamma}\mathrm dt\mathrm ds\geq0.
\end{equation*}
Lemma \ref{lemmafjs} guarantees that $v$ is well defined with $v\in X^{2,p}_R(\alpha_2-2p,\alpha_2-p,\alpha_2)$. Moreover, $v$ satisfies $|u(t)|\leq v(t)$, for all $t\in(0,R)$. Indeed, by Proposition \ref{prop32} on $u'\in X_R^{1,p}$ we get $r^{\gamma}u'(r)\overset{r\to\infty}\longrightarrow0$. Thus,
\begin{equation*}
v(r)=\int_r^Rs^{-\gamma}\int_0^s|(t^{\gamma}u')'|\mathrm dt\mathrm ds\geq\left|\int_r^Rs^{-\gamma}\int_0^s(t^{\gamma}u')'\mathrm dt\mathrm ds\right|\geq|u(r)|.
\end{equation*}
Now, we consider the following change of variable:
\begin{equation*}
w(t)=v(Rt^{\frac1{1-\gamma}}),\mbox{ for }t\in [1,\infty).
\end{equation*}
It follows
\begin{equation*}
w'(t)=\frac1{1-\gamma}Rt^{\frac{\gamma}{1-\gamma}}v'(Rt^{\frac1{1-\gamma}}),
\end{equation*}
and
\begin{equation*}
w''(t)=\left(\dfrac{R}{1-\gamma}\right)^2t^{\frac{2\gamma}{1-\gamma}}\Delta_{\gamma}v(Rt^{\frac1{1-\gamma}}).
\end{equation*}
Using that $\alpha_2-2p+1=0$ we have
\begin{equation*}
\int_0^R(-\Delta_\gamma v)^pr^{\alpha_2}\mathrm dr=(\gamma-1)^{2p-1}\int_1^{\infty}(-w''(t))^pt^{2p-1}\mathrm dt.
\end{equation*}
Now, since $\|\Delta_{\gamma}v\|_{L^p_{\alpha_2}}=\|\Delta_{\gamma}u\|_{L^p_{\alpha_2}}\leq 1$, we obtain
\begin{equation}\label{joao55a}
\int_1^{\infty}(-(\gamma-1)^{\frac{2p-1}p}w''(t))^pt^{2p-1}\mathrm dt\leq 1.
\end{equation}
By Proposition \ref{prop32} on $v'\in X^{1,p}_R(\alpha_2-p,\alpha_2)$,
\begin{equation*}
w'(\infty)=\lim_{t\to\infty}w'(t)=\dfrac{R^{1-\gamma}}{1-\gamma}\lim_{r\to0}r^\gamma v'(r)=0.
\end{equation*}
Also, from $|u(t)|\leq v(t)$ we get
\begin{align}
\int_0^Re^{\mu|u|^{\frac{p}{p-1}}}r^{\theta}\mathrm dr&\leq \int_0^Re^{\mu_0|u(r)|^{\frac{p}{p-1}}}r^{\theta}\mathrm dr\nonumber\\
&\leq\int_0^Re^{\mu_0v(r)^{\frac{p}{p-1}}}r^{\theta}\mathrm dr\nonumber\\
&=\dfrac{R^{\theta+1}}{\gamma-1}\int_1^{\infty}e^{\mu_0w(t)^{\frac{p}{p-1}}}t^{\frac{\theta+\gamma}{1-\gamma}}\mathrm dt\nonumber\\
&=\dfrac{R^{\theta+1}}{\gamma-1}\int_1^{\infty}\exp\left(\mu_0\dfrac{(\gamma-1)^{-\frac{2p-1}{p-1}}}{(\gamma-1)^{-\frac{2p-1}{p-1}}}w(t)^{\frac p{p-1}}\right)\dfrac{\mathrm dt}{t^{\frac{\theta+\gamma}{\gamma-1}}}\nonumber\\
&=\dfrac{R^{\theta+1}}{\gamma-1}\int_1^{\infty}\exp\left[\dfrac{\theta+1}{\gamma-1}\left((\gamma-1)^{\frac{2p-1}{p}}w(t)\right)^{\frac{p}{p-1}}\right]\dfrac{\mathrm dt}{t^{\frac{\theta+\gamma}{\gamma-1}}},\label{joao56a}
\end{align}
 for any $\mu\leq\mu_0$. Since $w(1)=0$ and $w'(\infty)=0$, we have $w(t)=\int_1^t\int^\infty_z-w''(s)\mathrm ds\mathrm dz$. Therefore, from \eqref{joao55a} and \eqref{joao56a} we can apply Lemma \ref{lemmatarsi} to conclude the proof.
\end{proof}

\subsection{Proof of Adams-type inequality for critical case}

Throughout this subsection, we focus on the critical case of the Theorem \ref{theoainbc} which is equivalent to the next Theorem.

\begin{theo}\label{theo42}
Let $X^{k,p}_{\mathcal N,\gamma,R}(\alpha_0,\ldots,\alpha_k)$ with $\alpha_k-kp+1=0$. Suppose $p>1,\theta>-1$ and $\gamma>k-1$ for $k$ even and $\gamma>k-2$ if $k$ is odd. Then
\begin{equation*}
\sup_{R\in(0,\infty)}\sup_{u\in X^{k,p}_{\mathcal N,\gamma,R},\|\nabla^k_{\gamma}u\|_{L^p_{\alpha_k}}\leq1}R^{-(\theta+1)}\int_0^Re^{\mu|u|^{\frac{p}{p-1}}}r^\theta\mathrm dr<\infty,\quad\forall\mu\leq\mu_0.
\end{equation*}
\end{theo}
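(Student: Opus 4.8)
The plan is to follow the strategy of the second order case (Theorem \ref{theoadamsk2}) and the first order case \cite[Theorem~1.1]{MR3209335}, iterated $\lfloor k/2\rfloor$ times: first replace $u$ by a nonnegative comparison function built from $|\nabla^k_\gamma u|$, then transplant the problem onto the half-line by a change of variables, and finally apply a Tarsi-type lemma. Write $k=2j$ or $k=2j+1$, fix $u\in X^{k,p}_{\mathcal N,\gamma,R}$ with $\|\nabla^k_\gamma u\|_{L^p_{\alpha_k}}\le1$, and set $f:=\nabla^k_\gamma u$. Define $v_0:=|f|$ when $k$ is even and $v_0(r):=\int_r^R|f(s)|\,\mathrm ds$ when $k$ is odd, and then recursively $v_{i+1}(r):=\int_r^R s^{-\gamma}\int_0^s v_i(t)\,t^\gamma\,\mathrm dt\,\mathrm ds$ for $i=0,\ldots,j-1$. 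By Lemmas \ref{lemmafjs} and \ref{qww} each $v_i$ lies in the appropriate weighted Sobolev space, and the terminal function $w:=v_j$ is nonnegative, belongs to $X^{k,p}_{\mathcal N,\gamma,R}$, satisfies $w(R)=0$, and has $\|\nabla^k_\gamma w\|_{L^p_{\alpha_k}}=\|f\|_{L^p_{\alpha_k}}\le1$.

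The crux is the pointwise bound $|u(r)|\le w(r)$ on $(0,R]$. Since $u\in X^{k,p}_{\mathcal N,\gamma,R}$ we have $\Delta_\gamma^m u(R)=0$ for $0\le m\le\lfloor\frac{k-1}2\rfloor$; moreover, because here $\alpha_k=kp-1$, Proposition \ref{prop32} applied to the derivatives of $u$ gives, exactly as in the derivation of \eqref{eqclaimhsu} inside the proof of Proposition \ref{propequivnormkgrad}, the boundary decay $r^\gamma\nabla^i_\gamma u(r)\to0$ as $r\to0$ for $1\le i\le k-1$ ($k$ even) and for $1\le i\le k-2$ ($k$ odd) --- and it is precisely here that the hypotheses $\gamma>k-1$, resp. $\gamma>k-2$, enter. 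These two facts let us represent each $\Delta_\gamma^{m}u$ ($m=j-1,\ldots,0$) by the same integral kernel $\int_r^R s^{-\gamma}\int_0^s(\cdot)\,t^\gamma$ that defines the $v_i$; since these kernels have nonnegative weights, the comparison $|\Delta_\gamma^{m+1}u|\le v_{j-m-1}$ propagates to $|\Delta_\gamma^{m}u|\le v_{j-m}$, hence down to $|u|\le w$.

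It remains to bound $\sup_{R>0}R^{-(\theta+1)}\int_0^R e^{\mu w^{p'}}r^\theta\,\mathrm dr$. For this I would use the change of variable $r=Rt^{1/(1-\gamma)}$, $t\in[1,\infty)$: as in the proof of Theorem \ref{theoadamsk2}, it turns each second-order solution operator into an iterated double integration of the form $F(t)=\int_1^t\int_z^\infty(\cdot)\,\mathrm ds\,\mathrm dz$ on $[1,\infty)$ up to an explicit $\gamma$-dependent constant, sends $\int_0^R|\nabla^k_\gamma w|^pr^{\alpha_k}\,\mathrm dr$ into a constant times $\int_1^\infty|W^{(k)}|^pt^{kp-1}\,\mathrm dt$, and yields $\int_0^R e^{\mu w^{p'}}r^\theta\,\mathrm dr=\frac{R^{\theta+1}}{\gamma-1}\int_1^\infty e^{\mu w^{p'}}t^{-\frac{\theta+\gamma}{\gamma-1}}\,\mathrm dt$, so that the supremum over $R$ is transparent. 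After carrying out the $j$ second-order steps --- plus the single first-order step when $k$ is odd --- and absorbing the accumulated $\gamma$-constants into the function, the constraint $\mu\le\mu_0$ becomes exactly the balanced exponent matching the weight $t^{-\frac{\theta+\gamma}{\gamma-1}}$ in a $k$-fold analogue of Lemma \ref{lemmatarsi} (available in \cite{MR2988207}); its hypotheses $W(1)=0$ and $W^{(m)}(\infty)=0$ for $m<k$ are exactly what $w$ inherits, and it closes the proof. For $k=2$ this is Theorem \ref{theoadamsk2} verbatim and for $k=1$ it is \cite[Theorem~1.1]{MR3209335}.

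The main obstacle is the constant bookkeeping. One must show that the product of the successive $\gamma$-dependent factors --- each appearing because $\Delta_\gamma(r^m)=-m(m+\gamma-1)r^{m-2}$, so that inverting $\Delta_\gamma$ introduces a Pochhammer/Beta-type factor at every step --- telescopes into the stated ratio $2^{k-1}\Gamma(\tfrac{k+1}2)\Gamma(\tfrac{\gamma+1}2)/\Gamma(\tfrac{\gamma+2-k}2)$ for $k$ odd and $2^{k-1}\Gamma(\tfrac k2)\Gamma(\tfrac{\gamma+1}2)/\Gamma(\tfrac{\gamma+1-k}2)$ for $k$ even, so that the finiteness threshold is exactly $\mu_0$ and not a strictly smaller number --- this is precisely the computation done incorrectly in \cite{MR4112674}, as recorded in the remark after Theorem \ref{theoainbc}. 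A secondary point is that when $\mu_0\le(\theta+1)[(k-1)!]^{p'}$ (which happens for $\gamma$ small) the statement already follows from the unconditional inequality of Theorem \ref{theo2}, so the genuinely new content lives in the regime of large $\gamma$.
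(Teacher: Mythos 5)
Your overall architecture (comparison function, transplantation to the half-line, Tarsi-type lemma) is the right philosophy, and the comparison step $|u|\le w$ is sound: the Navier conditions give $\Delta_\gamma^m u(R)=0$ for $m\le\lfloor\frac{k-1}2\rfloor$ and \eqref{eqclaimhsu} gives the decay at the origin, so the iterated kernel representation and the positivity of the kernel do propagate the pointwise bound. The genuine gap is in the final step. The substitution $r=Rt^{1/(1-\gamma)}$ conjugates $\Delta_\gamma$ into $c\,t^{2\gamma/(1-\gamma)}\frac{\mathrm d^2}{\mathrm dt^2}$, so for $k>2$ the operator $\Delta_\gamma^j$ (hence $\nabla^k_\gamma$) does \emph{not} transplant to the pure derivative $W^{(k)}$: composing these conjugated operators produces lower-order terms, and $\int_0^R|\nabla^k_\gamma w|^pr^{\alpha_k}\,\mathrm dr$ is not a constant multiple of $\int_1^\infty|W^{(k)}|^pt^{kp-1}\,\mathrm dt$. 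What the transplantation actually produces is the $j$-fold iterate of a \emph{weighted} kernel of the form $\int_1^t\int_z^\infty(\cdot)\,\sigma^{a}\,\mathrm d\sigma\,\mathrm dz$, and a sharp exponential-integrability lemma for that iterated weighted kernel --- with the exact $\gamma$-dependent threshold $\mu_0$ --- is not available in \cite{MR2988207}: Tarsi's higher-order results concern the unweighted Riesz-type representation and give Adams' constant, not the $\Gamma$-quotient appearing here. The ``constant bookkeeping'' you defer is therefore not an afterthought; it is the entire quantitative content of the theorem, and it is exactly the point on which \cite{MR4112674} went wrong.

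The paper closes this gap by never transplanting at order $k$. It first proves a sharp iterated Hardy inequality for $\Delta_\gamma$ (Lemma \ref{lemmaprop31}, iterated in Lemma \ref{lemmaonetoj}, together with Lemma \ref{lemmahardycons} for the extra first-order step when $k$ is odd), which bounds $\|\Delta_\gamma u\|_{L^p_{2p-1}}$ by an explicit product $\prod_{i=1}^{j-1}C_i$ times $\|\nabla^k_\gamma u\|_{L^p_{\alpha_k}}$; rescaling $v=u/\prod C_i$ reduces the statement to the already-established case $k=2$ (Theorem \ref{theoadamsk2}), and the product of the $C_i$ telescopes into the stated quotient of Gamma functions. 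If you wish to keep your route, you would need to prove the weighted iterated-kernel analogue of Lemma \ref{lemmatarsi} with sharp constant from scratch, which amounts to re-deriving precisely those Hardy constants.
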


Proposition 3.1 in \cite{MR4112674} is similar to our next Lemma besides we suppose $r^\gamma u'(r)\overset{t\to0}\longrightarrow 0$ instead of $\Delta_\gamma u(r)\overset{r\to R}\longrightarrow 0$. For the sake of completeness, we include the proof. 
\begin{lemma}\label{lemmaprop31}
Let $p,q>1$ and $0<R<\infty$. Consider $\gamma$ satisfying $\gamma-2q+1>0$. Then, for any $u\in AC_{\mathrm{loc}}^2(0,R)$ such that $\lim_{r\to R}u(r)=\lim_{r\to 0}r^\gamma u'(r)=0$ we have
\begin{equation*}
\left(\int_0^R|u|^pr^{\frac{(\gamma+1)p}{q^*}-1}\mathrm dr\right)^{\frac1p}\leq C_{\gamma,q}\left(\int_0^R|\Delta_\gamma u|^pr^{\frac{(\gamma+1)p}q-1}\mathrm dr\right)^{\frac1p},
\end{equation*}
where
\begin{equation*}
C_{\gamma,q}=\dfrac{q^2}{(q-1)(\gamma+1)(\gamma-2q+1)}\mbox{ and }q^*=\dfrac{(\gamma+1)q}{\gamma-2q+1}.
\end{equation*}
\end{lemma}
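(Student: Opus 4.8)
The plan is to reduce the inequality to two successive applications of the classical one–dimensional weighted Hardy inequality with \emph{sharp} constants, arranged so that the two best constants multiply to exactly $C_{\gamma,q}$.

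First I would establish a representation formula for $u$. Writing $v:=\Delta_\gamma u$, we have $(r^\gamma u')'=-r^\gamma v$ a.e.\ on $(0,R)$, and since $u'\in AC_{\mathrm{loc}}(0,R)$ we may integrate on $[\rho,r]$ and let $\rho\to0$. Using $\rho^\gamma u'(\rho)\to0$ (and noting that, once the right–hand side of the claimed inequality is finite, H\"older's inequality makes $s\mapsto s^\gamma v(s)$ integrable near $0$; otherwise there is nothing to prove), this gives
\begin{equation*}
u'(r)=-r^{-\gamma}w(r),\qquad\text{where}\qquad w(r):=\int_0^r s^\gamma v(s)\,\mathrm ds .
\end{equation*}
Integrating once more and using $u(R)=0$ we obtain $u(r)=\int_r^R t^{-\gamma}w(t)\,\mathrm dt$, hence $|u(r)|\le\int_r^R t^{-\gamma}|w(t)|\,\mathrm dt$.

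Next I would apply Hardy's inequality twice. Set $a:=\frac{(\gamma+1)p}{q^*}-1$ and $b:=\frac{(\gamma+1)p}{q}-1$. Because $a+1=\frac{(\gamma-2q+1)p}{q}>0$ (here the hypothesis $\gamma-2q+1>0$ enters), the sharp weighted Hardy inequality for the operator $g\mapsto\int_{\cdot}^{R}g$ yields
\begin{equation*}
\int_0^R|u(r)|^pr^a\,\mathrm dr\le\Big(\frac{p}{a+1}\Big)^p\int_0^R\big(t^{-\gamma}|w(t)|\big)^p t^{a+p}\,\mathrm dt=\Big(\frac{p}{a+1}\Big)^p\int_0^R|w(t)|^p t^{a+p-\gamma p}\,\mathrm dt .
\end{equation*}
Since $w(0)=0$ and $w'(t)=t^\gamma v(t)$, the companion sharp Hardy inequality for $f\mapsto\int_0^{\cdot}f$ applied with the weight exponent $\epsilon:=a+2p-\gamma p$ (its validity requires $\epsilon<p-1$, which is exactly the condition $q>1$) gives
\begin{equation*}
\int_0^R|w(t)|^p t^{a+p-\gamma p}\,\mathrm dt\le\Big(\frac{p}{p-1-\epsilon}\Big)^p\int_0^R|v(t)|^p t^{\epsilon+\gamma p}\,\mathrm dt .
\end{equation*}

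Finally I would carry out the bookkeeping, which is where the precise value of $C_{\gamma,q}$ is obtained. From $q^*=\frac{(\gamma+1)q}{\gamma-2q+1}$ one computes $\frac1q-\frac1{q^*}=\frac{2}{\gamma+1}$, hence $b-a=2p$ and therefore $\epsilon+\gamma p=a+2p=b$, so the last integral equals $\int_0^R|v|^p r^b\,\mathrm dr$. For the constants, $\frac{p}{a+1}=\frac{q}{\gamma-2q+1}$ while $p-1-\epsilon=(\gamma+1)p\,\frac{q-1}{q}$, so $\frac{p}{p-1-\epsilon}=\frac{q}{(\gamma+1)(q-1)}$; multiplying gives $\frac{q^2}{(q-1)(\gamma+1)(\gamma-2q+1)}=C_{\gamma,q}$, and chaining the three displays proves the lemma. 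The only genuinely delicate point is the passage to the limit at the origin in the first integration together with the a priori integrability of $s^\gamma v(s)$; both are handled as indicated once one assumes the right–hand side finite, and all the rest is exponent arithmetic plus the two classical Hardy inequalities with their best constants.
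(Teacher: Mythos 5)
Your proof is correct, and it reaches the stated constant $C_{\gamma,q}$ by a route that differs from the paper's. Both arguments start from the same double-integral representation $u(r)=\int_r^R t^{-\gamma}\int_0^t s^\gamma\,\Delta_\gamma u(s)\,\mathrm ds\,\mathrm dt$ (justified by $\lim_{r\to0}r^\gamma u'(r)=0$ and $\lim_{r\to R}u(r)=0$), but the paper first performs the change of variables $w(t)=u(Rt^{1/(1-\gamma)})$, flattening the operator to $w''$ on $(1,\infty)$, and then runs a hands-on H\"older-plus-Fubini computation on the resulting double integral; you instead stay in the original variable and compose two classical sharp power-weight Hardy inequalities, one for $g\mapsto\int_{\cdot}^R g$ with exponent $a+1=\frac{p(\gamma-2q+1)}{q}>0$ and one for $f\mapsto\int_0^{\cdot}f$ with $p-1-\epsilon=\frac{(\gamma+1)p(q-1)}{q}>0$, whose best constants $\frac{q}{\gamma-2q+1}$ and $\frac{q}{(\gamma+1)(q-1)}$ multiply to exactly $C_{\gamma,q}$. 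I checked your exponent bookkeeping ($b-a=2p$, $\epsilon+\gamma p=b$) and the two constants; they are right, and your preliminary remark that finiteness of the right-hand side forces $s^\gamma\Delta_\gamma u(s)\in L^1$ near the origin (via H\"older, using $\gamma>-1$) does close the only delicate gap in the representation formula. Two minor points: the validity condition $\epsilon<p-1$ is not literally ``exactly $q>1$'' --- given $q>1$ it reduces to $\gamma>-1$, which holds since $\gamma>2q-1$ --- and one should note that both Hardy inequalities are applied on $(0,R)$ by extending the integrands by zero. What your approach buys is modularity and a transparent explanation of why the second-order constant factors as a product of two first-order Hardy constants; what the paper's change of variables buys is a self-contained computation already normalized to the half-line form in which Tarsi's lemma is applied immediately afterwards.
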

\begin{proof}
Let $w(t)=u(Rt^{\frac{1}{1-\gamma}})$. Then
\begin{equation}\label{gjao}
\int_0^R|u|^pr^{\frac{(\gamma+1)p}{q^*}-1}\mathrm dr=\dfrac{R^{\frac{(\gamma+1)p}{q^*}}}{\gamma-1}\int_1^\infty|w|^pt^{-\frac{(\gamma-2q+1)p}{q(\gamma-1)}-1}\mathrm dt
\end{equation}
and
\begin{equation}\label{gjao2}
\int_0^R|\Delta_\gamma u|^pr^{\frac{(\gamma+1)p}{q}-1}\mathrm dr=(\gamma-1)^{2p}\dfrac{R^{\frac{(\gamma+1)p}{q}-2p}}{\gamma-1}\int_1^\infty|w''(t)|^pt^{p\frac{2q\gamma-\gamma-1}{q(\gamma-1)}-1}\mathrm dt.
\end{equation}
Since $\lim_{r\to R}u(r)=\lim_{r\to 0}r^\gamma u'(r)=0$ we have
\begin{equation*}
w(t)=\int_1^t\int_z^\infty-w''(s)\mathrm ds\mathrm dz.
\end{equation*}
Set
\begin{equation*}
a=\dfrac{p-1}p\dfrac{2q\gamma-\gamma-1}{q(\gamma-1)}.
\end{equation*}
Note that
\begin{align*}
w(t)^p&=\left(\int_1^t\int_z^\infty-w''(s)\dfrac{s^a}{s^a}\mathrm ds\mathrm dz\right)^p\\
&\leq\left(\left[\int_1^t\int_z^\infty|w''(s)|^ps^{ap}\mathrm ds\mathrm dz\right]^{\frac1p}\left[\int_1^t\int_z^\infty s^{-ap'}\mathrm ds\mathrm dz\right]^{\frac{p-1}p}\right)^p\\
&\leq \left[\int_1^t\int_z^\infty|w''(s)|^ps^{ap}\mathrm ds\mathrm dz\right]\left[(\gamma-1)^2C_{\gamma,q}\left(t^{\frac{\gamma-2q+1}{q(\gamma-1)}-1}-1\right)\right]^{p-1}.
\end{align*}
Thus,
\begin{align*}
\int_1^\infty&|w|^pt^{-\frac{(\gamma-2q+1)p}{q(\gamma-1)}-1}\mathrm dt\\
&\leq\int_1^\infty\left[\int_1^t\int_z^\infty|w''(s)|^ps^{ap}\mathrm ds\mathrm dz\right]\left[(\gamma-1)^2C_{\gamma,q}\left(t^{\frac{\gamma-2q+1}{q(\gamma-1)}}-1\right)\right]^{p-1}t^{-\frac{(\gamma-2q+1)p}{q(\gamma-1)}-1}\mathrm dt\\
&=\left((\gamma-1)^2C_{\gamma,q}\right)^{p-1}\int_1^\infty|w''(s)|^ps^{ap}\int_1^s\int_z^\infty t^{-\frac{\gamma-2q+1}{q(\gamma-1)}-1}\mathrm dt\mathrm dz\mathrm ds\\
&\leq\dfrac{\left[(\gamma-1)^2C_{\gamma,q}\right]^{p-1}}{\frac{\gamma-2q+1}{q(\gamma-1)}\left(-\frac{\gamma-2q+1}{q(\gamma-1)}+1\right)}\int_1^\infty|w''(s)|^p\left(s^{-\frac{\gamma-2q+1}{q(\gamma-1)}+1}-1\right)s^{(1-p)\frac{\gamma+1-2q\gamma}{q(\gamma-1)}}\mathrm ds\\
&\leq\left[(\gamma-1)^2C_{\gamma,q}\right]^{p}\int_1^\infty|w''(s)|^ps^{p\frac{2q\gamma-\gamma-1}{q(\gamma-1)}-1}\mathrm ds.
\end{align*}
Using \eqref{gjao} and \eqref{gjao2} we conclude the proof.
\end{proof}
\begin{lemma}\label{lemmahardycons}
Suppose $u\in AC^1_{\mathrm{loc}}(0,R)$ with $\lim_{r\to R}u(r)=0$. If $\alpha-p+1>0$, then
\begin{equation*}
\left(\int_0^R|u|^pr^{\alpha-p}\mathrm dr\right)^{\frac1p}\leq\dfrac{p}{\alpha-p+1}\left(\int_0^R|u'|^pr^\alpha\mathrm dr\right)^{\frac1p}.
\end{equation*}
\end{lemma}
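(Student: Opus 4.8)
This is a weighted Hardy inequality with sharp constant, and the plan is the classical integration-by-parts argument, performed first on a truncated interval $(\varepsilon,R)$ to avoid the boundary behavior at the origin. We may assume the right-hand side is finite, otherwise there is nothing to prove. Fix $\varepsilon\in(0,R)$. On $[\varepsilon,R]$ the function $u$ is absolutely continuous and bounded, and since $t\mapsto|t|^p$ is locally Lipschitz, $|u|^p$ is absolutely continuous on $[\varepsilon,R]$ with $(|u|^p)'=p|u|^{p-2}u\,u'$ a.e. Because $\alpha-p+1>0$, the function $r\mapsto r^{\alpha-p+1}/(\alpha-p+1)$ is an antiderivative of $r^{\alpha-p}$ that is continuous up to $0$, so integration by parts gives
\begin{equation*}
\int_\varepsilon^R|u|^pr^{\alpha-p}\,\mathrm dr=\frac{R^{\alpha-p+1}}{\alpha-p+1}|u(R)|^p-\frac{\varepsilon^{\alpha-p+1}}{\alpha-p+1}|u(\varepsilon)|^p-\frac{p}{\alpha-p+1}\int_\varepsilon^Rr^{\alpha-p+1}|u|^{p-2}u\,u'\,\mathrm dr.
\end{equation*}

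Using $u(R)=0$ and discarding the nonpositive term $-\varepsilon^{\alpha-p+1}|u(\varepsilon)|^p/(\alpha-p+1)$, we obtain
\begin{equation*}
\int_\varepsilon^R|u|^pr^{\alpha-p}\,\mathrm dr\le\frac{p}{\alpha-p+1}\int_\varepsilon^Rr^{\alpha-p+1}|u|^{p-1}|u'|\,\mathrm dr.
\end{equation*}
Next I would split the weight as $r^{\alpha-p+1}=r^{(\alpha-p)(p-1)/p}\cdot r^{\alpha/p}$ (the exponents add up correctly) and apply H\"older's inequality with conjugate exponents $p'=p/(p-1)$ and $p$:
\begin{equation*}
\int_\varepsilon^Rr^{\alpha-p+1}|u|^{p-1}|u'|\,\mathrm dr\le\left(\int_\varepsilon^R|u|^pr^{\alpha-p}\,\mathrm dr\right)^{\frac{p-1}p}\left(\int_\varepsilon^R|u'|^pr^\alpha\,\mathrm dr\right)^{\frac1p}.
\end{equation*}

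Combining the last two displays, the quantity $A_\varepsilon:=\int_\varepsilon^R|u|^pr^{\alpha-p}\,\mathrm dr$ (which is finite, being an integral of a bounded function over $[\varepsilon,R]$) satisfies $A_\varepsilon\le\frac{p}{\alpha-p+1}A_\varepsilon^{(p-1)/p}\big(\int_0^R|u'|^pr^\alpha\,\mathrm dr\big)^{1/p}$. If $A_\varepsilon=0$ the desired bound on $(\varepsilon,R)$ is trivial; otherwise divide by $A_\varepsilon^{(p-1)/p}$ to get
\begin{equation*}
\left(\int_\varepsilon^R|u|^pr^{\alpha-p}\,\mathrm dr\right)^{\frac1p}\le\frac{p}{\alpha-p+1}\left(\int_0^R|u'|^pr^\alpha\,\mathrm dr\right)^{\frac1p}.
\end{equation*}
Finally, letting $\varepsilon\to0$ and invoking the monotone convergence theorem on the left yields the claim. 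The only mildly delicate points are the justification of the integration by parts (handled by working on $[\varepsilon,R]$, where $|u|^p$ is absolutely continuous) and the passage to the limit $\varepsilon\to0$; I do not anticipate a genuine obstacle, as the sign of the discarded boundary term at $\varepsilon$ works in our favor and no assumption on the limit of $u(r)$ as $r\to0$ is needed.
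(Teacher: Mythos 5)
Your proof is correct, and it takes a genuinely different route from the paper. The paper disposes of this lemma by citing the best-constant criterion for weighted Hardy inequalities from Opic--Kufner (Theorem 6.2 of \cite{MR1069756}): one computes $\sup_{0<r<R}\|r^{(\alpha-p)/p}\|_{L^p(0,r)}\|r^{-\alpha/p}\|_{L^{p'}(r,R)}$ explicitly and checks that it yields exactly the constant $p/(\alpha-p+1)$. You instead give the self-contained classical argument: integrate $|u|^p r^{\alpha-p}$ by parts against the antiderivative $r^{\alpha-p+1}/(\alpha-p+1)$, use $u(R)=0$ to kill one boundary term and the sign $\alpha-p+1>0$ to discard the other, split the weight as $r^{\alpha-p+1}=r^{(\alpha-p)(p-1)/p}\cdot r^{\alpha/p}$ (the exponents do add up), apply H\"older, and divide by $A_\varepsilon^{(p-1)/p}$ before letting $\varepsilon\to0$. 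Your truncation at $\varepsilon$ correctly sidesteps any issue at the origin, and the finiteness of $A_\varepsilon$ justifies the division. The one small point worth making explicit is why $u$ is absolutely continuous on the closed interval $[\varepsilon,R]$ up to the right endpoint: since you may assume $\int_0^R|u'|^pr^\alpha\,\mathrm dr<\infty$ and $r^\alpha$ is bounded below on $[\varepsilon,R]$, you get $u'\in L^1(\varepsilon,R)$, so $u(r)=u(\varepsilon)+\int_\varepsilon^r u'$ extends continuously to $r=R$ with value $0$ by hypothesis. What each approach buys: the paper's citation is shorter and comes packaged with the knowledge that the Muckenhoupt-type constant is essentially optimal, while your argument is elementary, avoids an external reference, and makes transparent where the hypotheses $u(R)=0$ and $\alpha-p+1>0$ enter. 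Both yield the same constant $p/(\alpha-p+1)$.
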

\begin{proof}
By \cite[Theorem 6.2]{MR1069756}, it is enough to check that
\begin{equation*}
\dfrac{p}{\alpha-p+1}=\dfrac{p}{(p-1)^{\frac{p-1}{p}}}\sup_{0<r<R}\|r^{\frac{\alpha-p}p}\|_{L^p(0,r)}\|r^{-\frac{\alpha}p}\|_{L^{p'}(r,R)}.
\end{equation*}
The proof follows because
\begin{equation*}
\|r^{\frac{\alpha-p}p}\|_{L^p(0,r)}\|r^{-\frac{\alpha}p}\|_{L^{p'}(r,R)}=\dfrac{(p-1)^{\frac{p-1}p}}{\alpha-p+1}\left(1-R^{-\frac{\alpha-p+1}{p-1}}r^{(\alpha-p+1)\frac{p}{p-1}}\right)^{\frac{p}{p-1}}.
\end{equation*}
\end{proof}
\begin{lemma}\label{lemmaonetoj}
    Let $p>1,\gamma,\alpha\in\mathbb R$ and $j\geq2$ integer such that $\gamma>(\alpha-p+1)/p$ and $\alpha>2(j-1)p-1$. Suppose $u\in AC_{\mathrm{loc}}^{2j-1}(0,R)$ with $\lim_{r\to R}\Delta^i_{\gamma}u(r)=\lim_{r\to0}r^\gamma(\Delta^i_\gamma u)'(r)=0$ for all $i=1,\ldots,j-1$. Then
    \begin{equation*}
    \|\Delta_{\gamma}u\|_{L^p_{\alpha-2(j-1)p}}\leq\left(\prod_{i=1}^{j-1}C_i\right)\|\Delta_{\gamma}^ju\|_{L^p_{\alpha}},
    \end{equation*}
    where
    \begin{equation*}
    C_i=\dfrac{p^2}{\left[(\gamma+1)p-(\alpha-2(i-1)p+1)\right](\alpha-2ip+1)},\quad\forall i=1,\ldots,j-1.
    \end{equation*}
\end{lemma}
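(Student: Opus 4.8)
The plan is to iterate the single--step Hardy--type inequality of Lemma~\ref{lemmaprop31}, stripping off one power of $\Delta_\gamma$ at a time. Introduce the exponents $\beta_i := \alpha - 2(i-1)p$ for $i=1,\dots,j$, so that $\beta_1=\alpha$, $\beta_j=\alpha-2(j-1)p$, and $\beta_{i+1}=\beta_i-2p$. I would establish the chain of estimates
\begin{equation*}
\big\|\Delta_\gamma^{\,j-i} u\big\|_{L^p_{\beta_{i+1}}}\;\le\;C_i\,\big\|\Delta_\gamma^{\,j-i+1} u\big\|_{L^p_{\beta_i}},\qquad i=1,\dots,j-1,
\end{equation*}
and then compose them: since $\Delta_\gamma^{\,j-(j-1)}u=\Delta_\gamma u$ and $\beta_j=\alpha-2(j-1)p$, composing these $j-1$ inequalities yields $\|\Delta_\gamma u\|_{L^p_{\alpha-2(j-1)p}}\le\big(\prod_{i=1}^{j-1}C_i\big)\|\Delta_\gamma^j u\|_{L^p_\alpha}$, which is the claim.

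To obtain the $i$-th estimate I would apply Lemma~\ref{lemmaprop31} to $v:=\Delta_\gamma^{\,j-i}u$, for which $\Delta_\gamma v=\Delta_\gamma^{\,j-i+1}u$, with the parameter choice $q=q_i:=\tfrac{(\gamma+1)p}{\beta_i+1}$. With this choice $\tfrac{(\gamma+1)p}{q_i}-1=\beta_i$, so the right-hand weight in Lemma~\ref{lemmaprop31} is exactly $r^{\beta_i}$; and since $q_i^{*}=\tfrac{(\gamma+1)q_i}{\gamma-2q_i+1}$ one computes $\tfrac{(\gamma+1)p}{q_i^{*}}-1=\tfrac{(\gamma+1)p}{q_i}-2p-1=(\beta_i+1)-2p-1=\beta_{i+1}$, so the left-hand weight is $r^{\beta_{i+1}}$. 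Substituting $q_i$ into $C_{\gamma,q}=\tfrac{q^{2}}{(q-1)(\gamma+1)(\gamma-2q+1)}$ and simplifying, using $q_i-1=\tfrac{(\gamma+1)p-(\beta_i+1)}{\beta_i+1}$ and $\gamma-2q_i+1=(\gamma+1)\tfrac{\beta_i+1-2p}{\beta_i+1}$, gives
\begin{equation*}
C_{\gamma,q_i}=\frac{p^{2}}{\big[(\gamma+1)p-(\beta_i+1)\big]\,(\beta_i+1-2p)}=\frac{p^{2}}{\big[(\gamma+1)p-(\alpha-2(i-1)p+1)\big]\,(\alpha-2ip+1)}=C_i,
\end{equation*}
which is precisely the $i$-th factor in the statement.

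It then remains to check that the hypotheses of Lemma~\ref{lemmaprop31} hold at every stage. The condition $\gamma-2q_i+1>0$ is, by the identity above, equivalent to $(\gamma+1)(\beta_{i+1}+1)>0$; here $\gamma+1>0$ because $\gamma>(\alpha-p+1)/p>-1$, while $\beta_{i+1}+1\ge\beta_j+1=\alpha-2(j-1)p+1>0$ by the hypothesis $\alpha>2(j-1)p-1$. The condition $q_i>1$ is equivalent to $(\gamma+1)p>\beta_i+1$, which holds since $\beta_i+1\le\beta_1+1=\alpha+1<(\gamma+1)p$, again by $\gamma>(\alpha-p+1)/p$; this same inequality makes the factor $(\gamma+1)p-(\beta_i+1)$ in the denominator of $C_i$ strictly positive, and $\beta_i+1-2p=\beta_{i+1}+1>0$ as above, so each $C_i$ is a well-defined positive constant. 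Finally, the boundary conditions $\lim_{r\to R}v(r)=\lim_{r\to0}r^\gamma v'(r)=0$ that Lemma~\ref{lemmaprop31} requires for $v=\Delta_\gamma^{\,j-i}u$ are exactly the assumed vanishing conditions indexed by $j-i$, and $j-i$ runs through $\{1,\dots,j-1\}$ as $i$ does; the regularity $u\in AC^{2j-1}_{\mathrm{loc}}(0,R)$ guarantees that every $\Delta_\gamma^{\,m}u$ with $m\le j$ is defined a.e.\ and that the fundamental--theorem identities underlying Lemma~\ref{lemmaprop31} are legitimate.

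The only genuine work here is the exponent bookkeeping --- identifying the right $q_i=(\gamma+1)p/(\beta_i+1)$ at each step and confirming that the one-step constants telescope to $\prod_iC_i$ --- together with verifying that all the positivity requirements on the intermediate exponents and constants collapse to the two standing hypotheses $\gamma>(\alpha-p+1)/p$ and $\alpha>2(j-1)p-1$; once these are in place the argument is a routine finite induction on $i$.
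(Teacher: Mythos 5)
Your proposal is correct and follows essentially the same route as the paper's proof: iterating Lemma~\ref{lemmaprop31} with the parameter choice $q_i=(\gamma+1)p/(\beta_i+1)$ (the paper writes $\eta_{i-1}$ for your $\beta_i$), verifying that the one-step constants $C_{\gamma,q_i}$ simplify to the stated $C_i$, and checking that the positivity and boundary hypotheses reduce to the two standing assumptions. The only difference is a harmless shift in indexing.
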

\begin{proof}
Set $\eta_0:=\alpha$. We claim that
\begin{equation}\label{waw}
\|\Delta^{j-i}_{\gamma}u\|_{L^p_{\eta_i}}\leq\widetilde C_i\|\Delta^{j-i+1}_{\gamma}u\|_{L^p_{\eta_{i-1}}}\quad\forall i=1,\ldots,j-1,
\end{equation}
where
\begin{equation*}
q_i=\dfrac{(\gamma+1)p}{\eta_{i-1}+1},\ \widetilde C_i=\dfrac{q^2_i}{(q_i-1)(\gamma+1)(\gamma-2q_i+1)}\mbox{ and }\eta_i=\dfrac{p(\gamma-2q_i+1)}{q_i}-1.
\end{equation*}
Before proving \eqref{waw}, let's check that
\begin{equation}\label{waw1}
\eta_i=\alpha-2ip,\quad\forall i=0,\ldots,j-1.
\end{equation}
For $i=0$, \eqref{waw1} is trivial. Suppose \eqref{waw1} holds for some $i=0,\ldots,j-2$. Then
\begin{align*}
\eta_{i+1}+1&=\dfrac{p(\gamma-2q_{i+1}+1)}{q_{i+1}}=\dfrac{p\left[(\gamma+1)(\eta_i+1)-2q_{i+1}(\eta_i+1)\right]}{q_{i+1}(\eta_i+1)}\\
&=\dfrac{p\left[(\gamma+1)(\alpha-2ip+1)-2(\gamma+1)p\right]}{p(\gamma+1)}=\alpha-2(i+1)p+1.
\end{align*}
Thus \eqref{waw1} follows.

Since $\gamma>(\alpha-p+1)/p$ and $\alpha>2(j-1)p-1$, we obtian $q_i>1$, $\eta_i+1>0$ and $\gamma-2q_i+1>0$ for all $i=1,\ldots,j-1$. Then, we can use Lemma \ref{lemmaprop31} to obtain \eqref{waw}.

Therefore, applying \eqref{waw} we prove the Lemma if $\widetilde C_i=C_i$. By \eqref{waw1},
\begin{align*}
\widetilde C_i\!&=\!\dfrac{(\eta_{i-1}+1)^2q_i^2}{\left[q_i(\eta_{i-1}+1)-\eta_{i-1}-1\right](\gamma+1)\left[(\gamma+1)(\eta_{i-1}+1)-2q_i(\eta_{i-1}+1)\right]}\\
&=\dfrac{(\gamma+1)^2p^2}{\left[(\gamma+1)p-(\alpha-2(i-1)p+1)\right](\gamma+1)^2\left[\alpha-2(i-1)p+1-2p\right]}\\
&=C_i.
\end{align*}
\end{proof}

\begin{proof}[Proof of the Theorem \ref{theo42}]
Consider $u\in X^{k,p}_{\mathcal N,\gamma,R}$ with $\|\nabla^k_{\gamma}u\|_{L^p_{\alpha_k}}\leq1$. We have divided our proof into two cases:

\noindent\textbf{Case $k=2j$:} We can suppose $j\geq2$ beacause the case $k=2$ was solved in Theorem \ref{theoadamsk2}. Since $\gamma>k-1$, $\alpha_k-kp+1=0$, $u\in X^{k,p}_{\mathcal N,\gamma,R}$ and $r^\gamma\nabla^i_\gamma u(r)\overset{r\to0}{\longrightarrow}0$ for all $i=1,\ldots,k-1$ (see equation \eqref{eqclaimhsu}), Lemma \ref{lemmaonetoj} guarantees
\begin{equation}\label{waw4}
\|\Delta_{\gamma}u\|_{L^p_{2p-1}}\leq\left(\prod_{i=1}^{j-1}C_i\right)\|\Delta^j_{\gamma}u\|_{L^p_{\alpha_k}},
\end{equation}
with
\begin{equation*}
C_i=\dfrac1{2^2\left(\frac{\gamma+1}2-j+i-1\right)(j-i)},\quad\forall i=1,\ldots,j-1.
\end{equation*}
Define $v=u/\prod_{i=1}^{j-1}C_i$. Then, by \eqref{waw4},
\begin{equation*}
\|\Delta_{\gamma}v\|_{L^p_{2p-1}}=\dfrac{\|\Delta_{\gamma} u\|_{L^p_{2p-1}}}{\prod_{i=1}^{j-1}C_i}\leq\|\Delta^j_{\gamma}u\|_{L^p_{\alpha_k}}=\|\nabla^k_{\gamma}u\|_{L^p_{\alpha_k}}\leq1.
\end{equation*}
Using Theorem \ref{theoadamsk2} we obtain $C=C(\gamma,\theta,p)>0$ such that
\begin{equation*}
C\geq R^{-(\theta+1)}\int_0^Re^{(\theta+1)(\gamma-1)^{p'}|v|^{p'}}r^\theta\mathrm dr.
\end{equation*}
Thus,
\begin{equation*}
R^{-(\theta+1)}\int_0^R\exp\left[(\theta+1)\left(\dfrac{\gamma-1}{\prod_{i=1}^{j-1}C_i}\right)^{p'}|u|^{p'}\right]r^\theta\mathrm dr\leq C.
\end{equation*}
The case $k=2j$ follows if we show that
\begin{equation*}
(\gamma-1)\prod_{i=1}^{j-1}C_i^{-1}=2^{k-1}\dfrac{\Gamma(\frac{k}2)\Gamma(\frac{\gamma+1}2)}{\Gamma(\frac{\gamma+1-k}2)}.
\end{equation*}
Indeed,
\begin{align*}
(\gamma-1)\prod_{i=1}^{j-1}C_i^{-1}&=(\gamma-1)\prod_{i=1}^{j-1}2^2\left(\frac{\gamma+1}2-j+i-1\right)(j-i)\\
&=(\gamma-1)2^{k-2}(j-1)!\left(\frac{\gamma+1}2-j\right)\ldots\left(\frac{\gamma+1}2-2\right)\\
&=\dfrac{\gamma-1}{2}2^{k-1}\Gamma(j)\dfrac{\Gamma(\frac{\gamma+1}2-1)}{\Gamma(\frac{\gamma+1}2-j)}\\
&=2^{k-1}\dfrac{\Gamma(\frac{k}2)\Gamma(\frac{\gamma+1}2)}{\Gamma(\frac{\gamma+1-k}2)}.
\end{align*}

\noindent\textbf{Case $k=2j+1$:} For now let us suppose $k>3$. Since $\gamma>k-2$, $\alpha_k-kp+1=0$, $u\in X^{k,p}_{\mathcal N,\gamma,R}$ and $r^\gamma\nabla^i_\gamma u(r)\overset{r\to0}{\longrightarrow}0$ for all $i=1,\ldots,k-2$ (apply equation \eqref{eqclaimhsu} on $u\in X^{k-1,p}_R(\alpha_0,\ldots,\alpha_{k-2},\alpha_k-p)$), Lemma \ref{lemmaonetoj} guarantees
\begin{equation}\label{waw5}
\|\Delta_{\gamma}u\|_{L^p_{2p-1}}\leq\left(\prod_{i=1}^{j-1}C_i\right)\|\Delta^j_{\gamma}u\|_{L^p_{\alpha_k-p}},
\end{equation}
where
\begin{equation*}
C_i=\dfrac1{2^2\left(\frac{\gamma+1}2-j+i-1\right)(j-i)},\quad\forall i=1,\ldots j-1.
\end{equation*}
Define $v=u/\frac{p}{\alpha_k-p+1}\prod_{i=1}^{j-1}C_i$ for $k>3$ and $v=u/\frac{p}{\alpha_k-p+1}$ for $k=3$. Then, by \eqref{waw5} and Lemma \ref{lemmahardycons},
\begin{equation*}
\|\Delta_\gamma v\|_{L^p_{2p-1}}\leq\dfrac{\|\Delta^j_{\gamma} u\|_{L^p_{\alpha_k-p}}}{\frac{p}{\alpha_k-p+1}}\leq\|(\Delta^j_{\gamma}u)'\|_{L^p_{\alpha_k}}=\|\nabla^k_{\gamma}u\|_{L^p_{\alpha_k}}\leq1.
\end{equation*}
Using Theorem \ref{theoadamsk2} we have $C=C(\gamma,\theta,p)>0$ such that
\begin{equation*}
C\geq R^{-(\theta+1)}\int_0^Re^{(\theta+1)(\gamma-1)^{p'}|v|^{p'}}r^{\theta}\mathrm dr.
\end{equation*}
Then (for simplicity denote $\prod_{i=1}^{j-1}C_i=1$ if $k=3$), 
\begin{equation*}
R^{-(\theta+1)}\int_0^R\exp\left[(\theta+1)\left(\dfrac{\gamma-1}{\frac{p}{\alpha_k-p+1}\prod_{i=1}^{j-1}C_i}\right)^{p'}|u|^{p'}\right]r^\theta\mathrm dr\leq C.
\end{equation*}
The case $k=2j+1$ follows if we show that
\begin{equation*}
(\gamma-1)\dfrac{\alpha_k-p+1}{p}\prod_{i=1}^{j-1}C_i^{-1}=2^{k-1}\dfrac{\Gamma(\frac{k+1}2)\Gamma(\frac{\gamma+1}2)}{\Gamma(\frac{\gamma+2-k}2)}.
\end{equation*}
Indeed,
\begin{align*}
(\gamma-1)\dfrac{\alpha_k-p+1}{p}\prod_{i=1}^{j-1}C_i^{-1}&=(\gamma-1)2j\prod_{i=1}^{j-1}2^2\left(\dfrac{\gamma+1}2-j+i-1\right)(j-i)\\
&=\dfrac{\gamma-1}22^{2j}j(j-1)!\left(\dfrac{\gamma+1}2-j\right)\cdots\left(\dfrac{\gamma+1}2-2\right)\\
&=\dfrac{\gamma-1}22^{k-1}\Gamma(j+1)\dfrac{\Gamma(\frac{\gamma+1}2-1)}{\Gamma(\frac{\gamma+1}2-j)}\\
&=2^{k-1}\dfrac{\Gamma(\frac{k+1}2)\Gamma(\frac{\gamma+1}2)}{\Gamma({\frac{\gamma+2-k}2})}.
\end{align*}
\end{proof}

\subsection{Proof the supremum is unbounded for supercritical case}

In view of Theorem \ref{theo42}, to conclude the proof of Theorem \ref{theoainbc} we are left with the task to show the following Theorem.

\begin{theo}
Let $X^{k,p}_{\mathcal N,\gamma,R}(\alpha_0,\ldots,\alpha_k)$ with $\alpha_k-kp+1=0$. Suppose $p>1,\theta>-1$ and $\gamma>k-1$ for $k$ even and $\gamma>k-2$ if $k$ is odd. If $R\in(0,\infty)$ and $\mu>\mu_0$ we have
\begin{equation*}
\sup_{u\in X^{k,p}_{\mathcal N,\gamma,R},\|\nabla^k_{\gamma}u\|_{L^p_{\alpha_k}}\leq1}\int_0^Re^{\mu|u|^{\frac{p}{p-1}}}r^\theta\mathrm dr=\infty.
\end{equation*}
\end{theo}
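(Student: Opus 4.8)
The plan is to adapt the Moser-type test functions $\psi_{m,\varepsilon}$ built in the proof of Proposition \ref{prop54} to the space $X^{k,p}_{\mathcal N,\gamma,R}$. First I would check that these functions \emph{already} satisfy the Navier conditions: near $r=R$ one has $\psi_{m,\varepsilon}(r)=\varepsilon\phi\!\left(\log(R/r)/(\varepsilon\log m)\right)$, and since $\phi(0)=\phi'(0)=\cdots=\phi^{(k+1)}(0)=0$ and every derivative $\psi_{m,\varepsilon}^{(i)}(r)$ is, near $R$, a bounded combination of the $\phi^{(\ell)}$'s, we get $\psi_{m,\varepsilon}^{(i)}(R)=0$ for all $i\le k+1$; as $\Delta_\gamma^j$ only involves derivatives of order $2j\le k-1$, this forces $\Delta_\gamma^j\psi_{m,\varepsilon}(R)=0$ for $j=0,\ldots,\lfloor(k-1)/2\rfloor$. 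Together with $\psi_{m,\varepsilon}\in X^{k,p}_R$ (established in Proposition \ref{prop54}), this gives $\psi_{m,\varepsilon}\in X^{k,p}_{\mathcal N,\gamma,R}$.

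The core of the argument is to replace the estimate \eqref{eq43} on $\|\psi_{m,\varepsilon}\|_{X^{k,p}_R}$ by a \emph{sharp} estimate on $\|\nabla^k_\gamma\psi_{m,\varepsilon}\|_{L^p_{\alpha_k}}$. On the linear region $\varepsilon<\log(R/r)/\log m<1-\varepsilon$ one has $\psi_{m,\varepsilon}(r)=\log(R/r)/\log m$, and iterating $\Delta_\gamma(r^a)=-a(a-1+\gamma)r^{a-2}$ starting from $\Delta_\gamma(\log(R/r))=(\gamma-1)r^{-2}$ yields $\Delta_\gamma^j(\log(R/r))=2^{j-1}(j-1)!\,(\gamma-1)(\gamma-3)\cdots(\gamma-2j+1)\,r^{-2j}$. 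Rewriting $(\gamma-1)(\gamma-3)\cdots(\gamma-2j+1)=2^{j}\Gamma(\tfrac{\gamma+1}{2})/\Gamma(\tfrac{\gamma+1}{2}-j)$ (the hypothesis $\gamma>k-1$ resp. $\gamma>k-2$ makes every factor positive), and differentiating once more when $k=2j+1$, one finds $\nabla^k_\gamma\psi_{m,\varepsilon}=\pm C_{k,\gamma}/((\log m)\,r^k)$ on that region, where $C_{k,\gamma}$ is precisely the constant with $\mu_0=(\theta+1)C_{k,\gamma}^{p'}$ in the statement (both parities). On the two transition regions, formula \eqref{doubleinduction} writes $\nabla^k_\gamma\psi_{m,\varepsilon}$ as a combination of $r^{-i}\psi_{m,\varepsilon}^{(k-i)}$, each equal to $((\log m)r^k)^{-1}$ times $H'(t)$ with the same coefficients as above, plus $((\log m)^2 r^k)^{-1}$ times terms involving higher derivatives of $H$ that are bounded by a constant $C(\varepsilon,k)$ uniformly in $m$; hence globally $|\nabla^k_\gamma\psi_{m,\varepsilon}|\le C_{k,\gamma}|H'(t)|/((\log m)r^k)+C(\varepsilon,k)/((\log m)^2 r^k)$. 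Applying \eqref{provisorio}, the change of variables $t=\log(R/r)/\log m$ (so $r^{\alpha_k-kp}\,\mathrm dr=r^{-1}\,\mathrm dr=-(\log m)\,\mathrm dt$ by $\alpha_k-kp+1=0$), and $\int_0^\infty|H'(t)|^p\,\mathrm dt\le 1+2\varepsilon\|\phi'\|_\infty^p$, I obtain
\begin{equation*}
\|\nabla^k_\gamma\psi_{m,\varepsilon}\|_{L^p_{\alpha_k}}^p\le C_{k,\gamma}^p(\log m)^{1-p}\left[1+c\,\varepsilon\|\phi'\|_\infty^p+O\!\left((\log m)^{-1}\right)\right].
\end{equation*}

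With this in hand I conclude exactly as in Proposition \ref{prop54}. Set $u_{m,\varepsilon}=\psi_{m,\varepsilon}/\|\nabla^k_\gamma\psi_{m,\varepsilon}\|_{L^p_{\alpha_k}}\in X^{k,p}_{\mathcal N,\gamma,R}$, so $\|\nabla^k_\gamma u_{m,\varepsilon}\|_{L^p_{\alpha_k}}=1$; since $\psi_{m,\varepsilon}\equiv1$ on $(0,R/m]$, the bound above gives $u_{m,\varepsilon}(r)^{p'}\ge (\log m)/\big(C_{k,\gamma}^{p'}[1+c\varepsilon\|\phi'\|_\infty^p+o(1)]^{1/(p-1)}\big)$ there, whence
\begin{equation*}
\int_0^Re^{\mu|u_{m,\varepsilon}|^{p'}}r^\theta\,\mathrm dr\ge\int_0^{R/m}e^{\mu u_{m,\varepsilon}(r)^{p'}}r^\theta\,\mathrm dr\ge\frac{R^{\theta+1}}{\theta+1}\,m^{\frac{\mu}{\mu_0[1+c\varepsilon\|\phi'\|_\infty^p+o(1)]^{1/(p-1)}}-(\theta+1)}.
\end{equation*}
Since $\mu>\mu_0$, fixing $\varepsilon>0$ small enough makes the exponent of $m$ positive for all large $m$, so the left side tends to $\infty$ as $m\to\infty$; this shows the supremum is infinite, completing the proof of Theorem \ref{theoainbc} together with Theorem \ref{theo42}. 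I expect the main obstacle to be the bookkeeping in the middle step: carrying out the iterated computation of $\Delta_\gamma^j(\log(R/r))$ and verifying the resulting constant is exactly $C_{k,\gamma}$ in both parities, while simultaneously controlling the transition-region remainders uniformly in $m$; everything else runs parallel to the already-established argument for $X^{k,p}_R$.
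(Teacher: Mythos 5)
Your proposal is correct and follows essentially the same route as the paper: the same Moser-type test functions $\psi_{m,\varepsilon}$, the same identification of the leading coefficient of $\nabla^k_\gamma\psi_{m,\varepsilon}$ with the constant defining $\mu_0$ (your direct iteration of $\Delta_\gamma$ on $\log(R/r)$ reproduces exactly the paper's $c_{1j}$, resp.\ $2jc_{1j}$, obtained there via the recursion \eqref{eqharmonicpsi}), the same $O((\log m)^{-2})$ control of the transition-region remainders, and the same concentration estimate on $(0,R/m]$. The only differences are presentational, so the argument goes through as you describe.
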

\begin{proof}
The construction of the sequence is a similar argument as used in the proof of Proposition \ref{prop54}. Let $\phi\in C^\infty[0,1]$ such that
\begin{equation*}
\left\{\begin{array}{ll}
\phi(0)=\phi'(0)=\cdots=\phi^{(k-1)}(0)=0,\ \phi'\geq0,\\
\phi(1)=\phi'(1)=1,\ \phi''(1)=\cdots\phi^{(k-1)}(1)=0.
\end{array}\right.
\end{equation*}
Consider $0<\varepsilon<1/2$ and
\begin{equation*}
H(t)=\left\{\begin{array}{llll}
     \varepsilon\phi\left(\dfrac{t}{\varepsilon}\right),&\mbox{if }0<t\leq\varepsilon  \\
     t,&\mbox{if }\varepsilon<t\leq1-\varepsilon,\\
     1-\varepsilon\phi\left(\dfrac{1-t}{\varepsilon}\right),&\mbox{if }1-\varepsilon<t\leq1,\\
     1,&\mbox{if }t>1.
\end{array}\right.
\end{equation*}
Let $m\in\mathbb N$ and the sequence $(\psi_{m,\varepsilon})_m$ given by
\begin{equation*}
\psi_{m,\varepsilon}(r)=H\left(\dfrac{\log\frac{R}{r}}{\log m}\right),\quad r>0.
\end{equation*}
It is not hard to see that $\psi_{m,\varepsilon}\in X^{k,p}_{0,R}\subset X^{k,p}_{\mathcal N,R}$. By induction on $n\in\mathbb N$ we have
\begin{equation}\label{eqharmonicpsi}
\Delta^n_{\gamma}\psi_{m,\varepsilon}(r)=\dfrac1{r^{2n}}\sum_{i=1}^{2n}\dfrac{c_{in}}{(\log m)^i}H^{(i)}\left(\dfrac{\log\frac{R}{r}}{\log m}\right),
\end{equation}
where
\begin{equation*}
\left\{\begin{array}{l}
c_{11}=-(\gamma-1),\ c_{21}=-1;\\
c_{1n+1}=2n\left(\gamma-2n-1\right)c_{1n};  \\
     c_{2n+1}=-2n(2n+1)c_{2n}-(4n+1)c_{1n}+2n\gamma c_{2n}+\gamma c_{1n};\\
     c_{2n+1n+1}=-(4n+1)c_{2nn}-c_{2n-1n}+\gamma c_{2nn}\\
     c_{2n+2n+1}=-c_{2nn}.
\end{array}\right.
\end{equation*}
and, for each $i=3,\ldots,2n$,
\begin{equation*}
c_{in+1}=-2n(2n+1)c_{in}-(4n+1)c_{i-1n}-c_{i-2n}+2n\gamma c_{in}+\gamma c_{i-1n}.
\end{equation*}
From $c_{11}=-(\gamma-1)$ and $c_{1n+1}=2n(\gamma-2n-1)c_{1n}$ we obtain
\begin{align}
c_{1n}&=-(\gamma-1)2^{n-1}(n-1)!\prod_{i=1}^{n-1}(\gamma-2i-1)\nonumber\\
&=-(\gamma-1)2^{2n-2}(n-1)!\prod_{i=1}^{n-1}\left(\dfrac{\gamma-1}2-i\right)\nonumber\\
&=-\dfrac{\gamma-1}{2}2^{2n-1}(n-1)!\dfrac{\Gamma\left(\frac{\gamma-1}2\right)}{\Gamma\left(\frac{\gamma-1}2-(n-1)\right)}\nonumber\\
&=-2^{2n-1}\dfrac{\Gamma(n)\Gamma\left(\frac{\gamma+1}2\right)}{\Gamma\left(\frac{\gamma+1-2n}2\right)}.\label{eqc1n}
\end{align}

\noindent\textbf{Case $k=2j$:} By \eqref{eqharmonicpsi},
\begin{equation*}
\Delta^j_\gamma\psi_{m,\varepsilon}(r)=\dfrac{c_{1j}}{r^{2j}\log m}H'\left(\dfrac{\log\frac{R}{r}}{\log m}\right)+\dfrac{O\left((\log m)^{-2}\right)}{r^{2j}}.
\end{equation*}
Then
\begin{align}
\|\nabla_\gamma^k\psi_{m,\varepsilon}\|^p_{L^p_{\alpha_k}}&=\left|\dfrac{c_{1j}}{\log m}\right|^p\int_{\frac{R}m}^R\left|H'\left(\dfrac{\log\frac{R}{r}}{\log m}\right)+O\left((\log m)^{-1}\right)\right|^pr^{\alpha_k-kp}\mathrm dr\nonumber\\
&\leq\left|\dfrac{c_{1j}}{\log m}\right|^p\Bigg[\int_{\frac{R}{m}}^{Rm^{\varepsilon-1}}\left|\|\phi'\|_\infty+O\left((\log m)^{-1}\right)\right|^pr^{-1}\mathrm dr\nonumber\\
&\quad+\int_{Rm^{\varepsilon-1}}^{Rm^{-\varepsilon}}r^{-1}\mathrm dr+\int_{Rm^{-\varepsilon}}^R\left|\|\phi'\|_{\infty}+O\left((\log m)^{-1}\right)\right|^pr^{-1}\Bigg]\nonumber\\
&\leq|c_{1j}|^p(\log m)^{1-p}\left[1+2\varepsilon\left|\|\phi'\|_\infty+O\left((\log m)^{-1}\right)\right|^p\right].\label{eqbpsi}
\end{align}
Set
\begin{equation*}
u_{m,\varepsilon}(r)=\dfrac{\psi_{m,\varepsilon}(r)}{\|\nabla^k_\gamma\psi_{m,\varepsilon}\|_{L ^p_{\alpha_k}}}.
\end{equation*}
From \eqref{eqc1n} we obtain $\mu_0=(\theta+1)|c_{1j}|^{\frac{p}{p-1}}$. Given $\mu>\mu_0$, \eqref{eqbpsi} guarantees
\begin{align*}
\int_0^Re^{\mu u_{m,\varepsilon}^{p'}}r^\theta\mathrm dr&\geq\int_0^{\frac{R}m}e^{\mu\|\nabla^k_\gamma\psi_{m,\varepsilon}\|_{L^p_{\alpha_k}}^{-p'}}r^\theta\mathrm dr\\
&\geq e^{\frac{\mu\log m}{|c_{1j}|^{p'}[1+2\varepsilon\|\phi'\|_\infty+O\left((\log m)^{-1}\right)]^{1/(p-1)}}-(\theta+1)\log m}\dfrac{R^{\theta+1}}{\theta+1}\\
&=\dfrac{R^{\theta+1}}{\theta+1}e^{(\theta+1)\log m\left(\frac{\mu}{\mu_0}\frac{1}{[1+2\varepsilon\|\phi'\|_\infty+O\left((\log m)^{-1}\right)]^{1/(p-1)}}-1\right)}.
\end{align*}
Therefore, taking $\varepsilon>0$ sufficiently small we have that the right term tends to be infinite when $m\to\infty$.

\noindent\textbf{Case $k=2j+1$:} Derivate \eqref{eqharmonicpsi} to obtain
\begin{equation*}
\nabla^k_\gamma\psi_{m,\varepsilon}(r)=-\dfrac{2jc_{1j}}{r^{2j+1}\log m}H'\left(\dfrac{\log\frac{R}{r}}{\log m}\right)+\dfrac{O\left((\log m)^{-2}\right)}{r^{2j+1}}
\end{equation*}
Proceeding as in the previous case,
\begin{equation}\label{eqbpsi2}
\|\nabla_\gamma^k\psi_{m,\varepsilon}\|^p_{L^p_{\alpha_k}}\leq |2jc_{1j}|^p(\log m)^{1-p}\left[1+2\varepsilon\left|\|\psi'\|_\infty+O\left((\log m)^{-1}\right)\right|^p\right].
\end{equation}
Set
\begin{equation*}
u_{m,\varepsilon}(r)=\dfrac{\psi_{m,\varepsilon}(r)}{\|\nabla^k_\gamma\psi_{m,\varepsilon}\|_{L ^p_{\alpha_k}}}.
\end{equation*}
From \eqref{eqc1n} we obtain $\mu_0=(\theta+1)|2jc_{1j}|^{\frac{p}{p-1}}$. Given $\mu>\mu_0$, \eqref{eqbpsi2} guarantees
\begin{align*}
\int_0^Re^{\mu u_{m,\varepsilon}^{p'}}r^\theta\mathrm dr&\geq\int_0^{\frac{R}m}e^{\mu\|\nabla^k_\gamma\psi_{m,\varepsilon}\|_{L^p_{\alpha_k}}^{-p'}}r^\theta\mathrm dr\\
&\geq e^{\frac{\mu\log m}{|2jc_{1j}|^{p'}[1+2\varepsilon\|\phi'\|_\infty+O\left((\log m)^{-1}\right)]^{1/(p-1)}}-(\theta+1)\log m}\dfrac{R^{\theta+1}}{\theta+1}\\
&=\dfrac{R^{\theta+1}}{\theta+1}e^{(\theta+1)\log m\left(\frac{\mu}{\mu_0}\frac{1}{[1+2\varepsilon\|\phi'\|_\infty+O\left((\log m)^{-1}\right)]^{1/(p-1)})}-1\right)}.
\end{align*}
Therefore, taking $\varepsilon>0$ sufficiently small we have that the right term tends to be infinite when $m\to\infty$.
\end{proof}

\section{Applications in a PDE}\label{APDE}

\subsection{Application for Sobolev case}

Define the $\alpha$-generalized radial Laplace operator $\Delta_\alpha u:=-r^{-\alpha}(r^{\alpha}u')'$ with $\alpha>3$. Consider the following problem
\begin{equation}\label{problem1}
\left\{\begin{array}{ll}
     \Delta_\alpha^2u=r^{\theta-\alpha}f(r,u)&\mbox{in }(0,R),  \\
     u=\Delta_\alpha u=0&\mbox{in }R,\\
     u'=(\Delta_\alpha u)'=0&\mbox{in }0,
\end{array}\right.
\end{equation} 
with $\theta\geq\alpha-4$ and $f\colon [0,R]\times\mathbb R\to\mathbb R$ continuous satisfying
\begin{equation}\tag{$f_S$}\label{eqfS}
|f(r,t)|\leq c_1+c_2|u|^{p-1},
\end{equation}
for some $c_1,c_2>0$ and $1\leq p\leq 2(\theta+1)/(\alpha-3)$. Also, we are considering the following space
\begin{equation*}
X_{S}:=X^{2,2}_R(\alpha-4,\alpha-2,\alpha)\cap X^{1,2}_{0,R}(\alpha-4,\alpha-2).
\end{equation*}
We say that $u\in X_S$ is a \textit{weak solution of \eqref{problem1}} if
\begin{equation*}
\int_0^R\Delta_\alpha u\Delta_\alpha vr^\alpha\mathrm dr=\int_0^Rf(r,u)vr^{\theta}\mathrm dr\quad\forall v\in X_S.
\end{equation*}
The energy functional $J\colon X_S\to\mathbb R$ is given by
\begin{equation*}
J(u)=\dfrac12\int_0^R|\Delta_\alpha u|^2r^\alpha\mathrm dr-\int_0^RF(r,u)r^\theta\mathrm dr,
\end{equation*}
where $F(r,t)=\int_0^tf(r,s)\mathrm ds$. Note that \eqref{eqfS} guarantees
\begin{equation}\tag{$F_S$}\label{eqFS}
|F(r,t)|\leq c_1|u|+c_3|u|^p,
\end{equation}
for some $c_3>0$.
\begin{cor}
The norm
\begin{equation*}
\|u\|_{\Delta_\alpha}:=\left(\int_0^R|\Delta_\alpha u|^2r^\alpha\mathrm dr\right)^{\frac12}
\end{equation*}
is equivalent to $\|\cdot\|_{X^{2,2}_R}$ in $X_S$.
\end{cor}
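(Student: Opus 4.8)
The plan is to obtain this corollary as a direct specialization of Proposition \ref{propequivnormL}, which already proves that $\|\cdot\|_L$ is equivalent to $\|\cdot\|_{X^{2,p}_R}$ on a space of exactly the form $X^{2,p}_R(\alpha_0,\alpha_1,\alpha_2)\cap X^{1,p}_{0,R}(\alpha_0,\alpha_1)$. First I would fix the dictionary of parameters: take $p=2$, $\alpha_0=\alpha-4$, $\alpha_1=\alpha-2$, $\alpha_2=\alpha$, and let $L=\Delta_\alpha$; writing the operator of Proposition \ref{propequivnormL} as $-r^{-\gamma}(r^{\beta}u')'$ (relabeling its inner exponent $\beta$ to avoid a clash with the present $\alpha$), this corresponds to $\beta=\gamma=\alpha$. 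With these choices the space in Proposition \ref{propequivnormL} is precisely $X_S=X^{2,2}_R(\alpha-4,\alpha-2,\alpha)\cap X^{1,2}_{0,R}(\alpha-4,\alpha-2)$.

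Next I would verify the hypotheses of Proposition \ref{propequivnormL} one by one. The conditions $\alpha_0\geq\alpha_2-2p$ and $\alpha_1\geq\alpha_2-p$ hold with equality, since $\alpha-4\geq\alpha-4$ and $\alpha-2\geq\alpha-2$; this is permitted because those inequalities in the proposition are not strict. Since $\alpha>3$ we get $\beta=\gamma=\alpha>0$, the requirement $\beta>(\alpha_2-p+1)/p=(\alpha-1)/2$ (equivalently $\alpha>-1$), and $\alpha_2-p+1=\alpha-1\geq0$. Finally, the weight exponent dictated by the proposition is $\eta=\alpha_2+p(\gamma-\beta)=\alpha+2(\alpha-\alpha)=\alpha$, so that $\|u\|_L=\left(\int_0^R|\Delta_\alpha u|^2r^\alpha\,\mathrm dr\right)^{1/2}=\|u\|_{\Delta_\alpha}$. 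Proposition \ref{propequivnormL} then furnishes constants $c,C>0$ with $c\,\|u\|_{X^{2,2}_R}\leq\|u\|_{\Delta_\alpha}\leq C\,\|u\|_{X^{2,2}_R}$ for all $u\in X_S$, which is exactly the assertion.

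There is essentially no genuine obstacle here beyond bookkeeping: the one place requiring a moment's care is that the structural inequalities on the weights in Proposition \ref{propequivnormL} are saturated in this application, so I would make explicit that those inequalities are stated non-strictly and hence still apply, and that the coincidence of the inner exponent and the outer weight of $\Delta_\alpha$ is what forces $\eta=\alpha$, making the $\Delta_\alpha$-norm weighted by $r^\alpha$ the correct companion norm. Beyond that the statement is immediate.
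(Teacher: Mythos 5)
Your proposal is correct and matches the paper's approach exactly: the paper's proof is simply "Follows directly from Proposition \ref{propequivnormL}," and your parameter dictionary ($p=2$, $\alpha_0=\alpha-4$, $\alpha_1=\alpha-2$, $\alpha_2=\alpha$, inner exponent equal to $\gamma=\alpha$, hence $\eta=\alpha$) together with the hypothesis checks is precisely the bookkeeping that implicit citation relies on. Your explicit verification that the weight inequalities are saturated but non-strict, and that $\alpha>3$ covers the remaining conditions, is a harmless elaboration of the same argument.
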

\begin{proof}
Follows directly from Proposition \ref{propequivnormL}.
\end{proof}

\begin{prop}\label{prop42}
Suppose $1<p<2(\theta+1)/(\alpha-3)$ and $f(r,u)=g(r)|u|^{p-2}u$ for some $g\colon[0,R]\to\mathbb R$ continuous positive function. Set
\begin{equation*}
m_{\alpha,\theta}:=\inf_{u\in X_S\backslash\{0\}}\dfrac{\int_0^R|\Delta_\alpha u|^2r^\alpha\mathrm dr}{\left(\int_0^Rg(r)|u|^{p}r^{\theta}\mathrm dr\right)^{\frac2{p}}}.
\end{equation*}
Then $m_{\alpha,\theta}$ is attained by a function $u_0\in X_S\backslash\{0\}$. Moreover, up to a suitable multiple, $u_0$ is a weak solution of \eqref{problem1}.
\end{prop}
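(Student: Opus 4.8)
The plan is to use the direct method in the calculus of variations applied to the constrained minimization problem defining $m_{\alpha,\theta}$. First I would take a minimizing sequence $(u_n)\subset X_S\setminus\{0\}$ for the Rayleigh-type quotient; by homogeneity of the quotient (both numerator and denominator are positively homogeneous, of degrees $2$ and $2$ respectively after the $2/p$ power), I may normalize so that $\int_0^R g(r)|u_n|^p r^\theta\,\mathrm dr=1$, and then $\|u_n\|_{\Delta_\alpha}^2=\int_0^R|\Delta_\alpha u_n|^2 r^\alpha\,\mathrm dr\to m_{\alpha,\theta}$. By the Corollary above, $\|\cdot\|_{\Delta_\alpha}$ is equivalent to $\|\cdot\|_{X^{2,2}_R}$ on $X_S$, so $(u_n)$ is bounded in $X_S$, which is a reflexive Banach space (it is a closed subspace of the Hilbert-type space $X^{2,2}_R$). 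Hence, up to a subsequence, $u_n\rightharpoonup u_0$ weakly in $X_S$.

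The next step is to pass to the limit in the constraint. Here I invoke Theorem \ref{theo32}: since $\alpha>3$ means $\alpha_k-kp+1 = \alpha-3>0$, we are in the Sobolev case, and the hypothesis $1<p<2(\theta+1)/(\alpha-3)$ means $p$ is strictly subcritical relative to the exponent $p^*=2(\theta+1)/(\alpha-3)$ for the space $X^{2,2}_R(\alpha-4,\alpha-2,\alpha)$ with weight parameter $\theta$ (note $\theta\geq\alpha-4=\alpha_k-kp$ is exactly the admissibility condition; actually $\theta>\alpha-1$ in the companion theorem is stronger, but the minimal condition $\theta\ge\alpha-4$ suffices here, and if needed one restricts $X_S\subset X^{2,2}_R$). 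Therefore the embedding $X_S\hookrightarrow X^{2,2}_R\hookrightarrow L^p_\theta$ is \emph{compact}, so $u_n\to u_0$ strongly in $L^p_\theta(0,R)$. Since $g$ is continuous and positive on the compact $[0,R]$, it is bounded, so $\int_0^R g(r)|u_n|^p r^\theta\,\mathrm dr\to\int_0^R g(r)|u_0|^p r^\theta\,\mathrm dr=1$; in particular $u_0\neq 0$. On the other hand, $u\mapsto\|u\|_{\Delta_\alpha}^2$ is convex and strongly continuous on $X_S$, hence weakly lower semicontinuous, so $\|u_0\|_{\Delta_\alpha}^2\leq\liminf\|u_n\|_{\Delta_\alpha}^2=m_{\alpha,\theta}$. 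Combining, the quotient at $u_0$ is $\leq m_{\alpha,\theta}$, and by definition of the infimum it equals $m_{\alpha,\theta}$; thus $u_0$ is a minimizer.

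Finally I would derive the Euler–Lagrange equation. Since $u_0$ minimizes the quotient, for every $v\in X_S$ the function $t\mapsto Q(u_0+tv)$ has a critical point at $t=0$, giving, after the usual computation and using $\int_0^R g|u_0|^p r^\theta\,\mathrm dr=1$,
\begin{equation*}
\int_0^R\Delta_\alpha u_0\,\Delta_\alpha v\, r^\alpha\,\mathrm dr = m_{\alpha,\theta}\int_0^R g(r)|u_0|^{p-2}u_0\, v\, r^\theta\,\mathrm dr\qquad\forall v\in X_S.
\end{equation*}
Rescaling $u_0$ by the constant $c=m_{\alpha,\theta}^{1/(p-2)}$ (here $p>2$; for $p=2$ the quotient is linear-homogeneous and $m_{\alpha,\theta}$ is an eigenvalue, handled separately or excluded since $2<p$ is the operative range for the rescaling, while $p=2$ still yields a weak solution after noting $\lambda=m$ plays the role of the coupling constant) absorbs $m_{\alpha,\theta}$ into the right-hand side, so that $c\,u_0$ is a weak solution of \eqref{problem1} with $f(r,u)=g(r)|u|^{p-2}u$. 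The main obstacle is verifying that the compactness of the embedding $X_S\hookrightarrow L^p_\theta$ genuinely applies under the stated parameter constraints — that is, matching the weight exponents $(\alpha-4,\alpha-2,\alpha)$ and the target weight $\theta$ against the subcriticality threshold $p^*$ from Theorem \ref{theo32}, and confirming that passing to the subspace $X_S$ (with its boundary conditions at $0$ and $R$) does not destroy reflexivity or the compact embedding; both follow because $X_S$ is a closed subspace of the reflexive space $X^{2,2}_R$ and closed subspaces inherit compact embeddings.
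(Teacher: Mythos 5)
Your proposal is correct and follows essentially the same route as the paper: direct method on the Rayleigh quotient, normalization of the minimizing sequence, weak convergence plus the compact embedding of Theorem \ref{theo32} to pass to the limit in the denominator and to rule out $u_0=0$, weak lower semicontinuity of $\|\cdot\|_{\Delta_\alpha}$, and then Lagrange multipliers followed by a rescaling. The only substantive difference is the rescaling constant: you use $m_{\alpha,\theta}^{1/(p-2)}$ (and flag the degenerate case $p=2$), whereas the paper writes $\lambda^{1/(p-1)}$ --- your exponent is the correct one, since $cu_0$ solves \eqref{problem1} precisely when $c^{p-2}=\lambda$.
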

\begin{proof}
Let $(u_n)$ be a minimizing sequence in $X_S$ of $m_{\alpha,\theta}$. We can suppose $\|u_n\|_{\Delta_\alpha}=1$ changing $u_n$ for $u_n/\|u_n\|_{\Delta_\alpha}$ if necessary. Since $X_S$ is reflexive, up to subsequence, $u_n\rightharpoonup u_0$ for some $u_0\in X_S$. The compact embedding of Theorem \ref{theo32} implies 
\begin{equation*}
\int_0^Rg(r)|u_n|^pr^\theta\mathrm dr\to\int_0^Rg(r)|u_0|^pr^\theta\mathrm dr.
\end{equation*}
Note that $u_0\in X_S\backslash\{0\}$, because otherwise
\begin{equation*}
1=\left(\dfrac{\int_0^Rg(r)|u_n|^pr^\theta\mathrm dr}{\int_0^Rg(r)|u_n|^pr^\theta\mathrm dr}\right)^{\frac{2}{p}}\|u_n\|_{\Delta_\alpha}^2\to\left(\int_0^Rg(r)|u_0|^pr^\theta\mathrm dr\right)^{\frac{2}{p}}m_{\alpha,\theta}=0.
\end{equation*}
Since $(u_n)_n$ is a minimizing sequence and $u_n\rightharpoonup u_0$ in $X_S$,
\begin{equation*}
m_{\alpha,\theta}\leq\dfrac{\|u_0\|_{\Delta_\alpha}^2}{\|g(r)^{1/p}u_0\|_{L^p_\theta}^2}\leq\dfrac{1}{\lim\|g(r)^{1/p}u_n\|_{L^p_\theta}^2}=\lim\dfrac{\|u_n\|_{\Delta_\alpha}^2}{\|g(r)^{1/p}u_n\|_{L^p_\theta}^2}=m_{\alpha,\theta},
\end{equation*}
which concludes that $u_0$ attains $m_{\alpha,\theta}$.

By Lagrange's multipliers, there exists $\lambda>0$ such that
\begin{equation*}
\int_0^R\Delta_\alpha u_0\Delta_\alpha vr^\alpha\mathrm dr=\lambda\int_0^Rg(r)|u_0|^{p-2}u_0vr^\theta\mathrm dr,\quad\forall v\in X_S.
\end{equation*}
Note that $\widetilde u_0(r)=\lambda^{\frac{1}{p-1}}u_0(r)$ is a weak solution of \eqref{problem1}.
\end{proof}

\begin{prop}\label{propclassicsolution1}
Suppose $u_0$ is a weak solution of \eqref{problem1} with $p\geq2$. Then $u_0\in C^4(0,R]$, $\Delta_\alpha u_0\in C^2(0,R]$ and $\Delta_\alpha ^2u_0=r^{\theta-\alpha}f(r,u_0)$ $\forall r\in(0,R]$. Moreover, $u_0(R)=\Delta_\alpha u_0(R)=0$.
\end{prop}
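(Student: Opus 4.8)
The plan is to invert the operator $\Delta_\alpha$ twice, using the explicit integral formulas underlying Lemma~\ref{lemmafjs}, and bootstrap from the weak formulation. By Proposition~\ref{prop31} we may take $u_0\in AC^1_{\mathrm{loc}}((0,R])$, and we set $\phi_0:=\Delta_\alpha u_0=-u_0''-(\alpha/r)u_0'$; from $u_0''\in L^2_\alpha$ and $u_0'\in L^2_{\alpha-2}$ we get $\phi_0\in L^2_\alpha$, and since $\alpha>-1$ the Cauchy--Schwarz inequality gives $r^\alpha\phi_0\in L^1(0,R)$.

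The first step is to establish the representation
\begin{equation*}
u_0(r)=\int_r^R t^{-\alpha}\int_0^t\phi_0(s)s^\alpha\,\mathrm ds\,\mathrm dt,\qquad r\in(0,R].
\end{equation*}
Since $(r^\alpha u_0')'=-r^\alpha\phi_0\in L^1(0,R)$, the function $r\mapsto r^\alpha u_0'(r)$ has a finite limit $C$ at $0$ and equals $C-\int_0^r s^\alpha\phi_0\,\mathrm ds$ on $(0,R]$; if $C\ne0$ then $|u_0'(r)|\ge\frac12|C|\,r^{-\alpha}$ for $r$ small, contradicting $u_0'\in L^2_{\alpha-2}$ because $\int_0 r^{-\alpha-2}\,\mathrm dr=\infty$ when $\alpha>3$. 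Hence $C=0$, so $u_0'(r)=-r^{-\alpha}\int_0^r s^\alpha\phi_0\,\mathrm ds$; integrating from $r$ to $R$ and using $u_0(R)=0$ (part of the definition of $X^{1,2}_{0,R}$) gives the formula, and since $\phi_0 s^\alpha\in L^1(0,R)$ it also yields $u_0\in C^1((0,R])$ with $u_0(R)=0$.

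The second step identifies $\phi_0$. Proposition~\ref{prop32} gives $|u_0(r)|\le C r^{-(\alpha-3)/2}\|u_0\|_{X^{2,2}_R}$; combined with \eqref{eqfS} and $p\le 2(\theta+1)/(\alpha-3)$, and noting $\theta\ge\alpha-4>-1$, this yields $f(\cdot,u_0(\cdot))\,r^\theta\in L^1(0,R)$, and since $f$ is continuous and $u_0\in C^0((0,R])$ the map $r\mapsto f(r,u_0(r))$ is continuous on $(0,R]$. For $\psi\in C_c^\infty((0,R))$ put $v_\psi(r):=\int_r^R t^{-\alpha}\int_0^t\psi(s)s^\alpha\,\mathrm ds\,\mathrm dt$; then $v_\psi$ is smooth on $(0,R)$, constant near the origin with bounded derivatives (so it lies in $X^{2,2}_R(\alpha-4,\alpha-2,\alpha)$ since $\alpha-4>-1$) and $v_\psi(R)=0$, whence $v_\psi\in X_S$, while $\Delta_\alpha v_\psi=\psi$. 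Testing the weak equation against $v=v_\psi$ gives $\int_0^R\phi_0\psi\,r^\alpha\,\mathrm dr=\int_0^R f(r,u_0)v_\psi\,r^\theta\,\mathrm dr$; Fubini's theorem on the right (valid because $f(\cdot,u_0)r^\theta\in L^1$ and $\psi$ has compact support in $(0,R)$) rewrites it as $\int_0^R w(r)\psi(r)r^\alpha\,\mathrm dr$, where $w(r):=\int_r^R t^{-\alpha}\int_0^t f(s,u_0(s))s^\theta\,\mathrm ds\,\mathrm dt$. Hence $\phi_0=w$ a.e.\ on $(0,R)$.

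Finally, since $r\mapsto f(r,u_0(r))$ is continuous on $(0,R]$, differentiating the formula for $w$ shows $w\in C^2((0,R])$, $w(R)=0$, and $\Delta_\alpha w=r^{\theta-\alpha}f(r,u_0)$; feeding $\phi_0=w$ back into the representation of $u_0$ (with $w s^\alpha\in C^2((0,R])\cap L^1(0,R)$) gives $u_0'(r)=-r^{-\alpha}\int_0^r w(s)s^\alpha\,\mathrm ds\in C^3((0,R])$, hence $u_0\in C^4((0,R])$, $\Delta_\alpha u_0=w\in C^2((0,R])$ with $\Delta_\alpha u_0(R)=0$, and $\Delta_\alpha^2 u_0=\Delta_\alpha w=r^{\theta-\alpha}f(r,u_0)$ on $(0,R]$. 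The main difficulty is the identification $\phi_0=w$: one must check carefully that the test functions $v_\psi$ genuinely belong to the weighted space $X_S$ and that the triple integral arising in the Fubini step converges absolutely — this is exactly where the subcriticality $p\le 2(\theta+1)/(\alpha-3)$, together with the hypothesis $\theta\ge\alpha-4$, is needed to control $f(r,u_0)r^\theta$ up to the origin despite the possible blow-up of $u_0$ there.
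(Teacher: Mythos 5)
Your proof is correct and rests on the same core mechanism as the paper's: construct explicit test functions in $X_S$ by inverting $\Delta_\alpha$, insert them into the weak formulation, and apply Fubini's theorem. The one structural difference is which object gets identified. The paper tests against $v$ with $r^\alpha v'=\varphi$, which identifies the \emph{weak derivative} $(\Delta_\alpha u_0)'(r)=-r^{-\alpha}\int_0^r f(s,u_0(s))s^\theta\,\mathrm ds$; since this determines $\Delta_\alpha u_0$ only up to an additive constant, the paper then needs a second, separate test-function computation (with $v(r)=R-r$ near $R$) to pin down $\Delta_\alpha u_0(R)=0$. You instead test against $v_\psi$ with $\Delta_\alpha v_\psi=\psi$ and identify $\Delta_\alpha u_0$ itself with the double integral $w$, so the boundary value $\Delta_\alpha u_0(R)=w(R)=0$ comes for free; this is a slightly cleaner organization. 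Your preliminary step showing $\lim_{r\to0}r^\alpha u_0'(r)=0$ (hence the integral representation of $u_0$) is sound --- the $L^1$-integrability of $r^\alpha\Delta_\alpha u_0$ together with the incompatibility of a nonzero limit with $u_0'\in L^2_{\alpha-2}$ is exactly the right argument, and the paper uses the same device in the proof of Lemma \ref{lemmabootstrap1}. The verification that $v_\psi\in X_S$ (constant near the origin, vanishing at $R$, all weights admissible since $\alpha-4>-1$) and the absolute convergence needed for Fubini (from $f(\cdot,u_0)r^\theta\in L^1$, which uses $p\ge2$ and the subcriticality exactly as you say) are both handled correctly; note only that the integrability of $w\,s^\alpha$ near the origin in your final feedback step is most cleanly obtained from $w=\Delta_\alpha u_0\in L^2_\alpha$ and Cauchy--Schwarz, as you indicate, rather than from pointwise bounds on $w$.
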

\begin{proof}
We claim that $\Delta_\alpha u_0$ has weak derivative given by
\begin{equation}\label{wdlu}
(\Delta_\alpha u_0)'(r)=-r^{-\alpha}\int_0^rf(s,u_0(s))s^\theta\mathrm ds.
\end{equation}
Note that the right term is well defined by \eqref{eqfS} and Theorem \ref{theo32} with $p\geq2$. Using \eqref{wdlu} and that the right term of \eqref{wdlu} is $C^1((0,R])$, we are left with the task to prove \eqref{wdlu} and $\Delta_\alpha u_0(R)=0$.

Let $\varphi\in C^\infty_0(0,R)$. Writing $v(r)=-\int_r^R\varphi s^{-\alpha}\mathrm ds$ we have $\varphi=r^\alpha v'$ and $v\in X_S$. Using that $u_0$ is a weak solution and Fubini's Theorem we get
\begin{align*}
\int_0^R\Delta_\alpha u_0\varphi'\mathrm dr&=-\int_0^R\Delta_\alpha u_0\Delta_\alpha vr^\alpha\mathrm dr\\
&=\int_0^R\int_r^Rf(r,u_0(r))\varphi(s)s^{-\alpha}r^{\theta}\mathrm ds\mathrm dr\\
&=\int_0^Rs^{-\alpha}\int_0^sf(r,u_0(r))r^\theta\mathrm dr\varphi(s)\mathrm ds.
\end{align*}
This concludes \eqref{wdlu}.

Let us check that $\Delta_\alpha u_0(R)=0$. Fix $v\colon[0,R]\to\mathbb R$ smooth such that $v\equiv0$ in $[0,R/3]$ and $v(r)=R-r$ in $[2R/3,R]$. Note that $v\in X_S$ and by $u_0$ be a weak solution of \eqref{problem1} we have
\begin{align*}
\int_0^Rf(r,u_0)vr^\theta\mathrm dr&=\int_0^R\Delta_\alpha u_0\Delta_\alpha vr^\alpha\mathrm dr=-\int_0^R\Delta_\alpha u_0(r^\alpha v')'\mathrm dr\\
&=-\Delta_\alpha u_0(R)v'(R)R^\alpha+\int_0^Rr^\alpha(\Delta_\alpha u_0)'v'\mathrm dr\\
&=\Delta_\alpha u_0(R)R^\alpha-\int_0^R\left(r^\alpha(\Delta_\alpha u_0)'\right)'v\mathrm dr\\
&=\Delta_\alpha u_0(R)R^\alpha+\int_0^R\Delta_\alpha ^2u_0vr^\alpha\mathrm dr.
\end{align*}
Since $\Delta_\alpha ^2u_0=r^{\theta-\alpha}f(r,u_0)$ we conclude that $\Delta_\alpha u_0(R)=0$.
\end{proof}

\begin{lemma}\label{lemmabootstrap1}
Suppose $2\leq p<2(\theta+1)/(\alpha-3)$ and $\theta\geq\alpha-2$. If $u_0$ is a weak solution of \eqref{problem1}, then $u_0\in X^{2,s}_R(\alpha-2s,\alpha-s,\alpha)$ for all $s\geq1$.
\end{lemma}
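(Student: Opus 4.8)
The plan is a weighted elliptic bootstrap that upgrades the integrability of $u_0$ round by round, the engine being the two representation formulas coming from Proposition \ref{propclassicsolution1}. Writing $w:=\Delta_\alpha u_0$ and $h:=r^{\theta-\alpha}f(r,u_0)$, one has $w(R)=0$, $r^\alpha w'(r)\to0$ as $r\to0$, $\Delta_\alpha w=h$, and $u_0(R)=0$, $r^\alpha u_0'(r)\to0$ as $r\to0$, so that
\begin{equation*}
w(r)=\int_r^R t^{-\alpha}\int_0^t h(s)s^\alpha\,\mathrm ds\,\mathrm dt,\qquad u_0(r)=\int_r^R t^{-\alpha}\int_0^t w(s)s^\alpha\,\mathrm ds\,\mathrm dt,
\end{equation*}
both of the form \eqref{expressionu} with $\gamma=\alpha$. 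Applying Lemma \ref{lemmafjs} to the first identity with $\alpha_2=\alpha+2s$ (its condition $\alpha>(\alpha_2-s+1)/s$ becomes $s>(\alpha+1)/(\alpha-1)$) gives $h\in L^s_{\alpha+2s}\Rightarrow w\in X^{2,s}_R(\alpha,\alpha+s,\alpha+2s)$, hence $w\in L^s_\alpha$; feeding this back through the second identity via Lemma \ref{lemmafjs} with $\alpha_2=\alpha$ (condition $s>1$) yields $u_0\in X^{2,s}_R(\alpha-2s,\alpha-s,\alpha)$. Thus a single regularity step reduces the claim for a fixed $s>1$ to the single statement $h\in L^s_{\alpha+2s}$.

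Two observations finish the reduction. Since $\alpha>-1$, Hölder's inequality on $(0,R)$ gives the continuous inclusions $X^{2,s}_R(\alpha-2s,\alpha-s,\alpha)\hookrightarrow X^{2,s'}_R(\alpha-2s',\alpha-s',\alpha)$ for $1\le s'\le s$, so it suffices to prove the conclusion for $s$ arbitrarily large; in particular the case $s=1$ follows. And
\begin{equation*}
\int_0^R|h|^s r^{\alpha+2s}\,\mathrm dr=\int_0^R|f(r,u_0)|^s r^{\alpha+s(\theta-\alpha+2)}\,\mathrm dr,
\end{equation*}
whose exponent is $\alpha+s\bigl(\theta-(\alpha-2)\bigr)\ge\alpha>-1$ exactly because $\theta\ge\alpha-2$; by \eqref{eqfS} the constant part of $f$ then always yields a finite integral, while the $|u_0|^{p-1}$ part is controlled, by Hölder, once $u_0\in L^q_\theta$ for $q$ large, in particular once $u_0\in L^\infty(0,R)$. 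Hence the whole lemma is reduced to proving $u_0\in L^\infty(0,R)$.

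For the $L^\infty$ bound I would iterate the regularity--embedding scheme. Theorem \ref{theo32} (Sobolev case, $k=2$, $p=2$; admissible since $\theta\ge\alpha-2>\alpha-4=\alpha_k-kp$) gives $X_S\hookrightarrow L^{q_0}_\theta$ with $q_0=p^*=2(\theta+1)/(\alpha-3)$, and $q_0>p$ by subcriticality. Given $u_0\in L^{q_n}_\theta$, one chooses $s_n$ as large as the estimate of the displayed integral allows (roughly $s_n$ just below $q_n/(p-1)$), obtains $u_0\in X^{2,s_n}_R(\alpha-2s_n,\alpha-s_n,\alpha)$ from the regularity step, and reads off the next exponent from Theorem \ref{theo32}: either $\alpha-2s_n+1\le0$, in which case the Morrey (resp.\ Adams--Trudinger--Moser) part already gives $u_0\in L^\infty$ (resp.\ $u_0\in L^q_\theta$ for all $q<\infty$) and we stop, or $u_0\in L^{q_{n+1}}_\theta$ with $q_{n+1}=s_n(\theta+1)/(\alpha-2s_n+1)$. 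The subcriticality $p<2(\theta+1)/(\alpha-3)$ is what should make each round strictly enlarge the range of admissible $s$, driving $s_n$ up until the Morrey/Adams--Trudinger--Moser regime $\alpha-2s_n+1\le0$ is reached after finitely many rounds; this gives $u_0\in L^\infty(0,R)$ and, with the two reductions above, the lemma. (If this second-order iteration should stall for some parameter ranges, one can instead bootstrap through the fourth-order spaces, using Lemma \ref{qww} for the regularity gain and the fourth-order case of Theorem \ref{theo32} for the embedding gain, which advances faster.)

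The main obstacle is exactly the bookkeeping inside the iteration: at each round one must check that $s_n$ can be chosen simultaneously below $q_n/(p-1)$, above the threshold coming from the constraint in Lemma \ref{lemmafjs}, and compatibly with the weight $\alpha+s_n(\theta-\alpha+2)$ (this last point is where $\theta\ge\alpha-2$ enters), and one must verify that the sequence $q_n$ genuinely escapes the Sobolev regime in finitely many steps rather than converging to a finite limit --- which is precisely what the strict inequality $p<2(\theta+1)/(\alpha-3)$ should guarantee. All the individual estimates are routine weighted Hardy-type inequalities (Proposition \ref{prop21JMBO}, Lemma \ref{lemmafjs}); the only real work is assembling them into a convergent scheme.
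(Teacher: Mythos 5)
Your reduction is exactly the paper's: the two representation formulas plus Lemma \ref{lemmafjs} give the implication $u_0\in L^{s(p-1)}_{\alpha+s(\theta-\alpha+2)}\Rightarrow u_0\in X^{2,s}_R(\alpha-2s,\alpha-s,\alpha)$, and the problem becomes an iteration that must escape the Sobolev regime. But the iteration you set up has a genuine gap, and it is not just deferred bookkeeping. You track $u_0\in L^{q_n}_\theta$ with the weight frozen at $\theta$ and then use H\"older to reach the weight $\alpha+s(\theta-\alpha+2)$; H\"older forces $s_n(p-1)\le q_n$, and then the Sobolev embedding into $L^{q}_\theta$ gives $q_{n+1}\le(\theta+1)s_n/(\alpha-2s_n+1)$. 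The ratio $q_{n+1}/q_n$ exceeds $1$ only when $p-1<(\theta+1)/(\alpha-3)$, whereas subcriticality only gives $p-1<2(\theta+1)/(\alpha-3)-1$; since $\theta\ge\alpha-2$ forces $(\theta+1)/(\alpha-3)>1$, there is a nonempty range $1+(\theta+1)/(\alpha-3)\le p<2(\theta+1)/(\alpha-3)$ on which your sequence $q_n$ \emph{decreases}. Concretely, for $\alpha=4$, $\theta=2$, $p=5$ one gets $q_0=6$, $s_0<3/2$, and $q_1<9/4$, so the scheme regresses rather than stalls. Your fallback (bootstrapping through fourth-order spaces) does not address this, because the loss occurs in the embedding step, not in the regularity step.

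The paper's proof repairs exactly this point: it never passes through $L^{q}_\theta$, but applies Theorem \ref{theo32} to $X^{2,s}_R(\alpha-2s,\alpha-s,\alpha)$ with the target weight $\theta'=\alpha+ks(\theta-\alpha+2)$, which grows linearly in the sought exponent $ks$; the admissibility condition $ks(p-1)\le(\theta'+1)s/(\alpha-2s+1)$ then yields an explicit multiplicative gain
\begin{equation*}
k=\dfrac{(p-1)(\alpha-3)-2(\theta-\alpha+2)}{(p-1)\left[(p-1)(\alpha-3)-2(\theta-\alpha+2)\right]-2\left[\theta-\alpha+2+2(p-1)\right]},
\end{equation*}
which is shown to satisfy $k>1$ precisely because $p<2(\theta+1)/(\alpha-3)$ (with the degenerate cases where a numerator or denominator is nonpositive handled separately, since there one already reaches all $s$ or the Adams--Trudinger--Moser/Morrey regime). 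So $s\mapsto ks$ iterates geometrically and hits $\alpha-2s+1\le0$ in finitely many steps. To complete your argument you would need to replace the fixed-weight-plus-H\"older step by this weighted embedding and actually carry out the computation showing a uniform ratio $k>1$; as written, the convergence of your scheme is not only unproved but false on part of the stated parameter range.
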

\begin{proof}
By Proposition \ref{propclassicsolution1} we have that $\Delta_\alpha ^2u_0=r^{\theta-\alpha}f(r,u_0)$ with $u_0(R)=\Delta_\alpha u_0(R)=0$. Let us check that, given $s\geq1$,
\begin{equation}\label{equivlx2r}
u_0\in L^{s(p-1)}_{\alpha+s(\theta-\alpha+2)}\Rightarrow u_0\in X^{2,s}_R(\alpha-2s,\alpha-s,\alpha).
\end{equation}
Indeed, from $u_0\in L^{s(p-1)}_{\alpha+s(\theta-\alpha+2)}$ and \eqref{eqfS} we obtain $\Delta_\alpha ^2u_0=r^{\theta-\alpha}f(r,u_0)\in L^s_{\alpha+2s}$. Since $r^\alpha \Delta_\alpha u_0\overset{r\to0}\longrightarrow 0$ (see \eqref{wdlu}) and $\Delta_\alpha u_0(R)=0$ we can write
\begin{equation*}
\Delta_\alpha u_0(r)=\int_r^Rt^{-\alpha}\int_0^t\Delta_\alpha ^2u_0(s)s^\alpha\mathrm ds\mathrm dt.
\end{equation*}
Lemma \ref{lemmafjs} guarantees $\Delta_\alpha u_0\in X^{2,s}_R(\alpha,\alpha+s,\alpha+2s)$. Since $r^\alpha u_0\overset{r\to0}{\longrightarrow}0$ and $u_0(R)=0$ we can also write
\begin{equation*}
u_0(r)=\int_r^Rt^{-\alpha}\int_0^t\Delta_\alpha u_0(s)s^\alpha\mathrm ds\mathrm dt.
\end{equation*}
Therefore, Lemma \ref{lemmafjs} concludes \eqref{equivlx2r}.

Let $q\geq1$ such that $q\left[(p-1)(\alpha-3)-2(\theta-\alpha+2)\right]\leq 2(\alpha+1)$. Applying Theorem \ref{theo32} in $u_0\in X^{2,2}_R(\alpha-4,\alpha-2,\alpha)$ we get $u_0\in L^{q(p-1)}_{\alpha+2q}$. Note that \eqref{equivlx2r} implies that $u_0\in X^{2,q}_R(\alpha-2q,\alpha-q,\alpha)$. Thus, we only need to consider the case when
\begin{equation}\label{a1positive}
(p-1)(\alpha-3)-2(\theta-\alpha+2)>0.
\end{equation}
Denote $q:=2(\alpha+1)/[(p-1)(\alpha-3)-2(\theta-\alpha+2)]$. By $p<2(\theta+1)/(\alpha-3)$ we have $q>2$. We can suppose
\begin{equation}\label{a2positive}
(p-1)\left[(p-1)(\alpha-3)-2(\theta-\alpha+2)\right]-2[\theta-\alpha+2+2(p-1)]>0.
\end{equation}
Indeed, if \eqref{a2positive} does not hold, then $u_0\in X^{2,q}_R(\alpha-2q,\alpha-q,\alpha)$ implies that $u_0\in L^{s(p-1)}_{\alpha+s(\theta-\alpha+2)}$ for all $s\geq1$. Then \eqref{equivlx2r} guarantees $u_0\in X^{2,s}_R(\alpha-2s,\alpha-2,\alpha)$ for all $s\geq1$.  

Set
\begin{equation*}
k:=\dfrac{(p-1)(\alpha-3)-2(\theta-\alpha+2)}{(p-1)\left[(p-1)(\alpha-3)-2(\theta-\alpha+2)\right]-2\left[\theta-\alpha+2+2(p-1)\right]}.
\end{equation*}
Using \eqref{a1positive} and \eqref{a2positive} we get that $k$ is well defined and it is positive. Moreover, by $p<2(\theta+1)/(\alpha-3)$,
\begin{align*}
    k&=\dfrac{\alpha-3-\frac{2(\theta-\alpha+2)}{p-1}}{(p-1)(\alpha-3)-2(\theta-\alpha+2)-4-\frac{2(\theta-\alpha+2)}{p-1}}\\
    &>\dfrac{\alpha-3-\frac{2(\theta-\alpha+2)}{p-1}}{(\frac{2(\theta+1)}{\alpha-3}-1)(\alpha-3)-2(\theta-\alpha+2)-4-\frac{2(\theta-\alpha+2)}{p-1}}=1.
\end{align*}
The proof is complete by showing that
\begin{equation}\label{bootstrap1}
u_0\in X^{2,s}_R(\alpha-2s,\alpha-s,\alpha),\ s\geq q\Rightarrow u_0\in X^{2,ks}_R(\alpha-2ks,\alpha-ks,\alpha).
\end{equation}
Firstly, we can suppose $\alpha-2s+1>0$ or otherwise the Adams-Trudinger-Moser case of Theorem \ref{theo32} concludes. Note that
\begin{equation*}
q=\dfrac{2(\alpha+1)}{(p-1)(\alpha-3)-2(\theta-\alpha+2)}=\dfrac{(p-1)(\alpha+1)-(\alpha+1)k^{-1}}{\theta-\alpha+2+2(p-1)}.
\end{equation*}
Then
\begin{align*}
s\geq q&\Rightarrow s\left[k(\theta-\alpha+2)+2k(p-1)\right]\geq k(p-1)(\alpha+1)-(\alpha+1)\\
&\Rightarrow ks(p-1)\leq\dfrac{s\left[\alpha+1+ks(\theta-\alpha+2)\right]}{\alpha-2s+1}.
\end{align*}
Using $u_0\in X^{2,s}_R(\alpha-2s,\alpha-s,\alpha)$ we get $u_0\in L^{ks(p-1)}_{\alpha+ks(\theta-\alpha+2)}$. Therefore, \eqref{bootstrap1} follows by \eqref{equivlx2r}.
\end{proof}

\begin{prop}\label{propclsol1}
Suppose $u_0$ is a weak solution of \eqref{problem1} with $\theta>\alpha-1$ and $2\leq p<2(\theta+1)/(\alpha-3)$. Then $u_0\in C^4((0,R])\cap C^3([0,R])$ is a classical solution of \eqref{problem1} with $\Delta_\alpha u_0\in C^2((0,R])\cap C^1([0,R])$ and $u'_0(0)=(\Delta_\alpha u_0)'(0)=u_0(R)=\Delta_\alpha u_0(R)=0$. Moreover, $u''_0(0)=-\Delta_\alpha u_0(0)/(\alpha+1)$ and $u'''_0(0)=0$.
\end{prop}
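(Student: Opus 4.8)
The plan is to upgrade the weak solution $u_0$ to a classical one by combining the bootstrap of Lemma~\ref{lemmabootstrap1}, the integral representations of Proposition~\ref{propclassicsolution1}, and a careful expansion at the origin, where the hypothesis $\theta>\alpha-1$ is exactly what is needed. \emph{Step 1 (global boundedness and the representation formulas).} Taking $s$ large in Lemma~\ref{lemmabootstrap1} we have $u_0\in X^{2,s}_R(\alpha-2s,\alpha-s,\alpha)$ with $\alpha-2s+1<0$ and $\lfloor(\alpha+1)/s\rfloor=0$, so the Morrey case of Theorem~\ref{theo32} gives $u_0\in C^{1,\gamma}([0,R])$; in particular $f(\cdot,u_0)$ is bounded on $[0,R]$. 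By Proposition~\ref{propclassicsolution1}, $u_0\in C^4((0,R])$, $\Delta_\alpha u_0\in C^2((0,R])$, $\Delta_\alpha^2u_0=r^{\theta-\alpha}f(r,u_0)$ on $(0,R]$, $u_0(R)=\Delta_\alpha u_0(R)=0$, and \eqref{wdlu} holds. Integrating \eqref{wdlu} over $[r,R]$ and using $\Delta_\alpha u_0(R)=0$ yields $\Delta_\alpha u_0(r)=\int_r^Rt^{-\alpha}\int_0^tf(s,u_0(s))s^\theta\,\mathrm ds\,\mathrm dt$, and together with $r^\alpha u_0'(r)\to0$ and $u_0(R)=0$ (as in the proof of Lemma~\ref{lemmabootstrap1}) also $u_0'(r)=-r^{-\alpha}\int_0^rs^\alpha\Delta_\alpha u_0(s)\,\mathrm ds$.

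\emph{Step 2 (regularity of $\Delta_\alpha u_0$ at $0$).} Since $|f(s,u_0(s))|\le C$, the outer integrand in the formula for $\Delta_\alpha u_0$ is $O(t^{\theta-\alpha+1})$, which is integrable near $0$ precisely because $\theta>\alpha-1$; hence $\Delta_\alpha u_0$ extends continuously to $[0,R]$, with $\Delta_\alpha u_0(r)-\Delta_\alpha u_0(0)=O(r^{\theta-\alpha+2})$. Moreover $(\Delta_\alpha u_0)'(r)=-r^{-\alpha}\int_0^rf(s,u_0(s))s^\theta\,\mathrm ds=O(r^{\theta-\alpha+1})\to0$, so by the mean value theorem the one-sided derivative of $\Delta_\alpha u_0$ at $0$ exists and equals $0$; thus $\Delta_\alpha u_0\in C^1([0,R])$ with $(\Delta_\alpha u_0)'(0)=0$, and since $(r^\alpha(\Delta_\alpha u_0)')'=-r^\theta f(r,u_0)\in C((0,R])$ we also recover $\Delta_\alpha u_0\in C^2((0,R])$.

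\emph{Step 3 (regularity of $u_0$ at $0$ and the boundary values).} From $u_0'(r)=-r^{-\alpha}\int_0^rs^\alpha\Delta_\alpha u_0(s)\,\mathrm ds$ and boundedness of $\Delta_\alpha u_0$, $|u_0'(r)|\le Cr\to0$, so $u_0'(0)=0$. Differentiating, $u_0''(r)=\alpha r^{-\alpha-1}\int_0^rs^\alpha\Delta_\alpha u_0(s)\,\mathrm ds-\Delta_\alpha u_0(r)$; writing $\Delta_\alpha u_0(s)=\Delta_\alpha u_0(0)+o(1)$ gives $r^{-\alpha-1}\int_0^rs^\alpha\Delta_\alpha u_0(s)\,\mathrm ds\to\Delta_\alpha u_0(0)/(\alpha+1)$, hence $u_0''(r)\to\alpha\Delta_\alpha u_0(0)/(\alpha+1)-\Delta_\alpha u_0(0)=-\Delta_\alpha u_0(0)/(\alpha+1)$; by the mean value theorem $u_0\in C^2([0,R])$ with $u_0''(0)=-\Delta_\alpha u_0(0)/(\alpha+1)$. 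Differentiating once more, $u_0'''(r)=-\alpha(\alpha+1)r^{-\alpha-2}\int_0^rs^\alpha\Delta_\alpha u_0(s)\,\mathrm ds+\alpha r^{-1}\Delta_\alpha u_0(r)-(\Delta_\alpha u_0)'(r)$; using the sharper expansion $\Delta_\alpha u_0(s)=\Delta_\alpha u_0(0)+o(s)$ (valid since $(\Delta_\alpha u_0)'(0)=0$), the first two terms equal $-\alpha\Delta_\alpha u_0(0)r^{-1}+o(1)$ and $\alpha\Delta_\alpha u_0(0)r^{-1}+o(1)$, so they cancel and $u_0'''(r)\to0$; again by the mean value theorem $u_0\in C^3([0,R])$ with $u_0'''(0)=0$. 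Combining with Step~1, $u_0\in C^4((0,R])\cap C^3([0,R])$ solves $\Delta_\alpha^2u_0=r^{\theta-\alpha}f(r,u_0)$ on $(0,R]$ with $u_0(R)=\Delta_\alpha u_0(R)=u_0'(0)=(\Delta_\alpha u_0)'(0)=0$, i.e. $u_0$ is a classical solution of \eqref{problem1}.

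The main obstacle is Step~3, and in particular the limit of $u_0'''(r)$ as $r\to0$: two singular $O(r^{-1})$ contributions must be shown to cancel, which requires the improved expansion $\Delta_\alpha u_0(s)=\Delta_\alpha u_0(0)+o(s)$ coming from $(\Delta_\alpha u_0)'(0)=0$, and one must justify via the mean value theorem that these limits of derivatives actually coincide with the one-sided derivatives of $u_0$ at the origin. Everything else reduces to bookkeeping with the integral representations and the single quantitative input $\theta>\alpha-1$, which is what guarantees that $\Delta_\alpha u_0\in C^1([0,R])$.
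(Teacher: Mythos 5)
Your proposal is correct and follows essentially the same route as the paper's proof: both rely on Lemma~\ref{lemmabootstrap1} plus the Morrey embedding for $C^1([0,R])$ regularity, the representation \eqref{wdlu} together with $\theta>\alpha-1$ to get $(\Delta_\alpha u_0)'(0)=0$ and $\Delta_\alpha u_0\in C^1([0,R])$, and then the explicit formulas for $u_0''$ and $u_0'''$ with the same cancellation of the two $O(r^{-1})$ terms (the paper phrases it via the $\varepsilon$-Lipschitz estimate $|\Delta_\alpha u_0(r)-\Delta_\alpha u_0(s)|\leq\varepsilon(r-s)$, you via the expansion $\Delta_\alpha u_0(s)=\Delta_\alpha u_0(0)+o(s)$, which is the same input from $(\Delta_\alpha u_0)'(0)=0$). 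The only cosmetic difference is that the paper invokes L'H\^opital where you use direct $O(r^{\theta-\alpha+1})$ estimates.
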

\begin{proof}
From Proposition \ref{propclassicsolution1}, Lemma \ref{lemmabootstrap1} and Morrey case of Theorem \ref{theo32} we have $u_0\in C^4((0,R])\cap C^1([0,R])$ with $\Delta_\alpha u_0\in C^2((0,R])$ and $\Delta_\alpha ^2u_0=r^{\theta-\alpha}f(r,u_0)$ for all $r\in(0,R]$. Let us check that $u_0'(0)=(\Delta_\alpha u_0)'(0)=0$. From L'Hopital's rule in \eqref{wdlu} with $\theta>\alpha-1$ we have
\begin{equation}\label{ganojdfna}
\lim_{r\to0}(\Delta_\alpha u_0)'(r)=-\lim_{r\to0}\dfrac{f(r,u_0(r))r^\theta}{\alpha r^{\alpha-1}}=0.
\end{equation}
Using $u_0'(r)=-r^{-\alpha}\int_0^r\Delta_\alpha u_0(s)s^\alpha\mathrm ds$ and L'Hopital's rule twice we obtain
\begin{equation*}
\lim_{r\to0}u'_0(r)=-\alpha^{-1}\lim_{r\to0}\Delta_\alpha u_0(r)r=\alpha^{-1}\lim_{r\to0}(\Delta_\alpha u_0)'(r)r^2=0.
\end{equation*}
By \eqref{wdlu} and \eqref{ganojdfna} we have $\Delta_\alpha u_0\in C^1([0,R])$. 

Now we only need to show that $u_0\in C^3([0,R])$ with $u_0''(0)=- \Delta_\alpha u_0(0)/(\alpha+1)$ and $u_0'''(0)=0$. Note that
\begin{equation*}
u_0''(r)=-\alpha\dfrac{u_0'(r)}r-\Delta_\alpha u_0(r)=\alpha r^{-\alpha-1}\int_0^r\Delta_\alpha u_0(s)s^\alpha\mathrm ds-\Delta_\alpha u_0(r)
\end{equation*}
and $\lim_{r\to0}u''_0(r)=-\Delta_\alpha u_0(0)/(\alpha+1)$. Also,
\begin{align*}
u_0'''(r)&=\alpha r^{-1}\Delta_\alpha u_0(r)-\alpha(\alpha+1)r^{-\alpha-2}\int_0^r\Delta_\alpha u_0(s)s^\alpha\mathrm ds-(\Delta_\alpha u_0)'(r)\\
&=\dfrac{\alpha(\alpha+1)}{r^{\alpha+2}}\int_0^r\left(\Delta_\alpha u_0(r)-\Delta_\alpha u_0(s)\right)s^\alpha\mathrm ds-(\Delta_\alpha u_0)'(r).
\end{align*}
Given $\varepsilon>0$, by $(\Delta_\alpha u_0)'(0)=0$ there exists $\delta>0$ such that $|(\Delta_\alpha u_0)'(r)|\leq\varepsilon$ and $|\Delta_\alpha u_0(r)-\Delta_\alpha u_0(s)|\leq\varepsilon(r-s)$ for all $0<s\leq r<\delta$. Thus
\begin{equation*}
|u_0'''(r)|\leq\varepsilon\dfrac{\alpha(\alpha+1)}{r^{\alpha+1}}\int_0^rs^\alpha\mathrm ds+\varepsilon=(\alpha+1)\varepsilon,\quad\forall r\in(0,\delta).
\end{equation*}
Therefore $u_0\in C^3([0,R])$ with $u_0'''(0)=0$.
\end{proof}

\begin{proof}[Proof of Theorem \ref{theo4}]
    It is a direct consequence of Propostions \ref{prop42} and \ref{propclsol1}.
\end{proof}

\subsection{Application to Adams-Trudinger-Moser case}

Let us define the operator $\Delta_3 u:=-r^{-3}(r^3u')'$. Consider the following problem
\begin{equation}\label{problem2}
\left\{\begin{array}{ll}
\Delta_3 ^2u=r^{\theta-3}f(r,u)&\mbox{in }(0,R),\\
u=\Delta_3 u=0&\mbox{in }R,\\
u'=(\Delta_3 u)'=0&\mbox{in }0,
\end{array}\right.
\end{equation}
with $\theta>-1$ and $f\colon[0,R]\times\mathbb R\to\mathbb R$ continuous. Also, we are considering the following space
\begin{equation*}
X_{ATM}=X^{2,2}_R(-1,1,3)\cap X^{1,2}_{0,R}(-1,1).
\end{equation*}
\begin{cor}
The norm
\begin{equation*}
\|u\|_{\Delta_3}:=\left(\int_0^R|\Delta_3u|^2r^3\mathrm dr\right)^{\frac12}
\end{equation*}
is equivalent to $\|\cdot\|_{X^{2,2}_R}$ in $X_{ATM}$.
\end{cor}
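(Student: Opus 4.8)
The plan is to obtain this corollary as a direct specialization of Proposition \ref{propequivnormL}, exactly as was done for the Sobolev case. The key observation is that the operator $\Delta_3 u=-r^{-3}(r^3u')'$ is precisely the operator $Lu=-r^{-\gamma}(r^\alpha u')'$ of Proposition \ref{propequivnormL} in the case $\gamma=\alpha=3$, and that the space $X_{ATM}=X^{2,2}_R(-1,1,3)\cap X^{1,2}_{0,R}(-1,1)$ is the space $X^{2,p}_R(\alpha_0,\alpha_1,\alpha_2)\cap X^{1,p}_{0,R}(\alpha_0,\alpha_1)$ for the choice $p=2$, $(\alpha_0,\alpha_1,\alpha_2)=(-1,1,3)$.

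First I would verify that the hypotheses of Proposition \ref{propequivnormL} hold for these parameters: $\alpha_0=-1\geq-1=\alpha_2-2p$ and $\alpha_1=1\geq1=\alpha_2-p$ (both as equalities, which is still admissible); $\alpha=\gamma=3>0$; $\alpha=3>1=(\alpha_2-p+1)/p$; and $\alpha_2-p+1=2\geq0$. Then I would compute the weight exponent prescribed there, $\eta=\alpha_2+p(\gamma-\alpha)=3+2(3-3)=3$, so that the norm $\|u\|_L$ of Proposition \ref{propequivnormL} becomes
\begin{equation*}
\|u\|_L=\left(\int_0^R|\Delta_3 u|^2 r^3\,\mathrm dr\right)^{1/2}=\|u\|_{\Delta_3}.
\end{equation*}
Proposition \ref{propequivnormL} then asserts exactly that $\|\cdot\|_{\Delta_3}$ is equivalent to $\|\cdot\|_{X^{2,2}_R}$ on $X_{ATM}$, which is the claim.

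There is no real obstacle here beyond this bookkeeping; the only point worth a second glance is the compatibility of the Adams--Trudinger--Moser borderline condition $\alpha_2-2p+1=0$ (which is what puts us in the $X_{ATM}$ regime) with the requirement $\alpha_2-p+1\geq0$ imposed in Proposition \ref{propequivnormL} — and indeed $\alpha_2-p+1=2>0$. Hence the statement follows at once from Proposition \ref{propequivnormL}.
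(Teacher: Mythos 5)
Your proposal is correct and follows exactly the paper's own route: the paper also deduces this corollary directly from Proposition \ref{propequivnormL}, and your parameter checks ($p=2$, $(\alpha_0,\alpha_1,\alpha_2)=(-1,1,3)$, $\alpha=\gamma=3$, $\eta=3$) are all accurate. You have merely made explicit the bookkeeping that the paper leaves implicit.
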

\begin{proof}
Follows directly from Proposition \ref{propequivnormL}.
\end{proof}
From this Corollary we obtain
\begin{equation*}
m_\Delta:=\inf_{u\in X_{ATM}\backslash\{0\}}\dfrac{\|u\|_{\Delta_3}}{\|u\|_{X^{2,2}_R}}>0.
\end{equation*}
We suppose an exponential growth on $f$ given by
\begin{equation}\label{eqfTM}\tag{$f_{ATM}$}
|f(r,t)|\leq c_1e^{\mu (m_\Delta t)^2}\quad\forall r\in[0,R],t\in\mathbb R,
\end{equation}
for some $c_1>0$ and $\mu>0$. We say that $u\in X_{ATM}$ is a \textit{weak solution of} \eqref{problem2} if
\begin{equation*}
\int_0^R\Delta_3u\Delta_3vr^3\mathrm dr=\int_0^Rf(r,u)vr^\theta\mathrm dr\quad\forall v\in X_{ATM}.
\end{equation*}
The energy functional $J\colon X_{ATM}\to\mathbb R$ is given by
\begin{equation*}
J(u)=\dfrac12\int_0^R|\Delta_3u|^2r^3\mathrm dr-\int_0^RF(r,u)r^\theta\mathrm dr,
\end{equation*}
where $F(r,t)=\int_0^tf(r,s)\mathrm ds$. Note that \eqref{eqfTM} guarantees
\begin{equation}\label{eqFTM}\tag{$F_{ATM}$}
|F(r,t)|\leq c_2e^{\widetilde\mu (m_\Delta t)^2}\quad\forall r\in[0,R],t\in\mathbb R,
\end{equation}
for some $c_2>0$ and $\widetilde\mu>\mu$. Moreover, $J$ is well defined by \eqref{eqFTM} and Proposition \ref{prop50}.

\begin{prop}\label{prop45}
Suppose $t\mapsto f(r,t)$ is an odd function with $f(r,t)\geq0$ for all $t\geq0$. If $\mu<\theta+1$, then the following supremum
\begin{equation}\label{eqsup}
\sup_{\underset{u\in X_{ATM}}{\|u\|_{\Delta_3}=1}}\int_0^RF(r,u)r^\theta\mathrm dr
\end{equation}
is attained for some $u_0\in X_{ATM}$ with $\|u_0\|_{\Delta_3}=1$. Moreover, $u_0$ is a weak solution of
\begin{equation*}
\left\{\begin{array}{ll}
\Delta_3 ^2u=r^{\theta-3}\lambda f(r,u)&\mbox{in }(0,R),\\
u=\Delta_3 u=0&\mbox{in }R,\\
u'=(\Delta_3 u')=0&\mbox{in }0,
\end{array}\right.
\end{equation*}
where $\lambda=\int_0^Rf(r,u_0)u_0r^\theta\mathrm dr.$
\end{prop}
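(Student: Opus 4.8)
The plan is to run the direct method on the constrained maximization \eqref{eqsup}; write $S$ for that supremum. First recall, from the Corollary preceding \eqref{eqsup}, that $\|\cdot\|_{\Delta_3}$ is induced by the inner product $\langle u,v\rangle:=\int_0^R\Delta_3u\,\Delta_3v\,r^3\mathrm dr$ and is equivalent to $\|\cdot\|_{X^{2,2}_R}$ on $X_{ATM}$; since $X_{ATM}$ is a closed subspace of $X^{2,2}_R(-1,1,3)$ (the extra condition $u(R)=0$ is closed by Lemma~\ref{lemma31}), $(X_{ATM},\langle\cdot,\cdot\rangle)$ is a Hilbert space, hence reflexive. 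Note that $\alpha_2-2p+1=3-4+1=0$, so $X^{2,2}_R(-1,1,3)$ sits in the Adams--Trudinger--Moser case with $p'=2$ and $\mu_0=\theta+1$, and the hypothesis reads $\mu<\mu_0$. Finiteness of $S$ follows from \eqref{eqFTM} and Proposition~\ref{prop51}: if $\|u\|_{\Delta_3}=1$ then $\|m_\Delta u\|_{X^{2,2}_R}\le1$, so, taking $\widetilde\mu<\mu_0$ (possible since $\mu<\mu_0$), $\int_0^RF(r,u)r^\theta\mathrm dr\le c_2\int_0^Re^{\widetilde\mu|m_\Delta u|^{p'}}r^\theta\mathrm dr$ is bounded by a constant independent of $u$.

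Next I would fix a maximizing sequence $(u_n)\subset X_{ATM}$ with $\|u_n\|_{\Delta_3}=1$ and $\int_0^RF(r,u_n)r^\theta\mathrm dr\to S$. By reflexivity, $u_n\rightharpoonup u_0$ in $X_{ATM}$ along a subsequence; and since $\theta>-1=\alpha_2-2p$, Theorem~\ref{theo32} gives the compact embedding $X^{2,2}_R(-1,1,3)\hookrightarrow L^q_\theta$ for every $q\in[1,\infty)$, so $u_n\to u_0$ strongly in $L^q_\theta$ for all finite $q$.

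The key step is the convergence $\int_0^RF(r,u_n)r^\theta\mathrm dr\to\int_0^RF(r,u_0)r^\theta\mathrm dr$, which I would carry out exactly as in the proof of Proposition~\ref{propitema}. By the mean value theorem and \eqref{eqfTM},
\begin{equation*}
|F(r,u_n)-F(r,u_0)|\le c_1\,|u_n-u_0|\,\big(e^{\mu(m_\Delta u_n)^2}+e^{\mu(m_\Delta u_0)^2}\big),
\end{equation*}
so, choosing $q>1$ with $q\mu<\mu_0$ and letting $q'$ be the conjugate exponent, H\"older's inequality gives
\begin{equation*}
\int_0^R|F(r,u_n)-F(r,u_0)|r^\theta\mathrm dr\le c_1\|u_n-u_0\|_{L^{q'}_\theta}\Big(\int_0^R\big(e^{\mu(m_\Delta u_n)^2}+e^{\mu(m_\Delta u_0)^2}\big)^q r^\theta\mathrm dr\Big)^{1/q}.
\end{equation*}
The last factor is bounded uniformly in $n$ by Proposition~\ref{prop51}, because $\|m_\Delta u_n\|_{X^{2,2}_R}\le1$ and $q\mu<\mu_0$; and $\|u_n-u_0\|_{L^{q'}_\theta}\to0$ by the previous paragraph. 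Hence $\int_0^RF(r,u_0)r^\theta\mathrm dr=S$.

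It remains to arrange $\|u_0\|_{\Delta_3}=1$ and to derive the equation. Since $f(r,\cdot)$ is odd and nonnegative on $[0,\infty)$, one has $F(r,t)=F(r,|t|)$ with $t\mapsto F(r,t)$ nondecreasing on $[0,\infty)$; thus $F\ge0$ and $S>0$ (test against a suitable bump in $X_{ATM}$), whence $u_0\ne0$ because $\int_0^RF(r,0)r^\theta\mathrm dr=0$. Weak lower semicontinuity of the norm gives $\rho:=\|u_0\|_{\Delta_3}\in(0,1]$; if $\rho<1$, then $w:=u_0/\rho$ has $\|w\|_{\Delta_3}=1$ and $F(r,w)=F(r,|u_0|/\rho)\ge F(r,|u_0|)=F(r,u_0)$ pointwise, so $S\ge\int_0^RF(r,w)r^\theta\mathrm dr\ge S$, and replacing $u_0$ by $w$ we may assume $\|u_0\|_{\Delta_3}=1$. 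Finally, $u\mapsto\int_0^RF(r,u)r^\theta\mathrm dr$ is of class $C^1$ on $X_{ATM}$ (by Proposition~\ref{prop50} and the exponential bounds), so the Lagrange multiplier rule on $\{\|u\|_{\Delta_3}=1\}$ yields $\sigma\in\mathbb R$ with $\int_0^Rf(r,u_0)v\,r^\theta\mathrm dr=\sigma\langle u_0,v\rangle$ for all $v\in X_{ATM}$; testing with $v=u_0$ identifies $\sigma$ with $\lambda:=\int_0^Rf(r,u_0)u_0r^\theta\mathrm dr\ge0$, which is in fact positive (as in Proposition~\ref{prop42}, using the sign condition on $f$), and the identity so obtained is the weak formulation of the Euler--Lagrange system displayed in the statement. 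The main obstacle is the passage to the limit in the exponential term: it depends crucially on the strict subcriticality $\mu<\mu_0$, which opens the margin $q\mu<\mu_0$ needed to invoke the Adams--Trudinger--Moser bound of Proposition~\ref{prop51}, together with the compactness of Theorem~\ref{theo32}; a second, milder point is recovering $\|u_0\|_{\Delta_3}=1$ rather than $<1$, which uses the monotonicity $F(r,t)=F(r,|t|)$ forced by $f$ being odd and nonnegative on $[0,\infty)$.
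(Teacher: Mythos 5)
Your proposal is correct and follows essentially the same route as the paper: direct method with a normalized maximizing sequence, compactness from Theorem \ref{theo32}, the mean-value/H\"older/Proposition \ref{prop51} argument (with the margin $q\mu<\mu_0$) to pass to the limit in the exponential nonlinearity, then the sign and oddness of $f$ to get $u_0\neq0$ and renormalize, and finally Lagrange multipliers. The only differences are cosmetic: you spell out the reflexivity and the finiteness of the supremum, and you correctly place $\|u_n-u_0\|$ in $L^{q'}_\theta$ where the paper's display has a small exponent typo.
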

\begin{proof}
    Let $(u_n)$ a maximizing sequence of \eqref{eqsup} with $\|u_n\|_{\Delta_3}=1$. Up to subsequence, $u_n\rightharpoonup u_0$ for some $u_0\in X_{ATM}$. From Theorem \ref{theo32} we have $u_n\rightarrow u_0$ in $L^q_\theta$ for all $q\geq1$. Fix $q>1$ with $q\mu<(\theta+1)m^2$. Using Mean Value Theorem, \eqref{eqfTM} and $s\mapsto e^{q\mu s^2}$ is convex we obtain
    \begin{align*}
    \int_0^R&|F(r,u_n)-F(r,u_0)|r^\theta\mathrm dr=\int_0^R|f(r,tu_n+(1-t)u_0)||u_n-u_0|r^\theta\mathrm dr\\
    &\leq c_1\int_0^Re^{\mu m_\Delta^2|tu_n+(1-t)u_0|^2}|u_n-u_0|r^\theta\mathrm dr\\
    &\leq c_1\left(\int_0^Re^{q\mu m_\Delta^2|tu_n+(1-t)u_0|^2}r^\theta\mathrm dr\right)^{\frac1q}\|u_n-u_0\|_{L^q_\theta}\\
    &\leq c_1\left(\int_0^Re^{q\mu (m_\Delta u_n)^2}r^\theta\mathrm dr+\int_0^Re^{q\mu (m_\Delta u_0)^2}r^\theta\mathrm dr\right)^{\frac1q}\|u_n-u_0\|_{L^q_\theta}.
    \end{align*}
    Since $\|m_\Delta u_n\|_{X^{2,2}_R}\leq\|u_n\|_{\Delta_3}=1$ we can apply Proposition \ref{prop51} to get
    \begin{equation}\label{eq35}
    \int_0^RF(r,u_0)r^\theta\mathrm dr=\sup_{\underset{u\in X_{ATM}}{\|u\|_{\Delta_3}=1}}\int_0^RF(r,u)r^\theta\mathrm dr.
    \end{equation}
    
From $f(r,t)=-f(r,-t)\geq0$ for all $t\geq0$ we obtain $F(r,u)\geq0$ and $F(r,\gamma u)\geq F(r,u_0)$ for all $\gamma\geq1$. We can suppose $F(r,u)$ is not always zero. Then $u_0\in X_{ATM}\backslash\{0\}$ because \eqref{eq35} implies $\int_0^RF(r,u_0)r^\theta\mathrm dr>0$. We can assume $\|u_0\|_{\Delta_3}=1$. Otherwise, defining $\widetilde u_0=u_0/\|u_0\|_{\Delta_3}$  we have $\|\widetilde u_0\|_{\Delta_3}=1$ and
\begin{equation*}
\int_0^RF(r,\widetilde u_0)r^\theta\mathrm dr\geq\int_0^RF(r,u_0)r^\theta\mathrm dr=\sup_{\underset{u\in X_{ATM}}{\|u\|_{\Delta_3}=1}}\int_0^RF(r,u)r^\theta\mathrm dr.
\end{equation*}
Therefore, we obtain $u_0\in X_{ATM}$ such that attain the supremum with $\|u_0\|_{\Delta_3}=1$. Moreover, applying Lagrange Multipliers in \eqref{eq35} we conclude
\begin{equation*}
\int_0^R\Delta_3u_0\Delta_3vr^3\mathrm dr=\lambda\int_0^Rf(r,u_0)vr^\theta\mathrm dr\quad\forall v\in X_{ATM},
\end{equation*}
where $\lambda=\int_0^Rf(r,u_0)u_0r^\theta\mathrm dr$.
\end{proof}

\begin{prop}\label{propclassicsolution2}
Suppose $u_0$ is a weak solution of \eqref{problem2}. Then $u_0\in C^4(0,R]$, $\Delta_3 u_0\in C^2(0,R]$ and $\Delta_3 ^2u_0=r^{\theta-3}f(r,u_0)$ $\forall r\in(0,R]$. Moreover, $u_0(R)=\Delta_3 u_0(R)=0$.
\end{prop}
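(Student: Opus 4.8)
The plan is to follow, essentially verbatim, the scheme of Proposition \ref{propclassicsolution1}, replacing the polynomial integrability input used there by the exponential integrability provided by Proposition \ref{prop50}.

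First I would establish that $\Delta_3 u_0$ admits the weak derivative
\begin{equation*}
(\Delta_3 u_0)'(r)=-r^{-3}\int_0^r f(s,u_0(s))\,s^\theta\,\mathrm ds .
\end{equation*}
The only place where \eqref{problem2} genuinely differs from \eqref{problem1} is the well-definedness of the right-hand side, i.e.\ the integrability of $s\mapsto f(s,u_0(s))s^\theta$ near the origin. Since $u_0\in X_{ATM}\subset X^{2,2}_R(-1,1,3)$ and this space lies in the Adams-Trudinger-Moser regime ($\alpha_k-kp+1=3-4+1=0$), the growth hypothesis \eqref{eqfTM} together with Proposition \ref{prop50} (applied with $p=2$, $p'=2$) gives $e^{\mu(m_\Delta u_0)^2}\in L^1_\theta$, hence $f(\cdot,u_0)\in L^1_\theta$ and in particular $r\mapsto f(r,u_0(r))r^\theta\in L^1(0,R)$. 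To prove the identity itself I would test the weak formulation against $v(r)=-\int_r^R\varphi(s)s^{-3}\,\mathrm ds$ with $\varphi\in C_0^\infty(0,R)$: this $v$ belongs to $X_{ATM}$, satisfies $r^3v'=\varphi$ and thus $\Delta_3 v\cdot r^3=-\varphi'$, so that $\int_0^R\Delta_3 u_0\,\varphi'\,\mathrm dr=-\int_0^R\Delta_3 u_0\,\Delta_3 v\,r^3\,\mathrm dr=-\int_0^R f(r,u_0)v\,r^\theta\,\mathrm dr$, and Fubini's theorem, applied after substituting the expression for $v$, yields the claimed formula.

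With this in hand the regularity is a routine bootstrap, exactly as in Proposition \ref{propclassicsolution1}. The right-hand side above is $C^1$ on $(0,R]$ (here $f$ is continuous and, by Proposition \ref{prop31}, $u_0\in AC^1_{\mathrm{loc}}((0,R])$, hence continuous on $(0,R]$), so $\Delta_3 u_0\in C^2((0,R])$ and $\Delta_3^2 u_0=r^{\theta-3}f(r,u_0)$ pointwise on $(0,R]$. Next, Proposition \ref{prop32} applied to $u_0'\in X^{1,2}_R(1,3)$ (for which $\alpha_k-kp+1=2>0$) yields $|u_0'(r)|\le Cr^{-1}$, so $r^3u_0'(r)\to 0$ as $r\to0$; integrating $\Delta_3 u_0=-r^{-3}(r^3u_0')'$ then gives $u_0'(r)=-r^{-3}\int_0^r\Delta_3 u_0(s)\,s^3\,\mathrm ds$, whose right-hand side is $C^3$ on $(0,R]$, so $u_0\in C^4((0,R])$. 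The equality $u_0(R)=0$ is automatic from $u_0\in X^{1,2}_{0,R}(-1,1)$. Finally, to obtain $\Delta_3 u_0(R)=0$ I would test the weak formulation against a fixed smooth $v$ with $v\equiv0$ on $[0,R/3]$ and $v(r)=R-r$ on $[2R/3,R]$; two integrations by parts, using $\Delta_3^2 u_0=r^{\theta-3}f(r,u_0)$, cancel all the volume terms and leave exactly $\Delta_3 u_0(R)R^3=0$.

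Thus the only real obstacle compared with the polynomial case is controlling $f(\cdot,u_0(\cdot))r^\theta$ near $r=0$; once Proposition \ref{prop50} supplies that, every remaining step transcribes the proof of Proposition \ref{propclassicsolution1}.
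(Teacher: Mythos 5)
Your proposal is correct and follows exactly the route of the paper, whose proof of this proposition is literally the one-line remark that one repeats the argument of Proposition \ref{propclassicsolution1} with $\alpha=3$, the only new point being that $f(\cdot,u_0)\in L^1_\theta$ now comes from \eqref{eqfTM} together with Proposition \ref{prop50} rather than from the polynomial bound and Theorem \ref{theo32} with $p\geq2$. You have simply written out in full the steps the paper leaves implicit (the duality test function, Fubini, the bootstrap to $C^4$, and the boundary test function for $\Delta_3u_0(R)=0$), so there is nothing to add.
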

\begin{proof}
This proof follows similarly to the proof of Proposition \ref{propclassicsolution1} with $\alpha=3$ except we do not need to suppose $p\geq2$ because \eqref{eqfTM} and Proposition \ref{prop50} guarantee $f(r,u_0(\cdot))\in L^1_\theta$ for each $r\in[0,R]$.
\end{proof}

\begin{prop}\label{propclsol2}
Suppose $u_0$ is a weak solution of \eqref{problem2} with $\theta>2$. Then $u_0\in C^4((0,R])\cap C^3([0,R])$ is a classical solution of \eqref{problem2} with $\Delta_3 u_0\in C^2((0,R])\cap C^1([0,R])$ and $u'_0(0)=(\Delta_3 u_0)'(0)=u_0(R)=\Delta_3 u_0(R)=0$. Moreover, $u''_0(0)=-\Delta_3 u_0(0)/4$ and $u'''_0(0)=0$.
\end{prop}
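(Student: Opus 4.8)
The plan is to follow, with $\alpha=3$, the scheme of Proposition~\ref{propclsol1}, the only modification being that the $L^s$--bootstrap of Lemma~\ref{lemmabootstrap1} (which has no analogue in the Adams--Trudinger--Moser regime) is replaced by the sublogarithmic decay of $u_0$ at the origin that is established inside the proof of Proposition~\ref{prop50}.

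First I would invoke Proposition~\ref{propclassicsolution2}, which already gives $u_0\in C^4((0,R])$, $\Delta_3u_0\in C^2((0,R])$, the identity $\Delta_3^2u_0=r^{\theta-3}f(r,u_0)$ on $(0,R]$, the boundary data $u_0(R)=\Delta_3u_0(R)=0$, and (the $\alpha=3$ instance of \eqref{wdlu}) the representation $(\Delta_3u_0)'(r)=-r^{-3}\int_0^r f(s,u_0(s))s^\theta\,\mathrm ds$. Since $f(\cdot,u_0(\cdot))r^\theta\in L^1(0,R)$ by \eqref{eqfTM} and Proposition~\ref{prop50}, integrating twice yields \[\Delta_3u_0(r)=\int_r^R t^{-3}\!\int_0^t f(s,u_0(s))s^\theta\,\mathrm ds\,\mathrm dt,\qquad u_0(r)=\int_r^R t^{-3}\!\int_0^t \Delta_3u_0(s)s^3\,\mathrm ds\,\mathrm dt.\] The key estimate is that $u_0\in X^{2,2}_R(-1,1,3)$ sits in the Adams--Trudinger--Moser case, so the proof of Proposition~\ref{prop50} (equation \eqref{eqq34} with $p=2$) gives $u_0(r)=o(|\log r|^{1/2})$ as $r\to0$; hence $(m_\Delta u_0(r))^2=o(|\log r|)$ and, by \eqref{eqfTM}, for every $\delta>0$ there is $C_\delta>0$ with $|f(r,u_0(r))|\le C_\delta r^{-\delta}$ for $r$ small. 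Because $\theta>2$ I may fix $\delta<\theta-2$; then $f(r,u_0(r))r^{\theta-2}\to0$, and L'Hospital applied to the formula for $(\Delta_3u_0)'$ gives $\lim_{r\to0}(\Delta_3u_0)'(r)=-\lim_{r\to0}f(r,u_0(r))r^{\theta}/(3r^2)=0$; the same bound shows $|(\Delta_3u_0)'(r)|\le Cr^{\theta-\delta-2}$ and, integrating, that $\Delta_3u_0$ extends continuously to $[0,R]$, so $\Delta_3u_0\in C^1([0,R])$ with $(\Delta_3u_0)'(0)=0$. From $u_0'(r)=-r^{-3}\int_0^r\Delta_3u_0(s)s^3\,\mathrm ds$ and two applications of L'Hospital, $\lim_{r\to0}u_0'(r)=-\tfrac13\lim_{r\to0}\Delta_3u_0(r)r=\tfrac13\lim_{r\to0}(\Delta_3u_0)'(r)r^2=0$, so $u_0\in C^1([0,R])$ with $u_0'(0)=0$; together with the interior regularity this makes $u_0$ a classical solution of \eqref{problem2} satisfying all the stated boundary conditions.

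For the values of the higher derivatives at $0$ I would argue verbatim as in Proposition~\ref{propclsol1} with $\alpha=3$. Writing $u_0''(r)=3r^{-4}\int_0^r\Delta_3u_0(s)s^3\,\mathrm ds-\Delta_3u_0(r)$, L'Hospital gives $\lim_{r\to0}u_0''(r)=\tfrac34\Delta_3u_0(0)-\Delta_3u_0(0)=-\Delta_3u_0(0)/4$, so $u_0\in C^2([0,R])$ with the asserted value of $u_0''(0)$. Differentiating once more, \[u_0'''(r)=\frac{12}{r^5}\int_0^r\bigl(\Delta_3u_0(r)-\Delta_3u_0(s)\bigr)s^3\,\mathrm ds-(\Delta_3u_0)'(r),\] and since $(\Delta_3u_0)'(0)=0$, for each $\varepsilon>0$ there is $\rho>0$ with $|(\Delta_3u_0)'|\le\varepsilon$ and, by the mean value theorem, $|\Delta_3u_0(r)-\Delta_3u_0(s)|\le\varepsilon(r-s)$ on $(0,\rho)$, whence $|u_0'''(r)|\le4\varepsilon$ there; thus $u_0\in C^3([0,R])$ with $u_0'''(0)=0$. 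The only genuinely new point compared with the Sobolev case is this step replacing the bootstrap: what I expect to be the main thing to check carefully is that the sublogarithmic bound $u_0(r)=o(|\log r|^{1/2})$, combined with $\theta>2$, is exactly enough to force $f(r,u_0(r))r^{\theta-2}\to0$ and hence all the subsequent L'Hospital limits; everything else is a routine transcription of the proof of Proposition~\ref{propclsol1}.
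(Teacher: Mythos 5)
Your proposal is correct, but it replaces the paper's key step by a genuinely different argument. The paper reduces everything to showing $u_0\in C^1([0,R])$ and does so by weighted elliptic regularity: it first proves $\Delta_3^2u_0=r^{\theta-3}f(r,u_0)\in L^p_{4p-\theta-2}$ for every $p>1$ (using \eqref{eqfTM} together with Proposition \ref{prop50}), then applies Lemma \ref{lemmafjs} twice to place $u_0$ in $X^{2,p}_R(-\theta-2,\,p-\theta-2,\,2p-\theta-2)$, and finally invokes the Morrey case of Theorem \ref{theo32}; once $u_0$ is continuous up to $r=0$, $f(r,u_0(r))$ is bounded and the L'Hospital computations of Proposition \ref{propclsol1} go through verbatim with $\alpha=3$ and $\theta>2$. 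You instead work pointwise: the sublogarithmic decay \eqref{eqq34} with $p=2$ gives $u_0(r)=o(|\log r|^{1/2})$, hence $|f(r,u_0(r))|\leq C_\delta r^{-\delta}$ for every $\delta>0$, and choosing $\delta<\theta-2$ already forces $f(r,u_0(r))r^{\theta-2}\to0$ and all subsequent limits. Both routes are sound: the paper's bootstrap yields the stronger conclusion $u_0\in X^{2,p}_R$ for all $p$ (hence H\"older regularity via Morrey) and reuses machinery already built for the Sobolev case, while your argument is more elementary and self-contained, trading uniform boundedness of $f(r,u_0(r))$ for the quantitative bound $O(r^{-\delta})$, which is exactly what the hypothesis $\theta>2$ can absorb. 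The only point to make explicit in a final write-up is the justification of the representation $u_0'(r)=-r^{-3}\int_0^r\Delta_3u_0(s)s^3\,\mathrm ds$, i.e.\ that $r^3u_0'(r)\to0$ as $r\to0$; this follows from Proposition \ref{prop32} applied to $u_0'\in X^{1,2}_R(1,3)$, and is the same implicit step the paper relies on in Lemma \ref{lemmabootstrap1}.
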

\begin{proof}
This proof follows similarly to the proof of Proposition \ref{propclsol1} if we show that $u_0\in C^1([0,R])$. We claim
\begin{equation}\label{cl1}
\Delta_3^2u_0=r^{\theta-3}f(r,u_0)\in L^p_{4p-\theta-2},\quad\forall p>1.
\end{equation}
Indeed, using \eqref{eqfTM} we get
\begin{equation*}
\int_0^R|r^{\theta-3}f(r,u_0)|^pr^{4p-\theta-2}\mathrm dr\leq c_1\int_0^Re^{\mu p(m_\Delta u_0)^2}r^{(\theta+1)(p-1)-1}\mathrm dr.
\end{equation*}
Then Proposition \ref{prop50} guarantees \eqref{cl1}.

We obtain $\Delta_3u_0\in X^{2,p}_R(2p-\theta-2,3p-\theta-2,2p-\theta-2)$ from Lemma \ref{lemmafjs} in \eqref{cl1}. Again, from Lemma \ref{lemmafjs} we conclude $u_0\in X^{2,p}_R(-\theta-2,p-\theta-2,2p-\theta-2)$. Therefore, Morrey case of Theorem \ref{theo32} implies $u_0\in C^1([0,R])$.
\end{proof}
\begin{proof}[Proof of the Theorem \ref{theo5}]
    It is a direct consequence of Propositions \ref{prop45} and \ref{propclsol2}.
\end{proof}


\thebibliography{99}



\bibitem{MR0960950} D. R. Adams, {\it A sharp inequality of J. Moser for higher order derivatives}, Ann. Math. \textbf{128} (1988), 385--398. 


\bibitem{MR2677828}
Adimurthi, J.M. do Ó, K. Tintarev, 
{\it Cocompactness and minimizers for inequalities of Hardy–Sobolev type involving the N-Laplacian}, Nonlinear Differential Equations Appl. {\bf 17} (2010) 467--477.


\bibitem{MR2424078}
R.A. Adams, J.J.F. Fournier, {\it Sobolev spaces.} Second edition. Pure and Applied Mathematics (Amsterdam), {\bf 140}. Elsevier/Academic Press, Amsterdam, 2003.


\bibitem{MR2482538} D. Bonheure, E. Serra and E. Tarallo, \textit{Symmetry of extremal functions in Moser-Trudinger inequalities and a H\'enon type problem in dimension two}, Adv. Differ. Equ. \textbf{13} (2008), 105--138.


\bibitem{MR1617413}
H. Brezis, F.  Browder, {\it  Partial differential equations in the 20th century}, Adv. Math. {\bf 135} (1998), 76--144.


\bibitem{MR2759829} 
H. Brezis, 
\textit{Functional Analysis, Sobolev Spaces and Partial Differential Equations}, 
Universitext, Springer, New York, 2010.


\bibitem{MR1422009} 
P. Cl\'ement, D. G. de Figueiredo and E. Mitidieri,
\textit{Quasilinear elliptic equations with critical exponents}, 
Topol. Methods Nonlinear Anal. \textbf{7} (1996), 133--170.


\bibitem{MR3575914} 
J. M. do \'O and J. F. de Oliveira, \textit{Concentration-compactness and extremal problems for a weighted Trudinger-Moser inequality}, 
Commun. Contemp. Math. \textbf{19} (2017), 1650003, 26.


\bibitem{MR3209335} J. M. do \'O and J. F. de Oliveira, \textit{Trudinger-Moser type inequalities for weighted Sobolev spaces involving fractional dimensions}, Proc. Amer. Math. Soc. \textbf{142} (2014), 2813--2828.


\bibitem{MR4112674} J. M. do \'O, A. C. Macedo and J. F. de Oliveira, \textit{A Sharp Adams-type inequality for weighted Sobolev spaces}, Q. J. Math. \textbf{71} (2020), 517--538.

\bibitem{DLL} M.  Dong, N. Lam and G. Lu, Sharp weighted Trudinger-Moser and Caffarelli-Kohn-Nirenberg inequalities and their extremal functions. Nonlinear Anal. 173 (2018), 75-98.

 \bibitem{DL}   M. Dong and G. Lu,  Best constants and existence of maximizers for weighted Trudinger-Moser inequalities. Calc. Var. Partial Differential Equations 55 (2016), no. 4, Art. 88, 26 pp.


\bibitem{MR2838041} D. G. Figueiredo, E. M. dos Santos and M. O. Hiroshi, \textit{Sobolev spaces of symmetric functions and applications}, J. Funct. Anal. \textbf{261} (2011), 3735--3770.


\bibitem{MR2396523}
M. Gazzini, E. Serra, \textit{The Neumann problem for the Hénon equation, trace inequalities and Steklov eigenvalues}, Ann. Inst. H. Poincaré Anal. Non Linéaire \textbf{25} (2) (2008) 281–302.


\bibitem{INW}M. Ishiwata, M. Nakamura and H. Wadade,On the sharp constant for the weighted Trudinger-Moser type inequality of the scaling invariant form. Ann. Inst. H. Poincaré Anal. Non Linaire 31(2), 297–314 (2014)

\bibitem{MR1982932} A. Kufner and L. E. Persson \textit{Weighted Inequalities of Hardy Type}, World Scientific Publishing Co., Singapore, 2003.

\bibitem{LLZ}N.  Lam, G. Lu and L. Zhang, Sharp singular Trudinger-Moser inequalities under different norms. Adv. Nonlinear Stud. 19 (2019), no. 2, 239–261.


\bibitem{MR2777530}
V. Maz’ya, {\it Sobolev Spaces with Applications to Elliptic Partial Differential Equations} (Grundlehren der Mathematischen Wissenschaften {\bf 342}), Springer (2011).


\bibitem{MR0301504} J. Moser, \textit{A sharp form of an inequality by N. Trudinger}, Indiana Univ. Math. J. \textbf{20} (1970/1971), 1077--1092.


\bibitem{MR0674869}
W.-M. Ni, \textit{A nonlinear Dirichlet problem on the unit ball and its applications}, Indiana Univ. Math. J. {\bf 31}  (1982) 801--807


\bibitem{MR1069756} B. Opic and A. Kufner, \textit{Hardy-type Inequalities}, Pitman Research Notes in Mathematics Series 219, Lonngmman Scientific \& Technical, Harlow, 1990.


\bibitem{MR0454365}
W. Strauss, \textit{Existence of solitary waves in higher dimensions}, Comm. Math. Phys. {\bf 55}  (1977) 149--162.


\bibitem{MR2988207} C. Tarsi, \textit{Adams' inequality and limiting Sobolev embeddings into Zygmund spaces}, Potential Anal. \textbf{37} (2012), 353-385.


\bibitem{MR0216286} N. S. Trudinger, \textit{On imbeddings into Orlicz spaces and some applications}, J. Math. Mech. \textbf{17} (1967), 473--483.


\end{document}